\numberwithin{equation}{section}
\newtheorem{theorem}{Theorem}[section]
\newtheorem{assumption}[theorem]{Assumption}
\newtheorem{claim}[theorem]{Claim}
\newtheorem{corollary}[theorem]{Corollary}
\newtheorem{definition}[theorem]{Definition}
\newtheorem{lemma}[theorem]{Lemma}
\newtheorem{notation}[theorem]{Notation}
\newtheorem{proposition}[theorem]{Proposition}
\theoremstyle{definition}
\newtheorem{remark}[theorem]{Remark}
\newcommand{\norm}[1]{\left\lVert#1\right\rVert}
\newcommand{\bignorm}[1]{\bigl\lVert{#1}\bigr\rVert}
\newcommand{\bigpar}[1]{\bigl(#1\bigr)}
\newcommand{\Bigpar}[1]{\Bigl(#1\Bigr)}
\newcommand{\indic}[1]{\mathbbm{1}_{\{\,{#1}\,\}}}
\newcommand{\bbone}{\mathbbm{1}}
\newcommand{\prob}{\mathbb{P}}
\newcommand{\ev}{\mathbb{E}}
\newcommand{\ibf}{\mathbf{i}}
\newcommand{\jbf}{\mathbf{j}}
\newcommand{\ppp}[2]{\underline{\mathcal{{#1}}}^n_{{#2}}}
\newcommand{\rbrwppp}[2]{\underline{\widehat{\mathcal{{#1}}}}^n_{{#2}}}
\newcommand{\modppp}[2]{\underline{\widetilde{\mathcal{{#1}}}}^n_{{#2}}}
\newcommand{\edge}[1]{\tau^n(#1)}
\newcommand{\rbrwedge}[1]{\widehat{\tau}(#1)}
\newcommand{\exptree}{\mathcal{T}^n_{\hat{t}}}
\newcommand{\rbrwtree}{\widehat{\mathcal{T}}^n_{\hat{t}}}
\newcommand{\modtree}{\widetilde{\mathcal{T}}^n_{\hat{t}}}
\newcommand{\emb}[1]{v^n(#1)}
\newcommand{\rbrwemb}[1]{\hat{v}^n(#1)}
\newcommand{\rt}{\varnothing}
\newcommand{\nb}[1]{N({#1})}
\newcommand{\neunb}[1]{\mathcal{N}({#1})}
\newcommand{\actnb}[1]{\mathcal{A}({#1})}
\newcommand{\expnb}[1]{\mathcal{E}({#1})}
\newcommand{\rbrwnb}[1]{\widehat{N}({#1})}
\newcommand{\rbrwneunb}[1]{\widehat{\mathcal{N}}({#1})}
\newcommand{\rbrwactnb}[1]{\widehat{\mathcal{A}}({#1})}
\newcommand{\rbrwexpnb}[1]{\widehat{\mathcal{E}}({#1})}
\newcommand{\nbh}[3]{\mathcal{B}^{#1, R}_{#2}\left(#3\right)}
\newcommand{\refCl}[1]{Claim~\ref{#1}}
\newcommand{\refC}[1]{Corollary~\ref{#1}}
\newcommand{\refD}[1]{Definition~\ref{#1}}
\newcommand{\refP}[1]{Proposition~\ref{#1}}
\newcommand{\refL}[1]{Lemma~\ref{#1}}
\newcommand{\refN}[1]{Notation~\ref{#1}}
\newcommand{\refS}[1]{Section~\ref{#1}}
\newcommand{\refT}[1]{Theorem~\ref{#1}}
\definecolor{FigBlue}{HTML}{000fff}
\definecolor{gray1}{rgb}{0.3,0.3,0.3}
\definecolor{gray2}{rgb}{0.45,0.45,0.45}
\definecolor{gray3}{rgb}{0.6,0.6,0.6}
\definecolor{gray4}{rgb}{0.75,0.75,0.75}
\definecolor{gray5}{rgb}{0.9,0.9,0.9}
\title{Local Limit of the Random Degree Constrained Process}
\author{%
Bal\'azs~R\'ath \thanks{Department of Stochastics, Institute of Mathematics, Budapest University of Technology and Economics,
M\H{u}egyetem rkp.\ 3., H-1111 Budapest, Hungary; HUN-REN--BME Stochastics Research Group, Budapest University of Technology and Economics, M\H{u}egyetem rkp.\ 3., H-1111 Budapest, Hungary;
HUN-REN Alfr\'ed R\'enyi Institute of Mathematics, Re\'altanoda utca 13-15, H-1053 Budapest,
Hungary.
Email:~\href{mailto:rathb@math.bme.hu}{\nolinkurl{rathb@math.bme.hu}}, \href{mailto:szokemp@math.bme.hu}{\nolinkurl{szokemp@math.bme.hu}}.}
\and 
M\'arton~Sz\H{o}ke\footnotemark[1]
\and
Lutz Warnke\thanks{Department of Mathematics, University of California San Diego, La Jolla CA 92093, USA.
Email:~\href{mailto:lwarnke@ucsd.edu}{\nolinkurl{lwarnke@ucsd.edu}}}}
\begin{document}

\maketitle

\begin{abstract}
In this paper we show that the random degree constrained process (a time-evolving random graph model with degree constraints) has a local weak limit, provided that the underlying host graphs are high degree almost regular.
We, moreover, identify the limit object as a multi-type branching process, by combining coupling arguments with the analysis of a certain recursive tree process. 
Using a spectral characterization, we also give an asymptotic expansion of the critical time when the giant component emerges in the so-called random $d$-process, resolving a problem of Warnke and Wormald for large~$d$.

\vspace{0.5em}
\noindent\textbf{Keywords:} random degree constrained process; local weak limit; PWIT; multi-type branching processes; recursive tree processes; critical time; giant component\\
\textbf{AMS MSC 2020:} 05C80; 60C05; 60J80
\end{abstract}

\section{Introduction}
In this paper we determine the local weak limit of a time-evolving random graph model with degree constraints. 
The model we study is the random degree constrained process (RDCP): starting with the empty graph, the edges (of some given host graph) are added one-by-one in a random order, but a new edge is only added if it does not violate the degree constraints at its end-vertices. 
This model is a generalization of the random \mbox{$d$-process} (in which each degree constraint is a fixed integer~$d$) that has been widely studied since the 1980s in many areas of science, including chemistry and physics~\cite{BQ87, BK, KQ}. 
Since then many interesting combinatorial properties of the random \mbox{$d$-process} have been investigated, including `static' properties of the final graph such as the number of edges~\cite{RW92}, connectedness~\cite{RW02}, the distribution of short cycles~\cite{RW97}, and Hamiltonicity~\cite{TWZ}, as well as `dynamic' properties of the time-evolving \mbox{$d$-process} graph such as the degree distribution~\cite{H24a,RW23,W95,W99a} and the `giant' component phase transition~\cite{EW,Se13,WW,WW2}. 

Local weak convergence describes the limiting local structure of the neighborhood of a typical vertex (often a tree structure). 
This is a useful concept, because numerous graph properties and parameters can be investigated using local convergence, including the matching number~\cite{BLS13}, maximum weight independent sets and matchings~\cite{GNS}, the number of spanning trees~\cite{L05}, weighted spanning subgraphs~\cite{Sa13}, the spectrum~\cite{BES, BL, BLS11}, the rank of the adjacency matrix~\cite{BLS11}, and the densest subgraph~\cite{AS16}.
Here a key conceptual point is that the limiting objects (often branching processes) are much simpler to study than the original model. 

For random graphs, most of the existing results establish local weak convergence for `static' models where there is no time-evolution, 
like Erd\H os--R\'enyi random graphs, random graphs with a given degree sequence, and inhomogeneous random graphs (see~\cite{BBSY,H24b} and Section~\ref{subsec:related}). 
Far fewer results establish local weak convergence for `dynamic' models where there is time-evolution, like in the random degree constrained process we study in this paper. 
In fact, most of these dynamic results exploit some `special' property of the time-evolving model that leads to a well-behaved limit object, for example that their evolution is closely linked to a P\'olya-urn process~\cite{BBCS}, 
that they have uniform distribution after conditioning on the degree sequence~\cite{JW,MHZ}, or that they are inhomogeneous random graphs after conditioning on certain information~\cite{CRY,RT}. 
The fact that in this paper we can determine the limit object of the `dynamic' random degree constrained process without exploiting such a special property\footnote{Such a special property does not seem to be available for the RDCP, which we think of as a genuine time-evolving model. As some partial evidence towards this, we remark that by~\cite{MSW} we know that the final graph of the RDCP is not contiguous to the configuration model with the same degree sequence (provided that the degree constraints are not nearly regular).} 
is one of our main conceptual~contributions.

Our main results for the random degree constrained process (RDCP) are as follows:
{\vspace{-0.25em}\begin{enumerate}[label={(\arabic*)}]
\itemsep 0.125em \partopsep=0pt \parsep 0em 
\item \textbf{Local Weak Convergence:} 
We determine the local weak limit of the random degree constrained process on regular host graphs~$G^n$ with degree~$r_n \to \infty$; see Theorem~\ref{thm:local-limit1}. 
The local limit object is universal in the sense that it does not depend on the sequence of underlying host graphs~$G^n$ (and we can even relax the regularity condition slightly by only assuming that the sequence of host graphs is \emph{high degree almost regular}, see Definition~\ref{def:hdar}).
\item \textbf{Identification of Limit Object:} 
We explicitly identify the local weak limit of the random degree constrained process in three different ways:
(i)~in terms of the RDCP on the Poisson weighted infinite tree (PWIT) (cf.\ Section~\ref{subsec:rdcp_on_pwit}), (ii)~in terms of a so-called recursive tree process (RTP), i.e., a special case of the framework introduced in \cite{AB} (cf.\ Section~\ref{sss:additional_RTP}), and (iii)~as a multi-type branching process (MTBP) (cf.\ Section~\ref{sss:additional_MTBP}).
In our case the response variable of a vertex of the RTP as well as the type of a vertex in the MTBP is a positive real number (the so-called phantom saturation time of a~vertex, see Definition~\ref{def:phantom-sat}). 
\item \textbf{Critical Time:} 
To demonstrate the applicability of our local limit results, we consider the random \mbox{$d$-process}, i.e., the RDCP on the complete $n$-vertex host graph~$G^n=K_n$, where each vertex~$v$ has the same degree constraint~$d(v)=d \ge 3$. 
Warnke and Wormald~\cite{WW} showed that there exists a critical time~${t_c=t_c(d)}$ such that a `giant' component of size $\Theta(n)$ emerges after adding around~$t_c n$ edges. 
Using the MTBP characterization of the local limit, we give a new spectral characterization of the critical time; see Theorem~\ref{thm:PF-eigenvalue}. 
From this we derive the asymptotic expansion ${t_c(d) = 1/2 + (1+o(1))/(\mathrm{e} \cdot d!)}$ of the critical time (as~${d \to \infty}$), and deduce that~${t_c(d)>1/2}$ for sufficiently large~$d$, resolving a problem of Warnke and Wormald~\cite[Section~6]{WW} for large~$d$. Actually, our asymptotic expansion of the critical time is more generally valid for the RDCP with high degree constraints; see Theorem~\ref{thm:tc-asymptotics} and~\eqref{eq:discrete-tc:dprocess}. 
\vspace{-0.25em}\end{enumerate}}

The remainder of this introduction is organized as follows. 
In Section~\ref{subsec:setup} we formally define the continuous-time random degree constrained process (RDCP) on high degree almost regular host graphs.
In Section~\ref{subsec:rdcp_on_pwit} we define the limit object that arises in the local weak limit: the RDCP on the PWIT.
In Section~\ref{subsec:local_limit} we state our first main result, \refT{thm:local-limit1}, about the local weak convergence of the RDCP graph.
In Section~\ref{subsec:discrete} we discuss how this result transfers to the discrete-time variant of the RDCP. 
In Section~\ref{subsec:intro_tc} we state our other main result, Theorem~\ref{thm:tc-asymptotics} about the asymptotic expansion of the critical time~$t_c$. 
In Section~\ref{subsec:auxiliary_results} we give two further equivalent descriptions of the limit object: one that uses recursive tree processes (RTPs) (cf.\ Section~\ref{sss:additional_RTP}) and another one that uses multi-type branching processes (MTBPs) (cf.\ Section~\ref{sss:additional_MTBP}).
In Section~\ref{sss:additional_spectral} we provide a new spectral characterization of the critical time~$t_c$ in terms of the branching operator of the MTBP representation that allows us to prove our main results about the asymptotic expansion of~$t_c$.
In Section~\ref{subsec:related} we briefly discuss some related work on the topics of RDCP, local limits and the PWIT. Finally, in Section~\ref{subsec:outline_of_rest_of_paper} we outline the structure of the rest of this paper.

\subsection{Random degree constrained process (RDCP)}\label{subsec:setup}
In this section we formally define the random degree constrained process (RDCP) on a host graph~$G^n$, which requires some setup.
 
In previous work the host graph~$G^n$ was mostly\footnote{For example, the papers~\cite{H24a,MSW,RW92,RW97,RW02,RW23,Se13,WW} study the RDCP with host graph~${G^n=K_n}$. We remark that degree constrained percolation models have also been investigated on some infinite host graphs, including the square lattice~\cite{LSSST, SSS, SS21}, $m$-ary trees~\cite{LSSST}, and the hypercubic lattice~\cite{APS24,HL,HS}.} the complete $n$-vertex graph ${G^n=K_n}$. 
In this paper we allow for sparser host graphs~$G^n$ that are almost~$r_n$ regular in the sense of the following definition from~\cite[Definition~1.3]{NP}; note that we assume~$r_n \to \infty$ as~$n \to \infty$ (and that we do not assume~$G^n$ to have~$n$~vertices). 
\begin{definition}[High degree almost regularity]\label{def:hdar}
A sequence of finite, simple, connected graphs $\{\,G^n\,\}$ is called \textbf{high degree almost regular} if there exist non-negative sequences\footnote{Note that w.l.o.g.\ we could have replaced $a_n$, $b_n$ and $c_n$ by $\max\{a_n, \: b_n, \: c_n\}$ in Definition~\ref{def:hdar}. We chose to denote $a_n$, $b_n$ and $c_n$ with separate symbols, because this makes it easier to track which condition is used in calculations.}
$r_n$, $a_n$, $b_n$ and $c_n$ such that
$r_n \to \infty$ and $\max\{a_n, \: b_n, \: c_n\} \to 0$ as~$n \to \infty$, 
and that, for any sufficiently large $n \in \mathbb{N}$, we have 
{\vspace{-0.25em}\begin{enumerate}[label=(\alph*)]
\itemsep 0.125em \partopsep=0pt \parsep 0em 
\item\label{item:hdar_regular} at least $(1-a_n) \cdot |V(G^n)|$ vertices of $G^n$ have degree in the interval $(1 \pm b_n)\cdot r_n$, and 
\item\label{item:hdar_degree_sum} the sum of degrees in $G^n$ is in the interval $(1 \pm c_n) \cdot r_n \cdot |V(G^n)|$.
\vspace{-0.25em}\end{enumerate}}%
\end{definition}
We call $r_n$ the typical degree of $G^n$.

We next discuss the (random) degree constraints of the vertices of the host graph~$G^n$. 
\begin{assumption}[Degree constraint distribution (mild)]\label{assump:degree_constraint_mild} Let $p_k \geq 0$ for each $k=1,2,\dots$ and let us assume that $\sum_{k=1}^{\infty} p_k =1$. 
Let us denote $\underline{p} = (p_k)_{k=1}^\infty$.
\end{assumption}

For each vertex $v \in V(G^n)$ of the host graph~$G^n$, we independently let $d(v) \sim \underline{p}$ be the random \textbf{degree constraint} of~$v$.
Hence~$p_k$ is approximately the fraction of vertices with degree constraint~$k$. We assume $p_0=0$, because a vertex $v$ with $d(v)=0$ would remain isolated in the graph that we produce (so there would be no point in including it).

Some of our results are proved under a more strict condition on $\underline{p}$:

\begin{assumption}[Degree constraint distribution (strict)]\label{assump:degree_constraint_strict} Let $\Delta \in \mathbb{N}_+$, $2 \leq \Delta <\infty$. Let $p_k \geq 0$ for each $k=2, \dots, \Delta$ and let us assume $p_2+\dots+p_\Delta=1$. Let
\begin{equation} \label{eq:upper-bound-assumption}
\underline{p} = (p_k)_{k=2}^\Delta, \quad \text{i.e.,} \quad
p_k = 0 \quad \text{for} \quad k=0 \quad \text{and} \quad k=1 \quad \text{and for any} \quad k > \Delta.
\end{equation}
\end{assumption}

Observe that ${\Delta \in \mathbb{N}_+}$ is an upper bound on the degree constraints (and eventually the degrees of the random graph that we produce, cf.\ Definition~\ref{def:rdcp}).

We will prove our local weak convergence result and the various equivalent characterizations (RTP, MTBP) of the limit object under the mild Assumption~\ref{assump:degree_constraint_mild}.
We use the strict Assumption~\ref{assump:degree_constraint_strict} only to prove our results related to the critical time.

Intuitively, the RDCP starts with the empty graph on the vertex set of the host graph~$G^n$, and then sequentially tries to add the edges of~$G^n$ in random order, only adding those that do not violate any degree constraint. 
For mathematical convenience, in this paper we use the natural continuous-time version of the RDCP defined below (see Section~\ref{subsec:discrete} for a discrete-time variant), where we tacitly assume that $\{\,G^n\,\}$ is a sequence of high degree almost regular graphs with typical degree $r_n$. 
Recall that each vertex~$v \in V(G^n)$ has its own independent degree constraint $d(v) \sim \underline{p}$, where $\underline{p}$ satisfies the mild Assumption~\ref{assump:degree_constraint_mild}.

\begin{definition}[RDCP on a high degree almost regular graph]\label{def:rdcp}
The random degree constrained process $\bigpar{G^n_{\underline{p}}(t)}_{t \geq 0}$ is a time-evolving random graph process on $G^n$, where $t$ is a \textbf{continuous time} parameter and $G^n_{\underline{p}}(0)$ is the empty graph with vertex set~$V(G^n)$. 
For each edge $e \in E(G^n)$, we independently let $X_e \sim \text{EXP}\left(1/r_n\right)$ be the \textbf{activation time} of~$e$. 
As time evolves, we attempt to add edges to the graph one-by-one, in increasing order of their activation times. 
An edge $e=vw$ is added at its activation time $X_e$ if and only if the degrees of vertices $v$ and~$w$ in the RDCP graph $G^n_{\underline{p}}(X_e^-)$ right before time~$X_e$ are strictly less than their degree constraints $d(v)$ and~$d(w)$, respectively.
If no more edges can be added to the graph without violating any degree constraint, we arrive at the \textbf{final graph} of the RDCP on $G^n$, which we denote by $G^n_{\underline{p}}(\infty)$.
\end{definition}

We say that vertex $v$ is \textbf{saturated} in the RDCP on $G^n$ at time $t$ if its degree in $G^n_{\underline{p}}(t)$ is equal to $d(v)$, i.e., no more edges containing vertex~$v$ can be added. 
The saturation time of a vertex is the time when it saturates. 
Note that the final graph can potentially contain unsaturated vertices (if $G^n$ has no edges that connect unsaturated vertices).

\subsection{RDCP on the Poisson weighted infinite tree (PWIT)}\label{subsec:rdcp_on_pwit}

One of our goals is to show that the sequence of graphs $G^n_{\underline{p}}(t)$ (cf.\ Definition~\ref{def:rdcp}) locally converges as $n \to \infty$. 
In this subsection we describe the limit object $G^\infty_{\underline{p}}(t)$ by adapting the random degree constrained process (RDCP) to the Poisson weighted infinite tree (PWIT).

\begin{remark}[Motivation for the definition of PWIT]\label{rem:mot_pwit} In order to understand the local limit of $G^n_{\underline{p}}(t)$ as $n \to \infty$, we want to describe the structure of neighbors, the neighbors of neighbors, etc.\ of a typical vertex $v$ of $G^n$. If $n \gg 1$ then $r_n \gg 1$.
The degree of~$v$ is close to~$r_n$ and the activation times of the edges emanating from~$v$ are i.i.d.\ with $\text{EXP}\left(1/r_n\right)$ distribution, so the first few activation times of edges adjacent to~$v$ can be approximated by the arrival times of a homogeneous Poisson point process (PPP) with unit intensity.
Also note that only these early edges matter in the construction of the RDCP, since the edges that activate after $v$ saturates never get added. 
Also note that if~$w$ is an early neighbor of~$v$ in~$G^n$, then the point process of the first few activation times that we can observe on the set of edges that connect~$w$ to its other neighbors is again approximately a unit intensity PPP, which is approximately independent of the PPP corresponding to the activation times of the early neighbors of~$v$. 
Also note that it follows from $r_n \gg 1$ that the subgraph of $G^n$ spanned by such early edges will look locally like a tree: it is unlikely to see a short cycle of early edges that contains~$v$.
\end{remark}

With the above motivation in mind, let us define the PWIT. We will discuss the history and other applications of the PWIT in Section~\ref{subsec:related}.

\begin{definition}[PWIT]\label{def:pwit}
The \textbf{Poisson weighted infinite tree} (PWIT) is an infinite tree with random edge labels. The vertices are denoted by finite strings of positive integers. If $\ibf$ is such a string and $j$ is a positive integer, then we say that the vertex~$\ibf j$ is a child of vertex~$\ibf$.
The root of the tree is the empty string~$\rt$. For each vertex~$\ibf$ we define a unit intensity Poisson point process (PPP) $\underline{\tau}^{\ibf} = (\tau^{\ibf j})_{j=1}^\infty$ on $[0,\infty)$, and we assume that these PPPs are independent of each other. We say that~$\tau^{\ibf j}$ is the \textbf{label of the edge}~$\{\ibf, \ibf j\}$.
\end{definition}
We may also interpret the label~$\tau^{\ibf j}$ as the activation time of the edge that connects~$\ibf$ and~$\ibf j$. Let us stress that the label of an edge that connects a parent $\ibf$ to a child $\ibf j$ might be bigger than the label of an edge that connects $\ibf j$ to a grandchild $\ibf jl$ of $\ibf$. In other words, we can have $\tau^{\ibf jl}<\tau^{\ibf j}$.
See Figure~\ref{fig:pwit} for an illustration of the PWIT.

\begin{figure}[t]
\centering
\begin{tikzpicture}[font=\small, node distance={1.0cm}, thick, main/.style = {draw, circle}]

\node[main, fill=black, scale=0.3,label=left:{$\scriptstyle{\varnothing}$}] at (-3,-1)(0) {};
\node[main, fill=black, scale=0.3, label=left:{$\scriptstyle{1}$}] at (-6,1) (1) {};
\node[main, fill=black, scale=0.3, label=left:{$\scriptstyle{2}$}] at (-3,1) (2) {};
\node[main, fill=black, scale=0.3, label=left:{$\scriptstyle{3}$}] at (0,1) (3) {};
\node[main, fill=black, scale=0.3, label=left:{$\scriptstyle{\dots}$}] at (1.5,1) (4) {};
\node[main, fill=black, scale=0.3, label=above:{$\scriptstyle{11}$}] at (-7,2) (11) {};
\node[main, fill=black, scale=0.3, label=above:{$\scriptstyle{12}$}] at (-6,2) (12) {};
\node[main, fill=black, scale=0.3, label=above:{$\scriptstyle{13}$}] at (-5,2) (13) {};
\node[main, fill=black, scale=0.3, label=above:{$\scriptstyle{\dots}$}] at (-4.5,2) (14) {};
\node[main, fill=black, scale=0.3, label=above:{$\scriptstyle{21}$}] at (-4,2) (21) {};
\node[main, fill=black, scale=0.3, label=above:{$\scriptstyle{22}$}] at (-3,2) (22) {};
\node[main, fill=black, scale=0.3, label=above:{$\scriptstyle{23}$}] at (-2,2) (23) {};
\node[main, fill=black, scale=0.3, label=above:{$\scriptstyle{\dots}$}] at (-1.5,2) (24) {};
\node[main, fill=black, scale=0.3, label=above:{$\scriptstyle{31}$}] at (-1,2) (31) {};
\node[main, fill=black, scale=0.3, label=above:{$\scriptstyle{32}$}] at (0,2) (32) {};
\node[main, fill=black, scale=0.3, label=above:{$\scriptstyle{33}$}] at (1,2) (33) {};
\node[main, fill=black, scale=0.3, label=above:{$\scriptstyle{\dots}$}] at (1.5,2) (34) {};

\draw[gray5] (0)--(4) node [midway, xshift=-0.02cm,yshift=-0.1cm] {$\scriptstyle{\dots}$};
\draw[gray3] (0)--(3) node [midway, xshift=-0.25cm] {$\scriptstyle{\tau}^{\scriptscriptstyle{3}}$};
\draw[gray1] (0)--(2) node [midway, xshift=-0.2cm] {$\scriptstyle{\tau}^{\scriptscriptstyle{2}}$};
\draw (0)--(1) node [midway, xshift=-0.36cm] {$\scriptstyle{\tau}^{\scriptscriptstyle{1}}$};

\draw[gray5] (1)--(14) node [midway, xshift=0.18cm]
{$\scriptstyle{\dots}$};
\draw[gray3] (1)--(13) node [midway, xshift=-0.12cm,yshift=0.1cm] {$\scriptstyle{\tau}^{\scriptscriptstyle{13}}$};
\draw[gray1] (1)--(12) node [midway, xshift=-0.22cm,yshift=0.1cm] {$\scriptstyle{\tau}^{\scriptscriptstyle{12}}$};
\draw (1)--(11) node [midway, xshift=-0.39cm,yshift=0.1cm] {$\scriptstyle{\tau}^{\scriptscriptstyle{11}}$};

\draw[gray5] (2)--(24) node [midway, xshift=0.18cm]
{$\scriptstyle{\dots}$};
\draw[gray3] (2)--(23) node [midway, xshift=-0.12cm,yshift=0.1cm] {$\scriptstyle{\tau}^{\scriptscriptstyle{23}}$};
\draw[gray1] (2)--(22) node [midway, xshift=-0.22cm,yshift=0.1cm] {$\scriptstyle{\tau}^{\scriptscriptstyle{22}}$};
\draw (2)--(21) node [midway, xshift=-0.39cm,yshift=0.1cm] {$\scriptstyle{\tau}^{\scriptscriptstyle{21}}$};

\draw[gray5] (3)--(34) node [midway, xshift=0.18cm]
{$\scriptstyle{\dots}$};
\draw[gray3] (3)--(33) node [midway, xshift=-0.12cm,yshift=0.1cm] {$\scriptstyle{\tau}^{\scriptscriptstyle{33}}$};
\draw[gray1] (3)--(32) node [midway, xshift=-0.22cm,yshift=0.1cm] {$\scriptstyle{\tau}^{\scriptscriptstyle{32}}$};
\draw (3)--(31) node [midway, xshift=-0.39cm,yshift=0.1cm] {$\scriptstyle{\tau}^{\scriptscriptstyle{31}}$};

\node[main, fill=black, scale=0.3] at (-3,-1)(0) {};
\node[main, fill=black, scale=0.3] at (-6,1) (1) {};
\node[main, fill=black, scale=0.3] at (-3,1) (2) {};
\node[main, fill=black, scale=0.3] at (0,1) (3) {};
\node[main, fill=black, scale=0.3] at (1.5,1) (4) {};
\node[main, fill=black, scale=0.3] at (-7,2) (11) {};
\node[main, fill=black, scale=0.3] at (-6,2) (12) {};
\node[main, fill=black, scale=0.3] at (-5,2) (13) {};
\node[main, fill=black, scale=0.3] at (-4.5,2) (14) {};
\node[main, fill=black, scale=0.3] at (-4,2) (21) {};
\node[main, fill=black, scale=0.3] at (-3,2) (22) {};
\node[main, fill=black, scale=0.3] at (-2,2) (23) {};
\node[main, fill=black, scale=0.3] at (-1.5,2) (24) {};
\node[main, fill=black, scale=0.3] at (-1,2) (31) {};
\node[main, fill=black, scale=0.3] at (0,2) (32) {};
\node[main, fill=black, scale=0.3] at (1,2) (33) {};
\node[main, fill=black, scale=0.3] at (1.5,2) (34) {};
\end{tikzpicture}
\caption{Illustration of the PWIT. Vertices and edge labels are denoted as in \refD{def:pwit}. Color strength of the edges indicates the importance of the given edge in the RDCP on the PWIT: we have $\tau^{\ibf 1}<\tau^{\ibf 2}<\tau^{\ibf 3}<\dots$, and edges with a bigger label (lighter gray) tend to be less important than edges with a smaller label (darker gray) 
since edges with later activation times have a smaller chance of being added in the RDCP than edges with earlier activation times.}
\label{fig:pwit}
\end{figure}

\begin{definition}[RDCP on PWIT]\label{def:rdcp-on-pwit}
We decorate the vertices of the PWIT with i.i.d.\ random degree constraints with distribution~$\underline{p}$ satisfying the mild Assumption~\ref{assump:degree_constraint_mild}, writing~$d(\ibf)$ for the degree constraint of vertex~$\ibf$. 
By identifying the activation time of an edge with its label (see \refD{def:pwit}), the usual definition of the RDCP time evolution described in \refD{def:rdcp} then carries over to the PWIT: we attempt to add the edges in increasing order of their labels, only adding them if this does not violate any degree constraint. 
For fixed $t \in \mathbb{R}_+ \cup \{+\infty\}$ let $G^\infty_{\underline{p}}(t)$ denote the connected component of the root~$\rt$ in the subgraph of the PWIT that consists of the edges that got added in the RDCP by time~$t$.
Hence $G^\infty_{\underline{p}}(\infty)$ denotes the connected component of the root in the final graph of the RDCP on the PWIT.
\end{definition}

As before, we say that vertex $\ibf$ \textbf{saturates} when its degree reaches $d(\ibf)$, so that no more edges containing vertex~$\ibf$ can be added. 
Since each vertex~$\ibf$ of the PWIT has its own independent PPP~$\underline{\tau}^{\ibf}$ (the points are the labels of the edges that connect~$\ibf$ to its children) and since the PWIT has infinitely many vertices, the set of all edge labels is dense in~$\mathbb{R}_+$. 
In particular, the instructions are not well-ordered chronologically.
Therefore, it is not obvious that the RDCP on the PWIT is well-defined: we clarify this in the next claim.
\begin{claim}[RDCP on PWIT is well-defined]\label{cl:rdcp-pwit-welldefined} Let
$\underline{p}$ satisfy the mild Assumption~\ref{assump:degree_constraint_mild}.
The algorithm described in \refD{def:rdcp-on-pwit} almost surely determines the random rooted tree~$G^\infty_{\underline{p}}(t)$ for any~$t \in \mathbb{R}_+ \cup \{+\infty\}$.
\end{claim}

We will prove Claim~\ref{cl:rdcp-pwit-welldefined} in \refS{subsec:properties_rbrw_tree}, but let us sketch the gist of the idea here. 

\begin{remark}[Monotone decreasing labels and causality]\label{rem:mon_dec_causality}
It is enough to show that we only need to explore an almost surely finite subtree of the PWIT in order to determine whether an edge~$e$ of the PWIT can be added when it activates. If we condition on the value of the activation time of~$e$ (say it is equal to~$t$), then the two subtrees of the PWIT that we obtain by erasing this edge are actually two independent copies of the PWIT (rooted at the two endpoints of~$e$): let us focus on the left subtree. The edge set of this left PWIT can be partitioned into generations according to the length of the (unique) path that connects an edge to~$e$.
Each edge has a unique parent in the previous generation. 
In order to decide whether we can add~$e$ at its activation time~$t$, it is enough to focus on those edges of the first generation whose activation time is less than~$t$, since only these edges can potentially have an effect on our decision. Let us call these edges the crucial edges of the first generation.
We will also determine whether we can add these crucial edges at their respective activation times (since given this information and the degree constraint of the left endpoint of~$e$, we can decide whether or not the left endpoint is saturated when we try to add~$e$ at time~$t$). In order to decide about a first generation crucial edge, we only need to look at those children of it whose activation times are smaller than the activation time of the parent edge. Let us call these edges the crucial edges of the second generation, etc. Observe that if we look at the unique path from~$e$ to a crucial edge, then the sequence of activation times of the edges of this path are monotone decreasing. Using this, one can check that the expected number of crucial edges in the~$r^{\text{th}}$ generation is~${t^r/r!}$ (where the~$r!$ in the denominator accounts for the fact that only one out of the $r!$ possible rearrangements of the~$r$ edge labels of the path has the monotone decreasing property), and thus the expected number of crucial edges in the left PWIT is~$ \sum_{r=0}^{\infty} t^r/r!=\mathrm{e}^t$, thus the subtree of crucial edges that we need to explore is almost surely finite.
\end{remark}

Let us note that a similar localization idea (involving paths with decreasing activation times) appeared independently in~\cite[Section~2]{APS24} for a degree constrained percolation model on an infinite~lattice.

\subsection{Local weak convergence}\label{subsec:local_limit}
One main result of this paper is that the random degree constrained process (RDCP) has a local weak limit. 
Let us briefly recall the notion of the local weak convergence of random graphs that we shall use.

\begin{definition}[Local weak convergence in probability]\label{def:local:weak:conv}
Let $\{\,\mathcal{G}^n\,\}$ be a sequence of random graphs. Let $V(\mathcal{G}^n)$ denote the vertex set of~$\mathcal{G}^n$. 
Let $\mathcal{B}^{n,R}(w)$ denote the $R$-neighborhood in $\mathcal{G}^n$ of a vertex $w \in V(\mathcal{G}^n)$ with respect to graph distance, viewed as a graph rooted at~$w$. 
Then we say that $\mathcal{G}^n$ \textbf{converges locally in probability} to the random rooted graph~${(\mathcal{G},u)}$, if for every rooted graph $(H,v)$ and every integer $R \ge 1$ we have
\begin{equation}\label{eq:local-weak-in-prob}
\frac{1}{|V(\mathcal{G}^n)|} \sum \limits_{w \in V(\mathcal{G}^n)} \indic{\mathcal{B}^{n,R}(w) \simeq (H,v)} \stackrel{\prob}{\longrightarrow} \prob\bigpar{\mathcal{B}^R(u) \simeq (H,v) } \quad \text{as } n \to \infty,
\end{equation}
where $\mathcal{B}^R(u)$ is the $R$-neighborhood of the root $u$ in $\mathcal{G}$, and the relation $\simeq$ denotes the root-preserving isomorphism of rooted graphs. 
\end{definition}

\begin{remark}[Local weak convergence in distribution]
Sometimes a slightly weaker convergence notion than~\eqref{eq:local-weak-in-prob} is studied (cf.~the discussion in~\cite[Remarks~2.12-2.13]{H24b}). 
Indeed, if we let $u^n$ denote a vertex that is independent of $\mathcal{G}^n$ and uniformly distributed in $V(\mathcal{G}^n)$, then~\eqref{eq:local-weak-in-prob} implies
\begin{equation}\label{eq:local-weak-dist}
\prob \left(\mathcal{B}^{n,R}(u^n) \simeq (H,v) \right) \longrightarrow \prob \left(\mathcal{B}^R(u) \simeq (H,v) \right) \quad \text{ as } n \to \infty
\end{equation}
for every rooted graph $(H,v)$, which is also called \emph{convergence in distribution in the local weak sense}; see~\cite[Definition~2.11]{CRY} or~\cite[Definition~2.11]{H24b}.
\end{remark}

We are now ready to state out first main result: 
in concrete words, Theorem~\ref{thm:local-limit1} says that the local weak limit of the RDCP on high degree almost regular host graphs~$G^n$ (which includes complete graphs~$K_n$, complete bipartite graphs~$K_{n,n}$, and also $r_n$-regular graphs with degree~$r_n\to \infty$ as~$n \to \infty$, just to name a~few) is the RDCP on the PWIT.

\begin{theorem}[Local weak limit of RDCP]\label{thm:local-limit1}
Let~$\{\,G^n\,\}$ be a sequence of high degree almost regular graphs. Let $\underline{p}$ satisfy the mild Assumption~\ref{assump:degree_constraint_mild}. Let us fix~$\hat{t} \in \mathbb{R}_+ \cup \{+\infty\}$.
Then
\begin{equation}\label{eq:local-limit}
G^n_{\underline{p}}(\hat{t}) \longrightarrow \Bigpar{G^\infty_{\underline{p}}(\hat{t}), \rt } 
\quad \text{locally in probability as } n \to \infty,
\end{equation}
where $\bigpar{G^\infty_{\underline{p}}(\hat{t}), \rt }$ was introduced in Definition~\ref{def:rdcp-on-pwit}.
\end{theorem}

Note that the local weak limit is universal in the sense that the limit object $G^\infty_{\underline{p}}(\hat{t})$ does not depend on the underlying host graphs~$G^n$ (whereas different host graphs can potentially have very different global properties). 
Notably, in this paper we give two further equivalent characterizations of the local weak limit object $G^\infty_{\underline{p}}(\hat{t})$: 
(i)~in terms of a recursive tree process (RTP) and
(ii)~as a multi-type branching process (MTBP); 
see Sections~\ref{sss:additional_RTP} and~\ref{sss:additional_MTBP} for the details. 

We will prove \refT{thm:local-limit1} in \refS{sec:section_local_limit} using a coupling argument that makes the heuristics outlined in Remark~\ref{rem:mot_pwit} rigorous. We sketch some further proof ideas (which involve a \emph{two-phase graph exploration}) in Remark~\ref{rem:heu_two_phase}. A more 
detailed and self-contained heuristic explanation as to why the local weak limit of the RDCP exists at any time $\hat{t}$ can be found in Section~1.6 of an earlier version~\cite{RSzWArXivV1} of our paper.

\subsection{Discrete-time RDCP}\label{subsec:discrete}
The random degree constrained process (RDCP) is usually studied on the complete~$n$-vertex graph $G^n=K_n$ and the papers~\cite{H24a,MSW,RW92,RW97,RW02,RW23,Se13,WW} define the RDCP process using a slightly different time parametrization: edges are added one by one at discrete time steps, see the definition below.
As the reader can guess, our main local weak convergence result Theorem~\ref{thm:local-limit1} carries over to the discrete-time version of the process (see Corollary~\ref{cor:local-limit-discrete} below), but this requires some extra~setup. 

\begin{definition}[RDCP in discrete time]\label{def:rdcp_discrete_time}
We define the discrete-time random degree constrained process $\bigpar{G^{n,k}_{\underline{p}}}_{k \ge 0}$, where $k \in \mathbb{N}$ is a step parameter and~$G^{n,0}_{\underline{p}}$ is the empty graph with vertex set~$V(G^n)$. 
For $k \in \mathbb{N}_+$ we then obtain $G^{n,k}_{\underline{p}}$ by adding exactly one new edge to $G^{n,k-1}_{\underline{p}}$. 
This new edge is chosen uniformly from the set of edges not in~$G^{n,k-1}_{\underline{p}}$, for which both end-vertices are unsaturated in $G^{n,k-1}_{\underline{p}}$. 
If no more edges can be added to the graph without violating any degree constraint, we arrive at the final graph. 
We denote by $M_n(\underline{p})$ the random number of edges in the final graph $G^{n,M_n(\underline{p})}_{\underline{p}}$. 
For mathematical convenience, we also define $G^{n,k}_{\underline{p}}:=G^{n,M_n(\underline{p})}_{\underline{p}}$ for $k > M_n(\underline{p})$. 
\end{definition}
The following claim requires no further justification.
\begin{claim}[Correspondence between RDCP in discrete and continuous time]\label{cl:final_graphs}
Note that $G^{n,k}_{\underline{p}}$ has exactly $k$~edges when~$k \leq M_n(\underline{p})$, 
Furthermore, $G^{n,k}_{\underline{p}}$ has the same distribution as $G^n_{\underline{p}}(\hat{\tau}_{n,k})$, where $\hat{\tau}_{n,k}$ denotes the time when the $k^{\text{th}}$ edge is added in the (c.\`a.d.l.\`a.g.\ version of) the continuous-time process $\bigpar{G^n_{\underline{p}}(t)}_{t \geq 0}$. 
In particular, the continuous- and discrete-time final graphs $G^n_{\underline{p}}(\infty)$ and $G^{n,M_n(\underline{p})}_{\underline{p}}$ have the same~distribution. 
\end{claim}

\begin{remark}[Time-change]\label{rem:time_change}
To translate results from the continuous-time RDCP to the discrete-time RDCP, 
by Claim~\ref{cl:final_graphs} it suffices to (i)~show concentration of the number of edges in~$G^n_{\underline{p}}(t)$, 
and (ii)~determine which time~$t=t(s)$ ensures that~$G^n_{\underline{p}}(t)$ has approximately~$sn$ edges, i.e., an average degree approximately equal to~$2s$. 
By the local weak convergence result of Theorem~\ref{thm:local-limit1}, this effectively reduces to studying the expected number of neighbors of the root in the random tree $G^\infty_{\underline{p}}(t)$, 
which is the content of the auxiliary result \refCl{cl:F(t)-inverse} below. 
\end{remark}

In the following~$D$ denotes a random variable with distribution $\underline{p}$ satisfying the mild Assumption~\ref{assump:degree_constraint_mild}. Let $\ev(D)=\sum_{k=1}^{\infty} k \cdot p_k \in \mathbb{R}_+ \cup \{+\infty\}$.

\begin{definition}[Mean number of neighbors of the root]\label{def:F(t)}
Given $ t \in \mathbb{R}_+ \cup \{+\infty\}$, let $F_{\underline{p}}(t)$ denote the expected number of neighbors of the root in $G^\infty_{\underline{p}}(t)$ (cf.\ Definition~\ref{def:rdcp-on-pwit}).
\end{definition}
\begin{claim}[Inverse of $F_{\underline{p}}(t)$]\label{cl:F(t)-inverse}
Let $\underline{p}$ satisfy the mild Assumption~\ref{assump:degree_constraint_mild}. The function $F_{\underline{p}}$ has an inverse function $F_{\underline{p}}^{-1}\colon \,[0, \ev(D)) \to \mathbb{R}_+$, which is continuous and strictly increasing.
Furthermore, $F_{\underline{p}}^{-1}(s) \to \infty$ as~$s \nearrow \ev(D)$. 
\end{claim}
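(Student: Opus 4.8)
The plan is to reduce the statement to four elementary properties of $F_{\underline{p}}$, working throughout with the description of the limit object $G^\infty_{\underline{p}}(t)$ as the RDCP run up to time $t$ on the Poisson weighted infinite tree (PWIT); see \refS{sec:additional:PWIT}. Under this description $F_{\underline{p}}(t) = \e[\deg_{\rt}(t)]$, where $\deg_{\rt}(t) \in \{0,1,\dots,\Delta\}$ denotes the degree of the root $\rt$ at time $t$, which is non-decreasing in $t$ (edges are only ever added) and is bounded above by the root's degree constraint $d(\rt)$, which has law $\underline{p}$. It suffices to establish that (a) $F_{\underline{p}}(0) = 0$; (b) $F_{\underline{p}}$ is continuous on $\mathbb{R}_+$; (c) $F_{\underline{p}}$ is strictly increasing on $\mathbb{R}_+$; and (d) $F_{\underline{p}}(t) < \e(D)$ for every $t \in \mathbb{R}_+$, while $F_{\underline{p}}(t) \to \e(D)$ as $t \to \infty$. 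Indeed, (a)--(d) make $F_{\underline{p}}$ a continuous strictly increasing bijection from $[0,\infty)$ onto $[0,\e(D))$, which therefore has a continuous strictly increasing inverse $F_{\underline{p}}^{-1}\colon [0,\e(D)) \to \mathbb{R}_+$; moreover $F_{\underline{p}}^{-1}(s) \to \infty$ as $s \nearrow \e(D)$, since if $F_{\underline{p}}^{-1}(s) \le L < \infty$ held for all $s$ close to $\e(D)$ then $s = F_{\underline{p}}(F_{\underline{p}}^{-1}(s)) \le F_{\underline{p}}(L) < \e(D)$ for all such $s$, which is absurd.

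Properties (a)--(c) and the first half of (d) are routine. For (a), $G^\infty_{\underline{p}}(0)$ is the empty graph. For (b), $\deg_{\rt}(\cdot)$ is right-continuous with left limits, and its jumps occur only at the activation times of edges incident to the root; since each such time is an atomless random variable, for any fixed $t$ the path $\deg_{\rt}(\cdot)$ a.s.\ has no jump at $t$, and as $0 \le \deg_{\rt}(\cdot) \le \Delta$ bounded convergence gives continuity of $F_{\underline{p}}$. For the strict bound in (d): on the positive-probability event $A$ that the root has no incident edge activated by time $t$ we have $\deg_{\rt}(t) = 0$ while $d(\rt) \ge 2$, so $\e(D) - F_{\underline{p}}(t) = \e[d(\rt) - \deg_{\rt}(t)] \ge 2\,\p(A) > 0$. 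For (c), fix $0 \le t_1 < t_2$ and consider the positive-probability event that exactly one edge incident to the root is activated by time $t_2$, that its activation time lies in $(t_1, t_2]$, and that the child $c$ reached by that edge has no edge to one of its own children activated by time $t_2$; on this event $\deg_{\rt}(t_1) = 0$, whereas just before the activation time of $\rt c$ both $\rt$ and $c$ have degree $0$, so that edge is added and $\deg_{\rt}(t_2) \ge 1$. As $\deg_{\rt}(t_2) \ge \deg_{\rt}(t_1)$ always, this gives $F_{\underline{p}}(t_2) > F_{\underline{p}}(t_1)$.

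The crux is the second half of (d), that $F_{\underline{p}}(t) \to \e(D)$. By bounded convergence it suffices to show that the root a.s.\ becomes saturated at some finite time (so that $\deg_{\rt}(t) \to d(\rt)$ a.s.), and I would prove this by a self-consistency argument. Call a child $c$ of the root \emph{good} if $c$ is unsaturated just before the activation time $\xi_c$ of the edge $\rt c$; since $\rt c$ is the only edge joining the sub-PWIT rooted at $c$ to the rest of the tree, ``$c$ is good'' depends only on $(\xi_c, \text{sub-PWIT at } c)$ and in fact equals $\{\xi_c < S_c\}$, where $S_c \in (0,\infty]$ is the saturation time of $c$ in the RDCP run on its own sub-PWIT. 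Note that only good children can receive an edge from the root, and when the $j$-th good child (in increasing order of $\xi_c$) is reached the root still has degree $j-1$; hence if the root has at least $d(\rt)$ good children, it adds the edges to the first $d(\rt)$ of them and saturates at the finite time when the last of those edges is added. By the recursive structure of the PWIT, the saturation times $(S_c)_c$ are i.i.d.\ copies of the saturation time $S_{\rt}$ of the root, and are independent of the activation times. Now suppose $q := \p(S_{\rt} = \infty) > 0$: since the root has infinitely many children and the events $\{S_c = \infty\}$ are i.i.d.\ of probability $q$, a.s.\ infinitely many of them occur, and every such child $c$ is good (as $\xi_c < \infty = S_c$); thus the root a.s.\ has at least $d(\rt) \le \Delta$ good children, and therefore a.s.\ saturates at a finite time, i.e.\ $\p(S_{\rt} = \infty) = 0$, contradicting $q > 0$. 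Hence $\p(S_{\rt} = \infty) = 0$, so the root a.s.\ saturates at a finite time and $F_{\underline{p}}(t) \to \e[d(\rt)] = \e(D)$.

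The main obstacle is precisely this last step: showing that the root of the limiting tree a.s.\ saturates, equivalently that $\sup_{t} F_{\underline{p}}(t) = \e(D)$ is approached and not attained strictly below $\e(D)$. The self-consistency argument sketched above has the advantage of avoiding any direct analysis of the (genuinely non-monotone) RDCP dynamics; alternatively one could hope to derive the same fact from a quantitative bound on the number of unsaturated vertices in the final RDCP graph together with the $\hat t = \infty$ case of \refT{thm:local-limit1}, but the PWIT argument given here is self-contained.
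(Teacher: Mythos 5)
Your proof is correct and takes a genuinely different route from the paper's. The paper derives the explicit formula $F_{\underline{p}}(t) = \int_0^t (\lambda'(s))^2\,\mathrm{d}s$ (\refL{lem:F(t)-alternative}), from which continuity, strict monotonicity and $F_{\underline{p}}(0)=0$ are immediate (since $\lambda'>0$ everywhere), and evaluates $\lim_{t\to\infty} F_{\underline{p}}(t) = \e(D)$ via the branching-operator formula \eqref{eq:F(t)-def}. You instead stay entirely within the PWIT picture: (a), (c) and the strict bound in~(d) follow from elementary positive-probability events, and (b) from a no-atoms/bounded-convergence argument. The genuinely new ingredient is your self-consistency argument for the limit in~(d): the PWIT's self-similarity makes the children's phantom saturation times $(S_c)_c$ i.i.d.\ copies of $S_{\rt}$, independent of the activation times, so that if $\p(S_{\rt}=\infty)>0$ then by the second Borel--Cantelli lemma the root a.s.\ sees infinitely many good children, hence (as $d(\rt)\le\Delta<\infty$) a.s.\ saturates at a finite time, a contradiction. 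This gives an ODE-free proof of the almost-sure finiteness part of \refCl{cl:phantom-sat-finite}, which the paper instead obtains from \refL{lem:unique_rde} and \refCl{cl:lambda-prop} by showing $\lambda'(t)\to 0$. What the paper's approach buys is a quantitative ODE representation of $F_{\underline{p}}$ and $\lambda$ that is reused heavily in \refS{sec:t_crit-asymp}; what yours buys is a shorter, purely probabilistic and self-contained proof of this particular claim.
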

We will prove Claim~\ref{cl:F(t)-inverse} in Section~\ref{subsec:bopint}.

We are now ready to formally state that our main local weak convergence result Theorem~\ref{thm:local-limit1} carries over to the discrete-time version of 
the RDCP, including the final graph (cf.~the~$\hat{t}=+\infty$ case of~Theorem~\ref{thm:local-limit1}). 
\begin{corollary}[Local weak convergence of RDCP in discrete time]\label{cor:local-limit-discrete}
Given~${s \in \left[0, \frac{\ev(D)}{2} \right)}$, assume that~$k_n(s) \in \mathbb{N}$ satisfies $\lim_{n \to \infty} k_n(s)/|V(G^n)|=s$. Let $\underline{p}$ satisfy the mild Assumption~\ref{assump:degree_constraint_mild}.
Then the discrete-time random degree constrained process $G_{\underline{p}}^{n, k_n(s) }$ on $G^n$ after~$k_n(s)$ steps
converges locally in probability to a random rooted tree $\bigpar{G^\infty_{\underline{p}}\bigpar{F_{\underline{p}}^{-1}(2s)}, \rt }$ as $n \to \infty$, i.e., 
\begin{equation*}
G_{\underline{p}}^{n, k_n(s) } \longrightarrow \Bigpar{G^\infty_{\underline{p}}\bigpar{F_{\underline{p}}^{-1}(2s)}, \rt } 
\quad \text{locally in probability as } n \to \infty,
\end{equation*}
where $\bigpar{G^\infty_{\underline{p}}(\hat{t}), \rt}$ is the same as in Theorem~\ref{thm:local-limit1}. 
Furthermore, the sequence of final graphs $G^{n,M_n(\underline{p})}_{\underline{p}}$ converges locally in probability
to $\bigpar{G^\infty_{\underline{p}}(\infty), \rt}$.
\end{corollary}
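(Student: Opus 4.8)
The plan is to reduce the statement to the continuous-time result Theorem~\ref{thm:local-limit1} via the distributional identity $G^{n,k}_{\underline{p}}\stackrel{d}{=}G^n_{\underline{p}}(\hat{\tau}_{n,k})$ from Remark~\ref{remark_final_graphs} (valid for each fixed $k$). Since the empirical counts appearing in~\eqref{eq:local-weak-in-prob} are measurable functions of the unlabelled graph, equality in distribution of two random graph sequences transfers local convergence in probability from one to the other. Hence the ``furthermore'' assertion is immediate: $G^{n,M_n(\underline{p})}_{\underline{p}}$ and $G^n_{\underline{p}}(\infty)$ have the same distribution, and $G^n_{\underline{p}}(\infty)\to\bigpar{G^\infty_{\underline{p}}(\infty),\rt}$ is the $\hat{t}=\infty$ case of Theorem~\ref{thm:local-limit1}. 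For the first assertion it then suffices, writing $t^\ast:=F_{\underline{p}}^{-1}(2s)\in\mathbb{R}_+$ (well defined since $2s\in[0,\e(D))$ by Claim~\ref{cl:F(t)-inverse}), to show that $G^n_{\underline{p}}(\hat{\tau}_{n,k_n(s)})$ converges locally in probability to $\bigpar{G^\infty_{\underline{p}}(t^\ast),\rt}$.

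The first ingredient is concentration of the edge count: for each fixed $t\in\mathbb{R}_+$ we have $|E(G^n_{\underline{p}}(t))|/|V(G^n)|\to F_{\underline{p}}(t)/2$ in probability. Indeed every vertex has degree at most its degree constraint, hence at most $\Delta$ by~\eqref{eq:upper-bound-assumption}, so all $1$-neighbourhood types have at most $1+\Delta$ vertices and there are only finitely many of them; applying Theorem~\ref{thm:local-limit1} with $R=1$ to each such type shows that the empirical distribution of $1$-neighbourhood types of $G^n_{\underline{p}}(t)$ converges in probability to that of $G^\infty_{\underline{p}}(t)$, and therefore the empirical mean of the bounded functional ``root degree'' converges, i.e.\ $\tfrac1{|V(G^n)|}\sum_{v}\deg_{G^n_{\underline{p}}(t)}(v)=\tfrac{2|E(G^n_{\underline{p}}(t))|}{|V(G^n)|}\to\e[\deg_{G^\infty_{\underline{p}}(t)}(\rt)]=F_{\underline{p}}(t)$ in probability.

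The second ingredient localizes the random time $\hat{\tau}_{n,k_n(s)}$. Since $F_{\underline{p}}$ is continuous and strictly increasing with $F_{\underline{p}}(t^\ast)=2s$ (Claim~\ref{cl:F(t)-inverse}), for any small $\epsilon>0$ we have $F_{\underline{p}}(t^\ast-\epsilon)<2s<F_{\underline{p}}(t^\ast+\epsilon)$, the lower bound being vacuous when $t^\ast=0$. Combining the edge-count concentration at the two deterministic times $t^\ast\pm\epsilon$ with the hypothesis $k_n(s)/|V(G^n)|\to s$ and the elementary identity $\{\hat{\tau}_{n,k}\le t\}=\{|E(G^n_{\underline{p}}(t))|\ge k\}$ (valid because the RDCP only ever adds edges) then yields $\hat{\tau}_{n,k_n(s)}\in[t^\ast-\epsilon,\,t^\ast+\epsilon]$ with high probability.

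The one subtlety is that $Y_n(t):=\tfrac1{|V(G^n)|}\sum_{w}\indic{\mathcal{B}^{n,R}_{G^n_{\underline{p}}(t)}(w)\simeq(H,v)}$ is \emph{not} monotone in $t$, so one cannot directly substitute the random time $\hat{\tau}_{n,k_n(s)}$ into Theorem~\ref{thm:local-limit1}; this is again handled using the bounded maximum degree. For $t'\le t''$ we have $G^n_{\underline{p}}(t')\subseteq G^n_{\underline{p}}(t'')$ with both graphs of maximum degree $\le\Delta$, so adding a single edge changes the $R$-neighbourhood isomorphism type of at most $C=C(R,\Delta)$ vertices (those within distance $R$ of an endpoint), giving the Lipschitz bound $|Y_n(t'')-Y_n(t')|\le C\bigl(|E(G^n_{\underline{p}}(t''))|-|E(G^n_{\underline{p}}(t'))|\bigr)/|V(G^n)|$; together with monotonicity of the edge count this also controls $\sup_{t\in[t^\ast-\epsilon,t^\ast+\epsilon]}|Y_n(t)-Y_n(t^\ast)|$ by the same quantity evaluated at $t'=t^\ast-\epsilon$, $t''=t^\ast+\epsilon$, which on the high-probability events above is at most $C\bigl(F_{\underline{p}}(t^\ast+\epsilon)-F_{\underline{p}}(t^\ast-\epsilon)\bigr)/2+o(1)$. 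Since in addition $Y_n(t^\ast)\to\p\bigpar{\mathcal{B}^{R}_{G^\infty_{\underline{p}}(t^\ast)}(\rt)\simeq(H,v)}$ in probability by Theorem~\ref{thm:local-limit1} at the deterministic time $t^\ast$, letting $n\to\infty$ and then $\epsilon\to0$ (using continuity of $F_{\underline{p}}$ to kill the error term) gives $Y_n(\hat{\tau}_{n,k_n(s)})\to\p\bigpar{\mathcal{B}^{R}_{G^\infty_{\underline{p}}(t^\ast)}(\rt)\simeq(H,v)}$ in probability for every rooted graph $(H,v)$ and every $R\ge1$, which is exactly the asserted local convergence. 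The main obstacle is nothing deep — it is just the careful organization of this iterated $n\to\infty$, $\epsilon\to0$ limit and of the elementary edge-count and ball-stability estimates; all the substantive content is already contained in Theorem~\ref{thm:local-limit1} and Claim~\ref{cl:F(t)-inverse}.
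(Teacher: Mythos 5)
Your proposal is correct and follows essentially the same route as the paper: reduce to continuous time via Remark~\ref{remark_final_graphs}, obtain $|E(G^n_{\underline{p}}(t))|/|V(G^n)|\to F_{\underline{p}}(t)/2$ from the $R=1$ case of Theorem~\ref{thm:local-limit1}, sandwich the discrete-time graph between $G^n_{\underline{p}}(t^*\pm\varepsilon)$, and let $\varepsilon\to0$. The one place you go beyond the paper's wording is in making the final step quantitative: the paper simply invokes ``continuity of the law of $G^\infty_{\underline{p}}(\hat{t})$ in $\hat{t}$ with respect to local weak topology,'' whereas you prove a finite-$n$ Lipschitz estimate on the empirical $R$-ball counts $Y_n(t)$ in terms of the number of added edges, using the bounded maximum degree $\Delta$ of the RDCP graph so that each new edge perturbs at most $C(R,\Delta)$ many $R$-neighbourhoods. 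This is exactly the implicit ingredient needed to transfer convergence through the sandwich, so your version is, if anything, slightly more self-contained than the paper's.
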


We will prove \refC{cor:local-limit-discrete} in \refS{subsec:local_limit_extension} by making the time-change argument of Remark~\ref{rem:time_change} rigorous. 
Although this local weak limit transfer from the continuous-time RDCP to the discrete-time RDCP is fairly straightforward and not surprising, 
we decided to include it here explicitly since the discrete-time version appears in many papers 
(so the formulation of \refC{cor:local-limit-discrete} may potentially be useful in applications).

\subsection{Critical time}\label{subsec:intro_tc}

Warnke and Wormald~\cite[Theorem~1.6]{WW} proved that if $\underline{p}$ satisfies $p_k=0$ for all $k > \Delta$, then the discrete-time RDCP on the complete graph $G^n=K_n$ undergoes a \textbf{phase transition}:
there exists a \textbf{critical time}~${t_c = t_c(\underline{p})}$ such that
\begin{itemize}
\item if $s<t_c(\underline{p})$ is fixed and~${n \gg 1}$, then the size of the largest connected component of the discrete-time RDCP graph $G_{\underline{p}}^{n, \lfloor n \cdot s \rfloor }$ is $\mathcal{O}(\ln(n))$ with high probability, but 
\item 
if $s>t_c(\underline{p})$ is fixed and $n \gg 1$, then $G_{\underline{p}}^{n, \lfloor n \cdot s \rfloor }$ has a giant connected component (i.e., a component of size~$\Theta(n)$) with high probability. 
\end{itemize}

A main result of this paper concerns the asymptotics of the critical time of the random degree constrained process. 
Warnke and Wormald~\cite{WW} used differential equations to identify the critical time $t_c = t_c(\underline{p})$ as the blow-up point of the susceptibility, i.e., the expected component size of a randomly chosen vertex (in the discrete-time RDCP). 
Using the time-change of \refC{cor:local-limit-discrete}, we can analogously characterize the critical time using the local weak limit of the RDCP, i.e., as the blow-up point of the susceptibility of the `limiting' random rooted tree $G^\infty_{\underline{p}}(\hat{t})$ introduced in Definition~\ref{def:rdcp-on-pwit}.

\begin{definition}[Critical time of continuous-time RDCP]\label{def:t-crit}
We define~$\hat{t}_c(\underline{p})$ as
\begin{equation}\label{eq:t_crit-def}
\hat{t}_c(\underline{p}) := \sup \left\{ \, \hat{t} \ge 0 \: : \: \ev \bigpar{|V(G^\infty_{\underline{p}}(\hat{t}))|} < \infty \, \right\}.
\end{equation}
\end{definition}

Our next result states that the global characterization \cite[Theorem~1.6]{WW} of the phase transition and the critical time of the discrete-time RDCP corresponds to our local characterization of the phase transition and the critical time of the continuous-time RDCP under the appropriate time-change.

\begin{proposition}[Equivalence of critical time definitions]\label{prop:tc-equivalence}
Let $\underline{p}$ satisfy the strict Assumption~\ref{assump:degree_constraint_strict}.
The time
\begin{equation}\label{eq:discrete-tc-def}
t_c(\underline{p}) := F_{\underline{p}}\bigpar{\hat{t}_c(\underline{p})}/2
\end{equation}
equals the critical time~$t_c(\underline{p})$ 
for the discrete-time RDCP determined in~\cite[Theorem~1.6]{WW}.
\end{proposition}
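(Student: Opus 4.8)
The plan is to connect our definition~\eqref{eq:t_crit-def}--\eqref{eq:discrete-tc-def} to the susceptibility blow-up characterization established in~\cite[Theorem~1.6]{WW} by using the local weak convergence transfer of \refC{cor:local-limit-discrete} together with the time-change dictionary encoded in \refCl{cl:F(t)-inverse}. Recall from~\cite[Theorem~1.6]{WW} that the discrete time critical value, call it~$t_c^{\mathrm{WW}}(\underline{p})$, is the supremum of all~$s$ such that the susceptibility $\chi_n(sn) := |V(G^n)|^{-1}\sum_w |\mathcal{C}_w(G^{n,\lfloor sn\rfloor}_{\underline{p}})|$ stays bounded (in probability, or in expectation) as~$n\to\infty$; here $|\mathcal{C}_w(\cdot)|$ is the size of the component of~$w$. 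So the first step is to record that, by \refC{cor:local-limit-discrete}, for any fixed~$s \in [0,\e(D)/2)$ the graph $G^{n,k_n(s)}_{\underline{p}}$ converges locally in probability to the rooted random tree $(G^\infty_{\underline{p}}(F_{\underline{p}}^{-1}(2s)),\rt)$, and that the expected size of the root component in this limit is exactly $\e(|V(G^\infty_{\underline{p}}(F_{\underline{p}}^{-1}(2s)))|)$.

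Second, I would use the general principle (see e.g.~\cite{H24b} or the standard theory around local weak limits) that local convergence in probability gives a lower bound on the limiting susceptibility: for every fixed radius~$R$, the truncated susceptibility $|V(G^n)|^{-1}\sum_w |\mathcal{B}^{n,R}(w)|$ converges in probability to $\e(|\mathcal{B}^R(\rt)|)$ in $G^\infty_{\underline{p}}(F_{\underline{p}}^{-1}(2s))$, and letting $R\to\infty$ shows $\liminf_n \chi_n(sn) \ge \e(|V(G^\infty_{\underline{p}}(F_{\underline{p}}^{-1}(2s)))|)$, which by monotone convergence is finite precisely when $F_{\underml{p}}^{-1}(2s) < \hat{t}_c(\underline{p})$, i.e.\ when $s < F_{\underline{p}}(\hat{t}_c(\underline{p}))/2 = t_c(\underline{p})$. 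This already shows $t_c^{\mathrm{WW}}(\underline{p}) \le t_c(\underline{p})$: for $s>t_c(\underline{p})$ (within the valid range, which is guaranteed since $t_c(\underline{p}) < \e(D)/2$ by \refCl{cl:F(t)-inverse}, as $F_{\underline{p}}(\hat t) < \e(D)$) the susceptibility is unbounded, so $s$ is above the WW-critical value.

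Third, for the matching bound $t_c^{\mathrm{WW}}(\underline{p}) \ge t_c(\underline{p})$ I would invoke the content of~\cite[Theorem~1.6]{WW} itself: Warnke and Wormald prove, via differential equations, that the susceptibility of $G^{n,\lfloor sn\rfloor}_{\underline{p}}$ is bounded (converges to a finite limit) for every $s < t_c^{\mathrm{WW}}(\underline{p})$, and that this finite limit is given by an explicit quantity. The cleanest route is to identify that explicit WW susceptibility limit with $\e(|V(G^\infty_{\underline{p}}(F_{\underline{p}}^{-1}(2s)))|)$ — both satisfy the same characterization in terms of the branching/tree structure of the local limit (using the MTBP representation of $G^\infty_{\underline{p}}(\hat t)$, whose offspring mean is encoded in the same differential equations as in~\cite{WW}) — so that finiteness of one is equivalent to finiteness of the other. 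Hence $s < t_c^{\mathrm{WW}}(\underline{p})$ forces $F_{\underline{p}}^{-1}(2s) < \hat t_c(\underline{p})$, i.e.\ $s < t_c(\underline{p})$. Combining the two inequalities gives $t_c^{\mathrm{WW}}(\underline{p}) = t_c(\underline{p})$, as claimed.

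\textbf{Main obstacle.} The routine parts are the truncation argument for the susceptibility lower bound and the time-change bookkeeping via \refCl{cl:F(t)-inverse}; the delicate step is the third one, namely matching our branching-process susceptibility $\e(|V(G^\infty_{\underline{p}}(\hat t))|)$ with the susceptibility limit implicit in~\cite[Theorem~1.6]{WW}. The difficulty is that \cite{WW} works with a finite system of differential equations tracking degree-and-component-size statistics, while we have a branching-process description; one must check that the generating-function / mean-matrix recursions of the MTBP (Theorems~\ref{thm:mtbp-reproduces} and the spectral characterization \refT{thm:PF-eigenvalue}) reduce to exactly the WW differential equations, or alternatively verify directly that the WW susceptibility blows up at $\hat t_c(\underline{p})$ as defined in~\eqref{eq:t_crit-def}. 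I expect this to be handled by appealing to the spectral characterization of $\hat t_c$ in \refT{thm:PF-eigenvalue}, since the Perron--Frobenius eigenvalue hitting~$1$ is the natural meeting point of both descriptions of criticality.
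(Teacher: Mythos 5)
Your high-level plan — combine $\cite{WW}$'s Theorem 1.6 with the local weak convergence transfer (Corollary~\ref{cor:local-limit-discrete}) and the spectral characterization (Theorem~\ref{thm:PF-eigenvalue}) — is the right circle of ideas, but you leave a genuine gap in the direction $t_c^{\mathrm{WW}} \ge t_c$, and the route you propose to close it is not what the paper does and would require substantially more work.

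Specifically, for $t_c^{\mathrm{WW}} \ge t_c$ you propose to identify the explicit WW susceptibility limit with $\e(|V(G^\infty_{\underline{p}}(\hat{t}))|)$, i.e.\ to match the WW differential-equation description of the susceptibility against the MTBP description. You correctly flag this as the delicate part, but you do not carry it out; as it stands, this is an unproved step. The paper avoids this entirely by switching from the susceptibility to the quantity $Q^{n,k}(t)$ (fraction of vertices in components of size at most $k$), whose convergence to $Q^{\infty,k}(t) := \p(|V(G^\infty_{\underline{p}}(t))| \le k)$ is an immediate consequence of the local weak limit. The key device is that the two properties
\begin{equation*}
\text{(subcrit): } \forall\,\varepsilon>0\ \exists\,k: Q^{\infty,k}(t)\ge 1-\varepsilon,
\qquad
\text{(supercrit): } \exists\,\varepsilon>0\ \forall\,k: Q^{\infty,k}(t)\le 1-\varepsilon,
\end{equation*}
are mutually exclusive, and that \emph{both} $t_c$ and $t_c^{\mathrm{WW}}$ separate them: $t<t_c$ gives (subcrit) via $\e(|V(G^\infty)|)<\infty$, while $t>t_c$ gives (supercrit) via Proposition~\ref{prop:emergence-of-giant} (which is where Theorem~\ref{thm:PF-eigenvalue} enters — it shows the $\e(|V|)<\infty$-threshold coincides with the $\p(|V|=\infty)=0$-threshold); on the $\cite{WW}$ side, $t<t_c^{\mathrm{WW}}$ gives (subcrit) because a finite susceptibility limit yields (subcrit) by Markov, while $t>t_c^{\mathrm{WW}}$ gives (supercrit) because a giant component forces a positive fraction of vertices into components larger than any fixed $k$. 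Mutual exclusivity then forces $t_c^{\mathrm{WW}}=t_c$ with no formula-matching. So your missing direction is handled not by comparing generating-function recursions to the WW ODE system, but by invoking the giant-component side of $\cite{WW}$'s Theorem~1.6 together with Proposition~\ref{prop:emergence-of-giant} — a much lighter argument than what you propose.

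As a secondary point, your $t_c^{\mathrm{WW}}\le t_c$ argument via the truncated-susceptibility lower bound $\liminf_n \chi_n \ge \e(|V(G^\infty)|)$ is valid and genuinely different from the paper's, which goes through (subcrit)/(supercrit) and the finite-susceptibility implication rather than a lower bound on $\chi_n$; both work for this direction. But you should rewrite the other direction to use the $Q^{n,k}$ mechanism sketched above rather than pursuing the MTBP-to-ODE dictionary, which is an unnecessary detour.
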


We will prove \refP{prop:tc-equivalence} in \refS{sec:branching_operator} using a new spectral characterization of the critical time~$\hat{t}_c(\underline{p})$ given by Theorem~\ref{thm:PF-eigenvalue}, 
as later discussed in Section~\ref{sss:additional_spectral}.

We will discuss the reasons for making the strict Assumption~\ref{assump:degree_constraint_strict} after the statement of Theorem~\ref{thm:PF-eigenvalue}, but let us note that the assumptions of ~\cite[Theorem~1.6]{WW} (where $t_c(\underline{p})$ is defined) also include that $\underline{p}$ satisfies $p_k=0$ for all $k > \Delta$. 
On the other hand, let us stress that the only point of this paper where we rely on the results of \cite{WW} is the proof of \refP{prop:tc-equivalence}; otherwise, our results are self-contained and our proof strategy differs from that of \cite{WW}.

To demonstrate the applicability of our local limit results, we shall next state a novel result \eqref{eq:discrete-tc:dprocess} about the critical time $t_c = t_c(d)$ of the \textbf{random \mbox{$d$-process}}, 
i.e., the discrete-time RDCP on the complete $n$-vertex host graph~$G^n=K_n$, where each vertex~$v$ has the same degree constraint~$d(v)=d \ge 3$, i.e., the special case of the RDCP where $\underline{p}$ satisfies~$p_d=1$.
Motivated by the desire to better understand analytical properties of the critical time, 
a problem of Warnke and Wormald~\cite[Section~6]{WW} asks for a proof of ${t_c(d) > 1/2}$ for any~$d \ge 3$. 
With the large~$d$ case in mind, this motivates us to study the critical time of the RDCP, when the typical degree constraint is large. 
\begin{notation}
Let $D^m,\, m \in \mathbb{N}$ denote a sequence of non-negative, integer-valued random variables. 
We write $D^m \Rightarrow \infty$ if $D^m$ converges in distribution to~$\infty$ as~$m \to \infty$, i.e., if~$\prob(D^m = k) \to 0$ as $m \to \infty$ for any $k \in \mathbb{N}_+$.
\end{notation}
Given~$\underline{p}^m = (p^m_k)_{k=2}^{\Delta_m}$, suppose that the sequence of random variables ${D^m \sim \underline{p}^m}$ satisfies ${D^m \Rightarrow \infty}$ as $m \to \infty$. 
If we run the discrete-time RDCP using the probability distribution $\underline{p}=\underline{p}^m$ (and a suitable upper bound~$\Delta=\Delta_m$, where $\Delta_m \to \infty$ as $m \to \infty$), 
then for large~$m$ the degree constraint of most vertices is very large, and thus very few vertices saturate before the phase transition occurs.
The discrete-time Erd\H{o}s--R\'enyi graph process (where there are no degree~constraints) is known to have critical time~$1/2$, 
so we expect the first order asymptotics $t_c(\underline{p}^m) \to 1/2$ as $m \to \infty$, which in fact follows from~\cite[Appendix~A]{WW}.
The following theorem gives the second order asymptotics of the critical time, 
by showing that $t_c(\underline{p}^m) \approx 1/2+\ev \bigpar{1/D^m!}/\mathrm{e}$ for large~$m$. 
\begin{theorem}[Asymptotics of critical times $\hat{t}_c$ and $t_c$]\label{thm:tc-asymptotics}
Let $\Delta_m \in \mathbb{N}_+$, and for each $m = 1,2,\dots$ let $\underline{p}^m = (p^m_k)_{k=2}^{\Delta_m}$ be a probability distribution on $\{\,2, \dots, \Delta_m \, \}$ that satisfies the strict Assumption~\ref{assump:degree_constraint_strict}. Moreover, let us assume that the random variables~$D^m \sim \underline{p}^m$ satisfy $D^m \Rightarrow \infty$ as $m \to \infty$. 
Then the continuous and discrete critical times~satisfy
\begin{align}
\label{eq:tc}
\frac{\hat{t}_c(\underline{p}^m)-1}{\frac{2}{\mathrm{e}} \cdot \ev \left( \frac{1}{D^m!} \right)} \; &\stackrel{m \to \infty}{\longrightarrow} \; 1,\\
\label{eq:discrete-tc}
\frac{t_c(\underline{p}^m)-\frac 12}{\frac{1}{\mathrm{e}} \cdot \ev \left( \frac{1}{D^m!} \right)} \; &\stackrel{m \to \infty}{\longrightarrow} \; 1.
\end{align}%
\end{theorem}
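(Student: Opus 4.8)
The plan is to prove the asymptotic expansion for the continuous critical time~$\hat{t}_c(\underline{p}^m)$ first, since \eqref{eq:discrete-tc} will then follow from \eqref{eq:tc} via the relation $t_c(\underline{p}^m) = F_{\underline{p}^m}(\hat{t}_c(\underline{p}^m))/2$ from~\eqref{eq:discrete-tc-def}, together with a separate (but easy) estimate showing that $F_{\underline{p}^m}(\hat{t}_c(\underline{p}^m)) = \hat{t}_c(\underline{p}^m) + o(\e(1/D^m!))$ — intuitively, when degree constraints are huge the limiting tree near the root looks like the Erd\H{o}s--R\'enyi / Poisson tree, for which the mean offspring at time~$\hat t$ is exactly~$\hat t$, and the correction is of the same order as the one we are already tracking. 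So the heart of the matter is~\eqref{eq:tc}. For this I would use the spectral characterization of~$\hat t_c$ via the Perron--Frobenius eigenvalue of the relevant branching/mean-offspring operator (Theorem~\ref{thm:PF-eigenvalue}, as referenced in Section~\ref{sec:additional:spectral}): $\hat t_c(\underline{p})$ is the supremal~$\hat t$ for which that operator has spectral radius~$<1$, equivalently the~$\hat t$ at which the Perron eigenvalue hits~$1$.

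The key steps, in order. First, set up the MTBP from Theorem~\ref{thm:mtbp-reproduces} in which a vertex's type is its phantom saturation time (Definition~\ref{def:phantom-sat}), and write down explicitly the mean-offspring kernel $K_{\hat t}$ governing how many children of each type a type-$x$ vertex produces at time~$\hat t$. Second, analyze the $D^m \Rightarrow \infty$ regime: show that as $m \to \infty$ the phantom saturation time of a typical vertex tends to the ``never saturates'' limit, so that to leading order the process behaves like the Erd\H{o}s--R\'enyi branching (a Poisson/Yule tree run until time~$\hat t$), whose mean-offspring is multiplication by~$\hat t$ and hence has Perron eigenvalue~$\hat t$; this recovers $\hat t_c(\underline{p}^m) \to 1$. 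Third — the crucial quantitative step — compute the first-order correction to the Perron eigenvalue of $K_{\hat t}$ at $\hat t = 1$ caused by the rare vertices that do saturate. A vertex with degree constraint~$k$ saturates (roughly) once it has accumulated~$k$ edges, which near time~$1$ happens with probability controlled by a Poisson$(1)$ tail at level~$k$, i.e.~of order $\mathrm{e}^{-1}/k!$; averaging over $k \sim \underline{p}^m$ produces the factor $\e(1/D^m!)/\mathrm{e}$. Feeding this perturbation into the eigenvalue (via a first-order/Rayleigh perturbation computation on the kernel, using that at $\hat t=1$ the unperturbed operator has a simple leading eigenvalue with explicit positive eigenfunction) shows that to restore Perron eigenvalue~$1$ one must advance time by $(2+o(1))\,\e(1/D^m!)/\mathrm{e}$, which is exactly~\eqref{eq:tc}. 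Fourth, convert the continuous-time statement to the discrete one using Proposition~\ref{prop:tc-equivalence} and the $F_{\underline{p}^m}$-estimate mentioned above, halving to get~\eqref{eq:discrete-tc}.

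The main obstacle I expect is the third step: making the perturbation-of-the-Perron-eigenvalue argument rigorous and uniform in~$m$. The operator $K_{\hat t}$ acts on functions of a continuous type variable, so one must control its spectrum carefully — establishing that the leading eigenvalue is simple and isolated with a spectral gap, that the eigenfunction and eigenmeasure are well-behaved, and that the ``saturation correction'' really contributes at order $\e(1/D^m!)$ with the constant~$2$ (the factor~$2$, versus~$1$ in the discrete case, presumably coming from the time-change Jacobian $F_{\underline{p}^m}'$ near the critical point, so this bookkeeping must be done precisely). A secondary difficulty is that $\hat t_c$ is defined via a supremum of finiteness of $\e|V(G^\infty_{\underline p}(\hat t))|$, so one must also verify the ``soft'' direction — that criticality of the mean-offspring operator (Perron eigenvalue~$=1$) coincides with the blow-up of the expected tree size — but this is precisely what Theorem~\ref{thm:PF-eigenvalue} supplies, so it can be invoked rather than reproved. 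One should also double-check that the upper bound $\Delta^m$ (which may grow with~$m$) does not interfere: since $D^m \Rightarrow \infty$ the contribution of the constraint level is already captured by $\e(1/D^m!)$, and truncation at $\Delta^m$ only removes mass that is negligible on the relevant scale.
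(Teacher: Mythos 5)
Your overall strategy matches the paper's: both use the Perron--Frobenius/spectral characterization of $\hat t_c$ (Theorem~\ref{thm:PF-eigenvalue}), analyze the MTBP with phantom saturation times as types, recover the leading-order limit $\hat t_c\to 1$ from comparison with the unconstrained Poisson branching process, and pass to discrete time via $t_c = F_{\underline p}(\hat t_c)/2$ together with an estimate showing $F_{\underline p^m}(\hat t_c) - \hat t_c = o\bigl(\e(1/D^m!)\bigr)$. Where the routes genuinely diverge is the crucial quantitative step, extracting the $\frac{2}{\mathrm{e}}\,\e(1/D^m!)$ correction. You propose a first-order (Rayleigh) perturbation of the Perron eigenvalue of the mean-offspring kernel, treating the $m=\infty$ operator as the unperturbed problem. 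The paper instead reformulates the eigenvalue equation as a second-order linear ODE (a Sturm--Liouville problem, Lemma~\ref{lem:w-properties}): writing $w(t)=\lambda(t)v_{\hat t}(t)/E(t)$ turns the eigenvalue equation into $\mu_{\hat t}\,w''=-Hw$ with $w(0)=0$, $w'(0)=1$; at criticality $\mu=1$, so $\hat t_c$ is characterized as the unique root of an \emph{explicit scalar function} $\gamma_m(t)=W_m'(t)-W_m(t)\bigl(1-\int_0^t H_m\bigr)$ (Lemma~\ref{lem:Wm-gamma-prop}(d)), which is then located by Taylor expansion (Lemma~\ref{lem:tc-approx}), with all the Poisson-tail asymptotics isolated in Lemma~\ref{lem:error-asymptotics}. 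This buys a great deal: it sidesteps precisely the obstacles you flag yourself --- the continuum of types, the degenerate $m=\infty$ limit (where $E\to\infty$, $\lambda(t)\to t$, the density $f$ and the weight $\rho$ degenerate, so the unperturbed kernel is not a well-behaved operator on a fixed space), and the need for uniform-in-$m$ spectral-gap control --- and reduces the hard analytic work to one-variable root-finding. Your perturbation-theoretic route is plausible in principle but would require substantial additional scaffolding to make rigorous; the paper's ODE route is a real simplification, not just cosmetic. Two smaller corrections to your proposal: the $F$-estimate is not quite ``easy'' (it rests on the nontrivial identity $F_{\underline p}(t)=\int_0^t(\lambda'(s))^2\,\mathrm{d}s$ from Lemma~\ref{lem:F(t)-alternative}, established via the branching-operator integral formula and Lemma~\ref{lem:H_prop}); and the factor $2$ in~\eqref{eq:tc} versus $1$ in~\eqref{eq:discrete-tc} comes simply from the $1/2$ in $t_c=F_{\underline p}(\hat t_c)/2$, not from the Jacobian of $F$, which is $\approx 1$ near the critical point and only enters at lower order.
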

We will prove \refT{thm:tc-asymptotics} in \refS{sec:t_crit_asymp} using the asymptotic analysis of the solution of a second order linear differential equation that arises from a spectral characterization of~$\hat{t}_c(\underline{p})$ given by Theorem~\ref{thm:PF-eigenvalue}, 
which in turn exploits the multi-type branching process characterization of the local weak limit that we will discuss in Section~\ref{sss:additional_MTBP}. 

In the case of the random \mbox{$d$-process}, the degree constraint variables $d(v), \, v \in V(K_n)$ are all equal to the deterministic number $d$, i.e., the probability distribution $\underline{p}^d = (p^d_k)_{k=2}^{\Delta_d}$ satisfies $p^d_d=1$ and~$\Delta_{d}=d$, say. 
From~\eqref{eq:discrete-tc} of Theorem~\ref{thm:tc-asymptotics}, it follows that the critical time $t_c=t_c(d)$ of the random \mbox{$d$-process} satisfies
\begin{equation}\label{eq:discrete-tc:dprocess}
t_c(d) = \frac{1}{2} + \frac{1+o(1)}{\mathrm{e} \cdot d!} \quad \text{as} \quad d \to \infty.
\end{equation}
Hence there exists $d_0$ such that $t_c(d) > 1/2$ holds for $d \ge d_0$, answering the above-mentioned open problem of Warnke and Wormald for large~$d$. 
Conceptually speaking, this shows that (even if we measure time by the number of successfully added edges) the `giant' component of size~$\Theta(n)$ emerges significantly later in the random \mbox{$d$-process} than in the Erd\H{o}s--R\'enyi graph process, where there are no degree~constraints. 
On an intuitive level this might not be surprising, since vertices with large degree have a bigger role in the creation of the giant of the Erd\H{o}s--R\'enyi compared to vertices with a lower degree, and the degree constraints of the random $d$-process prevent the formation of such large degree hubs. However, $t_c(d) > 1/2$ appears to be harder to prove than one might suspect (which is one of the reasons why Warnke and Wormald asked about~this).

\subsection{Further characterizations of the local limit object and~\texorpdfstring{$\hat{t}_c$}{tc}}\label{subsec:auxiliary_results}
We have already identified the limit object of Theorem~\ref{thm:local-limit1} as the RDCP on the PWIT. In Section~\ref{subsec:auxiliary_results} we provide alternative equivalent descriptions of this limit object.
In Section~\ref{sss:additional_RTP} we define the notion of phantom saturation times and show that they solve a `recursion from infinity', i.e., we put the RDCP on the PWIT in the framework of recursive tree processes (RTPs) established by Aldous and Bandyopadhyay in \cite{AB}.
In Section~\ref{sss:additional_MTBP} we give another equivalent description of this limit object, in terms of a multi-type branching process (MTBP).
Finally, in Section~\ref{sss:additional_spectral} we use the MTBP description to give a useful spectral characterization of the critical time~$\hat{t}_c(\underline{p})$.

\subsubsection{Characterization of the limit object using recursive tree processes (RTPs)}\label{sss:additional_RTP}

In Section~\ref{sss:additional_RTP} we give a new characterization of the RDCP on the PWIT in terms of the framework of recursive tree processes (RTPs). The key idea is to use that the PWIT has a simple recursive structure: the subtrees of the PWIT that emanate from the children of the root are i.i.d.\ copies of the PWIT.

Recall the notion of the RDCP on the PWIT from Definition~\ref{def:rdcp-on-pwit}.
\begin{definition}[Phantom saturation time]\label{def:phantom-sat}
The \textbf{phantom saturation time} of vertex~$\ibf$ of the PWIT is the time when vertex~$\ibf$ saturates in the RDCP on the PWIT rooted at~$\ibf$ (i.e., we `delete' the edge that connects $\ibf$ to its parent, we focus on vertex~$\ibf$ and the PWIT of its descendants and run the RDCP construction on this copy of the PWIT). We denote the phantom saturation time of vertex~$\ibf$ by $T_\ibf$.
\end{definition}

In Definition~\ref{def:rde} and Claim~\ref{claim:rtp} we will describe a recursion (that `starts at infinity' and progresses towards the root) for phantom saturation times. Once we know the phantom saturation times, we will describe another recursion (that starts at the root and progresses towards infinity) for the (real) saturation times in Proposition~\ref{prop:rtp_satu}.

Note that the (real) saturation time of $\rt$ is equal to $T_\rt$, but the real saturation time of a non-root vertex of the PWIT might be smaller than its phantom saturation time.
In particular, the saturation time of a non-root vertex of $G^\infty_{\underline{p}}(\infty)$ (cf.\ Definition~\ref{def:rdcp-on-pwit}) is strictly smaller than its phantom saturation time (cf.\ Remark~\ref{remark:pst_sat_comparison} for more on this).

\begin{definition}[Recursion for phantom saturation times]\label{def:rde}
Let~$T$ denote an $\mathbb{R}_+ \cup \{ \infty\}$-valued random variable with a given distribution on~${\mathbb{R}_+ \cup \{ \infty\}}$, and let $T_1, \, T_2,\, \dots$ denote i.i.d.\ random variables with the same distribution as~$T$. Let ${\underline{\tau} = (\tau^j)_{j=1}^\infty}$ be a homogeneous unit intensity PPP and $D$ be a random variable with distribution $\underline{p}$ satisfying the mild Assumption~\ref{assump:degree_constraint_mild}. Let us assume that all of these random variables are independent of each other.
Let~$\underline{\tau}_\bullet$ denote the thinned point process where we only keep the $j^{\text{th}}$ point~$\tau^j$ of~$\underline{\tau}$ if ${\tau^j < T_j}$. 
Let $\tau^1_{\bullet}<\tau^2_{\bullet} <\dots$ denote the points of $\underline{\tau}_\bullet$, i.e., $\underline{\tau}_\bullet= (\tau^\ell_{\bullet})_{\ell=1}^\infty$. If~$\underline{\tau}_\bullet$ only has finitely many (say~$L$) points, let us define $\tau^\ell_{\bullet}=+\infty$ for all~${\ell > L}$.
Let us define the function~$\chi$~as
\begin{equation}\label{eq:rde-recursion}
\chi[\underline{\tau},D](T_1, T_2, \dots) := \tau_{\bullet}^D.
\end{equation}

\noindent 
We say that the distribution of~$T$ \textbf{solves the RDCP recursive distributional equation (RDE)} if the distribution of $\chi[\underline{\tau},D](T_1, T_2, \dots)$ is the same as the distribution of~$T$.
\end{definition}

\begin{remark}\label{remark:T_finite_infinite_exp}
It is easy to check that $\underline{\tau}_\bullet$ has infinitely many points if and only if ${\mathbb{E}(T)=+\infty}$. We will later see (cf.\ Remark~\ref{remark:rde_ode_sketch}, Claim~\ref{cl:phantom-sat-finite} and Lemma~\ref{lem:unique_rde}) that the RDCP RDE has a unique solution and that it satisfies ${\mathbb{P}(T<+\infty)=1}$ and ${\mathbb{E}(T)=+\infty}$.
\end{remark}

Recall from Definition~\ref{def:phantom-sat} that $T_{\ibf j}$ denotes the phantom saturation time of the $j^{\text{th}}$ child of vertex $\ibf$ of the PWIT. Recall from Definition~\ref{def:pwit} that $\underline{\tau}^{\ibf} = (\tau^{\ibf j})_{j=1}^\infty$, where 
 $\tau^{\ibf j}$ denotes the activation time of the edge that connects $\ibf$ to its $j^{\text{th}}$ child.

\begin{definition}[Thinned point processes on the PWIT]\label{def:thinned_pp_on_pwit}
Let $\underline{\tau}^{\ibf}_\bullet=(\tau^{\ibf \ell}_{\bullet})_{\ell=1}^\infty$ denote the thinned point process where we only keep the $j^{\text{th}}$ point $\tau^{\ibf j}$ of $\underline{\tau}^{\ibf}$ if $\tau^{\ibf j} < T_{\ibf j}$. 
\end{definition}

\begin{claim}[Phantom saturation times solve the RDE]\label{cl:phantom-sat-time-solves-rde}
The distribution of the phantom saturation times in the RDCP on the PWIT solves the RDE of \refD{def:rde}.
\end{claim}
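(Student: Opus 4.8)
The plan is to establish the stronger pathwise identity behind the statement. Write $T_1,T_2,\dots$ for the phantom saturation times of the children $1,2,\dots$ of the root $\rt$ of the PWIT, $\underline{\tau}^\rt=(\tau^j)_{j\ge1}$ for the unit intensity PPP of edge labels at $\rt$ (taken in increasing order), and $d(\rt)\sim\underline{p}$ for the root's degree constraint. I would show that, almost surely,
\[
T_\rt \;=\; \chi\bigl[\underline{\tau}^\rt,\,d(\rt)\bigr](T_1,T_2,\dots),
\]
with $\chi$ as in \refD{def:rde}. The claim then follows immediately by taking laws: by the recursive structure of the decorated PWIT, $(T_j)_{j\ge1}$ is an i.i.d.\ sequence distributed as $T_\rt$ and is independent of $(\underline{\tau}^\rt,d(\rt))$, whose joint law equals that of $(\underline{\tau},D)$ in \refD{def:rde}; hence the common law of the phantom saturation times is a fixed point of the map induced by $\chi$, i.e.\ it solves the RDE.

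The crux is a locality statement describing when a child of $\rt$ is available. For each $j$, I claim that the collection of edges lying in the subtree rooted at $j$ and having label smaller than $\tau^j$ that are accepted by the RDCP on the whole PWIT coincides with the collection accepted by the RDCP on the PWIT rooted at $j$. Indeed, the RDCP processes edges in increasing order of their labels, and whether an edge $\{a,b\}$ is accepted depends only on the degrees of $a$ and $b$ just before its label, hence only on the previously processed edges incident to $a$ or $b$; since the unique edge joining the subtree of $j$ to the rest of the PWIT is $\{\rt,j\}$, whose label $\tau^j$ exceeds every label under consideration, an induction on the label order (with base case the smallest label in the subtree, where both endpoints have degree $0$) gives the asserted coincidence. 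In particular the degree of $j$ just before time $\tau^j$ is the same in the two processes, so $j$ is unsaturated just before $\tau^j$ in the RDCP on the whole PWIT if and only if $T_j\ge\tau^j$, equivalently (almost surely) if and only if $\eta_j:=\indic{\tau^j<T_j}=1$.

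Granting this, the saturation time of $\rt$ is obtained by elementary bookkeeping: the edges incident to $\rt$ are exactly $\{\rt,1\},\{\rt,2\},\dots$, with labels $\tau^1<\tau^2<\cdots$, processed in this order, and $\{\rt,j\}$ is accepted precisely when $\rt$ is not yet saturated and $\eta_j=1$. Letting $A_j$ be the number of accepted edges among $\{\rt,1\},\dots,\{\rt,j\}$, a one-line induction gives $A_j=\min(\eta_1+\dots+\eta_j,\,d(\rt))$, so $\rt$ accepts its $d(\rt)$-th (and final) edge exactly at time $\tau^N$ with $N=\min\{j:\eta_1+\dots+\eta_j=d(\rt)\}$; this is the displayed identity, and $\tau^N$ is precisely $\chi[\underline{\tau}^\rt,d(\rt)](T_1,T_2,\dots)$. (If the partial sums never reach $d(\rt)$ one reads $N=\infty$, $\tau^N=\infty$, $T_\rt=\infty$; that this has probability $0$ is part of \refCl{cl:phantom-sat-finite}, whose proof relies on the present recursion, and is not needed here --- it suffices that by \refCl{cl:rdcp-pwit-wellldefined} the quantities $T_\rt$ and $T_j$ are well-defined $[0,\infty]$-valued random variables.)

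The step I expect to be the main obstacle is making the locality claim of the second paragraph fully rigorous: one must verify carefully that the status of the root edge $\{\rt,j\}$ never retroactively alters any RDCP decision inside the subtree of $j$ at times before $\tau^j$, and one must be careful about the fact that the PWIT has a dense set of edge labels, so the induction on label order has to be set up via the finite-dependence / well-definedness provided by \refCl{cl:rdcp-pwit-wellldefined} rather than by naively processing edges one at a time.
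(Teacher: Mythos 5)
Your proof is correct and follows essentially the same approach as the paper: establish the pathwise identity $T_\rt=\chi[\underline{\tau}^\rt,d(\rt)](T_1,T_2,\dots)$ together with the independence of $(T_j)_{j\ge1}$ from $(\underline{\tau}^\rt,d(\rt))$ and their common distribution with $T_\rt$, then take laws. The paper's proof is terser — it asserts the pathwise identity without justification — whereas you fill in the locality argument (that the subtree-of-$j$ process and the whole-PWIT process agree on all acceptance decisions inside the subtree of $j$ at labels below $\tau^j$, since the only edge crossing into the subtree has the larger label $\tau^j$), which is precisely the step the paper implicitly relies on; no gap.
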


\begin{proof} In this proof we work in the PWIT rooted at $\ibf$ (cf.\ Definition~\ref{def:phantom-sat}).
By \refD{def:rdcp-on-pwit}, we know that $T_{\ibf 1}, \, T_{\ibf 2}, \, \dots$ are i.i.d., since each child of vertex $\ibf$ has its own independent PWIT of descendants and the value of the phantom saturation time $T_{\ibf j}$ only depends on the PWIT of descendants of the $j^\text{th}$ child of $\ibf$, moreover, $T_{\ibf j}$ has the same distribution as~$T_\ibf$. If the edge connecting $\ibf$ to one of its children arrives before that vertex phantom saturates (i.e., if $\tau^{\ibf j} < T_{\ibf j}$), then we add that edge, otherwise we don't. We keep connecting $\ibf$ with new neighbors until it (phantom) saturates (i.e., until it acquires $d(\ibf)$ child neighbors). 
According to Definition~\ref{def:thinned_pp_on_pwit}, this happens at time $\tau^{\ibf d(\ibf) }_{\bullet}$. Thus $T_\ibf=\tau^{\ibf d(\ibf) }_{\bullet}$, and we obtain that $T_{\ibf} = \chi[\underline{\tau}^{\ibf},d(\ibf)](T_{\ibf 1}, T_{\ibf 2}, \dots)$ holds. This proves Claim~\ref{cl:phantom-sat-time-solves-rde}.
\end{proof}

In a \textbf{recursive tree process} (RTP) \cite{AB} the vertices have innovation and response variables such that if we know the innovation of a given vertex and the responses of its children, then we can determine the response of the vertex.

\begin{claim}[RTP representation of phantom saturation times]\label{claim:rtp}
 The structure of phantom saturation times of the RDCP on the PWIT can be represented as a recursive tree process (RTP), where the response variable of vertex $\ibf$ is its phantom saturation time $T_{\ibf}$ and the innovation of vertex $\ibf$ is 
 $(\underline{\tau}^{\ibf},d(\ibf))$, where 
 $\underline{\tau}^{\ibf} = (\tau^{\ibf j})_{j=1}^\infty$ is the point process of activation times of edges between vertex $\ibf$ and its children and $d(\ibf)$ is the degree constraint of vertex $\ibf$.
In particular, we have
\begin{equation}\label{eq:phantom-sat-recursion}
T_{\ibf} = \chi[\underline{\tau}^{\ibf},d(\ibf)](T_{\ibf 1}, T_{\ibf 2}, \dots).
\end{equation}
\end{claim}
\begin{proof}
We have already seen that \eqref{eq:phantom-sat-recursion} holds in the proof of \refCl{cl:phantom-sat-time-solves-rde}.
\end{proof}

\begin{remark}[Endogeny of the phantom saturation time RTP]
The recursive relation \eqref{eq:phantom-sat-recursion} determines the phantom saturation times of the $r^{\text{th}}$ generation using the phantom saturation times of generation $(r+1)$. At first, this recursion from infinity seems potentially ill-defined, but using that a chain of causality can only involve monotone decreasing edge labels (cf.\ Remark~\ref{rem:mon_dec_causality}), we can see that we don't have to look infinitely far if we want to determine the value of $T_{\ibf}$. More rigorously, \refCl{cl:rdcp-pwit-welldefined} implies that the phantom saturation times of all vertices are measurable with respect to the $\sigma$-algebra generated by all of the degree constraints and edge labels, thus by \cite[Definition 7]{AB} the RTP that appears in \refCl{claim:rtp} is \emph{endogenous}.
\end{remark}

\begin{remark}[The solution of the RDE]\label{remark:rde_ode_sketch} In Section~\ref{subsec:rtp_mtbp} we will show that the RDCP RDE (cf.\ Definition~\ref{def:rde}) identifies the distribution of phantom saturation times using the following ideas: let
$X_t := \sum_{\ell=1}^{\infty} \indic{\tau^\ell_{\bullet} \leq t} $ denote the number of children of $\rt$ at time~$t$ in the RDCP on the PWIT if we ignore the degree constraint of $\rt$. Then $X_t$ has $\mathrm{POI}(\lambda(t))$ distribution, where
$\lambda(t)=\int_0^t \mathbb{P}(T>s)\, \mathrm{d}s$. Since $T$ has the same distribution as $\chi[\underline{\tau},D](T_1, T_2, \dots)$, we have 
$\mathbb{P}(T>t)=\mathbb{P}(X_t<D)$. These observations, together with $\lambda'(t)=\mathbb{P}(T>t)$ and $\mathbb{P}(X_t<D)=\sum_{k=1}^{\infty} \mathbb{P}(D=k)\cdot \mathbb{P}(X_t \leq k-1) $ give the ODE 
\begin{equation}\label{lambda_ode_first_intro}
\lambda'(t)= \sum \limits_{k=1}^{\infty} p_k \cdot \sum \limits_{l=0}^{k-1} \mathrm{e}^{-\lambda(t)} \cdot \frac{\lambda(t)^l}{l!}, \qquad \lambda(0)=0,
\end{equation}
which identifies the function $\lambda(\cdot)$ and the distribution of phantom saturation times. In Lemma~\ref{lem:unique_rde} we will show that the solution of~\eqref{lambda_ode_first_intro} satisfies $\lambda'(t) \to 0$ and $\lambda(t) \to \infty$ as $t \to \infty$ and that this implies ${\mathbb{P}(T<+\infty)=1}$ and ${\mathbb{E}(T)=+\infty}$, as we have already mentioned in Remark~\ref{remark:T_finite_infinite_exp}.
\end{remark}

\begin{definition}[Saturation times in the PWIT]\label{def:sat_times_pwit}
Let us denote by $T^{\lozenge}_{\ibf}$ the saturation time of vertex $\ibf$ in the RDCP on the PWIT (cf.\ Definition~\ref{def:rdcp-on-pwit}). Let $\eta^{\lozenge}_{\ibf j}$ denote the indicator of the event that the edge between $\ibf$ and $\ibf j$ is added in the RDCP on the PWIT. Let $\eta^{\lozenge}_{\rt}:=0$.
\end{definition}
This way $\eta^{\lozenge}_{\ibf}$ is defined for each vertex $\ibf$ of the PWIT, and it can be interpreted as the indicator of the event that the edge between $\ibf$ and its parent is added in the RDCP on the PWIT (and we have $\eta^{\lozenge}_{\rt}=0$ since the root has no parent).

The recursion of Claim~\ref{claim:rtp} determines the phantom saturation times of the $r^{\text{th}}$ generation using the phantom saturation times of generation $(r+1)$ (and the innovations of the RTP). 
In the next proposition we recursively determine the saturation times of generation $(r+1)$ using the saturation times of the $r^{\text{th}}$ generation (and the innovations as well as the responses of the RTP).

\begin{proposition}[The RTP determines the saturation times]\label{prop:rtp_satu} Let $\ibf$ be a vertex of the PWIT. If we already know $\eta^{\lozenge}_{\ibf}$, then we can determine $T^{\lozenge}_{\ibf}$ and $\eta^{\lozenge}_{\ibf j}$ for each $j \in \mathbb{N}_+$ as follows. Let us define the point process $\underline{\tau}^{\ibf}_{\lozenge} = (\tau^{\ibf \ell}_{\lozenge})_{\ell=1}^\infty$ by
\begin{equation}\label{eq:tau_lozenge_def}
\underline{\tau}^{\ibf}_{\lozenge}:= \begin{cases}
\underline{\tau}^{\ibf}_{\bullet} & \text{if } \;\;\; \eta^{\lozenge}_{\ibf}=0, \\
\underline{\tau}^{\ibf}_{\bullet} \cup \{ \tau^{\ibf} \} & \text{if } \;\;\; \eta^{\lozenge}_{\ibf}=1,
\end{cases} 
\end{equation}
where the point process $ \underline{\tau}^{\ibf}_{\bullet} \cup \{ \tau^{\ibf} \}$ is obtained from $ \underline{\tau}^{\ibf}_{\bullet}$ by inserting the extra point $\tau^{\ibf}$ (i.e., the activation time of the edge that connects $\ibf$ to its parent). Then we have
\begin{equation}\label{eq:sat_time_explicit}
T^{\lozenge}_{\ibf}=\tau_{\lozenge}^{\ibf d(\ibf)},
\qquad
\eta^{\lozenge}_{\ibf j} = \indic{\tau^{\ibf j} < T_{\ibf j} } \cdot \indic{\tau^{\ibf j} \leq T^{\lozenge}_{\ibf}}.
\end{equation}
\end{proposition}

We will prove Proposition~\ref{prop:rtp_satu} in Section~\ref{subsec:rtp_mtbp}. 
\begin{remark}[About $T^{\lozenge}_{\ibf}$ and $T_{\ibf}$]\label{remark:pst_sat_comparison} Note that it follows from $T_\ibf=\tau^{\ibf d(\ibf) }_{\bullet}$ and $T^{\lozenge}_{\ibf}=\tau_{\lozenge}^{\ibf d(\ibf)}$ that $T^{\lozenge}_{\ibf} \leq T_\ibf$ for all $\ibf$. If $\eta^{\lozenge}_{\ibf}=0$ then 
$T^{\lozenge}_{\ibf} = T_\ibf$, but if $\eta^{\lozenge}_{\ibf}=1$ then 
$T^{\lozenge}_{\ibf} < T_\ibf$.
\end{remark}

\subsubsection{Characterization of the limit object as a MTBP}\label{sss:additional_MTBP}
In this subsection we will define a \textbf{multi-type branching process (MTBP)}, 
which has the same law as the limit object $G^\infty_{\underline{p}}(\hat{t})$ from Theorem~\ref{thm:local-limit1}. The types of our MTBP will be the phantom saturation times (cf.\ Definition~\ref{def:phantom-sat}). 
Our goal is to generate the labels of the edges and the types of vertices of generation~$(r+1)$ given the types of generation~$r$.
\begin{notation}\label{not:lambda_q_z}
Let us introduce the notation
\begin{align}
\label{eq:def-qk}
q_k &:= \sum \limits_{d=k}^\infty p_d, && k= 1,2,\dots, \\
\label{eq:def-zk}
z_k^t & := \mathrm{e}^{-\lambda(t)} \cdot \frac{\lambda(t)^k}{k!}\cdot p_{k+1}, && k = 0,1,2,\dots,
\end{align}
where $\lambda(t)$ is the solution of the initial value problem
\begin{align}
\lambda'(t) &= \mathrm{e}^{-\lambda(t)} \cdot \sum \limits_{k=0}^{\infty} \frac{\lambda(t)^k}{k!} q_{k+1}, \label{eq:lambda-ivp} \\
\lambda(0) &= 0.\label{eq:lambda-ivp-null}
\end{align}
\end{notation}
\begin{remark}[Two equivalent forms of the same ODE] 
One checks that the ODE \eqref{lambda_ode_first_intro} is the same as the ODE
\eqref{eq:lambda-ivp} by changing the order of summations in \eqref{lambda_ode_first_intro}, thus the two initial value problems have the same solution $\lambda(\cdot)$.
\end{remark}
\begin{claim}[Properties of phantom saturation times of RDCP on PWIT]\label{cl:phantom-sat-finite}
Let
$\underline{p}$ satisfy the mild Assumption~\ref{assump:degree_constraint_mild}.
The phantom saturation time~$T_\rt$ of the root~$\rt$ of the PWIT is almost surely finite, but $\ev(T_\rt) = \infty$. 
Moreover, $T_\rt$ has a probability density function $f(t)$ such that
\begin{equation}\label{eq:f-with-lambda-rdcp}
f(t) = -\lambda''(t) = \lambda'(t)\mathrm{e}^{-\lambda(t)} \cdot \sum \limits_{k=0}^\infty \frac{\lambda(t)^k}{k!} \cdot p_{k+1} \stackrel{\eqref{eq:def-zk}}{=} \lambda'(t) \cdot \sum \limits_{k=0}^\infty z_k^t,
\end{equation}
where $\lambda(t)$ is the solution of the initial value problem $\{\eqref{eq:lambda-ivp},\, \eqref{eq:lambda-ivp-null}\}$.
\end{claim}

We will prove \refCl{cl:phantom-sat-finite} in \refS{subsec:rtp_mtbp}.
Note that $\prob(T_\rt < \infty) = 1$ (see \refCl{cl:phantom-sat-finite}) and \refT{thm:local-limit1} together imply that the fraction of unsaturated vertices in the final graph~$G^n_{\underline{p}}(\infty)$ goes to $0$ as $n \to \infty$. 
In concrete words, this means that asymptotically almost all vertices of the final graph are saturated.

\begin{notation}\label{not:f(t)-A}
In the following $f(t)$ denotes the probability density function of $T_{\rt}$, cf.~\eqref{eq:f-with-lambda-rdcp}.
We introduce the planar domain
\begin{equation}\label{eq:A-def}
A_{t_0} := \{\,(t,s) \in \mathbb{R}_+^2 \, | \, 0 \le t \le t_0,\, t \le s \,\} \quad \text{for } t_0 \in \mathbb{R}_+.
\end{equation}
\end{notation}

\begin{remark}[The role of $A_{t_0}$] If $\{\ibf, \ibf j\}$ is an edge of~$G^\infty_{\underline{p}}(\infty)$ (cf.~Definition~\ref{def:rdcp-on-pwit}) 
and ${T_{\ibf}=t_0}$, then the pair 
$(\tau^{\ibf j}, T_{\ibf j})$ falls in the domain~$A_{t_0}$, as we now explain. It is enough to verify that $\tau^{\ibf j} \leq T_{\ibf}$ and $ \tau^{\ibf j} \leq T_{\ibf j}$ both hold. We have $T^{\lozenge}_{\ibf} \leq T_{\ibf}$ and $T^{\lozenge}_{\ibf j} \leq T_{\ibf j}$ (cf.~Remark~\ref{remark:pst_sat_comparison}), i.e., the saturation time of a vertex is less than or equal to its phantom saturation time. Now $\tau^{\ibf j} \leq T^{\lozenge}_{\ibf} \wedge T^{\lozenge}_{\ibf j}$ must hold by the rules of the RDCP: the edge $\{\ibf, \ibf j\}$ can only be added if both of its end-vertices are unsaturated when it activates.
\end{remark}

Now we redefine $\tau^{\mathbf{i}j}$ and $T_{\mathbf{i}}$ in the context of multi-type branching processes, but this clash between old and new notation should not cause much confusion, because Theorem~\ref{thm:mtbp-reproduces} below states that the new tree decorated with the new notation has the same law as the old tree decorated with the old notation.

\begin{definition}[MTBP tree that will reproduce RDCP on PWIT]\label{def:mtbp}
We define an \textbf{edge-labelled MTBP} where the vertices are denoted by finite strings of integers (similarly to \refD{def:pwit}), the type of a vertex~$\ibf$ is a positive real number $T_{\ibf}$ and the label of the edge $(\ibf,\ibf j)$ is denoted by the positive real number $\tau^{\ibf j}$. Moreover, each vertex~$\ibf$ has an integer-valued degree constraint $d(\ibf)$.

The type of the root $T_\rt$ has probability density function~$f(t)$, where $f$ is defined as in~\eqref{eq:f-with-lambda-rdcp}. 
If the type of the root is~$T_\rt=t_0$, then we generate its degree constraint $d(\rt)$ with probability mass function
\begin{equation}\label{eq:d-mass}
p_k^{t_0} := \frac{z_{k-1}^{t_0}}{\sum \limits_{l=0}^\infty z_l^{t_0}} \quad \text{for } k =1,2,\dots,
\end{equation}
where $z_k^t$ is defined in \eqref{eq:def-zk}.

Conditional on $T_\rt=t_0$ and $d(\rt)=d$, the edge label $\tau^d$ of the edge that connects the root to the $d^{\text{th}}$ child of the root is $\tau^d=t_0$ and the type $T_d$ of the $d^{\text{th}}$ child of the root is a random variable with probability density function
\begin{equation}\label{eq:dth-dens}
\frac{f(s)}{\int \limits_{t_0}^\infty f(u) \, \mathrm{d}u} \cdot \indic{s>t_0}.
\end{equation}
Given $T_\rt=t_0$ and $d(\rt)=d$, let $(\breve{\tau}^j, \breve{T}_j),\, j=1,\dots, d-1$ denote conditionally i.i.d.\ $\mathbb{R}^2_+$-valued random variables that are also independent of $T_d$, and conditionally on $T_\rt=t_0$ and $d(\rt)=d$ let the pair $(\breve{\tau}^j, \breve{T}_j)$ have joint probability density function
\begin{equation}\label{eq:mtbp-joint-dens}
g_{t_0}(t,s) = \frac{f(s)}{\lambda(t_0)} \cdot \indic{(t,s) \in A_{t_0}},
\end{equation}
where $A_{t_0}$ is defined in \eqref{eq:A-def}. Now let $\sigma\colon \{1,\dots,d-1\} \to \{1,\dots,d-1\}$ denote the (almost surely unique) permutation for which $\breve{\tau}^{\sigma(1)}<\dots<\breve{\tau}^{\sigma(d-1)}$ and let $\tau^j:=\breve{\tau}^{\sigma(j)}$ and $T_j:=\breve{T}_{\sigma(j)}$ for all $j=1,\dots,d-1$.

For further generations of the MTBP, if we already know the type of a vertex $\ibf$ (say it is equal to $t_0$), we can generate its degree constraint $d(\ibf)$ with mass function \eqref{eq:d-mass}. Then this vertex will have $d(\ibf)-1$ children. We generate the pairs $(\breve{\tau}^{\ibf j}, \breve{T}_{\ibf j})$ for these children with joint density function \eqref{eq:mtbp-joint-dens} independently of each other and of the random variables generated earlier, and again we define $\left((\tau^{\ibf j}, T_{\ibf j})\right)_{j=1}^{d(\ibf)-1}$ by rearranging the list $\left((\breve{\tau}^{\ibf j}, \breve{T}_{\ibf j})\right)_{j=1}^{d(\ibf)-1}$ of pairs in a way that their first coordinates are in increasing order.
\end{definition}

\begin{definition}[Connected component of $\rt$ at time $\hat{t}$]\label{def:mtbp-at-t}
Fix~$\hat{t} \in \mathbb{R}_+ \cup \{+\infty\}$. Let us consider the subgraph of the MTBP tree described in \refD{def:mtbp} that consists of the edges with label less than $\hat{t}$. 
We denote by $\mathcal{M}_{\underline{p}}(\hat{t})$ the connected component in this subgraph that contains the root, 
decorated with types $T_\ibf$, degree constraints~$d(\ibf)$ and edge labels~$\tau^{\ibf}$.
\end{definition}

Note that by \refD{def:mtbp}, it follows that $\mathcal{M}_{\underline{p}}(\hat{t})$ is also (the family tree of) a multi-type branching process, 
since the type of a vertex determines the distribution of the number of its children and their types.

\begin{theorem}[MTBP gives an alternative characterization of the local weak limit]\label{thm:mtbp-reproduces}
Fix~$\hat{t} \in \mathbb{R}_+ \cup \{+\infty\}$. 
Let
$\underline{p}$ satisfy the mild Assumption~\ref{assump:degree_constraint_mild}.
Then 
$\mathcal{M}_{\underline{p}}(\hat{t})$ with types~$T_\ibf$ and edge labels~$\tau^{\ibf}$ 
has the same distribution as 
$G^\infty_{\underline{p}}(\hat{t})$ from \refD{def:rdcp-on-pwit} (that appears as the limit object in Theorem~\ref{thm:local-limit1}) 
with phantom saturation times~$T_\ibf$ and edge labels~$\tau^{\ibf}$.
\end{theorem}

We will prove \refT{thm:mtbp-reproduces} in \refS{subsec:rtp_mtbp}. 
Note that in the case of the RTP (cf.\ Claim~\ref{claim:rtp}) one recursively determines phantom saturation times of the $r^{\text{th}}$ generation from generation $(r+1)$, while in the case of the MTBP one recursively generates the phantom saturation times (i.e., the types) of generation $(r+1)$ with the help of the $r^{\text{th}}$ generation. 
The proof of Theorem~\ref{thm:mtbp-reproduces} amounts to showing that the outputs produced by these two definitions have the same law.

\subsubsection{Characterization of critical time in terms of MTBP}\label{sss:additional_spectral}
In this subsection we exploit that the limit object $G^\infty_{\underline{p}}(\hat{t})$ from Theorem~\ref{thm:local-limit1}
has a multi-type branching process description (cf.\ Theorem~\ref{thm:mtbp-reproduces}) in order to give a new spectral characterization of the critical time $\hat{t}_c(\underline{p})$ (cf.\ Definition~\ref{def:t-crit}).

\begin{definition}[Branching operator]
Let $S$ be the type space of a MTBP and $\varphi\colon S \to \mathbb{R}$ be a bounded, measurable function defined on the type space. We define the \textbf{branching operator} $B$ of the MTBP as follows. We define $B\varphi \colon S \to \mathbb{R}$ such that for any $t_0 \in S$ we~have
\begin{equation}\label{eq:branch-op-def}
(B\varphi)(t_0) = \ev \left( \sum \limits_{i=1}^M \varphi(t_i) \right),
\end{equation}
where $M$ is the (random) number of children of a vertex with type $t_0$ in the MTBP, and $t_1, \, t_2, \, \dots, \, t_M$ are the (random) types of these children.
\end{definition}

\begin{definition}[Branching operator at time $\hat{t}$]\label{def:branching-operator-at-t}
Let
$\underline{p}$ satisfy the mild Assumption~\ref{assump:degree_constraint_mild}.
Recall the notion of the MTBP tree~$\mathcal{M}_{\underline{p}}(\hat{t})$ that we introduced in \refD{def:mtbp-at-t}. Recall from Definition~\ref{def:mtbp} that the offspring distribution of the root is different from that of any other (i.e., non-root) vertex of the MTBP.
We denote the branching operator of non-root vertices of the MTBP tree~$\mathcal{M}_{\underline{p}}(\hat{t})$ by $B_{\hat{t}}$.
\end{definition}

In Section~\ref{subsec:bopint} we will show that $B_{\hat{t}}$ is an integral operator of form 
\begin{equation}\label{eq:bo_integral_intro}
\left(B_{\hat{t}} \varphi\right) (t) = \frac{\sum_{k=1}^{\infty} (k-1)\cdot p_k^t} {\lambda(t)} \int \limits_0^\infty f(s) \cdot \left(\hat{t} \wedge t \wedge s \right) \cdot \varphi(s) \, \mathrm{d}s ,
\end{equation}
where $\lambda(\cdot)$, $f(\cdot)$ and $p_k^t$ were defined in Notation~\ref{not:lambda_q_z}, \eqref{eq:f-with-lambda-rdcp} and \eqref{eq:d-mass}, respectively.

\begin{definition}[Sub- and supercritical MTBP]\label{def:critical}
We say that a MTBP tree~$\mathcal{M}$ is \textbf{subcritical} if its susceptibility is finite, i.e., if the expected size $\ev( |\mathcal{M}| )$ of the branching process tree $\mathcal{M}$ is finite. 
We say that a MTBP tree~$\mathcal{M}$ is \textbf{supercritical} if it survives with positive probability, i.e., if $\prob( |\mathcal{M}| = \infty ) > 0$. 
\end{definition}

The following theorem gives a new and useful spectral characterization of the critical time $\hat{t}_c(\underline{p})$ defined in~\eqref{eq:t_crit-def}, 
in terms of the branching operator $B_{\hat{t}}$ of the multi-type branching process tree $\mathcal{M}_{\underline{p}}(\hat{t})$ (cf.\ Definition~\ref{def:branching-operator-at-t}).
\begin{theorem}[Spectral characterization of critical time~$\hat{t}_c$]\label{thm:PF-eigenvalue}
Let 
$\underline{p}$ satisfy the strict Assumption~\ref{assump:degree_constraint_strict}.
For any $\hat{t} \in \mathbb{R}_+$ we have:%
{\vspace{-0.25em}\begin{enumerate}
\itemsep 0.125em \partopsep=0pt \parsep 0em 
\item The branching operator $B_{\hat{t}}$ is self-adjoint with respect to $L^2(\mathbb{R}_+, \, \rho)$ with an appropriate choice of the measure $\rho$ (cf.\ Definition~\ref{def:auxiliary_rho}), and the $L^2(\mathbb{R}_+, \, \rho)$ operator norm $\norm{B_{\hat{t}}}$ of $B_{\hat{t}}$ is~finite.
\item \label{item:subcrit_eigen_statement} If $\norm{B_{\hat{t}}} < 1$, then $\mathcal{M}_{\underline{p}}(\hat{t})$ is subcritical (cf.~Definition~\ref{def:critical}).
\item \label{item:supcrit_eigen_statement} If $\norm{B_{\hat{t}}} > 1$, then $\mathcal{M}_{\underline{p}}(\hat{t})$ is supercritical (cf.~Definition~\ref{def:critical}).
\item The function $\hat{t} \mapsto \norm{B_{\hat{t}}}$ is strictly increasing and continuous on $[0,+\infty)$.
\item We have $\bignorm{B_{\hat{t}_c(\underline{p})}}=1$, i.e., the norm of the branching operator at the critical time~$\hat{t}_c(\underline{p})$ equals~one.
\vspace{-0.25em}\end{enumerate}}%
\end{theorem}

We will prove \refT{thm:PF-eigenvalue} in Section~\ref{sec:branching_operator}, but some of the technical ingredients (specifically, the proof of the Hilbert--Schmidt property of $B_{\hat{t}}$ and the proof of statements~\ref{item:subcrit_eigen_statement}.\ and~\ref{item:supcrit_eigen_statement}.\ of \refT{thm:PF-eigenvalue}) are deferred to Appendix~\ref{appendix_main}. 
These are the proofs where we use that $p_k =0$ holds for any $k > \Delta$, as stated in the strict Assumption~\ref{assump:degree_constraint_strict}. 

Note that~$\norm{B_{\hat{t}}}$ is the principal eigenvalue of $B_{\hat{t}}$. We will give an equivalent characterization of this eigenvalue (and the corresponding eigenfunction) in terms of a Sturm--Liouville problem (i.e., a second order linear differential equation with well-chosen boundary conditions), cf.\ Lemma~\ref{lem:w-properties}. In Remark~\ref{rem:p1_zero} we explain why we assume $p_1=0$ (as stated in the strict Assumption~\ref{assump:degree_constraint_strict}) in this proof.
This ODE-based characterization of $\norm{B_{\hat{t}}}$ provides us with a new characterization of $\hat{t}_c(\underline{p})$ that will serve as a tool in the proof of our results about the asymptotics of $\hat{t}_c(\underline{p})$ stated in Theorem~\ref{thm:tc-asymptotics}. 

We point out that our ODE-based characterization of the critical time (in terms of the MTBP) is fundamentally different from the ODE-based characterization of the critical time given in \cite[Section~3.1]{WW}, where $t_c$ is shown to be equal to the blow-up point of a system of differential equations that characterizes the susceptibility. 

\begin{remark}[Multiple giant components are possible]
Proposition~\ref{prop:tc-equivalence} and Theorem~\ref{thm:PF-eigenvalue} together imply that if $\hat{t} > \hat{t}_c(\underline{p})$, then the local limit~$G^\infty_{\underline{p}}(\hat{t})$ has infinitely many vertices with positive probability. 
Perhaps surprisingly, there are high degree almost regular graph sequences~$\{\,G^n\,\}$ where the fraction of vertices in the largest connected component of the RDCP graph $G^n_{\underline{p}}(\hat{t})$ does not converge in probability to ${\mathbb{P}( |V(G^\infty_{\underline{p}}(\hat{t}))|=+\infty)}$, as one might expect. 
For example, this convergence fails if we define $G^n$ to be the disjoint union of two copies of $K_n$ connected by a single edge, 
in which case for $\hat{t} > \hat{t}_c(\underline{p})$ the RDCP graph $G^n_{\underline{p}}(\hat{t})$ will typically have two `giant' components of size~$\Theta(n)$ by the results of Warnke and Wormald~\cite{WW} (one in each copy of~$K_n$). 
In fact, this convergence can even fail for complete host graphs~$G^n=K_n$ when every vertex has degree constraint~$2$ (i.e., $\underline{p} = (p_k)_{k=2}^\Delta$ with~$p_2=1$): 
in this case equation~(4.12) in~\cite[Section~4.2]{WW} implies that the final graph $G^n_{\underline{p}}(\infty)$ can have multiple `giant' components of size $\Theta(n)$ with positive probability. 
\end{remark}

\subsection{Related work}\label{subsec:related}
The concept of local weak convergence of finite graphs was introduced in the early 2000s by Benjamini and Schramm~\cite[Section~1.2]{BS} and also by Aldous and Steele~\cite[Section~2.3]{AS04}, 
with some of the underlying ideas tracing back to the early 1990s, see~\cite{A91}.
Below we we briefly discuss some (well-known or recent) results on the topic; for a more detailed overview of local weak convergence we refer 
to the textbook~\cite{H24b} by van der Hofstad and the expository article~\cite{BBSY} by Banerjee, Bhamidi, Shen and Young. 

\smallskip 

\textbf{Classical random graphs.} The local weak limits of many random graph models have been studied for a long time (though the results were originally not phrased that way). 
It is a classical result that the Erd\H os--R\'enyi graph with average degree~$\lambda$ converges locally in probability to a branching process with a Poisson offspring distribution~$\text{POI}(\lambda)$ with mean~$\lambda$ (see e.g.~\cite[Section~2.4.5]{H24b} or~\cite[Section~3.5.1]{Bor}). 
Another well-known result concerns random graphs with a given degree sequence: 
under some regularity conditions, the configuration model~\cite{Bol} converges locally in probability to a unimodular branching process tree (see e.g.~\cite[Section~4.2]{H24b}, \cite[Section~3.5.2]{Bor} and~\cite[Theorem~25]{BR15}). 
Bollob\'as, Janson and Riordan \cite{BJR} studied the phase transition in so-called inhomogeneous random graphs: their results imply that the local weak limit of sequences of such graphs is the family tree of a multi-type branching process (see also~\cite[Section~3.5]{H24b} and~\cite[Lemma~11.11]{BJR}).

\smallskip 

\textbf{Spanning trees.} The local weak limit of spanning trees has rich literature. It follows from \cite[Theorem~3]{G80} that the local weak limit of the uniform spanning tree of~$K_n$ is a critical Galton--Watson tree with Poisson offspring distribution, conditioned to survive forever.
Nachmias and Peres~\cite[Theorem~1.4]{NP} show that the local weak limit is the same if the host graphs form a high degree almost regular graph sequence (as in Definition~\ref{def:hdar}).
Hladk\'y, Nachmias and Tran~\cite[Theorem~1.3]{HNT} showed that (a multi-type version of) this result remains true when $K_n$ is replaced by a dense graph sequence that converges to a graphon.
D'Achille, Enriquez and Melotti~\cite[Theorem~1]{DEM} studied the local weak limit of $\lambda_n$-massive spanning forest of $K_n$, where $\lambda_n$ is a given sequence: they showed that the limit depends on the growth rate of $\lambda_n$. 

\smallskip 

\textbf{Preferential attachment models.} Local weak limits of the uniform and the preferential attachment models have also been investigated. These models are very natural, the uniform attachment tree (or random recursive tree) occurred already in \cite{DKS}, while the preferential attachment model was introduced by Barab\'asi and Albert in \cite{BA}, but it has already appeared in~\cite{Y} and later was observed in different contexts in \cite{P, S55}. The limit of the uniform attachment tree was studied in \cite[Section~3.2]{A91} (see also \cite[Theorem~3.10]{BBSY}). 
 Berger, Borgs, Chayes and Saberi defined a MTBP, the P\'olya-point graph model (see \cite[Section~2.3]{BBCS}) and \cite[Theorem~2.2]{BBCS} shows that this branching process is the local weak limit of the preferential attachment model. Rudas, T\'oth and Valk\'o studied the limit of the tree versions of these models~\cite[Theorem~2]{RTV}, i.e., each new vertex is only connected to one other vertex of the graph. These results were further generalized by Banerjee, Deka and Olvera-Cravioto \cite[Corollary 1.1]{BDO} for collapsed branching processes and by Garavaglia, Hazra, van der Hofstad and Ray~\cite[Theorem~1.5]{GHHR} for the case when the new vertex is connected to the graph with random number of new edges. See also~\cite[Theorems~5.8,~5.26]{H24b} for the local weak limit of different versions of the preferential attachment model.

\smallskip 

\textbf{Local weak limits of other random graphs.} The local weak limit of uniformly chosen connected and $2$-connected planar graphs \cite[Theorem~1.1 and Theorem~1.2]{S21}, and of spatial inhomogeneous random graphs \cite[Theorem~1.11]{HHM} have also been determined. The mean field forest fire model \cite{RT} at a fixed time is in fact an inhomogeneous random graph (in the sense of~\cite{BJR}), and thus the local weak limit is a multi-type branching process~\cite{CRY}, just like in the case of inhomogeneous random graphs~\cite{BJR} mentioned above.

\smallskip

\textbf{PWIT.} One characterization of our local limit result uses the so-called Poisson weighted infinite tree (PWIT), which is a fundamental limit object in the theory of randomly weighted graphs. It was introduced by Aldous~\cite[Section~3.1]{A92} in the early 1990s, and was first called PWIT in~\cite[Section~4.1]{A01}. It is the local weak limit of~$K_n$ decorated with independent and exponentially distributed edge weights \cite{A92} (we tried to summarize this idea in Remark~\ref{rem:mot_pwit} above). The PWIT has several applications in the asymptotic characterization of the solutions of various combinatorial optimization problems on large random weighted graphs, e.g.\ for random assignment problems \cite{A92, SS}, random matrices \cite{BCC11a, BCC11b, BC, J18}, invasion percolation \cite{AGK}, minimal and stable matchings \cite{AB, HMP} and minimal spanning trees \cite{S02}. In particular, the local weak limit of the minimal spanning tree of $K_n$ with i.i.d.\ edge weights \cite[Theorem~1.1]{A13} is constructed using the PWIT (this result is a generalization of \cite[Theorem~1]{A90}). Branching random walk on the PWIT \cite{AF}, the length of longest path in subgraphs of the PWIT \cite{G16} as well as wired minimal spanning forests of the PWIT \cite{AS23,NT} have also been studied. A general overview of the PWIT with applications can be found in the survey~\cite{AS04}.

\subsection{Outline of the rest of this paper}\label{subsec:outline_of_rest_of_paper}

In Section~\ref{sec:section_local_limit} we prove our local limit result using graph explorations and couplings. In Section~\ref{subsec:rtp_mtbp} we prove the RTP and MTBP characterizations of RDCP on the PWIT.
In Section~\ref{sec:branching_operator} we study the branching operator of our MTBP and prove our spectral characterization of the critical time (deferring some of the proofs to Appendix~\ref{appendix_main}). Finally, in Section~\ref{sec:t_crit_asymp} we prove our result about the asymptotic expansion of the critical time.

\section{Proof of local weak limit result}\label{sec:section_local_limit}

\paragraph{}In this section we prove Theorem~\ref{thm:local-limit1}, i.e., that the local weak limit of the RDCP on~$G^n$ is the RDCP on the PWIT (see \refD{def:rdcp-on-pwit}). 
First, we heuristically outline our two-phase exploration algorithm on $G^n$ that only explores the activation times in a small neighborhood of a typical vertex $v$ (but the explored part will be sufficient for us to determine the $R$-neighborhood of $v$ in $G^n_{\underline{p}}(\hat{t})$). We make the definition of the two-phase exploration rigorous in \refS{subsec:two_phase_algo}. In \refS{subsec:coupling_to_rbrw} we define another process called the regularized branching random walk (RBRW) and construct a coupling of the two-phase exploration processes and the RBRW. Then in \refS{subsec:coupling_properties} we state the main properties of this coupling. In \refS{subsec:properties_rbrw_tree} we study the properties of the output of the RBRW, and in \refS{subsec:alarm_prob} we show that the probability that the two coupled models differ is small. Then we can deduce Theorem~\ref{thm:local-limit1} for $\hat{t}<\infty$. Finally, in \refS{subsec:local_limit_extension} we extend our proof to the final graph (i.e., when $\hat{t}=\infty$), and also prove \refC{cor:local-limit-discrete}.

\medskip 

In the following, $G^n_{\underline{p}}(\hat{t})$ denotes the RDCP graph on $G^n$ at time $\hat{t}$ (cf.\ Definition~\ref{def:rdcp}), where the sequence of the base graphs $\{\,G^n\,\}_{n=1}^\infty$ satisfies the high degree almost regularity condition (\refD{def:hdar}) and $\underline{p}$ satisfies the mild Assumption~\ref{assump:degree_constraint_mild}. We denote the vertex set and the edge set of $G^n$ by $V(G^n)$ and $E(G^n)$, respectively.

\medskip 

Let us fix a radius $R \in \mathbb{N}$, a point $0 \le \hat{t} < \infty$ in time, an error probability threshold ${\varepsilon>0}$ and a graph sequence $\{ \,G^n \, \}_{n=1}^\infty$. We will show that there exists $n_0 = n_0(R,\, \hat{t},\, \varepsilon,\, \{ \, G^n \, \})$ such that for any $n \ge n_0$ if we pick a vertex uniformly in $G^n_{\underline{p}}(\hat{t})$ and consider its $R$-radius neighborhood in $G^n_{\underline{p}}(\hat{t})$ (cf.\ Definition~\ref{def:rdcp}), then it can be coupled with the $R$-radius neighborhood of the root of $G_{\underline{p}}^\infty(\hat{t})$ (cf.\ Definition~\ref{def:rdcp-on-pwit}) in a way that with probability at least ($1-\varepsilon$) they will be isomorphic.

\begin{remark}[Heuristic description of the two-phase exploration]\label{rem:heu_two_phase}
Recall from Remark~\ref{rem:mot_pwit} that if $n$ is large, then the local structure of `early' edges of~$G^n$ is similar to the structure of the PWIT.
However, this similarity is not perfect, so if we aim for a perfect coupling of the explorations in the two settings, then it is crucial to explore only a small part of these graphs in order to minimize the chance that we encounter a difference between the two coupled worlds as we explore them. On the other hand, the structure that we explore around a uniformly chosen vertex $v$ of $G^n$ should contain enough information to allow us to identify the $R$-neighborhood of~$v$ in~$G^n_{\underline{p}}(\hat{t})$. Our exploration has two phases. The first phase is called the \textbf{breadth-first search (BFS)} phase: we explore the $R$-neighborhood of~$v$ in the subgraph of $G^n$ spanned by edges with activation time at most $\hat{t}$. We call this graph the BFS graph. The $R$-neighborhood of $v$ in~$G^n_{\underline{p}}(\hat{t})$ will surely be a subgraph of the BFS graph, but to be able to decide exactly which edges of the BFS graph are added, we also want to figure out whether the vertices on the boundary of the BFS graph saturate before $\hat{t}$ and if they do, we want to figure out when. This is what we do in the second phase of our exploration which we call the \textbf{monotone decreasing phase}. The strategy is analogous to the one outlined in Remark~\ref{rem:mon_dec_causality}: from each vertex on the boundary of the BFS graph we start a new exploration, but we only explore edges that activate earlier than their parent edge, since only these edges can affect our decision when we try to add the parent edge. This way the labels of edges decrease monotonically along a path that we explore.
We refer the reader to Figure~\ref{fig:two_phase} for a rough image of the two-phase exploration.
Somewhat naively we can estimate the size of the graph that we explore starting from $v$ by $ \sum_{r=1}^R \hat{t}^{r} + \hat{t}^R \cdot \mathrm{e}^{\hat{t}}$, where $\mathrm{e}^{\hat{t}}=\sum_{\ell=0}^{\infty} \hat{t}^\ell/\ell!$, cf.\ Remark~\ref{rem:mon_dec_causality}.
The rigorous bound will be worse than this, but it will be enough to guarantee that the explored region is `small enough' if we fix $R<+\infty$ and $\hat{t}<+\infty$. 
\end{remark}

\begin{figure}[t]
\centering
\begin{tikzpicture}[font=\small, node distance={1.0cm}, thick, main/.style = {draw, circle}]

\draw[dashed, gray5] (0.0, 0.0) -- (6.0, 3.0);
\draw[dashed, gray5] (0.0, 0.0) -- (6.0, -3.0);

\draw[dashed, gray5] (1.0, -0.5) -- (1.0, 0.5);
\draw[dashed, gray5] (2.0, -1.0) -- (2.0, 1.0);
\draw[dashed, gray5] (3.0, -1.5) -- (3.0, 1.5);
\draw[dashed, gray5] (4.0, -2.0) -- (4.0, 2.0);
\draw[dashed, gray5] (5.0, -2.5) -- (5.0, 2.5);
\draw[dashed, gray5] (6.0, -3.0) -- (6.0, 3.0);
\node[label=below:1] at (1.0, -0.4) {};
\node[label=below:2] at (2.0, -0.9) {};
\node[label=below:{\rotatebox{-25}{\raisebox{1pt}{$\cdots$}}}] at (4, -1.9) {};
\node[label=below:$R$] at (6, -2.9) {};

\draw[dashed, gray5] (8.1,2) ellipse (2.1 and 0.9);
\draw[dashed, gray5] (7.1,0) ellipse (1.1 and 0.35);
\draw[dashed, gray5] (7.6,-2) ellipse (1.6 and 0.8);

\node[main, fill=black, scale=0.3, label=below:$\varnothing$] (v0) at (0,0) {};
\node[main, fill=black, scale=0.3] (v11) at (1.0, 0.0) {};
\node[main, fill=black, scale=0.3] (v21) at (2.0, 0.5) {};
\node[main, fill=black, scale=0.3] (v22) at (2.0, -0.5) {};
\node[main, fill=black, scale=0.3] (v31) at (3.0, 0.0) {};
\node[main, fill=black, scale=0.3] (v32) at (3.0, -1.0) {};
\node[main, fill=black, scale=0.3] (v41) at (4.0, 1) {};
\node[main, fill=black, scale=0.3] (v42) at (4.0, 0) {};
\node[main, fill=black, scale=0.3] (v43) at (4.0, -1) {};
\node[main, fill=black, scale=0.3] (v51) at (5.0, 1.5) {};
\node[main, fill=black, scale=0.3] (v52) at (5.0, 0.5) {};
\node[main, fill=black, scale=0.3] (v53) at (5.0, -1.5) {};
\node[main, fill=black, scale=0.3] (vR1) at (6, 2) {};
\node[main, fill=black, scale=0.3] (vR2) at (6, 0) {};
\node[main, fill=black, scale=0.3] (vR3) at (6, -2) {};

\draw[black] (v0) -- (v11);
\draw[black] (v11) -- (v21);
\draw[gray5] (v11) -- (v22);
\draw[black] (v22) -- (v31);
\draw[gray4] (v22) -- (v32);
\draw[black] (v31) -- (v41);
\draw[gray2] (v31) -- (v42);
\draw[gray5] (v31) -- (v43);
\draw[gray1] (v41) -- (v51);
\draw[gray4] (v41) -- (v52);
\draw[gray3] (v43) -- (v53);
\draw[gray5] (v51) -- (vR1);
\draw[gray3] (v52) -- (vR2);
\draw[gray5] (v53) -- (vR3);

\node[main, fill=black, scale=0.3] (m11) at (7.0, 2.0) {};
\node[main, fill=black, scale=0.3] (m12) at (7.0, 0.0) {};
\node[main, fill=black, scale=0.3] (m13) at (7.0, -2.0) {};

\node[main, fill=black, scale=0.3] (m21) at (8.0, 2.5) {};
\node[main, fill=black, scale=0.3] (m22) at (8.0, 1.5) {};
\node[main, fill=black, scale=0.3] (m23) at (8.0, 0.0) {};
\node[main, fill=black, scale=0.3] (m24) at (8.0, -1.5) {};
\node[main, fill=black, scale=0.3] (m25) at (8.0, -2.5) {};

\node[main, fill=black, scale=0.3] (m31) at (9.0, 2.5) {};
\node[main, fill=black, scale=0.3] (m32) at (9.0, 2.0) {};
\node[main, fill=black, scale=0.3] (m33) at (9.0, -2.0) {};

\node[main, fill=black, scale=0.3] (m41) at (10.0, 2.0) {};

\draw[gray4] (vR1) -- (m11);
\draw[gray2] (vR2) -- (m12);
\draw[gray4] (vR3) -- (m13);
\draw[gray2] (m11) -- (m21);
\draw[gray3] (m11) -- (m22);
\draw[black] (m12) -- (m23);
\draw[gray1] (m13) -- (m24);
\draw[gray3] (m13) -- (m25);
\draw[black] (m21) -- (m31);
\draw[gray1] (m21) -- (m32);
\draw[black] (m25) -- (m33);
\draw[black] (m32) -- (m41);

\draw[decorate, decoration={brace, amplitude=10pt, mirror}] (-0.2, -3.5) -- (6.15, -3.5) node[midway, below=10pt] {BFS phase};
\draw[decorate, decoration={brace, amplitude=10pt, mirror}] (6.25, -3.5) -- (10.2, -3.5) node[midway, below=10pt] {Monotone decreasing phase};

\node[main, fill=black, scale=0.3] (v0) at (0,0) {};
\node[main, fill=black, scale=0.3] (v11) at (1.0, 0.0) {};
\node[main, fill=black, scale=0.3] (v21) at (2.0, 0.5) {};
\node[main, fill=black, scale=0.3] (v22) at (2.0, -0.5) {};
\node[main, fill=black, scale=0.3] (v31) at (3.0, 0.0) {};
\node[main, fill=black, scale=0.3] (v32) at (3.0, -1.0) {};
\node[main, fill=black, scale=0.3] (v41) at (4.0, 1) {};
\node[main, fill=black, scale=0.3] (v42) at (4.0, 0) {};
\node[main, fill=black, scale=0.3] (v43) at (4.0, -1) {};
\node[main, fill=black, scale=0.3] (v51) at (5.0, 1.5) {};
\node[main, fill=black, scale=0.3] (v52) at (5.0, 0.5) {};
\node[main, fill=black, scale=0.3] (v53) at (5.0, -1.5) {};
\node[main, fill=black, scale=0.6] (vR1) at (6, 2) {};
\node[main, fill=black, scale=0.6] (vR2) at (6, 0) {};
\node[main, fill=black, scale=0.6] (vR3) at (6, -2) {};

\node[main, fill=black, scale=0.3] (m11) at (7.0, 2.0) {};
\node[main, fill=black, scale=0.3] (m12) at (7.0, 0.0) {};
\node[main, fill=black, scale=0.3] (m13) at (7.0, -2.0) {};
\node[main, fill=black, scale=0.3] (m21) at (8.0, 2.5) {};
\node[main, fill=black, scale=0.3] (m22) at (8.0, 1.5) {};
\node[main, fill=black, scale=0.3] (m23) at (8.0, 0.0) {};
\node[main, fill=black, scale=0.3] (m24) at (8.0, -1.5) {};
\node[main, fill=black, scale=0.3] (m25) at (8.0, -2.5) {};
\node[main, fill=black, scale=0.3] (m31) at (9.0, 2.5) {};
\node[main, fill=black, scale=0.3] (m32) at (9.0, 2.0) {};
\node[main, fill=black, scale=0.3] (m33) at (9.0, -2.0) {};
\node[main, fill=black, scale=0.3] (m41) at (10.0, 2.0) {};
\end{tikzpicture}
\caption{A schematic image of the output of the two-phase exploration of the PWIT (noting that if the coupling is perfect, then the output of the exploration on $G^n$ will exactly be the same). 
The vertices on the boundary of the BFS graph are marked with larger dots.
Similarly to Figure~\ref{fig:pwit}, darker gray edges activate earlier, lighter gray edges activate later. All edges on this figure activate earlier than $\hat{t}$. Note that the edges explored in the BFS phase can activate later than their parent edge, but in the monotone decreasing phase each edge must activate earlier than its parent edge.}
\label{fig:two_phase}
\end{figure}

\begin{definition}[PPP of an edge]\label{def:edge-ppp}
On each edge $e \in E(G^n)$ we consider i.i.d.\ homogeneous Poisson point processes with intensity ${1/r_n}$. We denote the sequence of arrival times of the PPP of edge $e$ by
\begin{equation}\label{eq:ppp-Xe-def}
\ppp{X}{e} = \left( \mathcal{X}_e^{n,k} \right)_{k=1}^\infty,
\end{equation}
i.e., $\mathcal{X}_e^{n,k}$ is the $k^{\text{th}}$ point of the PPP $\ppp{X}{e}$. We denote by $G^n_*$ the graph $G^n$ decorated with the PPPs $\ppp{X}{e}, \, e \in E(G^n)$. 
In the RDCP on $G^n$ we identify the activation time of edge $e$ (cf.\ \refD{def:rdcp}) with $\mathcal{X}_e^{n,1}$, i.e., the first point of its PPP.
\end{definition}

Thus the activation times are indeed i.i.d.\ random variables with common distribution $\text{EXP}\left(1/r_n\right)$, as required by Definition~\ref{def:rdcp}. We need the remaining points of the PPPs of Definition~\ref{def:edge-ppp} to facilitate the coupling between the two-phase exploration of $G^n_*$ (to be rigorously described in Section~\ref{subsec:two_phase_algo}) and the two-phase exploration of the Poisson weighted infinite tree (to be described in Section~\ref{subsec:coupling_to_rbrw}): since the latter is naturally defined in terms of PPPs (cf.\ Definition~\ref{def:pwit}), it will be beneficial to treat an $\text{EXP}\left(1/r_n\right)$ distributed edge activation time as the first point of a PPP.

\subsection{Two-phase exploration algorithm}\label{subsec:two_phase_algo}

In Section~\ref{subsec:two_phase_algo} we define the two-phase exploration outlined in Remark~\ref{rem:heu_two_phase}. We begin by describing those aspects of the exploration that are common to the two phases, then we describe the BFS phase, finally we describe the monotone decreasing phase.

\paragraph{Exploration process on a graph}
Now we describe an exploration process of $G_*^n$. In some cases the exploration algorithm fails and gives an alarm as an output. Otherwise, the output of the exploration process is an \textbf{abstract rooted, edge-labelled tree} $\exptree$ and an injective \textbf{embedding}
\begin{equation}\label{eq:emb-def}
v^n\colon \, V(\exptree) \to V(G^n).
\end{equation}

The vertices of $\exptree$ are denoted by strings of integers, just like in the case of the vertices of the PWIT (cf.\ Definition~\ref{def:pwit}): this will facilitate the coupling between the two trees.

The \textbf{root} of $\exptree$ is denoted by the empty string ${\rt}$ and it forms the $0^{\text{th}}$ \textbf{generation} of~$\exptree$. The vertex $\emb{{\rt}} \in V(G^n)$, i.e., the embedded image of ${\rt}$ is a uniformly chosen vertex of $G^n$. Every vertex~${\ibf} \in V(\exptree)$ has finite number of children, which are denoted by $\ibf 1,\, \ibf 2,\,$ etc. The children of vertices of the~$r^{\text{th}}$ generation of $\exptree$ form generation~$(r+1)$, i.e., the generation of a vertex is equal to its distance from the root~${\rt}$. The \textbf{incoming edge} of a vertex ${\ibf} \in V(\exptree) \setminus \{{\rt} \}$, i.e., the edge connecting vertex~${\ibf}$ to its parent, is denoted by~$e_{{\ibf}}$.

At each step of the exploration process, each vertex of $G^n$ can be \textbf{neutral}, \textbf{active} or \textbf{explored}. At the beginning, $\emb{{\rt}}$ is active, every other vertex is neutral. We also say that ${\ibf \in V(\exptree)}$ is neutral, active or explored if $\emb{\ibf}$ is neutral, active or explored, respectively. The algorithm terminates when either there is no active vertex of $\exptree$ or we get an alarm. Until then, in each step of the algorithm we select an active vertex~${\ibf \in V(\exptree)}$ and discover some of the neutral neighbors of $\emb{\ibf}$. They will correspond to the children of vertex ${\ibf} \in V(\exptree)$ and we make them active. Vertex~${\emb{\ibf} \in V(G^n)}$ (and the corresponding~${\ibf \in V(\exptree)}$) becomes explored. We call this step the \textbf{exploration of} ${\ibf}$.

If ${\ibf} \in V(\exptree)$, we denote by $\nb{\ibf} \subset V(G^n)$ the set of neighbors of $\emb{{\ibf}}$ in $G^n$. We also introduce the notation $\neunb{\ibf}$, $\actnb{\ibf}$ and $\expnb{\ibf}$ for the set of neighbors that are neutral, active and explored right before the exploration of ${\ibf}$. Note that $\nb{\ibf}$, $\neunb{\ibf}$, $\actnb{\ibf}$ and~$\expnb{\ibf}$ depend on $n$, but we omit the superscripts for ease of notation. Note also that ${\nb{\ibf} = \neunb{\ibf} \cup \actnb{\ibf} \cup \expnb{\ibf}}$.

Let ${\ibf} \in V(\exptree)$, $v \in \nb{\ibf}$ and let us define a point process
\begin{equation}\label{eq:ppp-X-expl-def}
\ppp{X}{{\ibf},v} = \left( \mathcal{X}_{{\ibf},v}^{n,k} \right)_{k=1}^\infty,
\quad \text{where} \quad
\ppp{X}{{\ibf},v} = \left\{\,
\begin{tabular}{ll}
$\ppp{X}{\{\emb{{\ibf}},v \} }$ & if $v \in \neunb{\ibf} \cup \actnb{\ibf}$, \\
$\underline{0}$ & if $v \in \expnb{\ibf}$,
\end{tabular}
\right.
\end{equation}
where $\ppp{X}{\{\emb{{\ibf}},v \} }$ is defined in \eqref{eq:ppp-Xe-def} and $\underline{0}$ denotes the empty point process.

\begin{claim}[Independent edge PPPs]\label{cl:indep-ppp}
Conditional on what we have explored so far, the point processes $\left(\ppp{X}{{\ibf},v} \right)_{v \in \neunb{\ibf} \cup \actnb{\ibf}}$ are i.i.d.\ PPPs on $\mathbb{R}_+$ with intensity~${1/r_n}$.
\end{claim}

\begin{proof}
It follows from the definition of the point processes~$\ppp{X}{{\ibf},v}$ and Definition~\ref{def:edge-ppp}.
\end{proof}

\begin{definition}[Union of point processes]
If $\underline{\mathcal{X}}$ and $\underline{\mathcal{X}}'$ are point processes on $\mathbb{R}_{+}$, then we define $\underline{\mathcal{X}} \cup \underline{\mathcal{X}}'$ as the point process that we obtain if we merge the point processes~$\underline{\mathcal{X}}$ and $\underline{\mathcal{X}}'$, i.e., it consists of the ordered union of the points of $\underline{\mathcal{X}}$ and $\underline{\mathcal{X}}'$.
\end{definition}

Note that almost surely, we will never encounter points with multiplicity higher than~$1$.

\begin{definition}[Point process of a vertex]\label{def:vertex-ppp}
For any vertex ${\ibf} \in V(\exptree)$ we define a point process
\begin{equation}\label{eq:ppp-Y-expl-def}
\ppp{Y}{{\ibf}} = \left( \mathcal{Y}_{{\ibf}}^{n,k} \right)_{k=1}^\infty,
\quad \text{where} \quad
\ppp{Y}{{\ibf}} := \bigcup_{v \in \nb{\ibf}} \ppp{X}{{\ibf},v}.
\end{equation}

For each point of the point process $\ppp{Y}{{\ibf}}$ we store the associated vertex of $G^n$, i.e., we denote by $v \left(\mathcal{Y}_{{\ibf}}^{n,k}\right)$ the vertex $v \in V(G^n)$ for which $\mathcal{Y}_{{\ibf}}^{n,k}$ is a point of the PPP $\ppp{X}{{\ibf},v}$.
\end{definition}

\begin{definition}[Cycle alarm]
We say that there is a \textbf{cycle alarm} when we explore vertex~${\ibf \in V(\exptree)}$ if
\begin{equation}\label{eq:cycle-alarm-def}
\left( \bigcup_{v \in \actnb{\ibf}} \ppp{X}{\ibf,v} \right) \cap [0, \hat{t}] \neq \emptyset \quad \text{ or } \quad \exists \, v \in \nb{\ibf} \text{ such that } \big| \ppp{X}{\ibf,v} \cap [0, \hat{t}] \big| \ge 2.
\end{equation}
\end{definition}
In words, the cycle alarm rings if we close a cycle or if we find two points in the interval $[0, \hat{t}]$ in the PPP of one of the edges
that we explore.

Now we define the children of the vertex ${\ibf} \in V(\exptree)$ that is currently explored. Before that, we need to define the labels in $\exptree$.

\begin{definition}[Edge labels]
If ${\ibf j} \in V(\exptree)$ is the $j^{\text{th}}$ child of vertex ${\ibf} \in V(\exptree)$, then the \textbf{label of the incoming edge} $e_{{\ibf j}}$ of vertex ${\ibf j}$ is $\mathcal{Y}_{{\ibf}}^{n,j}$, the $j^{\text{th}}$ point of the PPP~$\ppp{Y}{{\ibf}}$, i.e., the label is equal to the arrival time. We denote it by $\edge{{\ibf j}}$ and we also call it the \textbf{label of the vertex} ${\ibf j}$.
\end{definition}

\begin{remark}
Note that the root ${\rt} \in V(\exptree)$ has no label.
\end{remark}

\begin{claim}[Label with edge PPPs]
If there is no cycle alarm, then
\begin{equation}\label{eq:edge-label-ppp}
\edge{{\ibf j}} = \mathcal{X}_{{\ibf},\emb{{\ibf j}}}^{n,1},
\end{equation}
i.e., the label of the edge $e_{{\ibf j}}$ is equal to the first point of the point process~$\ppp{X}{{\ibf},\emb{{\ibf j}}}$.
\end{claim}

\begin{proof}
Equation \eqref{eq:edge-label-ppp} follows from the definition of $\ppp{Y}{{\ibf}}$ \eqref{eq:ppp-Y-expl-def} and \eqref{eq:cycle-alarm-def}.
\end{proof}

The exploration algorithm has two phases:
\begin{itemize}
\item \textbf{Breadth-first search (BFS) phase:} In the first $R$ generations we proceed with breadth-first search (BFS), i.e., at each step we choose an active vertex ${\ibf} \in V(\exptree)$ from the lowest, say $r^{\text{th}}$, generation where there exists an active vertex. If $r=0$ we choose the root ${\rt} \in V(\exptree)$. Otherwise, from the set of active vertices of the $r^{\text{th}}$ generation, we choose the vertex ${\ibf} \in V(\exptree)$ with the lowest label $\edge{{\ibf}}$, i.e.,
\begin{equation}\label{eq:bfs-vertex-selection}
{\ibf} := \arg\, \min \{\, \edge{{\jbf}} \, | \, {\jbf} \in V(\exptree) \text{ is an active vertex of generation } r\,\}.
\end{equation}

Now we explore the chosen vertex ${\ibf}$. If there is no cycle alarm at vertex ${\ibf}$, then the points of
\begin{equation}\label{eq:bfs-children-points}
\ppp{Y}{{\ibf}} \cap [0,\hat{t}]
\end{equation}
(all of which came from edges connecting $\emb{{\ibf}}$ to a neutral neighbor) generate the children of ${\ibf}$. The $j^{\text{th}}$ child of ${\ibf}$ is denoted by ${\ibf j}$ and its embedded image is
\begin{equation}\label{eq:ij-embedded-image-with-ppp}
\emb{{\ibf j}} := v \left(\mathcal{Y}_{{\ibf}}^{n,j}\right),
\end{equation}
where the function $v$ on the right-hand side of \eqref{eq:ij-embedded-image-with-ppp} is introduced in \refD{def:vertex-ppp}.

For each child ${\ibf j}$, the vertex $\emb{{\ibf j}} \in V(G^n)$ becomes active (and the corresponding vertex ${\ibf j}$ of $\exptree$ becomes active as well). Note that we have
\begin{equation}\label{eq:edge-label-bfs}
\edge{{\ibf j}} < \hat{t}.
\end{equation}

At the same time $\emb{{\ibf}}$ (and the corresponding vertex ${\ibf}$ of $\exptree$) becomes explored.

\item \textbf{Monotone decreasing phase:} If there is no active vertex in the first $R$ generations, from generation $(R+1)$ we start the monotone decreasing phase. Similarly to the BFS, at each step we select an active vertex. But now we always select the active vertex ${\ibf}$ with the largest label $\edge{\ibf}$, regardless of the generation level of the vertices, i.e.,
\begin{equation}\label{eq:md-vertex-selection}
{\ibf} := \arg\, \max \{\, \edge{\jbf} \, | \, {\jbf} \text{ is an active vertex} \,\}.
\end{equation}

The other difference is that if there is no cycle alarm at vertex ${\ibf}$, then (in contrast with \eqref{eq:bfs-children-points}) only the points of
\begin{equation}\label{eq:md-children-points}
\ppp{Y}{{\ibf}} \cap [0,\edge{\ibf}]
\end{equation}
generate the children of ${\ibf}$. Otherwise, everything is the same as it was in the BFS phase. (E.g., the vertices determined by the points of \eqref{eq:md-children-points} become active.)

Therefore, in this phase the label $\edge{\ibf j}$ of ${\ibf j}$ is less than the label $\edge{\ibf}$ of ${\ibf}$:
\begin{equation}\label{eq:edge-label-md}
\edge{\ibf j} < \edge{\ibf}.
\end{equation}
\end{itemize}

\begin{definition}[Output of the exploration of $G^n_*$]\label{def:exploration-output}
The output of the two-phase exploration algorithm is a cycle alarm or the abstract rooted, edge-labelled tree $\exptree$ and the embedding $v^n$ of its vertex set into~$V(G^n)$.
\end{definition}

\begin{remark}$ $
\begin{itemize}
\item Note that from generation $R+1$ we do not use BFS, instead we switch to a second phase that we call the monotone decreasing phase, because the labels of the selected vertices are monotone decreasing as we move further away from the root along the branches of the tree, see \eqref{eq:edge-label-md}.
\item We recall that if ${\ibf} \in V(\exptree)$ and $v \in \expnb{{\ibf}}$, then $\ppp{X}{{\ibf},v}$ is defined to be an empty point process (see the second line of \eqref{eq:ppp-X-expl-def}). The reason behind this choice is that the edge connecting $\emb{\ibf}$ and $v$ was already revealed in the step when $v$ became explored (this follows from the order in which we explore vertices in the BFS phase and the monotone decreasing phase, respectively).
\item The choice of the active vertex that we explore in both phases only depends on the explored part of~$G^n_*$.
\item Note that if there is no cycle alarm during the exploration process of $G^n_*$, then the embedding $v^n$ is injective (see \eqref{eq:cycle-alarm-def}).
\item Note that the embedded image $\emb{V(\exptree)}$ spans a small subgraph of $G^n$, but, as we will see in \refL{lem:R-neighborhood}, the information that we explored in the two-phase algorithm is sufficient to determine the $R$-radius neighborhood of vertex $\emb{\rt}$ in the RDCP on $G^n$ at time $\hat{t}$.
\end{itemize}
\end{remark}

\subsection{Coupling with regularized branching random walk (RBRW)}\label{subsec:coupling_to_rbrw}

In Section~\ref{sss:rbrw} we define another process on $G^n$ that we call the \emph{regularized branching random walk (RBRW)}. In Section~\ref{sss:coupling_expl_rbrw} we will couple the RBRW to the exploration process of $G^n_*$ defined in Section~\ref{subsec:two_phase_algo}. We will see that the output of the RBRW can be represented as the output of an analogous exploration process on a copy of the PWIT together with the embedding of the explored vertices of the PWIT in $G^n$.

\begin{remark}
Loosely speaking, in the RBRW, each individual has $\text{POI}(\hat{t})$ many children, which are then placed independently at uniformly chosen neighbors of the parent particle. In fact, our construction of the RBRW can also be viewed as an embedding of a certain Galton--Watson tree into $G^n$ (the details are more delicate, e.g.\ the RBRW construction will also have a monotone decreasing phase). We will explain the meaning of the adjective \emph{regularized} in the name of the RBRW in Remark~\ref{rem:why_regularized}.
\end{remark}

\subsubsection{Regularized branching random walk (RBRW)}\label{sss:rbrw}

The output of the RBRW will be an abstract rooted edge-labelled tree $\rbrwtree$ (which we call the family tree) and an embedding~$\hat{v}^n$ of $V(\rbrwtree)$ into $V(G^n)$:
\begin{equation}\label{eq:emb-rbrw}
\hat{v}^n\colon \, V(\rbrwtree) \to V(G^n).
\end{equation}
The vertices of $\rbrwtree$ are denoted by strings of integers. We call the vertices of the family tree $\rbrwtree$ individuals and their embedded image into $G^n$ particles.

The root of the RBRW is ${\rt} \in V(\rbrwtree)$. The embedded image $\rbrwemb{{\rt}} \in V(G^n)$ of ${\rt}$ is chosen from $V(G^n)$ with probability proportional to its degree in $G^n$, i.e., $\rbrwemb{{\rt}} \sim \underline{\nu}^n= \left( \nu^n_v \right)_{v \in V(G^n)}$, where
\begin{equation}\label{eq:nu-def}
\nu^n_v = \frac{\deg(v)}{\sum \limits_{w \in V(G^n)} \deg(w)} \quad \forall \, v \in V(G^n).
\end{equation}
Note that $\underline{\nu}^n$ is the stationary distribution of the simple random walk on $G^n$.

Every individual~${\ibf} \in V(\rbrwtree)$ has finite number of children, which are denoted by ${\ibf 1,\, \ibf 2,}$~etc. The algorithmic construction of the RBRW will be divided into rounds.

In order to be able to compare the RBRW to the exploration of $G^n_*$, in each round of the construction of the RBRW we classify the individuals of $\rbrwtree$ as either currently active or explored and we classify the vertices of $G^n$ as currently neutral, active or explored.
At the beginning, $\rbrwtree$ has one individual ${\rt}$ that is active. The algorithm terminates when there is no active individual of $\rbrwtree$. Until then, in each step of the algorithm we select an active individual ${\ibf} \in V(\rbrwtree)$, which produces some children that will be embedded to some of the neighbors of $\rbrwemb{{\ibf}}$ and we make these new individuals active. Individual~${\ibf \in V(\rbrwtree)}$ becomes explored. We call the step when we generate the children of vertex~${\ibf \in V(\rbrwtree)}$ the exploration of~${\ibf}$.

In each round of the construction of the RBRW, a vertex $v \in V(G^n)$ is (currently) neutral if there is no particle in $v$, i.e., none of the discovered individuals is embedded into $v$. The vertex $v$ is active if there is at least one active particle located at $v$. Otherwise, $v$ is explored. If ${\ibf} \in V(\rbrwtree)$, we denote by $\rbrwnb{\ibf} \subset V(G^n)$ the set of neighbors of $\rbrwemb{{\ibf}}$ in~$G^n$, and we also introduce the notation $\rbrwneunb{\ibf}$, $\rbrwactnb{\ibf}$ and $\rbrwexpnb{\ibf}$ for the set of neutral, active and explored neighbors of $\rbrwemb{{\ibf}}$ in~$G^n$ right before the exploration of ${\ibf}$, respectively. Note that $\rbrwnb{\ibf}$ is the disjoint union of $\rbrwneunb{\ibf}$, $\rbrwactnb{\ibf}$ and $\rbrwexpnb{\ibf}$.

Let ${\ibf} \in V(\rbrwtree)$. Given what we have constructed so far, for every $v \in \rbrwnb{\ibf}$ let
\begin{equation}\label{eq:ppp-X-rbrw-def}
\rbrwppp{X}{{\ibf},v} = \left( \widehat{\mathcal{X}}_{{\ibf},v}^{n,k} \right)_{k=1}^\infty
\end{equation}
denote a PPP with intensity $1/\deg(\rbrwemb{{\ibf}})$. Moreover, let us assume that the PPPs $\rbrwppp{X}{{\ibf},v},\, {v \in \rbrwnb{\ibf}}$ are conditionally independent of each other given what we have constructed
 so far.

For any vertex ${\ibf} \in V(\rbrwtree)$ we define a point process
\begin{equation}\label{eq:ppp-Y-rbrw-def}
\rbrwppp{Y}{{\ibf}} = \left( \widehat{\mathcal{Y}}_{{\ibf}}^{n,k} \right)_{k=1}^\infty, \quad \text{where} \quad
\rbrwppp{Y}{{\ibf}} := \bigcup_{v \in \rbrwnb{\ibf}} \rbrwppp{X}{{\ibf},v}.
\end{equation}

For each point of the point process $\rbrwppp{Y}{{\ibf}}$, we store the associated vertex of $G^n$, i.e., let us denote by $\hat{v} \left(\widehat{\mathcal{Y}}_{{\ibf}}^{n,k}\right)$ the vertex $v \in V(G^n)$ for which $\widehat{\mathcal{Y}}_{{\ibf}}^{n,k}$ is a point of the PPP $\rbrwppp{X}{{\ibf},v}$.

\begin{claim}[Vertex PPP in the RBRW]\label{cl:rbrw-unit-intensity}
$\rbrwppp{Y}{{\ibf}}$ is a unit intensity PPP.
\end{claim}

\begin{proof}
It follows from the merging property of the Poisson point processes.
\end{proof}

\begin{remark}\label{rem:why_regularized} The adjective \emph{regularized} in the name of the RBRW refers to the fact that the intensity of $\rbrwppp{Y}{{\ibf}}$ is exactly the same (namely, one) for all $\ibf$. Although
this property fails for the point processes
 $\ppp{Y}{{\ibf}}$ (defined in
\eqref{eq:ppp-Y-expl-def}), later we will see that the discrepancy between the point processes $\rbrwppp{Y}{{\ibf}}$ and $\ppp{Y}{{\ibf}}$ is small enough to enable a coupling between the two (cf.\ Definition~\ref{def:coupling-rbrw}) that is perfect with high probability on $[0, \hat{t}]$.
\end{remark}

Before defining the children of the vertex ${\ibf} \in V(\rbrwtree)$ that is currently explored, we need to define the labels in $\rbrwtree$.

\begin{definition}[Labels in the RBRW]
If ${\ibf j} \in V(\rbrwtree)$ is the $j^{\text{th}}$ child of vertex ${\ibf} \in V(\rbrwtree)$, then the \textbf{label of the incoming edge} $e_{{\ibf j}}$ of vertex ${\ibf j}$ is $\widehat{\mathcal{Y}}_{{\ibf}}^{n,j}$, the $j^{\text{th}}$ point of the PPP~$\rbrwppp{Y}{{\ibf}}$, i.e., it is equal to the arrival time. We denote it by $\rbrwedge{{\ibf j}}$ and we also call it the \textbf{label of the vertex} ${\ibf j}$.
\end{definition}

The RBRW has two phases, similarly to the exploration process of $G^n_*$:
\begin{itemize}
\item \textbf{Breadth-first search (BFS) phase:} In the first $R$ generations, in each step we choose an active individual of $\rbrwtree$, analogously to the BFS phase of the exploration process of $G^n_*$ (see \eqref{eq:bfs-vertex-selection}), and we take its unit intensity PPP $\rbrwppp{Y}{{\ibf}}$. For every point of the PPP $\rbrwppp{Y}{{\ibf}}$ in the time interval $[0, \hat{t}]$ (cf.\ \eqref{eq:bfs-children-points}), we generate a new individual. These new individuals will be the children of ${\ibf}$. The embedded image of ${\ibf j}$ ($j^{\text{th}}$ child of ${\ibf}$) is
\begin{equation}\label{eq:rbrwemb-def}
\rbrwemb{{\ibf j}} := \hat{v} \left(\widehat{\mathcal{Y}}_{{\ibf}}^{n,j}\right).
\end{equation}

The new individuals ${\ibf j}$ become active and ${\ibf}$ becomes explored.

\item \textbf{Monotone decreasing phase:} From generation $(R+1)$ onward, in each step we select the active individual ${\ibf} \in V(\rbrwtree)$ with the largest label $\rbrwedge{\ibf}$ (analogously to~\eqref{eq:md-vertex-selection}), and we consider the points of the PPP $\rbrwppp{Y}{{\ibf}}$ on the time interval~$[0,\, \rbrwedge{\ibf}]$ (instead of~$[0,\, \hat{t}]$, similarly to \eqref{eq:md-children-points}). Otherwise, the process is the same as in the BFS phase. The construction of the RBRW ends when it dies out, i.e., if there is no active individual in $\rbrwtree$.
\end{itemize}

\begin{remark}
The output of the exploration process of $G^n_*$ (described in Section~\ref{subsec:two_phase_algo}) and the RBRW depend also on the value of $R$ and $\hat{t}$.
\end{remark}

\begin{claim}[Exploration of the PWIT]\label{cl:pwit-expl}
If we use a copy of the PWIT (instead of $G^n_*$) as the input of the two-phase exploration process described in Section~\ref{subsec:two_phase_algo}, then the output that we obtain is a rooted, edge-labelled tree, which has the same distribution as the family tree $\rbrwtree$ of the RBRW.
\end{claim}

\begin{proof}
In both trees the children of a vertex $\ibf$ and the edge labels are generated by a unit intensity PPP restricted to $[0, \hat{t}]$ in the BFS phase and on $[0, \rbrwedge{\ibf}]$ in the monotone decreasing phase (see \refD{def:pwit} and \refCl{cl:rbrw-unit-intensity}), where $\rbrwedge{\ibf}$ denotes the label of the incoming edge of vertex $\ibf$ in the associated tree.
\end{proof}

\subsubsection{Coupling of the graph exploration and the RBRW}
\label{sss:coupling_expl_rbrw}

We now couple the exploration process of $G^n_*$ (defined in \refS{subsec:two_phase_algo}) with the RBRW. For this purpose, we augment the probability space on which the RBRW was defined and construct on it an abstract rooted, edge-labelled tree $\modtree$ and an embedding $\tilde{v}^n$ of~$V(\modtree)$ into~$V(G^n)$ or a cycle alarm, which have the same distribution as the output of the exploration of~$G^n_*$. In particular, we modify the PPPs~$\rbrwppp{X}{{\ibf}, v}$ (which have intensity~$1/{\deg(\rbrwemb{{\ibf}})}$, defined in \eqref{eq:ppp-X-rbrw-def}) of the RBRW by deleting/inserting points to obtain PPPs~$\modppp{X}{{\ibf},v}$ with the same distribution as $\ppp{X}{{\ibf}, v}$ (see \eqref{eq:ppp-X-expl-def}). Our goal is to produce a coupling under which~${\modtree \equiv \rbrwtree}$ (the two edge-labelled trees agree) and~${\tilde{v}^n \equiv \hat{v}^n}$ (the two embeddings agree) with high probability if~${n \gg 1}$.
We will build the two trees simultaneously and if something undesired happens, we abandon our coupling via alarms. The \emph{initialization alarm} occurs if we cannot couple~$\emb{{\rt}}$ (which has uniform distribution) and~$\rbrwemb{{\rt}}$ (which has degree-biased distribution). The \emph{injectivity alarm} occurs if the RBRW visits the same vertex of~$G^n$ twice. The \emph{modification alarm} occurs if we actually insert or delete a point of~${[0,\hat{t}]}$ as we modify~$\rbrwppp{X}{{\ibf}, v}$ to obtain~$\modppp{X}{{\ibf},v}$. If none of these alarms ever goes off, then~${\modtree \equiv \rbrwtree}$ and~${\tilde{v}^n \equiv \hat{v}^n}$ and the cycle alarm will not go off either. In Section~\ref{subsec:alarm_prob} we will show that the alarms stay silent with high probability if~${n \gg 1}$.

\paragraph{}Recall the embeddings $v^n$ and $\hat{v}^n$ from \eqref{eq:emb-def} and \eqref{eq:emb-rbrw}.

\begin{definition}[Initialization alarm]\label{def:initial-alarm}
We say that we have an \textbf{initialization alarm} in the coupling if the embedded images of the roots of the coupled processes differ, i.e.,~$\emb{{\rt}} \neq \rbrwemb{{\rt}}$.
\end{definition}

As we will see later (\refL{lem:alarm1}), it is easy to define a coupling of $\emb{{\rt}}$ and $\rbrwemb{{\rt}}$ such that the probability of an initialization alarm goes to zero as $n \to \infty$ if the sequence of graphs $\{ \, G^n \, \}$ is high degree almost regular.

The next type of alarm is defined purely in terms of the RBRW.
\begin{definition}[Injectivity alarm]\label{def:inject-alarm}
We say that we have an \textbf{injectivity alarm} in the RBRW if the embedding $\hat{v}^n$ is not injective (i.e., if the construction produces two particles in the same vertex of $G^n$).
\end{definition}

\begin{definition}[Coupling the RBRW and the exploration of $G^n_*$]\label{def:coupling-rbrw}
If there is an initialization alarm, then we do not bother with synchronizing our RBRW and the exploration of $G^n_*$,
i.e., we generate a RBRW starting from the vertex $\rbrwemb{{\rt}}$, we let $\tilde{v}^n(\rt):=\emb{{\rt}}$ (which is a uniformly distributed vertex of $G^n$) be the embedded image of the root of $\modtree$ and then we generate $\modtree$ together with the rest of the embedding $\tilde{v}^n$ (or a cycle alarm) with the same distribution as $\exptree$ and $v^n$ (or a cycle alarm) in the exploration process of~$G^n_*$, independently of the rest of the construction of the RBRW.

If there is no initialization alarm, we define the coupling recursively. Let ${\ibf} \in V(\rbrwtree)$ be the next active vertex of the RBRW. Let $v \in \rbrwnb{\ibf}$. We modify $\rbrwppp{X}{{\ibf},v}$ to obtain a Poisson point processes with the same distribution as in \eqref{eq:ppp-X-expl-def} and \refCl{cl:indep-ppp}.
\begin{itemize}
\item If $\frac{1}{r_n} < \frac{1}{\deg(\rbrwemb{{\ibf}})}$ then we thin $\rbrwppp{X}{{\ibf}, v}$, more precisely we keep each point of $\rbrwppp{X}{{\ibf}, v}$ independently with probability~${\deg(\rbrwemb{{\ibf}})}/{r_n}$. We denote the point process of deleted points by
\begin{equation}\label{eq:ppp-U-def}
\ppp{U}{{\ibf}, v} = \left( \mathcal{U}_{{\ibf}, v}^{n,k} \right)_{k=1}^\infty
\end{equation}
and $\modppp{X}{{\ibf},v}$ is defined as the point process of the remaining points.

\item If $\frac{1}{r_n} > \frac{1}{\deg(\rbrwemb{{\ibf}})}$ then conditional on what we have explored so far, we define the~PPP
\begin{equation}\label{eq:ppp-V-def}
\ppp{V}{{\ibf}, v} = \left( \mathcal{V}_{{\ibf}, v}^{n,k} \right)_{k=1}^\infty
\end{equation}
with intensity $\frac{1}{r_n} - \frac{1}{\deg(\rbrwemb{{\ibf}})}$. Then we define $\modppp{X}{{\ibf},v}$ as the superposition of $\rbrwppp{X}{{\ibf}, v}$ and~$\ppp{V}{{\ibf}, v}$:
\begin{equation}
\modppp{X}{{\ibf},v} := \rbrwppp{X}{{\ibf}, v} \cup \ppp{V}{{\ibf}, v}.
\end{equation}
\end{itemize}

Moreover, if $v \in \rbrwexpnb{\ibf}$ then $\modppp{X}{{\ibf},v}$ is defined as an empty point process $\underline{0}$.

After these modifications we obtain the PPPs
\begin{equation}\label{eq:ppp-X-mod-def}
\modppp{X}{{\ibf},v} = \left( \widetilde{\mathcal{X}}_{{\ibf}, v}^{n,k} \right)_{k=1}^\infty, \qquad v \in \rbrwnb{\ibf}.
\end{equation}
For each $v \in \rbrwnb{\ibf}$, the PPP~$\modppp{X}{{\ibf},v}$ has the same conditional intensity as the associated PPP~$\ppp{X}{{\ibf},v}$ of the exploration process of $G^n_*$ (cf.\ \refCl{cl:indep-ppp}), i.e., its intensity is~${1/r_n}$ if~${v \in \rbrwneunb{\ibf} \cup \rbrwactnb{\ibf}}$ or it is the empty point process $\underline{0}$ if $v \in \rbrwexpnb{\ibf}$. Moreover, the PPPs~$\modppp{X}{{\ibf},v}, \, v \in \rbrwnb{\ibf}$ are conditionally independent of each other given what we have explored so far.

\begin{definition}[Modification alarm]\label{def:mod-alarm}
We say that we have a \textbf{modification alarm} at vertex ${\ibf} \in V(\rbrwtree)$ if there exists a vertex $v \in \rbrwneunb{\ibf} \cup \rbrwactnb{\ibf}$ such that $\rbrwppp{X}{{\ibf},v}$ (see~\eqref{eq:ppp-X-rbrw-def}) and~$\modppp{X}{{\ibf},v}$ (see~\eqref{eq:ppp-X-mod-def}) differ on the time interval $[0, \hat{t}]$, i.e., the PPP $\ppp{U}{{\ibf},v}$ or $\ppp{V}{{\ibf},v}$ has at least one point in the time interval $[0,\hat{t}]$.
\end{definition}

The coupling of $\rbrwtree$ and $\modtree$ goes as follows.

\begin{itemize}
\item If there was no modification alarm nor injectivity alarm before exploring vertex~${\ibf \in V(\rbrwtree)}$:

In this case there was no cycle alarm either and we generate the point processes~$\rbrwppp{X}{{\ibf},v}$ (see \eqref{eq:ppp-X-rbrw-def}) and $\modppp{X}{{\ibf},v}$ (see \eqref{eq:ppp-X-mod-def}). We grow the trees $\rbrwtree$, $\modtree$ and extend the embeddings $\hat{v}^n$, $\tilde{v}^n$ using the corresponding PPPs (according to the rules described in Sections~\ref{sss:rbrw} and~\ref{subsec:two_phase_algo}, respectively). If we see a cycle alarm at $\modppp{X}{{\ibf},v}$, the output of the graph exploration is a cycle alarm.

\item If there was a modification or an injectivity alarm before exploring vertex ${\ibf} \in V(\rbrwtree)$:

In this case we grow $\rbrwtree$ according to the PPP $\rbrwppp{X}{{\ibf},v}$. If there was a cycle alarm in the graph exploration, then its output is already defined as a cycle alarm. If there was no cycle alarm in the graph exploration, then $\modtree$ and its embedding $\tilde{v}^n$ are generated as $\exptree$ and $v^n$ (detailed in \refS{subsec:two_phase_algo}) independently of~$\rbrwtree$.
\end{itemize}
\end{definition}

\begin{lemma}[Correct marginal distributions]\label{lem:pwit-marginal}
The construction of Definitions~\ref{def:coupling-rbrw} and~\ref{def:mod-alarm} indeed produces an output which has the same distribution as the output of the two-phase exploration of $G^n_*$ (described in \refD{def:exploration-output}). In particular, the output is either a cycle alarm with the appropriate probability or, given that there was no cycle alarm, the conditional joint distributions of $\left( \exptree,\,v^n \right)$ and $\left( \modtree,\,\tilde{v}^n \right)$ are the same.
\end{lemma}

\begin{proof}
This follows from the definition of the exploration process.
\end{proof}

\subsection{Coupling construction properties}\label{subsec:coupling_properties}

In this section we study the main properties of the coupling construction described in \refD{def:coupling-rbrw}. These properties will imply \eqref{eq:local-limit} if $\hat{t}<\infty$.

\begin{definition}[Alarm]\label{def:alarm}
We say that an \textbf{alarm rings} if we have an initialization (see \refD{def:initial-alarm}), injectivity (see \refD{def:inject-alarm}) or modification alarm (see \refD{def:mod-alarm}) during the coupling algorithm.
\end{definition}

\begin{lemma}[Alarm probability]\label{lem:alarm}
For any choice of $R$ and $\hat{t}$, the probability that the alarm rings during the coupling goes to $0$ as~$n \to \infty$.
\end{lemma}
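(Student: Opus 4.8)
The plan is to bound the probability of each type of alarm separately and then take a union bound. Since there are three types of alarms (initialization, injectivity, and modification, cf.\ \refD{def:alarm}), it suffices to show that each of them rings with probability tending to $0$ as $n\to\infty$ (for fixed $R$ and $\hat t$); this will be carried out in three separate lemmas (which presumably appear later as \refL{lem:alarm1}, etc.). Throughout, a key preliminary observation is that, since $\{G^n\}$ is high degree almost regular, \emph{most} vertices of $G^n$ have degree $(1\pm b_n)r_n$ with $b_n\to 0$; this is what makes the thinning/superposition corrections in \refD{def:coupling-rbrw} negligible.

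First I would handle the \textbf{initialization alarm}. By \refD{def:initial-alarm} this rings when $\emb{\rt}\ne\rbrwemb{\rt}$, where $\emb{\rt}$ is uniform on $V(G^n)$ and $\rbrwemb{\rt}\sim\underline\nu^n$ with $\nu^n_v\propto\deg(v)$. The total variation distance between the uniform law and $\underline\nu^n$ is $\tfrac12\sum_v|\nu^n_v-1/|V(G^n)||$; using Definition~\ref{def:hdar}(a)--(b) (most degrees are $(1\pm b_n)r_n$, and the degree sum is $(1\pm c_n)r_n|V(G^n)|$) this is $O(a_n+b_n+c_n)\to 0$, so the optimal coupling fails with probability $o(1)$.

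Next, the \textbf{injectivity alarm} (\refD{def:inject-alarm}): two embedded particles coincide. The crux here is that the embedded image $\modemb{V(\modtree)}$ (equivalently $\rbrwemb{V(\rbrwtree)}$) has size bounded by the size of the auxiliary tree, which by the branching heuristics of Section~\ref{sec:heuristic} — made rigorous in (the eventual) \refL{lem:key:size} with the bound $\e|V(C^R_{v,t})|\lesssim\sum_{k\le R}\hat t^k+\hat t^R e^{\hat t}$ — has bounded expectation (and, with a little more work, say a bounded second moment or at least stochastic domination by an integrable variable). Conditionally on the tree having $N$ vertices, each newly embedded particle lands at a uniformly (roughly) chosen neighbour of its parent, so among $N$ particles in a graph with $\Theta(r_n)$ local choices the collision probability is $O(N^2/r_n)$; taking expectations and using $r_n\to\infty$ gives $o(1)$. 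One has to be slightly careful that the embedding measure at each step is close to uniform on the $\Theta(r_n)$ neighbours — again guaranteed by high degree almost regularity — but this is routine.

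Finally, the \textbf{modification alarm} (\refD{def:mod-alarm}): for some explored individual $\ver\i$ and neighbour $v$, the deleted process $\ppp{U}{\ver\i,v}$ or the added process $\ppp{V}{\ver\i,v}$ has a point in $[0,\hat t]$. Here each such point process has intensity $|\tfrac1{r_n}-\tfrac1{\deg(\rbrwemb{\ver\i})}|$, which for a typical vertex is $\tfrac1{r_n}\cdot\tfrac{|\deg(\rbrwemb{\ver\i})-r_n|}{\deg(\rbrwemb{\ver\i})}=O(b_n/r_n)$; summed over the $\Theta(r_n)$ neighbours of $\ver\i$ this is $O(b_n)$ per explored individual, so the expected number of modification points over the whole (bounded-size) tree is $O(b_n\cdot\e|V(C^R_{v,\hat t})|)=o(1)$, except that one must also control the contribution of the $o(|V(G^n)|)$ "bad" vertices whose degree is far from $r_n$: the probability that the exploration ever embeds a particle at such a vertex is $O(a_n)$ (since each embedding step hits a uniformly-ish chosen vertex and the bad set has density $a_n$), times the bounded tree size, again $o(1)$. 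Combining, a Markov-inequality and union bound over the three alarm types finishes the proof.

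The main obstacle I expect is not any single alarm bound in isolation but the \emph{interleaving}: the tree $\rbrwtree$ is grown and the alarms are checked simultaneously in \refD{def:coupling-rbrw}, so one cannot straightforwardly condition on the final tree size before bounding collisions or modification points. The clean way around this is to run the alarm bounds on the \emph{monotone} auxiliary process of Section~\ref{sec:heuristic:saturation}--\ref{sec:heuristic:Rneighborhood} (whose size is controlled unconditionally by \refL{lem:key:size} via $\e|V(C^R_{v,\hat t})|\le\sum_{0\le k\le R}\hat t^k+\hat t^R e^{\hat t}<\infty$, as in \eqref{eq:Rneighborhood}), bound the expected number of "bad events" along this process by linearity of expectation (each vertex-exploration contributes $O(a_n+b_n)$, and there are $O(1)$ of them in expectation), and only then invoke Markov. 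The second delicate point is making precise the statement "each embedded particle is roughly uniform among $\Theta(r_n)$ neighbours" in a way that is compatible with the recursive conditioning of \refCl{cl:indep-ppp} and \refCl{cl:rbrw-unit-intensity}; I would phrase this as: conditionally on the explored part, the next particle's location is a size-biased-then-thinned choice whose law is within $O(b_n)$ in total variation of uniform on the neutral neighbours, and absorb the error into the $o(1)$ bound.
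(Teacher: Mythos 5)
Your overall plan matches the paper's: decompose into the three alarm types and bound each separately (the paper does this in Lemmas~\ref{lem:alarm1}, \ref{lem:alarm2}, \ref{lem:alarm3}), with the initialization-alarm and modification-alarm sketches essentially tracking the paper's arguments (total variation between $\underline{\mu}^n$ and $\underline{\nu}^n$; degree-deviation $b_n$ controls the thinning/superposition rate; defective vertices have $\underline{\nu}^n$-mass $O(a_n+b_n+c_n)$ and are avoided whp).

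The one place where your proposal has a genuine gap is the injectivity alarm. You bound the collision probability by $O(N^2/r_n)$ where $N=|V(\rbrwtree)|$ and then write ``taking expectations... gives $o(1)$''; this requires $\e\bigl[N^2\bigr]<\infty$, not merely $\e[N]<\infty$, and you acknowledge but underestimate this (``a little more work, say a bounded second moment''). The difficulty is that the monotone-decreasing phase subtrees $S_i$ are only controlled in the paper via $\e_R[S_i]=\mathrm{e}^{\hat t}$ (see~\eqref{eq:Si-expectation}); a second-moment bound for $S_i$ is plausible but is not a one-line extension, as it involves summing over pairs of time-monotone paths with shared prefixes. The paper sidesteps this entirely by introducing the slowly growing quantity $K_n=\min\{\log r_n,\,\log(1/(a_n+b_n+c_n))\}$ in~\eqref{eq:K_n-def} and proving (\refL{lem:key:size}, \refC{cor:rbrw-bound}) the high-probability bound $\p\bigl(|V(\rbrwtree)|>K_n^{3R+2}\bigr)\to 0$ using only the first moment of the MD subtree sizes plus Markov, together with Poisson tail bounds for the BFS levels. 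All three alarm bounds then condition on the ``RBRW condition'' $C_n$ (tree size at most $K_n^{3R+2}$ and no defective vertices visited, cf.\ \refD{def:rbrw-cond}), after which $\mathrm{poly}(K_n)\cdot\max\{1/r_n,\,b_n,\,a_n+b_n+c_n\}\to 0$ closes each bound. This is the device you are groping towards in your last paragraph about interleaving, but the resolution is not ``Markov on a first moment'' --- it is a polylogarithmic whp cutoff chosen to be simultaneously small relative to $r_n$ and $1/(a_n+b_n+c_n)$, which is the content of the deliberate choice of $K_n$. Once this whp truncation is in place, your remaining estimates (including the $O(N^3/r_n)$ or $O(N^2/r_n)$ injectivity bound, and the per-vertex $O(N b_n)$ modification bound) all go through unconditionally.
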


We prove \refL{lem:alarm} in \refS{subsec:alarm_prob}.

\begin{lemma}[No alarm implies synchronization]\label{lem:sync}
The above described coupling has the property that if the alarm has not rung during the coupling, then the output cannot be a cycle alarm and the two processes generate the same rooted, edge-labelled tree with the same embedded image, i.e., $\modtree \equiv \rbrwtree$ and $\tilde{v}^n \equiv \hat{v}^n$.
\end{lemma}

\begin{proof}
If the alarm did not go off, then the embedding $\hat{v}^n$ of $V(\rbrwtree)$ into $V(G^n)$ is injective. Since there were no initialization and modification alarms, we know that $\rbrwppp{X}{{\ibf},v}$ and $\modppp{X}{{\ibf},v}$ are the same on $[0,\hat{t}]$ for any ${\ibf} \in \rbrwtree$ and for any $v \in \rbrwnb{\ibf}$, therefore, $\rbrwtree \equiv \modtree$ (the two trees agree) and $\hat{v}^n \equiv \tilde{v}^n$ (the two embeddings agree).
\end{proof}

\begin{definition}[Coupling decorated with degree constraints]\label{def:deg-cons-coupling}
Let $\underline{p}$ satisfy the mild Assumption~\ref{assump:degree_constraint_mild}. Conditionally on the output of the RBRW (cf.\ Section~\ref{sss:rbrw}), let us decorate the vertices of $\rbrwtree$ with i.i.d.\ degree constraints $d(\ibf)$ with distribution $\underline{p}$. If there was no alarm during the coupling algorithm, then $\rbrwtree=\modtree$ and for each vertex $\ibf \in V(\modtree)$ we define the degree constraint of~$\ibf$ to be equal to~$d(\ibf)$.
On the other hand, if there was an alarm in the coupling and the output of the construction described in \refD{def:coupling-rbrw} is $\modtree$ (i.e., the output is not a cycle alarm), then we decorate the vertices of $\modtree$ with i.i.d.\ degree constraints with distribution~$\underline{p}$.
\end{definition}

\begin{corollary}[Synchronization and marginals decorated with degree constraints]\label{cor:decorated-synch}
The coupling decorated with degree constraints (see \refD{def:deg-cons-coupling}) preserves the properties seen in Lemmas~\ref{lem:pwit-marginal} and~\ref{lem:sync}, i.e.,
\begin{itemize}
\item the output of the exploration process of $G^n_*$ (described in \refS{subsec:two_phase_algo}) decorated with i.i.d.\ degree constraints and the output of the construction of \refD{def:coupling-rbrw} decorated with degree constraints (\refD{def:deg-cons-coupling}) have the same distribution,
\item if the alarm has not rung during the coupling, then the coupled processes decorated with degree constraints generate the same rooted, edge-labelled tree with the same embedded image.
\end{itemize}

Moreover, the upper bound on the probability of no alarm (and therefore, complete synchronization, including degree constraints) is the same as in the case without degree constraints, and in particular, the conclusion of Lemma~\ref{lem:alarm} will also hold in the case when the vertices are decorated with degree constraints.
\end{corollary}

Recall that we have defined the phantom saturation time $T_{\ibf}$ of a vertex $\ibf$ of the RDCP on the PWIT in \refD{def:phantom-sat}. Now we define it in the exploration process of $G^n_*$.

\begin{definition}[Phantom saturation times of the explored vertices of $G^n$]\label{def:phantom-sat-Gn}
Let us assume that there was no cycle alarm in the exploration of~$G^n_*$ (see \refS{subsec:two_phase_algo}). Let~${\ibf \in V(\exptree) \setminus \{ {\rt} \}}$ and $e_{{\ibf}}$ be the incoming edge of ${\ibf}$ (cf.\ Section~\ref{subsec:two_phase_algo}). Then the \textbf{phantom saturation time} of~${\ibf}$ in the exploration of~$G^n_*$ is the time when ${\ibf}$ saturates if we remove the edge $e_{{\ibf}}$ from~$G^n$. The phantom saturation time of the root ${\rt}$ is defined to be equal to its saturation time. We denote the phantom saturation time of ${\ibf} \in V(\exptree)$ by $T_{{\ibf}}^n$.
\end{definition}

\begin{definition}[Truncated phantom saturation time]\label{def_tr_st}
Let $t \in \mathbb{R}_+$ be fixed and let ${\ibf \in V(\exptree)}$. Then we define the \textbf{$t$-truncated phantom saturation time} $T^{n,t}_{{\ibf}}$ of ${\ibf}$ in the exploration of $G^n_*$ as
\begin{equation}\label{eq:truncated-phantom}
T^{n,t}_{{\ibf}} := T_{{\ibf}}^n \wedge t,
\end{equation}
where $T_{{\ibf}}^n$ is the phantom saturation time of vertex ${\ibf}$ (see \refD{def:phantom-sat-Gn}).
\end{definition}

Recall the notation $G^n_{\underline{p}}(\hat{t})$ and $G^\infty_{\underline{p}}(\hat{t})$ from Definitions~\ref{def:rdcp} and~\ref{def:rdcp-on-pwit}.

\begin{definition}[Neighborhood in the RDCP]\label{def:B-nR}
Let us denote by $\nbh{n}{\hat{t}}{v}$ the $R$-radius neighborhood of vertex $v$ in $G^n_{\underline{p}}(\hat{t})$ and by $\nbh{\infty}{\hat{t}}{\rt}$ the $R$-radius neighborhood of $\rt$ in~${G^\infty_{\underline{p}}(\hat{t})}$.
\end{definition}

\begin{lemma}[Determining the $R$-neighborhood]\label{lem:R-neighborhood}
Let us assume that there was no cycle alarm in the exploration of $G^n_*$ (see \refS{subsec:two_phase_algo}). The edge-labelled tree $\exptree$ equipped with degree constraints $d({\ibf})$ (see \refD{def:deg-cons-coupling}) determines the isomorphism class of the rooted graph $\nbh{n}{\hat{t}}{\emb{{\rt}}}$ (introduced in \refD{def:B-nR}).
\end{lemma}

\begin{proof}
Let us denote by $\mathcal{T}^{n,R}_{\hat{t}}$ the subgraph of $\exptree$ spanned by the first $R$ generations of~$\exptree$.

Note that $\nbh{n}{\hat{t}}{\emb{{\rt}}}$ is a subgraph of the embedded image of $\mathcal{T}^{n,R}_{\hat{t}}$.

Moreover, if we already know the $\hat{t}$-truncated phantom saturation time $T^{n,\hat{t}}_{{\ibf}}$ of each vertex of $\mathcal{T}^{n,R}_{\hat{t}}$, then the edges of $\nbh{n}{\hat{t}}{\emb{{\rt}}}$ can be determined, starting from the root, in the same order as they are generated in $\exptree$. Indeed, the embedded image of an edge~$({\rt}, j)$ of $\mathcal{T}^{n,R}_{\hat{t}}$ emanating from the root ${\rt}$ is an edge of $\nbh{n}{\hat{t}}{\emb{{\rt}}}$ if and only if
\begin{equation}\label{eq:root-edge-kept}
\edge{{j}} \le \min \left( T^{n,\hat{t}}_{{\rt}}, T^{n,\hat{t}}_{{j}} \right),
\end{equation}
noting that $\edge{{j}}=T^{n,\hat{t}}_{{\rt}}$ can occur with positive probability, but we almost surely have $\edge{{j}}\neq T^{n,\hat{t}}_{{j}}$.
 The embedded image of an edge $({\ibf}, {\ibf}j)$ of $\mathcal{T}^{n,R}_{\hat{t}}$ emanating from a vertex ${\ibf} \neq {\rt}$ is an edge of $\nbh{n}{\hat{t}}{\emb{{\rt}}}$ if and only if $\emb{{\ibf}}$ belongs to $\nbh{n}{\hat{t}}{\emb{{\rt}}}$ and
\begin{equation}\label{eq:non-root-edge-kept}
\edge{{\ibf j}} < \min \left( T^{n,\hat{t}}_{{\ibf}}, T^{n,\hat{t}}_{{\ibf j}} \right).
\end{equation}
Note that almost surely we have $\edge{{\ibf j}} \neq T^{n,\hat{t}}_{{\ibf j}}$.
Also note that in \eqref{eq:root-edge-kept} equality is allowed, while in \eqref{eq:non-root-edge-kept} equality is not allowed, since the $d({\ibf})^{\text{th}}$ child cannot be connected to $\emb{{\ibf}}$ in $\nbh{n}{\hat{t}}{\emb{{\rt}}}$ except for the case when $\ibf=\rt$.

Therefore, we only need to show that the $\hat{t}$-truncated phantom saturation time $T^{n,\hat{t}}_{{\ibf}}$ of a vertex ${\ibf} \in V(\mathcal{T}^{n,R}_{\hat{t}})$ can be determined by the edge-labelled tree $\exptree$ and the degree constraints.

For that purpose, let us define an auxiliary time $\tilde{t}({\ibf})$ for each vertex ${\ibf} \in V(\exptree)$:
\begin{equation}\label{eq:ttilde-def}
\tilde{t}({\ibf}):=\left\{\,
\begin{tabular}{ll}
$\hat{t}$, & if ${\ibf} \in V(\mathcal{T}^{n,R}_{\hat{t}})$\\
$\edge{\jbf}$, & if ${\ibf} \notin V(\mathcal{T}^{n,R}_{\hat{t}})$ and vertex ${\jbf}$ is the parent of vertex ${\ibf}$,
\end{tabular}
\right.
\end{equation}
i.e., $\tilde{t}({\ibf})$ is the right-hand side of the inequality \eqref{eq:edge-label-bfs} or \eqref{eq:edge-label-md}.

We show that the $\tilde{t}({\ibf})$-truncated phantom saturation time $T^{n,\tilde{t}({\ibf})}_{{\ibf}}$ can be determined for any vertex ${\ibf} \in V(\exptree)$ by only looking at $\exptree$. Note that \refL{lem:R-neighborhood} will follow immediately, as if ${\ibf} \in V(\mathcal{T}^{n,R}_{\hat{t}})$, then we have $\tilde{t}({\ibf})=\hat{t}$ (see \eqref{eq:ttilde-def}).

We claim that $T^{n,\tilde{t}({\ibf})}_{{\ibf}}$ can be determined by recursion starting from the leaves of~$\exptree$. Let~${\ibf}$ be a leaf of $\exptree$. First, note that by \refD{def:phantom-sat-Gn} the incoming edge of ${\ibf}$ is ignored. On the other hand, as~${\ibf}$ is a leaf, each point of the point process~$\ppp{Y}{{\ibf}}$ is larger than~$\tilde{t}({\ibf})$ (see~\eqref{eq:bfs-children-points} and~\eqref{eq:md-children-points}). Hence the activation time of any edge~$e$ (that is defined as the first point of the PPP~$\ppp{X}{e}$, see \refD{def:edge-ppp}) connecting $\emb{{\ibf}}$ and a non-explored neighbor of~$\emb{{\ibf}}$ is larger than $\tilde{t}({\ibf})$ (see \eqref{eq:ppp-X-expl-def} and \eqref{eq:ppp-Y-expl-def}). Therefore, we have $T^{n,\tilde{t}({\ibf})}_{{\ibf}} = \tilde{t}({\ibf})$ (see~\eqref{eq:truncated-phantom}).

Now let ${\ibf} \in V(\exptree)$ be a vertex such that we know $T^{n,\tilde{t}({\ibf j})}_{{\ibf j}}$ for all of its children ${\ibf j}$. Let us define
\begin{equation}\label{eq:Ci-def}
C^n({\ibf}) := \left\{\, \edge{{\ibf j}} \, \Big| \, {\ibf j} \text{ is a child of } {\ibf} \text{ such that } \edge{{\ibf j}} < T^{n,\tilde{t}({\ibf j})}_{{\ibf j}} \, \right\}.
\end{equation}
Thus $C^n({\ibf})$ contains the labels $\edge{{\ibf j}}$ of those children ${\ibf j}$ such that ${\ibf j}$ has not saturated before its incoming edge appeared and $\edge{{\ibf j}} < \tilde{t}({\ibf j})$, which is inequality \eqref{eq:edge-label-bfs} in the BFS phase and \eqref{eq:edge-label-md} in the monotone decreasing phase.

Note that $|C^n({\ibf})|$ is the number of such children. Note that in the monotone decreasing phase in the definition of $C^n({\ibf})$ (see \eqref{eq:Ci-def}) we do not take into account children ${\ibf j}$ that satisfy $\edge{{\ibf j}} > \edge{{\ibf}}$ (see \eqref{eq:truncated-phantom}, \eqref{eq:ttilde-def}), since the value of~$T^{n,\tilde{t}({\ibf})}_{{\ibf}}$ does not depend on such children.

Let us denote by $\tau^n_*({\ibf},k)$ the $k^{\text{th}}$ smallest element of $C^n({\ibf})$. Then we have
\begin{equation}\label{eq:determine-truncated-phantom}
T^{n,\tilde{t}({\ibf})}_{{\ibf}}:=\left\{\,
\begin{tabular}{ll}
$\tilde{t}({\ibf})$, & if $|C^n({\ibf})| < d({\ibf})$,\\
$\tau^n_*({\ibf},d({\ibf}))$ & otherwise.
\end{tabular}
\right.
\end{equation}

The recursion \eqref{eq:determine-truncated-phantom} shows that the $\tilde{t}({\ibf})$-truncated phantom saturation time $T^{n,\tilde{t}({\ibf})}_{{\ibf}}$ can be determined for all vertices ${\ibf} \in V(\exptree)$ by only looking at $\exptree$ and the degree constraints.
\end{proof}

It remains to prove \refL{lem:alarm}. First we show some preliminary results about the~RBRW.

\subsection{RBRW properties}\label{subsec:properties_rbrw_tree}

In this section our goal is to study the properties of the RBRW introduced in Section~\ref{sss:rbrw}.

\begin{lemma}[RBRW is stationary]\label{lem:stat}
The RBRW described in Section~\ref{sss:rbrw} is a stationary tree-indexed Markov chain, i.e., if we condition on $\rbrwtree$, then for each ${\ibf} \in V(\rbrwtree)$ the distribution of $\rbrwemb{{\ibf}}$ is $\underline{\nu}^n$ (defined in~\eqref{eq:nu-def}).
\end{lemma}

\begin{proof}
Let us first generate the family tree $\rbrwtree$ of the RBRW. Recalling \refCl{cl:pwit-expl}, we note that $\rbrwtree$ can be viewed as the output of the two-phase exploration of the PWIT, thus it can be generated without looking at $G^n$. Recall that $\hat{v}^n$ is the embedding of $V(\rbrwtree)$ into~$V(G^n)$ (see \eqref{eq:emb-rbrw}). The embedded image $\rbrwemb{{\rt}}$ of the root ${\rt}$ has distribution~$\underline{\nu}^n$ (see Section~\ref{sss:rbrw}).

Recall that if ${\ibf} \in \rbrwtree$ then for any $v \in \rbrwnb{{\ibf}}$ we defined the PPP $\rbrwppp{X}{{\ibf},v}$ (see \eqref{eq:ppp-X-rbrw-def}). Given the part of the RBRW tree and the part of its embedding that we have generated so far, these PPPs are conditionally independent and have the same intensity. Moreover, $\rbrwppp{Y}{{\ibf}}$ denotes the union of these PPPs (see~\eqref{eq:ppp-Y-rbrw-def}) and the children of ${\ibf}$ are defined by the points of $\rbrwppp{Y}{{\ibf}}$ (see Section~\ref{sss:rbrw}). It follows from \eqref{eq:rbrwemb-def} and the coloring property of Poisson point processes that, conditionally on $\rbrwppp{Y}{{\ibf}}$ (i.e., the labels on the edges that connect $\ibf$ to its children), the children of vertex ${\ibf}$ are embedded into uniformly chosen neighbors of $\rbrwemb{{\ibf}}$, conditionally independently of each other.

Therefore, if $P$ denotes the transition matrix of the simple random walk on $G^n$ and the embedded image of a vertex ${\ibf} \in V(\rbrwtree)$ has distribution $\underline{\nu}^n$, then the embedded image of any child of ${\ibf}$ has distribution $\underline{\nu}^n \cdot P = \underline{\nu}^n$.
Therefore, by induction, conditional on $\rbrwtree$, for each ${\ibf} \in V(\rbrwtree)$, the distribution of $\rbrwemb{{\ibf}}$ is $\underline{\nu}^n$.
\end{proof}

\begin{notation}
Let $G^n$ be a high degree almost regular graph sequence and recall the sequences $r_n$, $a_n$, $b_n$ and $c_n$ from \refD{def:hdar}. Let us define
\begin{equation}\label{eq:K_n-def}
K_n := \min \left\{\, \log r_n, \, \log \left( \frac{1}{a_n+b_n+c_n} \right)\, \right\}.
\end{equation}
Note that $K_n \to \infty$ as $n \to \infty$ by Definition~\ref{def:hdar}.
\end{notation}

First we state a lemma that can be used to estimate the tail probabilities of the number of children of a vertex:
\begin{lemma}[Poisson tail probability]\label{lem:poi-tail}\cite[Theorem 5.4]{MU}
If $\xi \sim \text{POI}(\lambda)$ and $x > \lambda$, then
\begin{equation}\label{eq:poi-tail}
\prob(\xi \ge x) \le \mathrm{e}^{-\lambda} \cdot \left(\frac{\mathrm{e}\lambda}{x}\right)^x.
\end{equation}
\end{lemma}

\begin{definition}[Size of the RBRW family tree]
Let $G^n$ be a high degree almost regular graph sequence and we consider the RBRW on it. Let us introduce the following notation.
\begin{itemize}
\item $T_r$ denotes the total number of vertices of the $r^{\text{th}}$ generation for $r = 0,1,\dots, R$,
\item $T_{\text{BFS}}$ denotes the total number of vertices of the BFS phase (i.e., the union of the first $R$ generations),
\item $T_{\text{MD}}$ denotes the total number of vertices of the monotone decreasing phase.
\end{itemize}
\end{definition}

\begin{lemma}[Bounds on the size of the RBRW family tree]\label{lem:key:size}
Let us fix $\hat{t}$. Let $G^n$ be a high degree almost regular graph sequence. Let us consider the RBRW on $G^n$ and let us define $K_n$ as in~\eqref{eq:K_n-def}. Then we have
\begin{align}
&\prob \left(T_r > K_n^r \right) \stackrel{n \to \infty}{\longrightarrow} 0 \quad \text{for any } r \in \{0,\, 1, \dots,\, R\},\label{eq:rth-level}\\
&\prob \left(T_{\text{BFS}} > K_n^{R+1} \right) \stackrel{n \to \infty}{\longrightarrow} 0,\label{eq:bfs-size}\\
&\prob \left(T_{\text{MD}} > K_n^{3R+1} \right) \stackrel{n \to \infty}{\longrightarrow} 0.\label{eq:md-size}
\end{align}
\end{lemma}

\begin{proof}$ $
First, we prove \eqref{eq:rth-level} and \eqref{eq:bfs-size}:

Let ${\ibf}$ be a vertex in one of the first $R$ generations of the family tree $\rbrwtree$. Conditionally on the part of the tree that we have generated up to the exploration of $\ibf$, the distribution of the number of children of vertex ${\ibf}$ in the PWIT that have label less than $\hat{t}$ is a random variable $\xi$ with distribution $\text{POI}\left(\hat{t} \right)$ (we omit the dependence of $\xi$ on $\ibf$ for ease of notation).

By \refL{lem:poi-tail}, we can upper bound the tail probability of $\xi$:
\begin{equation}\label{eq:tail-upper-bound}
\prob(\xi \ge K_n) \stackrel{\eqref{eq:poi-tail}}{\le} \mathrm{e}^{-\hat{t}} \cdot \left(\frac{\mathrm{e} \cdot \hat{t}}{K_n}\right)^{K_n} =: \alpha = \alpha(n).
\end{equation}

Thus by \eqref{eq:tail-upper-bound}, with probability at least $1-\alpha$, the root has at most $K_n$ children. Similarly, each vertex in the first $R$ generations has at most $K_n$ children with high probability. If each vertex in the first $R$ generations has at most $K_n$ children, the tree has at most $K_n^r$ vertices on the $r^{\text{th}}$ level and less than $K_n^{R+1}$ vertices in the union of the first~$R$ generations. Using the union bound, we can get an upper bound for the probability that among the first $K_n^{R+1}$ vertices there exists a vertex with more than $K_n$ children. Denoting this event by $A_n(R)$, by $K_n \to \infty$, we obtain
\begin{align*}
\prob(A_n(R)) &\le K_n^{R+1} \cdot \alpha(n) \stackrel{\eqref{eq:tail-upper-bound}}{=} \mathrm{e}^{-\hat{t}} \cdot \left(\mathrm{e} \cdot \hat{t} \right)^{R+1} \cdot \left(\frac{\mathrm{e} \cdot \hat{t}}{K_n} \right)^{K_n-R-1} \stackrel{n \to \infty}{\longrightarrow} 0.
\end{align*}
As $A_n(R)$ contains the events $\{\, T_r > K_n^r \,\}$ for every $r \le R$ and the event ${\{\, T_{\text{BFS}} > K_n^{R+1} \,\}}$, it proves \eqref{eq:rth-level} and \eqref{eq:bfs-size}.

\smallskip

Finally, we prove property \eqref{eq:md-size}:

Let us denote by $S_i,\, i=1,\, 2,\, \dots,\, T_R$ the size of the subtrees rooted at vertices that belong to the $R^{\text{th}}$ generation. Let us also introduce the notation $\prob_R$ and $\ev_R$ for the conditional probability and expectation given the first $R$ generations of the RBRW family tree $\rbrwtree$. First, we show that for any $i \in \{1,\, 2,\, \dots,\, T_R\}$ we have
\begin{equation}\label{eq:Si-expectation}
\ev_R(S_i) = \mathrm{e}^{\hat{t}}.
\end{equation}

For this purpose, let us pick $i \in \{1, 2, \dots, T_R\}$ (i.e., a vertex in generation $R$) and let us denote by $S_i^r$ the number of vertices of $S_i$ that are in generation $r$ of~$S_i$ (now we start to count generations from the root of $S_i$). We prove that
\begin{equation}\label{eq:Sir-expectation}
\ev_R(S_i^r) = \frac{\hat{t}^r}{r!}.
\end{equation}
If we consider the offspring of the root of $S_i$ with label at most $\hat{t}$ and disregard the monotone decreasing label requirement, then the expected number of vertices in the~$r^{\text{th}}$ generation is equal to $\hat{t}^r$. Let us pick a vertex~${\jbf}$ in generation~$r$ without looking at the labels of edges. Now the edge labels on the path connecting the root of~$S_i$ and vertex~${\jbf}$ are i.i.d.\ with uniform distribution on the interval~$[0,\hat{t}]$. Vertex~${\jbf}$ is in~$S_i^r$ if and only if the edge labels are monotone decreasing on the path from the root of $S_i$ to ${\jbf}$. Since each possible permutation of these $r$ labels is equally likely, we have~\eqref{eq:Sir-expectation}.

Hence we obtain
\eqref{eq:Si-expectation}, because $\ev_R(S_i) = \sum_{r=0}^\infty \ev_R(S_i^r) \stackrel{\eqref{eq:Sir-expectation}}{=} \mathrm{e}^{\hat{t}}$.

By \eqref{eq:Si-expectation} and Markov's inequality we get
\begin{equation}\label{eq:Si-markov}
\prob_R \left(S_i \ge \mathrm{e}^{\hat{t}}\cdot K_n^{2R} \right) \le K_n^{-2R} \quad \forall i \in \{1,\, 2,\, \dots,\, T_R\}.
\end{equation}

Using the union bound we obtain
\begin{align}
\begin{split}\label{eq:T_MD-union-bound}
\prob\left(T_{\text{MD}} > \mathrm{e}^{\hat{t}}\cdot K_n^{3R}\right) &\le \prob \left(T_R > K_n^R \right) + \prob\left( \max \limits_{i \in \{\, 1,2,\dots, T_R \,\}} S_i \ge \mathrm{e}^{\hat{t}} \cdot K_n^{2R} \, \big| \, T_R \le K_n^R \right) \le\\
&\prob \left(T_R > K_n^R \right) + K_n^R \cdot \prob_R \left(S_1 \ge \mathrm{e}^{\hat{t}}\cdot K_n^{2R} \right) \stackrel{\eqref{eq:Si-markov}}{\le}\\
&\prob \left(T_R > K_n^R \right) + K_n^{-R} \stackrel{n \to \infty}{\longrightarrow} 0
\end{split}
\end{align}
by \eqref{eq:rth-level} and since $K_n \to \infty$. Note that $\mathrm{e}^{\hat{t}} \le K_n$ if $n$ is large enough, thus~\eqref{eq:T_MD-union-bound} implies~\eqref{eq:md-size}.
\end{proof}

\begin{corollary}[Size of the RBRW family tree]\label{cor:rbrw-bound}
Let $G^n$ be a high degree almost regular graph sequence and we consider the RBRW on it. Recall that $\rbrwtree$ is the family tree of the RBRW. Let us define $K_n$ as in~\eqref{eq:K_n-def}. Then
\begin{equation}\label{eq:rbrw-bound}
\prob \left(\big|V(\rbrwtree) \big| > K_n^{3R+2} \right)\stackrel{n \to \infty}{\longrightarrow} 0.
\end{equation}
\end{corollary}

Note that the event of \eqref{eq:rbrw-bound} depends only on $\rbrwtree$ and does not depend on the embedding $\hat{v}^n$.

\begin{proof}[Proof of \refCl{cl:rdcp-pwit-welldefined}]
\refL{lem:R-neighborhood} and \refC{cor:rbrw-bound} show that almost surely the saturation time of each vertex of $G_{\underline{p}}^\infty(\hat{t})$ can be determined by looking at an almost surely finite subgraph of the decorated PWIT.
\end{proof}

\begin{definition}[Defective vertices]
Let us call a vertex $v \in V(G^n)$ a \textbf{defective vertex} if its degree is not in the interval $(1 \pm b_n)r_n$. Let us denote the set of defective vertices by $\mathcal{D}$:
\begin{equation}\label{eq:defective-set}
\mathcal{D} := \{\, v \in V(G^n) \, | \, \deg(v) \notin [(1-b_n)r_n, \, (1+b_n)r_n] \,\}.
\end{equation}
\end{definition}

\begin{lemma}[Stationary measure of defective vertices]\label{lem:defective-measure}
Let $a_n$, $b_n$, $c_n$, $r_n$ as in \refD{def:hdar}. Then
\begin{equation}
\underline{\nu}^n(\mathcal{D}) = \frac{\sum \limits_{v \in \mathcal{D}} \deg(v)}{\sum \limits_{w \in V(G^n)} \deg(w)} \le 2(a_n+b_n+c_n) \quad \text{ if } n \text{ is large enough.}\label{eq:defective-rate}
\end{equation}
\end{lemma}

\begin{proof}
First note that if $n$ is large enough, then
\begin{equation}\label{eq:deg_sum}
\sum \limits_{w \in V(G^n)} \deg(w) \ge (1-c_n)r_n \cdot |V(G^n)|
\end{equation}
by \refD{def:hdar}~\ref{item:hdar_degree_sum}.

On the other hand, we can use \refD{def:hdar}~\ref{item:hdar_regular} and~\ref{item:hdar_degree_sum} to conclude

\begin{align}
\begin{split}\label{eq:defective_deg_sum}
\sum \limits_{w \in \mathcal{D}} \deg(w) &= \sum \limits_{w \in V(G^n)} \deg(w) - \sum \limits_{w \in (V(G^n) \setminus \mathcal{D})} \deg(w) \stackrel{\eqref{eq:defective-set}}{\le}\\
& (1+c_n)r_n \cdot |V(G^n)| - (1-a_n)|V(G^n)| \cdot (1-b_n)r_n \le\\
& (a_n+b_n+c_n)r_n \cdot |V(G^n)|.
\end{split}
\end{align}

Hence \eqref{eq:defective-rate} follows from \eqref{eq:deg_sum} and \eqref{eq:defective_deg_sum} using the fact that $1-c_n \ge \frac12$ if $n$ is large enough (see Definition~\ref{def:hdar}).
\end{proof}

\begin{lemma}[No defective vertices visited by the RBRW]\label{lem:rbrw-defective}
With high probability, the RBRW does not visit defective vertices, i.e.,
\begin{equation}\label{eq:rbrw-defective-bound}
\prob \left( \rbrwemb{V(\rbrwtree)} \cap \mathcal{D} \neq \emptyset \right) \stackrel{n \to \infty}{\longrightarrow} 0.
\end{equation}
\end{lemma}

\begin{proof}
By Lemmas~\ref{lem:stat} and~\ref{lem:defective-measure}, the probability that a vertex ${\ibf} \in V(\rbrwtree)$ is embedded into a defective vertex can be upper bounded by $2(a_n+b_n+c_n)$. Therefore, by the union bound we have
\begin{equation}\label{eq:no-defective-conditional}
\prob \left( \rbrwemb{V(\rbrwtree)} \cap \mathcal{D} \neq \emptyset \, \big| \, \rbrwtree \right) \le 2(a_n+b_n+c_n) \cdot |V(\rbrwtree)|.
\end{equation}

By \refC{cor:rbrw-bound}, we also know that $|V(\rbrwtree)|$ can be upper bounded by $K_n^{3R+2}$ with high probability. Hence \eqref{eq:no-defective-conditional} implies \eqref{eq:rbrw-defective-bound}, since
\begin{equation*}
K_n^{3R+2} \cdot 2(a_n+b_n+c_n) \stackrel{\eqref{eq:K_n-def}}{\longrightarrow} 0 \quad \text{ as } n \to \infty.
\end{equation*}
\end{proof}

\begin{definition}[RBRW condition]\label{def:rbrw-cond}
Recall that $\rbrwtree$ denotes the family tree of the RBRW on $G^n$ and $\mathcal{D}$ denotes the set of defective vertices (see \eqref{eq:defective-set}). In the following, we denote by $C_n$ the intersection of the following two events:
\begin{align}
&\text{the size of the RBRW family tree is not larger than } K_n^{3R+2}: && \big|V(\rbrwtree) \big| \le K_n^{3R+2},\label{eq:rbrw-cond1} \\
&\text{the embedded image of } \rbrwtree \text{ has no defective vertices:} && \rbrwemb{V(\rbrwtree)} \cap \mathcal{D} = \emptyset.\label{eq:rbrw-cond2}
\end{align}
We also use the notation $C_n^c$ for the complement of the event $C_n$. We call the condition that the event $C_n$ holds the \textbf{RBRW condition}.
\end{definition}

\begin{corollary}[RBRW condition holds]
It follows from \refC{cor:rbrw-bound} and \refL{lem:rbrw-defective} that with high probability the RBRW condition holds, i.e.,
\begin{equation}\label{eq:rbrw-cond-prob}
\prob(C_n) \stackrel{n \to \infty}{\longrightarrow} 1.
\end{equation}
\end{corollary}

\subsection{Estimates of alarm probabilities}\label{subsec:alarm_prob}

In this section our goal is to prove \refL{lem:alarm}, i.e., that with high probability, the alarm never rings during our coupling algorithm (see \refD{def:alarm}).
This also allows us to prove Theorem~\ref{thm:local-limit1} for $\hat{t}<\infty$. 

\begin{definition}[Uniform distribution on $G^n$]
Let us denote by $\underline{\mu}^n = \left( \mu^n_v \right)_{v \in V(G^n)}$ the uniform distribution on the vertices of $G^n$, i.e.,
\begin{equation}
\mu^n_v = \frac{1}{|V(G^n)|} \quad \forall \, v \in V(G^n).
\end{equation}
\end{definition}

Recall the notion of the measure $\underline{\nu}^n$ on $V(G^n)$ from \eqref{eq:nu-def}.

\begin{lemma}[No initialization alarm]\label{lem:alarm1} One can couple $\emb{{\rt}}$ and $\rbrwemb{{\rt}}$ (i.e., the embedded images of the roots of the two processes, cf.\ Sections~\ref{subsec:two_phase_algo} and~\ref{subsec:coupling_to_rbrw}) in a way that $\prob \left( \emb{{\rt}} = \rbrwemb{{\rt}} \right) \to 1$ as $n \to \infty$.
\end{lemma}
\begin{proof} First, observe that it is an easy consequence of \refD{def:hdar} (see \cite[Equation~(4)]{NP}) that the total variation distance $d_{\text{TV}}(\underline{\mu}^n, \underline{\nu}^n)$ of the uniform measure $\underline{\mu}^n$ and the stationary measure $\underline{\nu}^n$
goes to zero as $n \to \infty$. It is well-known (cf.\ \cite[Proposition~4.7]{LPW}) that there exists a coupling of $\emb{{\rt}}$ and $\rbrwemb{{\rt}}$ such that
$\prob \left( \emb{{\rt}} = \rbrwemb{{\rt}} \right) = 1- d_{\text{TV}}(\underline{\mu}^n, \underline{\nu}^n)$, thus the proof of \refL{lem:alarm1} is complete.
\end{proof}

\begin{lemma}[No injectivity alarm]\label{lem:alarm2}
With high probability, there is no injectivity alarm in the coupling.
\end{lemma}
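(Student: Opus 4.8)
The plan is to condition on the family tree $\rbrwtree$, exploit that the RBRW embedding is then a reversible tree-indexed Markov chain (as established in the proof of \refL{lem:stat}), bound the probability that a fixed pair of vertices collides, and conclude with a union bound controlled by the size estimate of \refC{cor:rbrw-bound}.

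\textbf{Step 1 (pairwise collision bound).} Conditionally on $\rbrwtree$, the proof of \refL{lem:stat} shows that $\rbrwemb{\ver{\rt}} \sim \underline{\nu}^n$ and that, conditionally on its parent's image, the image of any vertex is uniform among the neighbours of the parent; thus $(\rbrwemb{\ver{\i}})_{\ver{\i} \in V(\rbrwtree)}$ is a tree-indexed Markov chain with transition matrix $P$ (the simple random walk on $G^n$) and initial law $\underline{\nu}^n$. Since $\underline{\nu}^n$ is reversible for $P$, the parent-given-child transition also equals $P$, so for two distinct vertices $\ver{\i},\ver{\j} \in V(\rbrwtree)$ at tree-distance $k = k(\ver{\i},\ver{\j}) \ge 1$ we have, conditionally on $\rbrwtree$, that $\rbrwemb{\ver{\i}} \sim \underline{\nu}^n$ and $\rbrwemb{\ver{\j}} \mid \rbrwemb{\ver{\i}} \sim P^k(\rbrwemb{\ver{\i}},\cdot)$ (the restriction of the tree-indexed chain to the unique $\ver{\i}$--$\ver{\j}$ path is a Markov chain run with kernel $P$). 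Hence
\[
\p\bigpar{\rbrwemb{\ver{\i}} = \rbrwemb{\ver{\j}} \,\big|\, \rbrwtree} \;=\; \sum_{v \in V(G^n)} \nu^n_v\, P^k(v,v) \;=\; \frac{\sum_{v \in V(G^n)} \deg(v)\, P^k(v,v)}{\sum_{w \in V(G^n)} \deg(w)}.
\]
Using $\deg(v) P(v,u) = \indic{u \sim v}$ and that $P^{k-1}$ is row-stochastic, the numerator satisfies
\[
\sum_{v \in V(G^n)} \deg(v)\, P^k(v,v) = \sum_{v} \sum_{u \,:\, u \sim v} P^{k-1}(u,v) = \sum_{u} \sum_{v \,:\, v \sim u} P^{k-1}(u,v) \le \sum_{u \in V(G^n)} 1 = |V(G^n)|,
\]
while the denominator is at least $(1-c_n)\, r_n\, |V(G^n)|$ by \refD{def:hdar}(b) (cf.\ \eqref{eq:deg_sum}). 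Consequently, for all large $n$ the conditional collision probability of any fixed pair is at most $2/r_n$, uniformly in the pair.

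\textbf{Step 2 (union bound and size control).} Let $D_n := \bigcpar{\,|V(\rbrwtree)| > K_n^{3R+2}\,}$, an event measurable with respect to $\rbrwtree$. Summing the Step~1 bound over all $\binom{|V(\rbrwtree)|}{2}$ pairs gives $\p(\text{injectivity alarm} \mid \rbrwtree) \le |V(\rbrwtree)|^2/r_n$ for large $n$, hence
\[
\p\bigpar{\text{injectivity alarm}} \;\le\; \p(D_n) + \e\Bigsqpar{\, \mathbbm{1}_{D_n^c}\, |V(\rbrwtree)|^2 / r_n \,} \;\le\; \p(D_n) + \frac{K_n^{6R+4}}{r_n}.
\]
By \refC{cor:rbrw-bound} we have $\p(D_n) \to 0$, and since $K_n \le \log r_n$ by \eqref{eq:K_n-def} while $r_n \to \infty$, also $K_n^{6R+4}/r_n \le (\log r_n)^{6R+4}/r_n \to 0$; this proves the lemma.

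\textbf{Main obstacle.} The delicate point is Step~1, namely verifying that conditionally on $\rbrwtree$ the embedding really is a tree-indexed Markov chain run with kernel $P$ in \emph{both} directions, so that the two-point function equals $\nu^n_v P^k(v,v)$; this uses reversibility of $\underline{\nu}^n$ together with the fact (\refL{lem:stat}) that every vertex is marginally $\underline{\nu}^n$-distributed. If one wishes to avoid the trace computation, an equivalent route is to generate $\rbrwtree$ first (without $G^n$) and then reveal the embedding one vertex at a time in exploration order: a newly placed particle is a uniform neighbour of its already-placed parent, so, on the event of \refL{lem:rbrw-defective} that all visited vertices are non-defective, it coincides with one of the $\le |V(\rbrwtree)|$ earlier particles with probability at most $|V(\rbrwtree)|/((1-b_n) r_n)$; a union bound over vertices together with \refC{cor:rbrw-bound} and \refL{lem:rbrw-defective} again gives the claim.
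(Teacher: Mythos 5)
Your argument is correct, and it differs from the paper's proof in a meaningful way. The paper reveals the embedded images $\rbrwemb{i_1},\rbrwemb{i_2},\dots$ sequentially in generation order, bounds the probability that $\rbrwemb{i_l}$ hits one of the $l-1$ earlier particles by $(l-1)/\deg(\rbrwemb{p_{\rbrwtree}(i_l)})$, and then uses \refL{lem:stat} only to compute $\e\bigl[\deg(\rbrwemb{\i})^{-1} \mid \rbrwtree\bigr] = |V(G^n)|/(2|E(G^n)|)$; a crude union bound over vertices then gives $\p(\hat v^n \text{ not injective} \mid \rbrwtree) \le 2|V(\rbrwtree)|^3/r_n$. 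You instead exploit the full strength of \refL{lem:stat}: under its conditional law the embedding is a reversible tree-indexed $P$-chain in stationarity, so the two-point collision probability for a fixed pair at tree distance $k$ is exactly the diagonal quantity $\sum_v \nu^n_v\,P^k(v,v)$, which you bound uniformly in $k$ by the trace computation $\sum_v \deg(v)P^k(v,v)\le|V(G^n)|$. A union bound over pairs then gives the sharper $|V(\rbrwtree)|^2/r_n$. Both approaches close the argument in the same way via \refC{cor:rbrw-bound} and $K_n \le \log r_n$. The trade-off is that your main route needs the (correct, but slightly more delicate) observation that reversibility lets you run the tree-indexed chain in both directions along the $\i$--$\j$ path; the paper's sequential revealing avoids this and is more elementary but wastes a factor of $|V(\rbrwtree)|$, which is harmless here. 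Your alternative route in the final paragraph (revealing one vertex at a time and using non-defectivity to lower-bound degrees) is essentially the paper's argument with a different way of controlling $\deg(\rbrwemb{\cdot})$, and would also work.
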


\begin{proof}
Let $V(\rbrwtree) = \{ \, i_1,\, i_2, \, \dots,\, i_{|V(\rbrwtree)|} \, \}$, where the vertices are ordered in the same order as they are generated in $\rbrwtree$. In particular, the index of each vertex is larger than the index of its parent.

Observe that for any $l \in \left\{ \,1,2, \dots, |V(\rbrwtree)| \, \right\}$, we have
\begin{equation}\label{eq:inj-obsevarion}
\prob \left(\rbrwemb{i_l} \in \left( \cup_{j=1}^{l-1} \{ \rbrwemb{i_j} \} \right) \, \big|\, \rbrwtree,\, \left(\rbrwemb{i_j}\right)_{j=1}^{l-1} \right) \le \frac{l-1}{\deg \left(\rbrwemb{p_{\rbrwtree}(i_l)} \right)},
\end{equation}
where $p_{\rbrwtree}(\ibf)$ denotes the parent of vertex $\ibf$ in $\rbrwtree$. Therefore, we have
\begin{align}
\begin{split}\label{eq:v-not-inj-cond}
\prob \left( \hat{v}^n \text{ is not injective } | \, \rbrwtree \right) &
\le |V(\rbrwtree)| \cdot \sum \limits_{{\ibf} \in V(\rbrwtree) \setminus \{ \rt \} } \ev \left( \frac{1}{\deg \left( \rbrwemb{p_{\rbrwtree}({\ibf})} \right)} \, \Bigg| \, \rbrwtree \right) \le \\
& |V(\rbrwtree)|^2 \cdot \sum \limits_{{\ibf} \in V(\rbrwtree)} \ev \left( \frac{1}{\deg(\rbrwemb{{\ibf}})} \, \Big| \, \rbrwtree \right),
\end{split}
\end{align}
where the first inequality follows from the union bound, equation \eqref{eq:inj-obsevarion} and the tower property of conditional expectation, while the second
inequality follows from the fact that each vertex of $ \rbrwtree$ can be the parent of at most $|V(\rbrwtree)|$ other vertices of $ \rbrwtree$.
Note that for any ${\ibf} \in V(\rbrwtree)$ we have
\begin{equation}\label{eq:deg-recip-cond-exp}
\ev \left( \frac{1}{\deg(\rbrwemb{{\ibf}})} \, \Big| \, \rbrwtree \right) \stackrel{\text{\refL{lem:stat}}}{=} \sum \limits_{v \in V(G^n)} \underline{\nu}^n(v) \cdot \frac{1}{\deg(v)} \stackrel{\eqref{eq:nu-def}}{=} \frac{|V(G^n)|}{2|E(G^n)|}.
\end{equation}

Hence we obtain
\begin{equation}\label{eq:v-not-inj-cond2}
\prob \left( \hat{v}^n \text{ is not injective } | \, \rbrwtree \right) \stackrel{\eqref{eq:v-not-inj-cond},\, \eqref{eq:deg-recip-cond-exp}}{\le} |V(\rbrwtree)|^3 \cdot \frac{|V(G^n)|}{2|E(G^n)|} \stackrel{\text{\refD{def:hdar}}}{\le} \frac{2|V(\rbrwtree)|^3}{r_n},
\end{equation}
where the last inequality holds if $n$ is large enough. Therefore, we have
\begin{align*}
&\prob \left( \hat{v}^n \text{ is not injective} \right) \le\\
&\prob \left( \big|V(\rbrwtree) \big| > K_n^{3R+2} \right) + \prob \left(\, \hat{v}^n \text{ is not injective and } \big|V(\rbrwtree) \big| \le K_n^{3R+2} \, \right) \stackrel{\eqref{eq:rbrw-cond1},\, \eqref{eq:v-not-inj-cond2}}{\le} \\
& \prob( C_n^c ) + \frac{2\cdot (K_n^{3R+2})^3}{r_n} \stackrel{\eqref{eq:K_n-def},\, \eqref{eq:rbrw-cond-prob}}{\longrightarrow} 0 \quad \text{as } n \to \infty.
\end{align*}
\end{proof}

\begin{lemma}[No modification alarm]\label{lem:alarm3}
With high probability, there is no modification alarm in the coupling.
\end{lemma}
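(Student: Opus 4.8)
The plan is to prove \refL{lem:alarm3} by a routine first-moment bound carried out on the high-probability event $C_n$ that the RBRW condition holds (see \refD{def:rbrw-cond} and~\eqref{eq:rbrw-cond-prob}). Recall that on $C_n$ the family tree satisfies $\bigabs{V(\rbrwtree)} \le K_n^{3R+2}$ by~\eqref{eq:rbrw-cond1} and $\rbrwemb{V(\rbrwtree)} \cap \mathcal{D} = \emptyset$ by~\eqref{eq:rbrw-cond2}, so it suffices to show that, conditionally on $C_n$, the probability of a modification alarm is $O(K_n^{3R+2} b_n) = o(1)$, and then to combine this with $\p(C_n^c) \to 0$.

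First I would isolate the per-edge estimate. Fix $\ver{\i} \in V(\rbrwtree)$ and $v \in \rbrwnb{\i}$; by \refD{def:coupling-rbrw} the discrepancy between $\rbrwppp{X}{\ver{\i},v}$ and $\modppp{X}{\ver{\i},v}$ is carried by the point process $\ppp{U}{\ver{\i},v}$ (a thinning of $\rbrwppp{X}{\ver{\i},v}$, in the case $1/r_n < 1/\deg(\rbrwemb{\ver{\i}})$) or by $\ppp{V}{\ver{\i},v}$ (an independent PPP, in the case $1/r_n > 1/\deg(\rbrwemb{\ver{\i}})$), and in either case, conditionally on the tree and the embedding (hence on $\deg(\rbrwemb{\ver{\i}})$), the expected number of points of this discrepancy process in $[0,\hat{t}]$ equals $\hat{t}\,\bigabs{1/r_n - 1/\deg(\rbrwemb{\ver{\i}})}$. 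Hence, by a first-moment bound, the conditional probability that it has a point in $[0,\hat{t}]$ is at most $\hat{t}\,\bigabs{1/r_n - 1/\deg(\rbrwemb{\ver{\i}})}$; summing over the at most $\deg(\rbrwemb{\ver{\i}})$ neighbours $v \in \rbrwneunb{\i} \cup \rbrwactnb{\i}$ that matter for \refD{def:mod-alarm}, the conditional probability of a modification alarm at $\ver{\i}$ is at most
\[
\hat{t}\cdot \deg(\rbrwemb{\ver{\i}})\cdot \Bigabs{\frac{1}{r_n}-\frac{1}{\deg(\rbrwemb{\ver{\i}})}} \;=\; \hat{t}\cdot \Bigabs{\frac{\deg(\rbrwemb{\ver{\i}})}{r_n}-1},
\]
which is at most $\hat{t}\, b_n$ whenever $\rbrwemb{\ver{\i}} \notin \mathcal{D}$ (see~\eqref{eq:defective-set}).

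Next I would combine this with the size and no-defective-vertex controls. Conditioning on $\rbrwtree$ and $\hat{v}^n$, on $C_n$ every vertex of $\rbrwtree$ is non-defective and there are at most $K_n^{3R+2}$ of them, so summing the per-vertex estimate over $V(\rbrwtree)$ gives
\[
\p\bigpar{\text{modification alarm}} \;\le\; \p(C_n^c) \;+\; K_n^{3R+2}\cdot \hat{t}\,b_n .
\]
When $b_n > 0$ we have $K_n \le \log\bigpar{1/(a_n+b_n+c_n)} \le \log(1/b_n)$ by~\eqref{eq:K_n-def}, so $K_n^{3R+2} b_n \le \bigpar{\log(1/b_n)}^{3R+2} b_n \to 0$; when $b_n = 0$ no non-defective vertex can ever trigger a modification, so the same bound $\hat{t}\,b_n = 0$ holds trivially. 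Since $\p(C_n^c) \to 0$ by~\eqref{eq:rbrw-cond-prob}, this proves the lemma.

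I expect the only delicate point to be the bookkeeping behind the union bound over the random vertex set $V(\rbrwtree)$: one truncates the sum at the deterministic level $K_n^{3R+2}$ using $C_n$, and one must apply the per-edge estimate conditionally on enough information for $\deg(\rbrwemb{\ver{\i}})$ to already be revealed — which it is, since the embedded image of a vertex is determined when its parent is explored, strictly before the vertex itself is explored. Everything else is a routine first-moment computation together with the high degree almost regularity of $\{G^n\}$ (\refD{def:hdar}).
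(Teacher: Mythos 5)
Your overall structure — truncate on $C_n$, use a first-moment union bound, and exploit the non-defectiveness bound $\lvert\deg(\rbrwemb{\ver{\i}})/r_n - 1\rvert \le b_n$ — is the right skeleton, and your final quantity $K_n^{3R+2}\hat{t}\, b_n$ is in fact a bit sharper than what the paper records. However, your per-vertex conditional estimate has a genuine gap, and it is not the bookkeeping point you flag as delicate. You assert that, conditionally on the tree $\rbrwtree$ and the embedding $\hat{v}^n$, the discrepancy process $\ppp{U}{\ver{\i},v}$ (thinning case $\deg(\rbrwemb{\ver{\i}}) < r_n$) has expected number of points in $[0,\hat{t}]$ equal to $\hat{t}\,\lvert 1/r_n - 1/\deg(\rbrwemb{\ver{\i}})\rvert$. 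That identity is the expectation given only the degree; it is not the conditional expectation given $(\rbrwtree,\hat{v}^n)$. Conditioning on $\rbrwtree$ (which carries the edge labels) has already revealed the locations of the points of $\rbrwppp{X}{\ver{\i},v}$ that produced the children of $\ver{\i}$, so given $(\rbrwtree,\hat{v}^n)$ the conditional expected number of deletions in $[0,\hat{t}]$ is the (random) number of revealed points times $1-\deg(\rbrwemb{\ver{\i}})/r_n$, plus an $O(\hat{t}\, b_n)$ contribution from the unrevealed part of the interval. Summed over $v$, the resulting per-vertex bound is of the form $(c_{\rbrwtree}(\ver{\i})+\hat{t})\,b_n$, where $c_{\rbrwtree}(\ver{\i})$ is the number of children of $\ver{\i}$ — exactly the random factor that appears in the paper's estimate \eqref{eq:Mi-cond} — and it cannot be replaced by the deterministic $\hat{t}$.

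The conclusion survives, but you need to decide which conditioning you are actually using. If you condition on $\mathcal{F}=\sigma\bigpar{\rbrwtree,\hat{v}^n}$, as is needed in order to split off the $\mathcal{F}$-measurable event $C_n$, then you should work with the per-vertex bound $(c_{\rbrwtree}(\ver{\i})+\hat{t})\,b_n$ and observe that $\sum_{\ver{\i}\in V(\rbrwtree)} c_{\rbrwtree}(\ver{\i}) = |V(\rbrwtree)|-1$, which on $C_n$ gives $\mathcal{O}(K_n^{3R+2}\, b_n)$ — recovering your exponent and in fact improving on the paper's cruder bound $c_{\rbrwtree}(\ver{\i})\le |V(\rbrwtree)|$, which only yields $K_n^{6R+5}b_n$. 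Alternatively, your per-vertex estimate $\hat{t}\,b_n$ is literally correct if you condition only on the exploration filtration up to, but not including, the moment $\ver{\i}$ is explored; but then $V(\rbrwtree)$ is not measurable in that filtration and the union bound over this random index set, truncated at $K_n^{3R+2}$, requires a stopping-time argument rather than the direct $C_n$-truncation you invoke. As written, your argument uses the conditioning that validates the per-edge estimate together with the union bound that requires the other one.
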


\begin{proof}
Let ${\ibf} \in V(\rbrwtree)$ and let us denote by $M_{{\ibf}}$ the event that there is a modification alarm when we explore vertex ${\ibf}$.

Recall the point processes $\ppp{U}{{\ibf},v}$ and $\ppp{V}{{\ibf},v}$ defined in \eqref{eq:ppp-U-def} and \eqref{eq:ppp-V-def}. By \refD{def:mod-alarm}, we know that
\begin{equation}\label{eq:Mi-with-ppp}
M_{{\ibf}} = \left\{\, \exists \, v \in \rbrwneunb{{\ibf}} \cup \rbrwactnb{{\ibf}} \text{ such that } \mathcal{U}_{{\ibf},v}^{n,1} \le \hat{t} \text{ or } \mathcal{V}_{{\ibf},v}^{n,1} \le \hat{t} \,\right\}.
\end{equation}

We also introduce a $\sigma$-algebra $\mathcal{F} := \sigma \left( \rbrwtree,\, \rbrwemb{{\ibf}},\, {\ibf} \in V(\rbrwtree)\, \right)$.

Then by \refD{def:coupling-rbrw}, for any $\ibf \in V(\rbrwtree)$ we have
\begin{equation}\label{eq:Mi-cond}
\prob(M_{{\ibf}} \, | \, \mathcal{F} ) \stackrel{\eqref{eq:Mi-with-ppp}}{\le} c_{\rbrwtree}({\ibf}) \cdot \left( 1-\frac{\deg(\rbrwemb{{\ibf}})}{r_n} \right)_{+} + \hat{t} \cdot \left( \frac{\deg(\rbrwemb{{\ibf}})}{r_n} -1 \right)_{+},
\end{equation}
where $c_{\rbrwtree}({\ibf})$ denotes the number of children of vertex ${\ibf}$ in $\rbrwtree$ and $x_{+}$ denotes the positive part of $x$.

Note that if the $\mathcal{F}$-measurable event $C_n$ holds (see \refD{def:rbrw-cond}), then by \eqref{eq:rbrw-cond2} we~have
\begin{equation}\label{eq:mod-alarm-aux}
\Bigg| \frac{\deg(\rbrwemb{{\ibf}})}{r_n} - 1 \Bigg| \le b_n.
\end{equation}

Therefore, for any $\ibf \in V(\rbrwtree)$ we have
\begin{equation}\label{eq:Mi-cond-with-Cn}
\prob \left(M_{{\ibf}} \, | \, \mathcal{F} \right) \cdot \indic{C_n} \stackrel{\eqref{eq:Mi-cond},\, \eqref{eq:mod-alarm-aux}}{\le} \indic{C_n} \cdot \left(|V(\rbrwtree)| \cdot b_n + \hat{t}\cdot b_n \right).
\end{equation}

Let us denote by $M$ the event that there is a modification alarm in the coupling. By the union bound we obtain
\begin{align}
\begin{split}\label{eq:Mi-union-with-Cn}
\prob(M \, | \, \mathcal{F})\cdot \indic{C_n} &= \prob \left(\bigcup \limits_{{\ibf} \in V(\rbrwtree)} M_{{\ibf}} \, \Big| \, \mathcal{F} \right) \cdot \indic{C_n} \stackrel{\eqref{eq:Mi-cond-with-Cn}}{\le}\\
& \indic{C_n} \cdot |V(\rbrwtree)| \cdot \left(|V(\rbrwtree)|+\hat{t} \right) \cdot b_n \stackrel{\eqref{eq:rbrw-cond1}}{\le} 2 \cdot \left(K_n^{3R+2}\right)^2 \cdot b_n.
\end{split}
\end{align}

Finally, we show that with high probability, there is no modification alarm in the coupling:
\begin{align*}
\prob (M) &\le \prob(C_n^c) + \prob \left( M \cap C_n \right) = \prob(C_n^c) + \ev \left( \prob \left( M \cap C_n \, | \, \mathcal{F} \right) \right) \stackrel{(*)}{=}\\
& \prob(C_n^c) + \ev \left( \prob \left( M \, | \, \mathcal{F} \right) \cdot \indic{C_n} \right) \stackrel{\eqref{eq:Mi-union-with-Cn}}{\le}\\
& \prob(C_n^c) + K_n^{6R+5} \cdot b_n \stackrel{\eqref{eq:K_n-def},\, \eqref{eq:rbrw-cond-prob}}{\longrightarrow} 0 \quad \text{as }n \to \infty,
\end{align*}
where in $(*)$ we used the fact that $C_n$ is $\mathcal{F}$-measurable.
\end{proof}

\begin{proof}[Proof of \refL{lem:alarm}]
\refL{lem:alarm} follows from Lemmas~\ref{lem:alarm1}, \ref{lem:alarm2} and~\ref{lem:alarm3}.
\end{proof}

\begin{proof}[Proof of Theorem~\ref{thm:local-limit1} for $\hat{t}<\infty$]
We first show that the relation \eqref{eq:local-weak-dist} holds between the graph sequence~$G^n_{\underline{p}}(\hat{t})$ and the random rooted graph~$G^\infty_{\underline{p}}(\hat{t})$, i.e., 
\begin{equation}\label{eq:local-limit-dist-goal}
\prob \left(\nbh{n}{\hat{t}}{u^n} \simeq (H,v) \right) \longrightarrow \prob \left(\nbh{\infty}{\hat{t}}{\rt} \simeq (H,v) \right) \quad \text{as } n \to \infty
\end{equation}
for every rooted graph $(H,v)$, where $u^n$ is a uniformly chosen vertex of $G^n$.

In \refL{lem:R-neighborhood} we have shown that $\exptree$ equipped with degree constraints determines $\nbh{n}{\hat{t}}{\emb{\rt}}$. Note that $\emb{\rt}$ is a uniformly chosen vertex of $G^n$ (see \refS{subsec:two_phase_algo}). On the other hand, $\rbrwtree$ equipped with degree constraints determines $\nbh{\infty}{\hat{t}}{\rt}$ (see \refCl{cl:pwit-expl}). Moreover, if $\exptree$ is isomorphic to $\rbrwtree$ and they have the same edge labels, then $\nbh{n}{\hat{t}}{\emb{\rt}}$ is isomorphic to $\nbh{\infty}{\hat{t}}{\rt}$. Hence, by \refL{lem:alarm} and \refC{cor:decorated-synch}, we infer that~\eqref{eq:local-limit-dist-goal} holds.

In order to prove local convergence in probability (i.e., that relation \eqref{eq:local-weak-in-prob} holds between $G^n_{\underline{p}}(\hat{t})$ and $G^\infty_{\underline{p}}(\hat{t})$), we use that it is equivalent to the convergence in distribution of the two-vertex measure by \cite[Remark~2.13]{H24b}.
Hence it suffices to show that, as~$n \to \infty$, we have
\begin{align}
\begin{split}\label{eq:two-vertex-measure-conv}
&\prob \left(\nbh{n}{\hat{t}}{u_1^n} \simeq (H_1,v_1), \, \nbh{n}{\hat{t}}{u_2^n} \simeq (H_2,v_2), \right) \longrightarrow\\
&\prob \left(\nbh{\infty}{\hat{t}}{\rt} \simeq (H_1,v_1) \right) \cdot
\prob \left(\nbh{\infty}{\hat{t}}{\rt} \simeq (H_2,v_2) \right)
\end{split}
\end{align}
for every pair of rooted graphs $(H_1,v_1)$ and $(H_2, v_2)$, where $u_1^n$, $u^n_2$ are independently and uniformly chosen vertices of $G^n$. In order to prove \eqref{eq:two-vertex-measure-conv}, it is enough to see that we can couple two i.i.d.\ copies of the RBRW with the joint exploration of the $R$-neighborhoods of two independent and uniformly distributed vertices of $G^n_*$ in a way that
(a) the alarm probability goes to $0$ and
(b) no alarm implies synchronization of both explorations with their corresponding RBRWs.

This coupling can be constructed analogously to the coupling described in \refS{subsec:coupling_to_rbrw}: 
we first perform the two-phase exploration of $G^n_*$ starting from $u_1^n$, then we perform it starting from $u^n_2$, naturally changing the definitions of various alarms: e.g.\ the cycle alarm now includes the event that the exploration starting from
$u^n_2$ reaches a vertex explored from $u_1^n$, and the injectivity alarm now includes
the event that the second RBRW visits a vertex of $G^n$ that is also visited by the first RBRW. The PPP modifications of Definition~\ref{def:coupling-rbrw} also need to be adjusted in a way that reflects the fact that
as we explore~$G^n_*$ from $u_1^n$, we find that the PPPs of some edges are empty on some intervals, and we need to remember this (negative) information as we explore $G^n_*$ from $u_2^n$ (essentially, we need to add the vertices explored from $u_1^n$ to the set of explored vertices when we start exploring from $u_2^n$). As a result, the probability of the modification alarm becomes slightly larger when we perform the exploration from $u^n_2$, but it still goes to zero. 

We omit the technical details, since the proofs and error bounds are routine and fairly minor modifications of the ones given in Sections~\ref{subsec:coupling_properties}, \ref{subsec:properties_rbrw_tree} and~\ref{subsec:alarm_prob} (the main point is that each exploration only reveals a relatively small part of the graph, so it is unlikely that the two explorations ever meet). 
\end{proof}

\subsection{Extensions and corollaries}\label{subsec:local_limit_extension}
In this subsection we complete the proof of Theorem~\ref{thm:local-limit1} by dealing with the final graph case $\hat{t}=\infty$. 
This also enables us to prove \refC{cor:local-limit-discrete}.

\begin{definition}[Neighborhood in the final graph]
Let us extend the notion of $\nbh{n}{\hat{t}}{v}$ and $\nbh{\infty}{\hat{t}}{\rt}$ (see \refD{def:B-nR}) to the cases when $\hat{t}$ is infinite as follows:
\begin{itemize}
\item $\nbh{n}{\infty}{v}$ denotes the $R$-radius neighborhood of vertex $v$ in $G^n_{\underline{p}}(\infty)$,
\item $\nbh{\infty}{\infty}{\rt}$ denotes the $R$-radius neighborhood of $\rt$ in $G^\infty_{\underline{p}}(\infty)$.
\end{itemize}
\end{definition}

\begin{proof}[Proof of Theorem~\ref{thm:local-limit1} for $\hat{t}=\infty$]
Similar to the $\hat{t}<\infty$ case in Section~\ref{subsec:alarm_prob}, 
we first show that the relation \eqref{eq:local-weak-dist} holds between $G^n_{\underline{p}}(\infty)$ and the random graph $G^\infty_{\underline{p}}(\infty)$ introduced in Definition~\ref{def:rdcp-on-pwit}, i.e.,
\begin{equation}\label{eq:final-graph-goal}
\prob \left(\nbh{n}{\infty}{\emb{{\rt}}} \simeq (H,v) \right) \longrightarrow \prob \left(\nbh{\infty}{\infty}{\rt} \simeq (H,v) \right) \quad \text{ as } n \to \infty
\end{equation}
for every rooted graph $(H,v)$.

We will show that for any $\varepsilon>0$, there exists $n_0 \in \mathbb{N}$ such that for any $n \ge n_0$ there is a coupling between $\nbh{n}{\infty}{\emb{{\rt}}}$ and $\nbh{\infty}{\infty}{\rt}$ such that
\begin{equation}\label{eq:isomorph-whp}
\prob \left(\nbh{n}{\infty}{\emb{{\rt}}} \simeq \nbh{\infty}{\infty}{\rt} \right) \ge 1-\varepsilon.
\end{equation}
This will indeed immediately imply \eqref{eq:final-graph-goal}.

First, note that
\begin{equation}\label{eq:Bt-equals-Binfty}
\begin{array}{cc}
\text{if each vertex of $\nbh{n}{t}{\emb{{\rt}}}$ and $\nbh{\infty}{t}{\rt}$ has saturated before time $t$, then} \\
 \text{we have } \nbh{n}{t}{\emb{{\rt}}} \simeq \nbh{n}{\infty}{\emb{{\rt}}} \text{ and } \nbh{\infty}{t}{\rt} \simeq \nbh{\infty}{\infty}{\rt}.
\end{array}
\end{equation}

Observe that we have
\begin{equation*}
\nbh{\infty}{t}{\rt} \stackrel{\text{a.s.}}{\longrightarrow} \nbh{\infty}{\infty}{\rt} \quad \text{ as } t \to \infty,
\end{equation*}
since $\nbh{\infty}{\infty}{\rt}$ is almost surely a finite tree and each phantom saturation time is almost surely finite (see \refC{cl:phantom-sat-finite} and \refD{def:mtbp}). Therefore, for any $R \in \mathbb{N}$ and $\varepsilon>0$, there exists $t^*=t^*(R,\varepsilon)<\infty$ such that
\begin{equation}\label{eq:each-vertex-sat-inf}
\prob \left( \, \forall \, \ibf \in V(\nbh{\infty}{t^*}{\rt}) \, : \, T_{\ibf} < t^* \, \right) > 1- \frac{\varepsilon}{2},
\end{equation}
where $ T_{\ibf}$ denotes the phantom saturation time of vertex $\ibf$ in the RDCP on the PWIT (see \refD{def:phantom-sat}), i.e., \eqref{eq:each-vertex-sat-inf} shows that with high probability, every vertex of $\nbh{\infty}{\infty}{\rt}$ has phantom saturated before time $t^*$.

The identity \eqref{eq:Bt-equals-Binfty} implies
\begin{equation}\label{eq:isomorph1}
\{ \, \forall \, \ibf \in V(\nbh{\infty}{t^*}{\rt}) \, : \, T_{\ibf} < t^* \} \subseteq \{ \nbh{\infty}{t^*}{\emb{{\rt}}} \simeq \nbh{\infty}{\infty}{\rt} \}.
\end{equation}

We couple $G^n_{\underline{p}}(t)$ and $G^\infty_{\underline{p}}(t)$ as in \refS{subsec:coupling_to_rbrw} until time $t^*$. From that point on, we continue building the two graphs (i.e., we try to add further edges when they activate, but only add them if the degree constraints at the endpoints are not violated) independently.

By \refL{lem:alarm}, we know that given $R$, $t^*$ and $\varepsilon$, there exists $n_0$ such that the probability that the alarm rings during the coupling is less than ${\varepsilon/2}$ for any $n \ge n_0$. Moreover, by the second point of \refC{cor:decorated-synch}, \begin{equation}\label{eq:isomorph2}
 \text{if the alarm does not ring until time $t^*$, then } \nbh{n}{t^*}{\emb{{\rt}}} \simeq \nbh{\infty}{t^*}{\rt}.
\end{equation}

Moreover, if the alarm does not ring until $t^*$, then $\modtree \equiv \rbrwtree$ and we also know from Lemma~\ref{lem:R-neighborhood} that the $t^*$-truncated phantom saturation times (cf.\ Definition~\ref{def_tr_st}) of the vertices in the first $R$ generations of $\modtree$ and $\rbrwtree$ coincide, and hence by \eqref{eq:isomorph1} and \eqref{eq:isomorph2}, we also have
\begin{equation}\label{eq:isomorph3}
 \{ \text{no alarm by time } t^* \} \cap \{ \, \forall \, \ibf \in V(\nbh{\infty}{t^*}{\rt}) \, : \, T_{\ibf} < t^* \}
 \subseteq \left\{
 \nbh{n}{\infty}{\emb{{\rt}}} \simeq \nbh{\infty}{\infty}{\rt} \right\}.
\end{equation}

Since the probability of the event on the l.h.s.\ of \eqref{eq:isomorph3} is at least $1-\varepsilon$ if $n \geq n_0$, we obtain \eqref{eq:isomorph-whp}.

Finally, the stronger notion of local convergence in probability \eqref{eq:local-weak-in-prob} follows by fairly routine modifications of the above argument, 
which we omit (they are similar to the ones outlined for the $\hat{t}<\infty$ case in Section~\ref{subsec:alarm_prob}). 
\end{proof}

\begin{proof}[Proof of \refC{cor:local-limit-discrete}]
The expected degree of the root in $G^\infty_{\underline{p}}(F_{\underline{p}}^{-1}(2s))$ is equal to~$2s$ by \refD{def:F(t)}.
Thus the empirical average of the degrees of the vertices of $G^n_{\underline{p}}(F_{\underline{p}}^{-1}(2s))$ converges to $2s$ as $n \to \infty$ by \refT{thm:local-limit1}. In other words, we have
\begin{equation*}
\frac{|E(G^n_{\underline{p}}(F_{\underline{p}}^{-1}(2s)))|}{|V(G^n)|} \stackrel{\mathbb{P}}{\longrightarrow} s, \qquad n \to \infty.
\end{equation*}
Thus, by Claim~\ref{cl:final_graphs} (i.e., using the natural coupling of the discrete- and continuous-time versions of the RDCP on $G^n$), 
for any $\varepsilon>0$, we have
\begin{equation*}
 \lim_{n \to \infty} \mathbb{P}\left( G^n_{\underline{p}}(F_{\underline{p}}^{-1}(2s)-\varepsilon) \subseteq
 G_{\underline{p}}^{n, k_n(s)} \subseteq G^n_{\underline{p}}(F_{\underline{p}}^{-1}(2s)+\varepsilon) \right)
 =1.
\end{equation*}
Now the claim of \refC{cor:local-limit-discrete} follows from \refT{thm:local-limit1} if we let $\varepsilon \to 0$ and use the fact that the distribution of $G_{\underline{p}}^\infty(\hat{t})$ is a continuous function in $\hat{t}$ with respect to the topology induced by local weak convergence. 

Finally, the statement pertaining to the local weak convergence of the discrete-time final graphs follows from the analogous statement about the continuous-time final graphs, since they have the same distribution, cf.~Claim~\ref{cl:final_graphs}.
\end{proof}

\section{RTP and MTBP representations of RDCP on PWIT}\label{subsec:rtp_mtbp}
In this section our main goal is to prove the MTBP characterization of $G^\infty_{\underline{p}}(\hat{t})$ stated in Theorem~\ref{thm:mtbp-reproduces}. 
We begin by rigorously deducing the ODE \eqref{lambda_ode_first_intro} that characterizes~$\lambda(\cdot)$, then we prove Proposition~\ref{prop:rtp_satu} (i.e., the forward recursion for saturation times).
Then we deduce some properties of phantom saturation times and prove \refT{thm:mtbp-reproduces}.

Recall the notation of $A_{t_0} := \{\,(t,s) \in \mathbb{R}_+^2 \, | \, 0 \le t \le t_0,\, t \le s \,\}$ from \eqref{eq:A-def}.

Recall the definition of the RDCP on the PWIT (see \refD{def:rdcp-on-pwit}) and the notion of phantom saturation times from \refD{def:phantom-sat}. Recall the definition of the thinned point process $\underline{\tau}_{\bullet}$ from Definition~\ref{def:rde}.

\begin{definition}[Counting process of children of the root]\label{def:counting_proc_of_chr}
Let $\tau^j$ denote the label (i.e., the activation time) of the edge that connects the root vertex $\rt$ of the PWIT to its $j^{\text{th}}$ child. Let $T_j$ denote the phantom saturation time of the $j^{\text{th}}$ child of the root.
Let us define the counting process $X_t$ of the point process $\underline{\tau}_{\bullet}$ by
\begin{equation}\label{eq:Xt-def}
X_t := \Big| \lbrace(\tau^j, T_j)_{j=1}^\infty\rbrace \cap A_t \Big|
=\sum_{j=1}^{\infty} \indic{\tau^j \leq t} \cdot \indic{\tau^j < T_j}= \sum_{\ell=1}^{\infty} \indic{\tau^\ell_{\bullet} \leq t}, \quad t \in \mathbb{R}_+.
\end{equation}
\end{definition}

Note that $X_t$ counts the number of children of $\rt$ at time $t$ in the RDCP on the PWIT if we ignore the degree constraint of $\rt$ (noting that we can only add the edge with activation time $\tau^j$ if $ \tau^j< T_j$).
Let us define
\begin{equation}\label{def_lambda_first}
\Gamma(t):= \mathbb{P}( T \leq t ), \qquad \lambda(t) := \int \limits_0^t (1-\Gamma(s)) \, \mathrm{d}s.
\end{equation}

\begin{claim}[PPP of the children]\label{cl:X-is-counting-process}
The points $(\tau^j, T_j) \in \mathbb{R}_+^2,$ $j=1, \,2, \, \dots$ form a PPP on $\mathbb{R}_+^2$ with intensity measure $\zeta$ defined as follows:
\begin{equation}\label{eq:intensity-meas}
\zeta\left(\, (t_1,t_2] \times (s_1, s_2]\,\right) := (t_2 - t_1)(\Gamma(s_2)-\Gamma(s_1)).
\end{equation}

Hence $\underline{\tau}_{\bullet}$ is an inhomogeneous Poisson point process on $\mathbb{R}_+$ with intensity $1-\Gamma(t)$ at time $t$. In particular, $X_t$ has Poisson distribution with parameter $\lambda(t)$.
\end{claim}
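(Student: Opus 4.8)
The plan is to recognise the pair process $\bigpar{(\tau^j,T_j)}_{j\ge1}$ as a marked Poisson point process and then read off the law of $X_t$ by thinning. First I would record the relevant independence structure. By \refD{def:pwit} the labels $(\tau^j)_{j\ge1}$ of the edges emanating from $\rt$ form a homogeneous unit intensity PPP on $\mathbb{R}_+$. Moreover, exactly as in the proof of \refCl{cl:phantom-sat-time-solves-rde}, the phantom saturation time $T_j$ is a measurable function of the edge labels and degree constraints inside the copy of the PWIT rooted at the $j^{\text{th}}$ child of $\rt$, a subtree that contains neither the edge $\{\rt,j\}$ nor any edge lying below another child of $\rt$. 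Hence $(T_j)_{j\ge1}$ are i.i.d.\ with common law $\mu_\Gamma$, where $\Gamma(t)=\mathbb{P}(T\le t)$ as in \eqref{def_lambda_first}, and $(T_j)_{j\ge1}$ is independent of $(\tau^j)_{j\ge1}$.

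With this in hand, I would invoke the marking theorem for Poisson point processes: attaching to each point $\tau^j$ of the unit intensity PPP the independent mark $T_j\sim\mu_\Gamma$ yields a Poisson point process $\Pi$ on $\mathbb{R}_+^2$ whose intensity measure is the product of Lebesgue measure in the first (edge-label) coordinate and $\mu_\Gamma$ in the second (saturation-time) coordinate, i.e.\ the measure $\zeta$ appearing in \eqref{eq:intensity-meas}; this is the first assertion of the claim. (Alternatively, conditioning on $(\tau^j)_{j\ge1}$ and using Poisson thinning in each Borel rectangle, together with independence across disjoint rectangles, gives the same conclusion without invoking the theorem as a black box.)

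For the second assertion I would restrict $\Pi$ to the half-space $\{(t,s):t\le s\}$. By the thinning property of Poisson processes, a point located at $\tau$ survives this restriction with probability $\mu_\Gamma([\tau,\infty))=1-\Gamma(\tau^-)$, and $\Gamma(\tau^-)=\Gamma(\tau)$ for all but countably many $\tau$; hence the image of the surviving points under $(t,s)\mapsto t$ is a PPP on $\mathbb{R}_+$ with intensity function $1-\Gamma(t)$. Since, by \eqref{eq:Xt-def} and \eqref{eq:A-def}, $X_t$ equals the number of surviving points $\tau^j$ with $\tau^j\le t$, the process $(X_t)_{t\ge0}$ is precisely the counting process of this inhomogeneous PPP, so it has independent increments and $X_t$ is Poisson distributed with parameter $\int_0^t(1-\Gamma(s))\,\mathrm{d}s=\lambda(t)$, by the definition of $\lambda$ in \eqref{def_lambda_first}. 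Equivalently, $X_t=\bigabs{\Pi\cap A_t}$ is Poisson with mean $\zeta(A_t)=\int_0^t\mu_\Gamma([\tau,\infty))\,\mathrm{d}\tau=\lambda(t)$.

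I do not anticipate a real obstacle here: once the independence structure is in place, everything is a routine application of the marking and thinning properties of Poisson point processes, together with the elementary identity $\int_0^t\bigpar{1-\Gamma(\tau^-)}\,\mathrm{d}\tau=\int_0^t\bigpar{1-\Gamma(\tau)}\,\mathrm{d}\tau$. The only step that genuinely uses the model is the first one, and there the point to be careful about is that $(T_j)_{j\ge1}$ really is i.i.d.\ and independent of $(\tau^j)_{j\ge1}$; this rests on the recursive, independently decorated structure of the PWIT together with the fact (\refD{def:phantom-sat}) that the phantom saturation time of a vertex is computed from that vertex and its descendants only, and so ignores the edge joining it to its parent.
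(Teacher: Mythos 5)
Your proposal is correct and takes essentially the same route as the paper: the paper's proof invokes in one line the colouring (marking) property of Poisson point processes together with the fact that $T_1, T_2, \dots$ are i.i.d.\ with the law of $T_\rt$ and independent of the unit intensity PPP $(\tau^j)_{j\ge1}$, which is exactly the structure you establish and then apply the marking theorem to. One small remark worth making: your identification of the intensity of the marked PPP, namely Lebesgue measure in the first (edge-label) coordinate and $\mathrm{d}\Gamma$ in the second (phantom saturation time) coordinate, is the correct one — the displayed formula~\eqref{eq:intensity-meas} as printed appears to have the two factors transposed, since as written it would assign $\zeta(A_{t_0})=\infty$, whereas the intended $\zeta\bigpar{(t_1,t_2]\times(s_1,s_2]}=(t_2-t_1)\bigpar{\Gamma(s_2)-\Gamma(s_1)}$ yields $\zeta(A_t)=\int_0^t(1-\Gamma(u))\,\mathrm{d}u=\lambda(t)$, matching both your computation and the joint density~\eqref{eq:mtbp-joint-dens} used later.
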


\begin{proof}
It follows from the coloring property of Poisson point processes and the fact that $T_1, \, T_2, \, \dots$ have the same distribution as $T_\rt$ and are independent of each other and the PPP~$(\tau^j)_{j=1}^\infty$.
\end{proof}

\begin{notation}\label{not:D-Xt} Let
$\underline{p}$ satisfy the mild Assumption~\ref{assump:degree_constraint_mild}.
Let us denote by $D$ a random variable with distribution $\underline{p}$ that is independent of~$X_t$ (cf.~Definition~\ref{def:counting_proc_of_chr}). We have
\begin{equation}\label{eq:def-pk-qk}
p_k = \prob(D=k) \quad \text{ and } \quad q_k \stackrel{\eqref{eq:def-qk}}{=} \prob(D\ge k) \quad \text{for } k =1,2, \dots.
\end{equation}
\end{notation}

Our next lemma makes the sketch outlined in Remark~\ref{remark:rde_ode_sketch} rigorous.

\begin{lemma}[Unique well-behaved solution]\label{lem:unique_rde}
Let
$\underline{p}$ satisfy the mild Assumption~\ref{assump:degree_constraint_mild}. Then
\begin{enumerate}[label=(\alph*)]
\item\label{item:rde_unique}
the RDE introduced in \refD{def:rde} has a unique solution,
\item\label{item:T_finite_ET_infinite} if $T$ is a random variable whose distribution solves the RDE, then
\begin{equation} \mathbb{P}(T<+\infty)=1 \qquad \text{and} \qquad \mathbb{E}(T)=+\infty,\end{equation} 
\item\label{item:density}
$T$ is absolutely continuous with p.d.f.\ $f$ that satisfies \eqref{eq:f-with-lambda-rdcp}, where $\lambda(\cdot)$ solves the ODE \eqref{eq:lambda-ivp} with initial condition \eqref{eq:lambda-ivp-null},
\item\label{item:lambda_prime} $\lambda'(t)$ has the following probabilistic interpretation:
\begin{equation}
\lambda'(t) = \prob(D>X_t) = \prob(T > t), \label{eq:lambda-diff-prob}
\end{equation}
\item\label{item:lambda_zero_infty} $\lambda(t)$ satisfies
\begin{equation}\label{eq:lambda-lim}
\lim \limits_{t \to \infty} \lambda'(t) = 0
\quad \text{ and } \quad
\lim \limits_{t \to \infty} \lambda(t) = \infty.
\end{equation}%
\end{enumerate}
\end{lemma}

\begin{proof}
Let $T$ be an $\mathbb{R}_+ \cup \{ \infty \}$-valued random variable with a distribution that solves the RDE. Using the distribution of $T$, we can repeat Definition~\ref{def:counting_proc_of_chr}, \eqref{def_lambda_first} and Claim~\ref{cl:X-is-counting-process}. Since the distribution of $T$ solves the RDE, we have
\begin{equation*}
\mathbb{P}\left( \chi[\underline{\tau},D](T_1, T_2, \dots) > s \right)=1-\Gamma(s) \quad \text{for any } s \in \mathbb{R}_+,
\end{equation*}
moreover, by the definition \eqref{eq:rde-recursion} of $\chi$, we have $\{\chi[\underline{\tau},D](T_1, T_2, \dots) >s\}=\{X_s < D\}$, thus
the functions $\Gamma$ and $\lambda$ satisfy
\begin{equation}\label{eq:lambda-integral-equation}
1-\Gamma(s) = \sum \limits_{k=1}^{\infty} p_k \cdot \sum \limits_{l=0}^{k-1} \mathrm{e}^{-\lambda(s)} \cdot \frac{\lambda(s)^l}{l!}, \quad s \geq 0.
\end{equation}
Thus if we define the function $\Psi\colon \mathbb{R}_+ \to [0,1]$ by $\Psi(\lambda):=\sum_{k=1}^{\infty} p_k \cdot \sum_{l=0}^{k-1} \mathrm{e}^{-\lambda} \cdot \frac{\lambda^l}{l!}$ and integrate both sides of \eqref{eq:lambda-integral-equation} from $0$ to $t$, then by \eqref{def_lambda_first}, we obtain that the function $\lambda$ that arises from a solution of the RDE satisfies the integral equation $\lambda(t)=\int_0^t \Psi(\lambda(s))\, \mathrm{d}s, \, t \geq 0$. Observing that $\Psi$ is Lipschitz continuous, we conclude that there is a unique differentiable function $\lambda(\cdot)$ that solves 
\eqref{eq:lambda-integral-equation}.
We have $\lambda'(t)= \Psi(\lambda(t))$, thus $\lambda(\cdot)$ satisfies the ODE~\eqref{eq:lambda-ivp} (since $\Psi(\lambda)=\mathrm{e}^{-\lambda} \cdot \sum_{k=0}^{\infty} \frac{\lambda^k}{k!} q_{k+1}$) and the initial condition~\eqref{eq:lambda-ivp-null}.
The first equality of~\eqref{eq:lambda-diff-prob} follows from \eqref{eq:lambda-ivp} and the second follows from \eqref{def_lambda_first}.
We have proved~\ref{item:rde_unique} and~\ref{item:lambda_prime}.

Next we prove \ref{item:lambda_zero_infty}.
Let us indirectly assume that $\lim_{t \to \infty} \lambda(t) \neq \infty$. Since $\lambda(t)$ is increasing (see \eqref{eq:lambda-diff-prob}), our indirect assumption implies that $\lim_{t \to \infty} \lambda(t) = c < \infty$ for some positive constant~$c$. But then $\lambda'(t) = \prob(D>X_t) \geq \prob(X_t=0) \geq \mathrm{e}^{-c}$ since $\prob(D \geq 1 )=1$ follows from the mild Assumption~\ref{assump:degree_constraint_mild}. Now $\lambda'(\cdot) \geq \mathrm{e}^{-c}$ implies ${\lim_{t \to \infty}\lambda(t) = \infty}$, and this contradicts our indirect assumption. Therefore, ${\lambda(t) \to \infty}$ and ${\lambda'(t)=\prob(D>X_t) \to 0}$ as~${t \to \infty}$, since ${\prob(D <+\infty )=1}$ and ${X_t \Rightarrow \infty}$. The proof of~\ref{item:lambda_zero_infty} is complete.

The proof of~\ref{item:T_finite_ET_infinite}
now readily follows from~\ref{item:lambda_zero_infty}, since \begin{align}
&\mathbb{P}(T=+\infty)=
\lim_{t \to \infty} \prob(T > t) \stackrel{\eqref{eq:lambda-diff-prob}}{=}
\lim_{t \to \infty}\lambda'(t)\stackrel{\eqref{eq:lambda-lim}}{=}0, \\
&\mathbb{E}(T)=\int \limits_0^{\infty} \prob(T > t)\, \mathrm{d}t \stackrel{\eqref{eq:lambda-diff-prob}}{=}
\int \limits_0^{\infty} \lambda'(t)\, \mathrm{d}t\stackrel{\eqref{eq:lambda-ivp-null}}{=}
\lim_{t \to \infty} \lambda(t) \stackrel{\eqref{eq:lambda-lim}}{=}+\infty.
\end{align}

Finally, we prove \ref{item:density}.
The function~$\Psi$ is analytic, thus the solution $\lambda(\cdot)$ of the ODE~${\lambda'(t)= \Psi(\lambda(t))}$ is twice differentiable, thus the distribution of $T$ is absolutely continuous with p.d.f.\ $f$ that satisfies~\eqref{eq:f-with-lambda-rdcp}. The proof of Lemma~\ref{lem:unique_rde} is complete.
\end{proof}
The proof of \refCl{cl:phantom-sat-finite} follows from Claim~\ref{cl:phantom-sat-time-solves-rde} and Lemma~\ref{lem:unique_rde}.

Recall the notion of $T^{\lozenge}_{\ibf}$ and $\eta^{\lozenge}_{\ibf}$ from Definition~\ref{def:sat_times_pwit}.

\begin{proof}[Proof of Proposition~\ref{prop:rtp_satu}]
First, we will show that $T^{\lozenge}_{\ibf} =\tau_{\lozenge}^{\ibf d(\ibf)}$ holds by treating the cases $\eta^{\lozenge}_{\ibf} = 0$ and $\eta^{\lozenge}_{\ibf} = 1$ separately.

On the one hand, if $\eta^{\lozenge}_{\ibf} = 0$ then
all of the neighbors of $\ibf$ that we add are its children, i.e., vertex $\ibf$ saturates as soon as it is connected to $d(\ibf)$ children. Therefore, ${\tau^{\ibf d(\ibf)}_{\bullet} =\tau_{\lozenge}^{\ibf d(\ibf)}=T^{\lozenge}_{\ibf}}$ (cf.\ Definition~\ref{def:thinned_pp_on_pwit} and the first case of \eqref{eq:tau_lozenge_def}).

On the other hand, if $\eta^{\lozenge}_{\ibf} = 1$ then $\ibf$ saturates as soon as it is connected to $d(\ibf)-1$ children and also to its parent. Note that $\eta^{\lozenge}_{\ibf} = 1$ implies that $\tau^{\ibf} < \tau^{\ibf d(\ibf)}_{\bullet}$ (cf.\ Definition~\ref{def:sat_times_pwit}). Thus, by the second case of \eqref{eq:tau_lozenge_def}, we obtain that $T^{\lozenge}_{\ibf} =\tau_{\lozenge}^{\ibf d(\ibf)}$ holds in this case, too.

It remains to prove the formula \eqref{eq:sat_time_explicit} for $\eta^{\lozenge}_{\ibf j}$. By Definition~\ref{def:sat_times_pwit}, $\eta^{\lozenge}_{\ibf j}=1$ if and only if
\begin{itemize}
\item $\tau^{\ibf j}$ is a point of $\underline{\tau}^{\ibf}_\bullet$, i.e., $\tau^{\ibf j} < T_{\ibf j}$ (cf. Definition~\ref{def:thinned_pp_on_pwit}) and
\item $\tau^{\ibf j}$ is one of the first $d(\ibf)$ points of $\underline{\tau}^{\ibf}_{\lozenge}$ (cf.\ \eqref{eq:tau_lozenge_def}), i.e., $\tau^{\ibf j} \leq \tau_{\lozenge}^{\ibf d(\ibf)}$ or equivalently (by the first formula of \eqref{eq:sat_time_explicit}) $\tau^{\ibf j} \leq T^{\lozenge}_{\ibf}$.
\end{itemize}
Thus \eqref{eq:sat_time_explicit} holds. It finishes the proof of Proposition~\ref{prop:rtp_satu}.
\end{proof}

Recall from \refN{not:f(t)-A} and Lemma~\ref{lem:unique_rde} that $f(t)$ denotes the probability density function of $T_\rt$.
By \eqref{def_lambda_first} and $1-\Gamma(s)=\int_s^\infty f(u)\, \mathrm{d}u$ it follows that we
\begin{equation}
\label{eq_lambda_f_lambda_diff}
\lambda(t) = \int \limits_0^t \int \limits_s^\infty f(u) \, \mathrm{d}u \, \mathrm{d}s, \qquad 
\lambda'(t) = \int \limits_t^\infty f(s) \, \mathrm{d}s.
\end{equation}

Recall the notation of $z_k^t$ from \eqref{eq:def-zk} and Notation~\ref{not:D-Xt}.

\begin{claim}[Alternative formulas for $f(t)$ and $z_k^t$]
We have
\begin{equation}\label{eq:f-z-probabilistic-meaning}
\begin{split}
f(t) & = \lim \limits_{h \to 0} \frac{1}{h} \prob(X_t < D \le X_{t+h}) = \prob(D>X_t) \cdot \prob(D=X_t +1), \\
z_k^t & = \prob(X_t = k, D=k+1).
\end{split}
\end{equation}
\end{claim}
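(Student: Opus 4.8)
The plan is to verify the three identities directly from the definitions, using that $D$ is independent of the whole counting process $X_\cdot$ (see \refN{not:D-Xt}), that $X_t\sim\text{POI}(\lambda(t))$ (see \refCl{cl:X-is-counting-process}), and the relation $\p(X_s<D)=1-\Gamma(s)$, which was already extracted inside the proof of \refL{lem:unique_rde} (there one observes $\{X_s<D\}=\{\chi[\underline{\tau},D](T_1,T_2,\dots)>s\}$ and $\p(\chi[\underline{\tau},D](T_1,T_2,\dots)>s)=1-\Gamma(s)$, i.e.\ in the PWIT itself $\{X_s<D\}=\{T_\rt>s\}$ almost surely with $D=d(\rt)$, cf.\ \eqref{eq:phantom-sat-recursion}).

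I would first dispatch the formula for $z_k^t$, which is immediate: by independence of $D$ and $X_t$ together with \refCl{cl:X-is-counting-process},
\[
\p(X_t=k,\,D=k+1)=\p(X_t=k)\,\p(D=k+1)=\mathrm{e}^{-\lambda(t)}\frac{\lambda(t)^k}{k!}\,p_{k+1},
\]
and the right-hand side is exactly $z_k^t$ by \eqref{eq:def-qk-zk}. Summing this over $k\ge0$ and again invoking independence gives $\p(D=X_t+1)=\sum_{k\ge0}z_k^t$. Combining this with $\p(D>X_t)=\lambda'(t)$ from \eqref{eq:lambda-diff-prob} and with the identity $f(t)=\lambda'(t)\sum_{k\ge0}z_k^t$ from \eqref{eq:f-with-lambda-rdcp} yields the rightmost expression $f(t)=\p(D>X_t)\cdot\p(D=X_t+1)$.

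For the middle expression I would exploit that $X_\cdot$ is a (non-decreasing) counting process. Since $X_t\le X_{t+h}$ we have $\{X_{t+h}<D\}\subseteq\{X_t<D\}$ and $\{X_t<D\le X_{t+h}\}=\{X_t<D\}\setminus\{X_{t+h}<D\}$, so
\[
\p(X_t<D\le X_{t+h})=\p(X_t<D)-\p(X_{t+h}<D)=\bigl(1-\Gamma(t)\bigr)-\bigl(1-\Gamma(t+h)\bigr)=\Gamma(t+h)-\Gamma(t),
\]
using $\p(X_s<D)=1-\Gamma(s)$ for $s\in\{t,t+h\}$. Dividing by $h$ and letting $h\to0$ gives $\Gamma'(t)$, which equals $f(t)$ because $T_\rt$ has density $f$ by \refL{lem:unique_rde} (equivalently \refCl{cl:phantom-sat-finite}); this closes the chain of equalities.

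The statement is essentially bookkeeping, so I do not expect a genuine obstacle; the only point deserving a sentence of care is the input $\p(X_s<D)=1-\Gamma(s)$. It amounts to checking that the event $\{X_s<D\}$ coincides up to a null set with $\{T_\rt>s\}$, which is already recorded in the proof of \refL{lem:unique_rde}, and that replacing the strict inequalities $\tau^j<T_j$ in \eqref{eq:rde-recursion} by the non-strict ones $\tau^j\le T_j$ implicit in the definition \eqref{eq:Xt-def} of $X_t$ changes nothing almost surely, since the PPP $(\tau^j)$ and the $(T_j)$ have no common atoms.
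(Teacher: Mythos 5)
Your proof is correct and takes essentially the same route as the paper's one-line justification (which cites \eqref{eq:f-with-lambda-rdcp}, \eqref{eq:def-qk-zk} and \eqref{eq:lambda-diff-prob}): you merely unpack those references, expressing $\p(X_s<D)=1-\Gamma(s)=\lambda'(s)$ via \refCl{cl:lambda-prop}(b) and then reading off both the limit and the product form from $f=-\lambda''=\lambda'\sum_k z_k^t$. The remark about strict versus non-strict inequalities in $\{\tau^j<T_j\}$ vs.\ $\{\tau^j\le T_j\}$ is a fair point of hygiene the paper leaves implicit, but it does not change the argument.
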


\begin{proof}
Follows from \eqref{eq:f-with-lambda-rdcp}, \eqref{eq:def-zk} and \eqref{eq:lambda-diff-prob}.
\end{proof}

Let us recall the edge-labelled MTBP that we introduced in \refD{def:mtbp}.

\begin{remark}
We note that the conditional joint distribution of the types of the offspring of the root given its type is different from that of other vertices. For the root, there are $d(\rt)-1$ children as in \eqref{eq:mtbp-joint-dens} and there is one last child with activation time equal to $T_\rt$ and phantom saturation time as in \eqref{eq:dth-dens}, and the types of all $d(\rt)$ children are conditionally independent given $T_\rt$ and $d(\rt)$. For the other vertices we have only $d(\ibf)-1$ children, which can be generated like the first $d(\rt)-1$ children of the root.
\end{remark}

\begin{claim}[Probabilistic interpretation of $p^t_k$]
Recall the notation $p_k^t$ from \eqref{eq:d-mass}. We have the following probabilistic interpretation for $p_k^t$
\begin{equation}\label{eq:p_k^t_prob_interpret}
p_k^t = \prob(D=k \,|\, X_t = D-1) = \prob(X_t = k-1 \,|\, X_t = D-1).
\end{equation}
\end{claim}

\begin{proof}[Proof of \refT{thm:mtbp-reproduces}]
First we prove the statement of the theorem in the $\hat{t}=\infty$ case, i.e., we show that the edge-labelled MTBP tree (defined in \refD{def:mtbp}) has the same law as the connected component of the root in the final graph of the RDCP on the PWIT, where the types in the MTBP correspond to the phantom saturation times in the RDCP on the PWIT. For that purpose, we need to show that if we condition on the event that the phantom saturation time of vertex $\ibf$ in the RDCP on the PWIT is $T_{\ibf} = t_0$, then
\begin{enumerate}[label=(\roman*)]
\item\label{item:MTBPi} the conditional probability mass function of $d(\ibf)$ is given by \eqref{eq:d-mass},
\item\label{item:MTBPii} the root has $d(\rt)$ children, while any other vertex $\ibf$ has $d(\ibf)-1$ children,
\item\label{item:MTBPiii} the probability density function of the phantom saturation time of child $d(\rt)$ of the root is \eqref{eq:dth-dens},
\item\label{item:MTBPiv} for the first $d(\rt)-1$ children of $\rt$, the set $\{(\tau^1, T_1), \dots, (\tau^{d(\rt)-1}, T_{d(\rt)-1}) \}$ of pairs have the same distribution as the set of $d(\rt)-1$ i.i.d.\ points of $\mathbb{R}_+^2$ with joint density function \eqref{eq:mtbp-joint-dens} and this random set is also independent of the phantom saturation time of the $d(\rt)^{\text{th}}$ child,
\item\label{item:MTBPv} for the $d(\ibf)-1$ children of $\ibf \neq \rt$, the set $\{(\tau^{\ibf j}, T_{\ibf j}), \, j=1,\dots, d(\ibf)-1 \}$ of pairs have the same distribution as the set of $d(\ibf)-1$ i.i.d.\ points of $\mathbb{R}_+^2$ with joint density function \eqref{eq:mtbp-joint-dens}.
\end{enumerate}

We start by proving~\ref{item:MTBPi}.
\begin{align*}
p_k^{t_0} &= \prob \left(d(\rt)=k \, | \, T_\rt \in [t_0, t_0+\mathrm{d}t] \right) = \frac{\prob(T_\rt \in [t_0, t_0+\mathrm{d}t] \, | \, d(\rt)=k) \cdot p_k}{f(t_0)\, \mathrm{d}t } = \\
& \frac{\lambda'(t_0)\mathrm{e}^{-\lambda(t_0)}\cdot \frac{\lambda(t_0)^{k-1}}{(k-1)!}\cdot p_k}{f(t_0)} \stackrel{\eqref{eq:f-with-lambda-rdcp}}{=} \frac{\frac{\lambda(t_0)^{k-1}}{(k-1)!}\cdot p_k}{\sum \limits_{l=0}^\infty \frac{\lambda(t_0)^{l}}{l!}\cdot p_{l+1}} = \frac{z_{k-1}^{t_0}}{\sum \limits_{l=0}^\infty z_l^{t_0}}.
\end{align*}

For the other vertices, the conditional probability mass function of $d(\ibf)$ is obtained analogously. Thus we obtained~\ref{item:MTBPi}.

Property~\ref{item:MTBPii} is trivial, i.e., the root has $d(\rt)$ children, while any other vertex $\ibf$ has $d(\ibf)-1$ children, since one of its neighbors is its ancestor.

Recall \refCl{cl:X-is-counting-process}. Statements~\ref{item:MTBPiii}, \ref{item:MTBPiv} and~\ref{item:MTBPv} follow from equation~\eqref{eq:intensity-meas} and the properties of Poisson point processes, as we now explain.

Equation~\eqref{eq:intensity-meas} implies that the intensity of the point process $(\tau^j, \, T_j)_{j=1}^{\infty}$ at $(t,s)$ is $\Gamma'(s) = f(s)$. Therefore, the conditional probability density function of the phantom saturation time $T_{d(\rt)}$ of child $d(\rt)$ of the root given $\tau^{d(\rt)}=t_0$ is \eqref{eq:dth-dens}. This proves~\ref{item:MTBPiii}. A fundamental property of PPPs states that under the condition that the PPP $(\tau^j, \, T_j)_{j=1}^{\infty}$ of the children of the root has exactly $d(\rt)-1$ points in the region $A_{t_0}$, these points are independent and identically distributed with joint density function given in \eqref{eq:mtbp-joint-dens}. The independence of the phantom saturation time of the $d(\rt)^{\text{th}}$ child of $\rt$ follows from that the points of a PPP in disjoint regions are independent. Thus we proved~\ref{item:MTBPiv}. Property~\ref{item:MTBPv} follows analogously.

We have thus proved \refT{thm:mtbp-reproduces} in the $\hat{t}=\infty$ case.

Now let us fix $0 \le \hat{t} < \infty$. Since the edge labels also agree on the two processes, we obtain that $\mathcal{M}_{\underline{p}}(\hat{t})$ (introduced in \refD{def:mtbp-at-t}), decorated with vertex types and edge labels, has the same distribution as $G^\infty_{\underline{p}}(\hat{t})$ (introduced in \refD{def:rdcp-on-pwit}) decorated with phantom saturation times and edge labels.
\end{proof}

\section{Further properties of the MTBP and its branching operator}\label{sec:branching_operator}

In Section~\ref{sec:branching_operator}
we study the branching operator $B_{\hat{t}}$ of the MTBP at time $\hat{t}$ (introduced in \refD{def:branching-operator-at-t}) and the principal eigenvalue of this operator. In Section~\ref{subsec:bopint} we only assume that $\underline{p}$ satisfies the mild Assumption~\ref{assump:degree_constraint_mild}, but then we switch to the strict Assumption~\ref{assump:degree_constraint_strict} for the rest of this paper.

In Section~\ref{subsec:bopint} we first derive an integral formula for $B_{\hat{t}}$. We then introduce a measure on~$\mathbb{R}_+$ with Radon--Nikodym derivative denoted by $\rho(\cdot)$ and we introduce the p.d.f.\ $H(\cdot)$ of the first time when the counting process $X_t$ reaches $D-1$ (cf.\ Notation~\ref{not:D-Xt}), noting that the functions $\rho$ and $H$ will become important in Sections~\ref{subsec:PF_theo} and~\ref{subsec:sturm_liouville}, respectively.
We then derive some useful properties of the function $H$, prove $F_{\underline{p}}(t) = \int_0^t (\lambda'(s))^2 \, \mathrm{d}s$ (cf.~Definition~\ref{def:F(t)}) and deduce Claim~\ref{cl:F(t)-inverse} from it. 

In Section~\ref{subsec:PF_theo} we prove that $B_{\hat{t}}$ is a self-adjoint, irreducible Perron--Frobenius operator on $L^2 \left(\mathbb{R}_+, \rho \right)$. We defer the proof of the Hilbert--Schmidt property of $B_{\infty}$ to Appendix~\ref{appendix_main}. These properties imply that $\norm{B_{\hat{t}}}$ is the principal eigenvalue of $B_{\hat{t}}$.

In Section~\ref{subsec:sturm_liouville} we characterize $\norm{B_{\hat{t}}}$ in terms of a second order linear ODE with appropriate boundary conditions. Note that the function $H(\cdot)$ appears at the r.h.s.\ of this ODE. 
We defer some of the details of the proof of Theorem~\ref{thm:PF-eigenvalue} (i.e., the spectral characterization of the critical time $\hat{t}_c(\underline{p})$) to Appendix~\ref{appendix_main}. 

In Section~\ref{subsec:tc_defs_coincide} we prove \refP{prop:tc-equivalence}, i.e., that $t_c(\underline{p}) = F_{\underline{p}}\bigpar{\hat{t}_c(\underline{p})}/2$.

\subsection{The branching operator as an integral operator}\label{subsec:bopint}

In our MTBP the type of a vertex $\ibf$ corresponds to the phantom saturation time $T_{\ibf}$ in the RDCP on the PWIT (see \refT{thm:mtbp-reproduces}), so the type space is $\mathbb{R}_+$. For a MTBP we can find out whether it is subcritical, supercritical or critical by looking at the principal eigenvalue of the branching operator. Let us recall that~$B_{\hat{t}}$ denotes the branching operator of $\mathcal{M}_{\underline{p}}(t)$ (\refD{def:branching-operator-at-t}). In this subsection, we prove an integral formula for~$B_{\hat{t}}$ that we already mentioned, see \eqref{eq:bo_integral_intro}. We also study the function $F_{\underline{p}}(t)$ and prove Claim~\ref{cl:F(t)-inverse}. In this subsection we assume that $\underline{p}$ satisfies the mild Assumption~\ref{assump:degree_constraint_mild}.

\begin{remark}
Note that in our case the joint offspring distribution of the root is different from that of any other vertex in later generations. In the definition of $B_{\hat{t}}$ we use the offspring distribution of later generations.
\end{remark}

\begin{notation}\label{not:E}
Recall the definition of our MTBP from Definition~\ref{def:mtbp}. Let $\underline{p}$ satisfy the mild Assumption~\ref{assump:degree_constraint_mild} and let 
\begin{equation}\label{eq:E-def}
E(t) := \ev(d(\ibf)-1 \, | \, T_{\ibf} = t), \qquad t \in \mathbb{R}_+.
\end{equation}
\end{notation}
$E(t)$ is the expected number of children of a non-root vertex $\ibf$ with type $t$.

\begin{claim}[Formula for $E(t)$]
Recall the notation $z_k^t$ and $p_k^t$ from \eqref{eq:def-zk} and \eqref{eq:d-mass}. We have
\begin{equation}\label{eq:E-formula}
E(t) = \left( \sum \limits_{k=1}^\infty k \cdot p_k^t \right) - 1 = \frac{\sum \limits_{k=1}^\infty k \cdot z_{k-1}^t}{\sum \limits_{k=0}^{\infty} z_k^t}-1 = \frac{\sum \limits_{k=0}^{\infty} k \cdot z_k^t}{\sum \limits_{k=0}^{\infty} z_k^t}.
\end{equation}

Moreover, $E(t)<\infty$ for any $t \in \mathbb{R}_+$.
\end{claim}

\begin{proof}
The formulas in \eqref{eq:E-formula} follow from \eqref{eq:E-def} and \eqref{eq:d-mass}. Furthermore, \eqref{eq:p_k^t_prob_interpret} implies that for any $t \in \mathbb{R}_+$, we have
\begin{equation*}
E(t) \stackrel{\eqref{eq:E-formula},\, \eqref{eq:p_k^t_prob_interpret}}{=} \frac{\sum \limits_{k=0}^\infty k\cdot \prob(X_t = D-1 = k)}{\prob(X_t = D-1)} \le \frac{\sum \limits_{k=0}^\infty k\cdot \prob(X_t = k)}{\prob(X_t = D-1)} = \frac{\lambda(t)}{\prob(X_t = D-1)} < \infty.
\end{equation*}
\end{proof}

Recall from \refN{not:f(t)-A} and Lemma~\ref{lem:unique_rde} that $f(t)$ denotes the probability density function of $T_\rt$.

\begin{claim}[Integral formula for the branching operator]
For a function $\varphi\colon \mathbb{R}_+ \to \mathbb{R}$, we~have
\begin{equation}\label{eq:branch-op-rdcp}
B_{\hat{t}} \varphi(t_0) = \frac{E(t_0)}{\lambda(t_0)} \int \limits_0^\infty f(s) \cdot \left(\hat{t} \wedge t_0 \wedge s \right) \cdot \varphi(s) \, \mathrm{d}s.
\end{equation}
\end{claim}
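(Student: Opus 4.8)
The plan is to compute the expectation $(B_{\hat{t}}\varphi)(t_0) = \e\bigl(\sum_{i=1}^M \varphi(t_i)\bigr)$ directly from the offspring law of a non-root vertex of type $t_0$ in the MTBP $\mathcal{M}_{\underline p}(\hat t)$, as described in \refD{def:mtbp} and \refD{def:mtbp-at-t}. Recall that a non-root vertex $\i$ of type $T_{\i}=t_0$ has $d(\i)-1$ children whose $(\breve\tau,\breve T)$-pairs are i.i.d.\ with joint density $g_{t_0}(t,s)=\tfrac{f(s)}{\lambda(t_0)}\mathbbm{1}_{(t,s)\in A_{t_0}}$, and that in $\mathcal{M}_{\underline p}(\hat t)$ we only keep those children whose edge label $\tau^{\i j}$ is less than $\hat t$. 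Since re-indexing the children by increasing edge label does not change the multiset of kept types, I can ignore the permutation $\sigma$ and treat the kept children as those i.i.d.\ pairs with first coordinate $<\hat t$.

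\textbf{Key steps.} First, I would condition on the degree constraint $d(\i)$, which by \eqref{eq:d-mass} has mass function $p_k^{t_0}$, and write
\[
(B_{\hat t}\varphi)(t_0) = \e\Bigl( (d(\i)-1)\cdot \e\bigl(\varphi(\breve T)\,\mathbbm 1_{\breve\tau<\hat t}\bigr) \Bigm| T_{\i}=t_0\Bigr) = E(t_0)\cdot \e\bigl(\varphi(\breve T)\,\mathbbm 1_{\breve\tau<\hat t}\bigr),
\]
using linearity of expectation over the $d(\i)-1$ i.i.d.\ children and the definition \eqref{eq:E-def} of $E(t_0)$; here $(\breve\tau,\breve T)$ has density $g_{t_0}$. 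Second, I would evaluate this single-child expectation as a double integral against $g_{t_0}$:
\[
\e\bigl(\varphi(\breve T)\,\mathbbm 1_{\breve\tau<\hat t}\bigr) = \frac{1}{\lambda(t_0)}\int_0^\infty \int_0^\infty \varphi(s)\,f(s)\,\mathbbm 1_{\{0\le t\le t_0,\; t\le s\}}\,\mathbbm 1_{\{t<\hat t\}}\,\mathrm dt\,\mathrm ds.
\]
Third, I would perform the inner integral over $t$: for fixed $s$, the set of admissible $t$ is $[0,\,\min\{t_0,s,\hat t\}]$ (up to a null set), so $\int_0^\infty \mathbbm 1_{\{t\le t_0\}}\mathbbm 1_{\{t\le s\}}\mathbbm 1_{\{t<\hat t\}}\,\mathrm dt = \hat t\wedge t_0\wedge s$. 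Substituting back gives exactly
\[
(B_{\hat t}\varphi)(t_0) = \frac{E(t_0)}{\lambda(t_0)}\int_0^\infty f(s)\,\varphi(s)\,(\hat t\wedge t_0\wedge s)\,\mathrm ds,
\]
which is \eqref{eq:branch-op-rdcp}.

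\textbf{Main obstacle.} The computation itself is routine; the one point requiring a little care is the bookkeeping of the root-versus-non-root offspring distinction and the truncation at $\hat t$. Concretely, I must make sure that (i) the branching operator $B_{\hat t}$ uses the \emph{non-root} offspring law (as flagged in \refD{def:branching-operator-at-t} and the remark preceding this claim), so the relevant family consists of $d(\i)-1$ i.i.d.\ pairs with density $g_{t_0}$ and no extra ``$d$-th child'' term; and (ii) restricting to $\mathcal{M}_{\underline p}(\hat t)$ means discarding children with edge label $\ge\hat t$, which is precisely the indicator $\mathbbm 1_{\{\breve\tau<\hat t\}}$ inserted above, and this is what produces the $\hat t\wedge{}$ in the min. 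A secondary minor point is justifying the interchange of the finite sum/expectation with the integral, which is immediate since $\varphi$ is bounded and $f$ integrable, so Fubini applies. No convergence or measurability subtleties arise beyond this.
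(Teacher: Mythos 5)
Your proposal is correct and follows essentially the same route as the paper: write $(B_{\hat t}\varphi)(t_0)$ as $E(t_0)$ times a single-child expectation against the density $g_{t_0}$, truncate at $\hat t$ (you via the indicator $\mathbbm 1_{\breve\tau<\hat t}$, the paper via restricting the integration domain to $A_{t_0\wedge\hat t}$), and integrate out $t$ to produce the factor $\hat t\wedge t_0\wedge s$. The only difference is that you spell out the conditioning on $d(\i)$ and the Fubini justification, which the paper leaves implicit.
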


\begin{proof}
Recall the joint density function $g_{t_0}(t,s)$ introduced in \eqref{eq:mtbp-joint-dens}. We have
\begin{align*}
B_{\hat{t}} \varphi(t_0) &\stackrel{\eqref{eq:branch-op-def}}{=} E(t_0) \cdot \iint \limits_{A_{t_0 \wedge \hat{t}}} g_{t_0}(t,s) \cdot \varphi(s) \, \mathrm{d}s \, \mathrm{d}t \stackrel{\eqref{eq:mtbp-joint-dens}}{=} E(t_0) \cdot \int \limits_0^{\infty} \int \limits_0^{\hat{t} \wedge t_0 \wedge s} \frac{f(s) \varphi(s)}{\lambda(t_0)} \, \mathrm{d}t \, \mathrm{d}s=\\
& \frac{E(t_0)}{\lambda(t_0)} \int \limits_0^\infty f(s) \cdot \varphi(s) \cdot \left(\hat{t} \wedge t_0 \wedge s \right) \, \mathrm{d}s.
\end{align*}
\end{proof}

Before deriving further properties of the branching operator $B_{\hat{t}}$, we introduce some notation.

\begin{definition}[Hitting time of $D-1$ children]
Let $D$ denote a random variable with distribution $\underline{p}$ that satisfies the mild Assumption~\ref{assump:degree_constraint_mild}, which is independent of the process~$X_t$ (see \refN{not:D-Xt}). Let
\begin{equation}\label{eq:tau-def}
\tau := \min \{\, t \,:\, X_t = D-1 \,\}.
\end{equation}
\end{definition}

\begin{remark}\label{rem:p1_zero}
Under the strict Assumption~\ref{assump:degree_constraint_strict} we have $p_1=0$. The reason for this is that we have $\prob(\tau = 0)=p_1$, so if $p_1>0$ then the distribution of $\tau$ has an atom at zero. On the other hand, in Lemma~\ref{lem:H_prop} we will show that if $p_1=0$ then the distribution of $\tau$ is absolutely continuous, and its p.d.f.\ (that we will denote by $H(\cdot)$) will play a key role in the ODE characterization of the principal eigenvalue of $B_{\hat{t}}$ (cf.\ Section~\ref{subsec:sturm_liouville}).
\end{remark}

\begin{definition}[Auxiliary functions]\label{def:auxiliary_rho}
Let us introduce the notation
\begin{equation}\label{eq:H-rho-def}
H(t):= \frac{E(t)f(t)}{\lambda(t)}, \qquad \rho(t):= \frac{\lambda(t)f(t)}{E(t)} ,
\end{equation}
where $\lambda(t)$ is defined by ~\eqref{eq:lambda-ivp} and~\eqref{eq:lambda-ivp-null}, $f(t)$ is the probability density function of $T_{\rt}$ and $E(t)$ is defined in \eqref{eq:E-def}.
\end{definition}

The motivation behind the definition of function $H(t)$ is that if $p_1=0$ then it is the probability density function of $\tau$ defined in \eqref{eq:tau-def} (see \refL{lem:H_prop}). On the other hand, the function $\rho(t)$ has a key role in the analysis of the branching operator: we will show that $B_{\hat{t}}$ is self-adjoint on the space $L^2(\mathbb{R}_+, \, \rho)$ (see \refL{lem:properties-of-branching-operator}).

\begin{lemma}[Properties of function $H(t)$]\label{lem:H_prop}
Let us consider the function $H(t)$ defined in~\eqref{eq:H-rho-def}.%
{\vspace{-0.25em}\begin{enumerate}[label=(\alph*)]
\itemsep 0.125em \partopsep=0pt \parsep 0em 
\item Then
\begin{equation}\label{eq:H-interpret}
H(t) = \frac{\mathrm{d}}{\mathrm{d}t}\prob(D-1 \le X_t) \stackrel{\eqref{eq:tau-def}}{=} \frac{\mathrm{d}}{\mathrm{d}t}\prob(\tau \le t).
\end{equation}
Therefore,
\begin{equation}\label{eq:H-int}
\int \limits_0^\infty H(s) \, \mathrm{d}s = 1 - p_1.
\end{equation}
\item The cumulative distribution function of $\tau$:
\begin{equation}\label{eq:H-primitive}
\prob(\tau \le t) = p_1 + \int \limits_0^t H(s) \, \mathrm{d}s = 1-\prob(X_t \le D-2) = 1- \mathrm{e}^{-\lambda(t)} \cdot \sum \limits_{k=0}^{\infty} \frac{\lambda(t)^k}{k!} \cdot q_{k+2}.
\end{equation}
\item $H(t)$ is bounded:
\begin{equation}\label{eq:H-bound}
0 \le H(t) \le 1.
\end{equation}%
\vspace{-0.25em}\end{enumerate}}%
\end{lemma}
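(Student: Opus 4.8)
## Proof plan for Lemma~\ref{lem:H_prop}

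The plan is to work directly from the probabilistic interpretations already available in the excerpt, so that each of the three parts reduces to a short computation with $X_t$, which by \refCl{cl:X-is-counting-process} is the counting process of an inhomogeneous Poisson point process with intensity $1-\Gamma(t)$, and in particular $X_t \sim \text{POI}(\lambda(t))$.

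First I would prove part~(b), since parts~(a) and~(c) follow from it. The key observation is that the event $\{\tau \le t\}$ coincides with $\{X_t \ge D-1\}$ by the definition \eqref{eq:tau-def} of $\tau$ (here we use that $X_t$ is non-decreasing and right-continuous, and that $D$ is independent of the process $(X_t)_{t \ge 0}$). Hence $\p(\tau \le t) = \p(X_t \ge D-1) = 1 - \p(X_t \le D-2)$. Conditioning on $D=k$ and using $X_t \sim \text{POI}(\lambda(t))$ together with $q_{k+2} = \p(D \ge k+2) = \sum_{d \ge k+2} p_d$ (see \eqref{eq:def-qk-zk}), a short interchange of the two sums gives
\begin{equation*}
\p(X_t \le D-2) = \sum_{d=2}^{\Delta} p_d \sum_{l=0}^{d-2} \mathrm{e}^{-\lambda(t)} \frac{\lambda(t)^l}{l!} = \mathrm{e}^{-\lambda(t)} \sum_{l=0}^{\infty} \frac{\lambda(t)^l}{l!} q_{l+2},
\end{equation*}
which is the last equality in \eqref{eq:H-primitive}. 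This proves the second and third expressions in \eqref{eq:H-primitive}; the first expression (that this cumulative distribution function equals $\int_0^t H(s)\,\mathrm{d}s$) will be established together with part~(a).

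Next I would prove part~(a) by differentiating the expression just obtained. Writing $\p(\tau \le t) = 1 - \mathrm{e}^{-\lambda(t)}\sum_{k \ge 0}\frac{\lambda(t)^k}{k!}q_{k+2}$ and differentiating in $t$, the terms telescope (exactly as in the derivation of \eqref{eq:f-with-lambda-rdcp} from \eqref{eq:lambda-ivp} in the proof of \refCl{cl:phantom-sat-finite}), leaving $\frac{\mathrm{d}}{\mathrm{d}t}\p(\tau \le t) = \lambda'(t)\mathrm{e}^{-\lambda(t)}\sum_{k \ge 0}\frac{\lambda(t)^k}{k!}p_{k+1}\cdot(\text{correction})$; I would then check that after using \eqref{eq:E-formula} for $E(t)$ and \eqref{eq:f-with-lambda-rdcp} for $f(t)$ this simplifies to exactly $H(t) = E(t)f(t)/\lambda(t)$ as defined in \eqref{eq:H-rho-def}. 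Concretely, $\sum_{k\ge 0}\frac{\lambda^k}{k!}p_{k+1}$ relates to $f$ via \eqref{eq:f-with-lambda-rdcp}, while the combinatorial factor coming from differentiating $\sum_k \frac{\lambda^k}{k!}q_{k+2}$ produces the ratio $\sum k z_k^t / \sum z_k^t = E(t)$ from \eqref{eq:E-formula}; matching these gives \eqref{eq:H-interpret}. Then \eqref{eq:H-int} is immediate since $\tau < \infty$ almost surely (because $X_t \to \infty$ as $t \to \infty$, which follows from $\lambda(t) \to \infty$ by \eqref{eq:lambda-lim} and the Poisson law of $X_t$), and the identity $\p(\tau \le t) = \int_0^t H(s)\,\mathrm{d}s$ from part~(b) also follows.

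Finally, part~(c) is immediate: $H(t) \ge 0$ because $H$ is a probability density function by part~(a), and $H(t) \le 1$ because, from \eqref{eq:H-interpret}, $H(t) = \frac{\mathrm{d}}{\mathrm{d}t}\p(\tau\le t)$ is the derivative of the distribution function of $\tau$, which in turn equals $\frac{\mathrm{d}}{\mathrm{d}t}\p(D-1 \le X_t)$; since $X_t$ increases by jumps of size one (it is a simple counting process), the event $\{D-1 \le X_{t+h}\}\setminus\{D-1 \le X_t\}$ requires $X$ to jump to the value $D-1$ in $(t,t+h]$, so $\p(D-1 \le X_{t+h}) - \p(D-1 \le X_t) \le \p(X \text{ jumps in } (t,t+h]) \le h\,\sup_s(1-\Gamma(s)) \le h$, giving $H(t) \le 1$ in the limit. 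Alternatively, one can bound $H(t) = E(t)f(t)/\lambda(t)$ directly using $f(t) = \lambda'(t)\sum_k z_k^t$ from \eqref{eq:f-with-lambda-rdcp}, $\lambda'(t) \le 1$, and $E(t)\sum_k z_k^t = \sum_k k z_k^t$, which is comparable to $\lambda(t)$; I expect the differentiation-and-matching in part~(a) to be the only genuinely fiddly step, and the rest to be routine.
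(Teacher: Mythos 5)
Your proposal is correct and the core algebra matches the paper's proof; the main difference is logical ordering and one alternative argument for part~(c).

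The paper's proof first establishes the chain of algebraic identities in its equation~\eqref{eq:H-formula}, starting from the definition $H(t)=E(t)f(t)/\lambda(t)$, substituting \eqref{eq:f-with-lambda-rdcp} and \eqref{eq:E-formula}, simplifying to $H(t)=\lambda'(t)\mathrm{e}^{-\lambda(t)}\sum_{k\ge 0}\frac{\lambda(t)^k}{k!}p_{k+2}$, and recognizing this as $\frac{\mathrm{d}}{\mathrm{d}t}\bigl(-\mathrm{e}^{-\lambda(t)}\sum_{k\ge 0}\frac{\lambda(t)^k}{k!}q_{k+2}\bigr)$ by a telescoping computation. All three parts then fall out: (a) by reading the RHS as a derivative of a CDF, (b) by the definition of $\tau$, and (c) by $H(t)\le\lambda'(t)\le 1$ directly from the formula. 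You instead start from the probabilistic end: prove (b) by the direct computation $\p(\tau\le t)=1-\p(X_t\le D-2)$ and the Poisson law of $X_t$, then derive (a) by differentiating and matching against the definition of $H$. The ``differentiation-and-matching'' step you flag as fiddly is exactly the same telescoping algebra the paper performs; you are doing it in the reverse direction, which is perfectly fine and does not change the difficulty. The one genuinely different piece is your first argument for (c): bounding $\p(D-1\le X_{t+h})-\p(D-1\le X_t)$ by the probability that the simple counting process $X$ jumps in $(t,t+h]$, which is at most $\int_t^{t+h}(1-\Gamma(s))\,\mathrm{d}s\le h$. This is a clean probabilistic alternative to the paper's algebraic bound $H(t)\le\lambda'(t)\le 1$; your second (algebraic) argument coincides with the paper's. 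Both routes are sound, and the probabilistic bound has the advantage of working without unpacking the formula for $H$, at the cost of invoking the simplicity of the point process.
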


\begin{proof}$ $
First, note that by the definition of $H(t)$ we have
\begin{align}
\begin{split}\label{eq:H-formula}
H(t) &\stackrel{\eqref{eq:H-rho-def}}{=} \frac{E(t)f(t)}{\lambda(t)} \stackrel{\eqref{eq:f-with-lambda-rdcp},\, \eqref{eq:E-formula}}{=} \frac{1}{\lambda(t)} \cdot \frac{\sum \limits_{k=0}^{\infty} k \cdot z_k^t}{\sum \limits_{k=0}^{\infty} z_k^t} \cdot \lambda'(t) \cdot \sum \limits_{k=0}^{\infty} z_k^t \stackrel{\eqref{eq:def-zk}}{=}\\
& \lambda'(t)\mathrm{e}^{-\lambda(t)} \cdot \sum \limits_{k=0}^{\infty} \frac{\lambda(t)^k}{k!} \cdot p_{k+2} \stackrel{\eqref{eq:def-qk}}{=} \frac{\mathrm{d}}{\mathrm{d} t} \left( -\mathrm{e}^{-\lambda(t)} \cdot \sum \limits_{k=0}^{\infty} \frac{\lambda(t)^k}{k!} \cdot q_{k+2} \right).
\end{split}
\end{align}

Now the statements of the lemma follow from \eqref{eq:H-formula}.

\begin{enumerate}[label=(\alph*)]
\item Note that 
\begin{align*}
H(t) &\stackrel{\eqref{eq:H-formula}}{=} \frac{\mathrm{d}}{\mathrm{d}t} \left( - \mathrm{e}^{-\lambda(t)} \cdot \sum \limits_{k=0}^{\infty} \frac{\lambda(t)^k}{k!} \cdot q_{k+2} \right) = \frac{\mathrm{d}}{\mathrm{d}t} \left( -\prob(D-2 \ge X_t) \right) =\\
&\frac{\mathrm{d}}{\mathrm{d}t} \left( \prob(D-2 < X_t) \right) = \frac{\mathrm{d}}{\mathrm{d}t} \left( \prob(D-1 \le X_t) \right).
\end{align*}
\item Note that $\prob(\tau = 0) = p_1$ and
\begin{equation*}
\prob(\tau \le t) \stackrel{\eqref{eq:tau-def}}{=} \prob(X_t \ge D-1) = 1-\prob(X_t < D-1) = 1-\prob(X_t +2 \le D) .
\end{equation*}

\item Non-negativity of $H(t)$ is trivial. The upper bound follows from \eqref{eq:H-formula}:
\begin{equation*}
H(t) \stackrel{\eqref{eq:H-formula}}{=} \lambda'(t)\mathrm{e}^{-\lambda(t)} \cdot \sum \limits_{k=0}^{\infty} \frac{\lambda(t)^k}{k!} \cdot p_{k+2} \stackrel{p_k\le 1}{\le} \lambda'(t) \stackrel{\eqref{eq:lambda-diff-prob}}{\le} 1.
\end{equation*}
\end{enumerate}
\end{proof}

Recall from \refD{def:F(t)} that $F_{\underline{p}}(t)$ denotes the expected number of neighbors of the root in $G^\infty_{\underline{p}}(t)$. Note that, by \refT{thm:mtbp-reproduces}, $F_{\underline{p}}(t)$ is equal to the expected number of neighbors of the root in $\mathcal{M}_{\underline{p}}(t)$. We prove a useful formula for $F_{\underline{p}}(t)$ in \refL{lem:F(t)-alternative} that will help us to prove Claim~\ref{cl:F(t)-inverse}.

\begin{claim}[Expected number of neighbors]
By the definition of $F_{\underline{p}}(t)$, it is clear that
\begin{equation}\label{eq:F(t)-def}
F_{\underline{p}}(t) = \int \limits_0^\infty (B_t(\bbone))(s) \cdot f(s) \, \mathrm{d}s + \int \limits_0^t f(s) \, \mathrm{d}s.
\end{equation}
\end{claim}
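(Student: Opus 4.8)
The plan is to unwind the definition of $F_{\underline{p}}(t)$ through the multi-type branching process description of the local limit. By \refT{thm:mtbp-reproduces}, $F_{\underline{p}}(t)$ equals the expected number of neighbours of the root in $\mathcal{M}_{\underline{p}}(t)$, and by \refD{def:mtbp-at-t} a vertex adjacent to the root in $\mathcal{M}_{\underline{p}}(t)$ is precisely a child of the root in the MTBP of \refD{def:mtbp} whose incoming edge label is less than $t$. So it suffices to compute the expected number of such children.

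First I would split the children of the root according to \refD{def:mtbp}: conditionally on $T_\rt=t_0$ and $d(\rt)=d$, there are the $d-1$ children with pairs $(\tau^j,T_j)$ obtained by reordering conditionally i.i.d.\ pairs $(\breve\tau^j,\breve T_j)$ of joint density $g_{t_0}$, together with one additional child, the $d^{\text{th}}$ one, whose incoming edge label equals $\tau^d=t_0=T_\rt$. Since the count $\#\{\,j\le d-1:\tau^j<t\,\}$ is invariant under the reordering permutation (it only depends on the multiset of the $\breve\tau^j$), I can work with the $\breve\tau^j$ directly. For the first $d(\rt)-1$ children: \refD{def:mtbp} stipulates that, conditionally on $T_\rt=t_0$, these have exactly the same joint distribution as the $d(\i)-1$ children of a generic non-root vertex of type $t_0$; hence by the definition of the branching operator (\refD{def:branching-operator-at-t}, applied to $\varphi\equiv\mathbbm{1}$) the conditional expected number of them with edge label less than $t$ equals $(B_t(\mathbbm{1}))(t_0)$. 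Integrating against the density $f$ of $T_\rt$ (see \refCl{cl:phantom-sat-finite}) produces the first term $\int_0^\infty (B_t(\mathbbm{1}))(s)\,f(s)\,\mathrm{d}s$. For the $d(\rt)^{\text{th}}$ child, its incoming edge label is $T_\rt$, so it is adjacent to the root in $\mathcal{M}_{\underline{p}}(t)$ if and only if $T_\rt<t$, an event of probability $\int_0^t f(s)\,\mathrm{d}s$ (the almost sure absence of ties makes the strict-versus-weak inequality immaterial). Adding the two contributions gives \eqref{eq:F(t)-def}.

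The only point that requires care --- and the closest thing to an obstacle --- is the bookkeeping around the fact that the root's offspring law differs from that of later generations: one must make sure that it is exactly the $d(\rt)-1$ children of density $g_{t_0}$ that fall within the scope of the branching operator $B_t$, whereas the extra $d(\rt)^{\text{th}}$ child, responsible for the $\int_0^t f$ term, is accounted for separately. Given \refD{def:mtbp} and the remarks following it, which make this decomposition explicit, this is routine, which is why the statement is phrased as ``clear''.
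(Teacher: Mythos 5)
Your proposal is correct and fills in exactly the routine verification the paper leaves implicit with the phrase ``it is clear'': you invoke \refT{thm:mtbp-reproduces} to pass to the MTBP, split the root's $d(\rt)$ children into the $d(\rt)-1$ with pairs of density $g_{t_0}$ (whose expected contribution is $(B_t(\mathbbm{1}))(T_\rt)$ by construction of $B_t$, since the root's offspring law for these agrees with that of a non-root vertex of the same type) plus the $d(\rt)^{\text{th}}$ child with edge label $T_\rt$, and then integrate against $f$. This is the intended argument, carried out correctly.
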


\begin{lemma}[Alternative formula for $F_{\underline{p}}(t)$]\label{lem:F(t)-alternative}
Let $\underline{p}$ satisfy the mild Assumption~\ref{assump:degree_constraint_mild}. We have
\begin{equation}\label{eq:F(t)-formula}
F_{\underline{p}}(t) = \int_0^t (\lambda'(s))^2 \, \mathrm{d}s,
\end{equation}
where $\lambda(t)$ is defined by~\eqref{eq:lambda-ivp} and~\eqref{eq:lambda-ivp-null}.
\end{lemma}

\begin{proof}
Since both sides of \eqref{eq:F(t)-formula} is $0$ at $t=0$, it is enough to show that
\begin{equation}\label{eq:F'}
F_{\underline{p}}'(t) = (\lambda'(t))^2.
\end{equation}

By $\eqref{eq:branch-op-rdcp}$ and using integration by parts we have
\begin{equation}\label{eq:B-indicator}
(B_t(\bbone))(s) = \frac{E(s)\lambda(t \wedge s)}{\lambda(s)}.
\end{equation}
Hence
\begin{align*}
F_{\underline{p}}'(t) &\stackrel{\eqref{eq:F(t)-def},\, \eqref{eq:B-indicator}}{=} E(t)f(t) + \lambda'(t) \cdot \int \limits_t^\infty \frac{E(s)f(s)}{\lambda(s)} \, \mathrm{d}s - E(t)f(t) + f(t) \stackrel{\eqref{eq:f-with-lambda-rdcp},\, \eqref{eq:H-rho-def}}{=}\\
& \lambda'(t) \cdot \int \limits_t^\infty H(s) \, \mathrm{d}s + \lambda'(t)\mathrm{e}^{-\lambda(t)} \cdot \sum \limits_{k=0}^\infty \frac{\lambda(t)^k}{k!} \cdot p_{k+1} \stackrel{\eqref{eq:H-primitive}}{=}\\
& \lambda'(t) \cdot \mathrm{e}^{-\lambda(t)} \cdot \sum \limits_{k=0}^\infty \frac{\lambda(t)^k}{k!} \cdot (q_{k+2}+p_{k+1}) \stackrel{\eqref{eq:lambda-ivp}}{=} (\lambda'(t))^2,
\end{align*}
thus \eqref{eq:F'} holds.
\end{proof}

\begin{remark}
Recall from \eqref{eq:lambda-diff-prob} that $\lambda'(t) = \prob(T_{\rt} > t)$, where $T_{\rt}$ is the phantom saturation time of $\rt$ in the RDCP on the PWIT and we also know that $T_\rt$ is equal to the saturation time of $\rt$. Hence, the probability that two randomly chosen vertices are unsaturated in the RDCP on $K_n$ is approximately $(\lambda'(t))^2$. It implies that the expected number of edges added in the RDCP on $K_n$ until time $t$ is approximately $\frac n2 \int_0^t (\lambda'(s))^2 \, \mathrm{d}s$. This argument provides an alternative heuristic proof of~\eqref{eq:F(t)-formula}.
\end{remark}

Using the above formulas for $F_{\underline{p}}(t)$, we can conclude the following claim that immediately implies Claim~\ref{cl:F(t)-inverse}.

\begin{claim}[Properties of $F_{\underline{p}}(t)$]\label{cl:F(t)-prop}
Let $\underline{p}$ satisfy the mild Assumption~\ref{assump:degree_constraint_mild}. The function~${F_{\underline{p}}:\, \mathbb{R}_+ \to \mathbb{R}_+}$ is continuous, strictly monotone increasing, $F_{\underline{p}}(0)=0$ and
\begin{equation}\label{eq:F(t)-limit}
\lim \limits_{t \to \infty} F_{\underline{p}}(t) = \ev(D).
\end{equation}
\end{claim}

\begin{proof}
Equation \eqref{eq:F(t)-formula} implies continuity, monotonicity and $F_{\underline{p}}(0)=0$. Equation \eqref{eq:F(t)-limit} follows from \eqref{eq:F(t)-def}, by noting~that
\begin{align*}
\lim \limits_{t \to \infty} F_{\underline{p}}(t) \stackrel{\eqref{eq:F(t)-def}}{=} \int \limits_0^\infty (B_\infty(\bbone))(s) \cdot f(s) \, \mathrm{d}s + \int \limits_0^\infty f(s) \, \mathrm{d}s \stackrel{\eqref{eq:branch-op-def}}{=} \int \limits_0^\infty \ev(D-1) \cdot f(s) \, \mathrm{d}s + 1 = \ev(D).%
\end{align*}
\end{proof}

\subsection{Perron--Frobenius theory}\label{subsec:PF_theo}

From this section, we assume that $\underline{p}$ satisfies the strict Assumption~\ref{assump:degree_constraint_strict}. Recall the function $\rho$ from \eqref{eq:H-rho-def}.

\begin{notation}\label{not:eltwo}
On the space $L^2 \left(\mathbb{R}_+, \rho \right)$ we denote the scalar product by $\langle \cdot, \cdot \rangle_\rho$ and the norm by $\norm{\cdot}_2$. I.e., for any $g, h \in L^2 \left(\mathbb{R}_+, \rho \right)$:
\begin{equation*}
\langle g, h \rangle_\rho := \int \limits_0^\infty g(t) h(t) \rho(t) \, \mathrm{d}t, \quad \norm{g}_2 := \sqrt{\langle g,g \rangle_\rho} = \left( \int \limits_0^\infty g(t)^2 \rho(t) \, \mathrm{d}t \right)^{1/2}.
\end{equation*}
\end{notation}

\begin{lemma}[Properties of $B_{\hat{t}}$]\label{lem:properties-of-branching-operator} Let $\underline{p}$ satisfy the strict Assumption~\ref{assump:degree_constraint_strict}.
For any ${\hat{t} \in \mathbb{R}_+ \cup \{+\infty\}}$
the operator $B_{\hat{t}}$ is 
\begin{enumerate}[label=(\roman*)]
\item\label{item:bo_sa} self-adjoint on the space $L^2(\mathbb{R}_+, \, \rho)$,
\item\label{item:bo_hs} Hilbert--Schmidt on the space $L^2(\mathbb{R}_+, \, \rho)$ (and therefore $\norm{B_{\hat{t}}} <+\infty$).
\end{enumerate}
\end{lemma}

\begin{proof}[Proof of Lemma~\ref{lem:properties-of-branching-operator}~\ref{item:bo_sa}]
Note that by \eqref{eq:branch-op-rdcp}, we know that
\begin{equation}\label{eq:kernel}
(B_{\hat{t}} \varphi) (u) = \int \limits_0^\infty \mathcal{K}_{\hat{t}}(u,s) \varphi(s) \rho(s) \, \mathrm{d}s,
\end{equation}
where $\mathcal{K}_{\hat{t}}(u,s) = \frac{E(u)\cdot E(s)}{\lambda(u) \cdot \lambda(s)} \cdot (\hat{t} \wedge u \wedge s)$ is the kernel of the branching operator. The symmetry of $\mathcal{K}_{\hat{t}}(u,s)$ implies that $B_{\hat{t}}$ is self-adjoint on the space $L^2 \left(\mathbb{R}_+, \rho \right)$ for any $\hat{t} \in \mathbb{R}_+ \cup \{+\infty\}$.
\end{proof}

We defer the proof of Lemma~\ref{lem:properties-of-branching-operator}~\ref{item:bo_hs} to Appendix~\ref{appendix_main} (noting that this is one of our proofs, where we use that $\underline{p}$ satisfies $p_k=0$ for all $k > \Delta$, as stated in Assumption~\ref{assump:degree_constraint_strict}). We discuss the possible advantages of proving the Hilbert--Schmidt property in the $\hat{t}=\infty$ case as well in Remark~\ref{rem:pf_infty}.

Let us now collect some facts about $B_{\hat{t}}$ that follow from Perron--Frobenius theory applied to Hilbert--Schmidt integral operators.

\begin{lemma}[Eigenvalues, eigenfunctions of $B_{\hat{t}}$]\label{lem:eigen-properties}
Let $\hat{t} \in \mathbb{R}_+ \cup \{+\infty\}$ and let us consider the branching operator $B_{\hat{t}}$. It satisfies the following properties.
{\vspace{-0.25em}\begin{enumerate}
\itemsep 0.125em \partopsep=0pt \parsep 0em 
\item $B_{\hat{t}}$ is compact and has an orthonormal eigenbasis.
\item $\norm{B_{\hat{t}}}$ is the principal eigenvalue of $B_{\hat{t}}$ with multiplicity 1 and with a positive eigenfunction.
\item The absolute values of all other eigenvalues of $B_{\hat{t}}$ are strictly smaller than $\norm{B_{\hat{t}}}$.
\item The only eigenvalue with a non-negative eigenfunction is $\norm{B_{\hat{t}}}$.
\vspace{-0.25em}\end{enumerate}}%
\end{lemma}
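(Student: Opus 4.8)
The statement to prove is Lemma~\ref{lem:eigen-properties}, which collects the Perron--Frobenius-type properties of the self-adjoint, Hilbert--Schmidt operator $B_{\hat{t}}$ on $L^2(\mathbb{R}_+,\rho)$.

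\textbf{Overall approach.} The plan is to invoke the spectral theory of compact self-adjoint operators together with the Krein--Rutman / Jentzsch theorem for integral operators with (almost everywhere) strictly positive kernels. By \refL{lem:properties-of-branching-operator} we already know that $B_{\hat{t}}$ is self-adjoint and Hilbert--Schmidt on $L^2(\mathbb{R}_+,\rho)$, and from \eqref{eq:kernel} we know it is an integral operator with symmetric kernel $\mathcal{K}_{\hat{t}}(u,s) = \frac{E(u)E(s)}{\lambda(u)\lambda(s)}(\hat{t}\wedge u \wedge s)$ against the reference measure $\rho(s)\,\mathrm{d}s$. The four assertions then follow from standard functional-analytic facts, provided we verify the positivity and irreducibility hypotheses of the Jentzsch theorem.

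\textbf{Key steps.} First, for item~(1): every Hilbert--Schmidt operator is compact, and a compact self-adjoint operator on a Hilbert space admits an orthonormal basis of eigenfunctions by the Hilbert--Schmidt spectral theorem; I would just cite this (e.g.\ \cite{RS} or \cite{RS}-style references already used in the paper). Second, for items~(2)--(4), the core input is that the kernel $\mathcal{K}_{\hat{t}}(u,s)$ is strictly positive for $\rho$-almost every pair $(u,s)$: indeed $E(u)>0$, $\lambda(u)>0$ for $u>0$, and $\hat{t}\wedge u \wedge s>0$ for $u,s>0$, and $\rho$ gives zero mass to $\{0\}$; hence $B_{\hat{t}}$ is a \emph{positivity-improving} operator on $L^2(\mathbb{R}_+,\rho)$ (it maps nonzero nonnegative functions to a.e.-strictly-positive functions). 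I would then invoke the Jentzsch / Krein--Rutman theorem (or the version for positivity-improving compact self-adjoint operators, cf.\ the Perron--Frobenius theory used in \cite{RS}): the spectral radius $\norm{B_{\hat{t}}}$ is an eigenvalue, it is simple (multiplicity one), the corresponding eigenfunction can be chosen strictly positive, and every other eigenvalue $\mu$ satisfies $|\mu| < \norm{B_{\hat{t}}}$ — this gives (2) and (3) at once. For item~(4), suppose $\mu$ is an eigenvalue with a nonnegative (nonzero) eigenfunction $g$; applying $B_{\hat{t}}$ once makes $g$ strictly positive a.e., and pairing the eigenvalue equation $B_{\hat{t}}g = \mu g$ with the strictly positive principal eigenfunction $w$ (using self-adjointness, $\langle B_{\hat{t}}g, w\rangle_\rho = \langle g, B_{\hat{t}}w\rangle_\rho = \norm{B_{\hat{t}}}\langle g,w\rangle_\rho$, while also $\langle B_{\hat{t}}g,w\rangle_\rho = \mu\langle g,w\rangle_\rho$) forces $\mu = \norm{B_{\hat{t}}}$ since $\langle g,w\rangle_\rho > 0$.

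\textbf{Main obstacle.} The only non-routine point is to check carefully that the hypotheses of the positivity-improving Perron--Frobenius theorem are genuinely satisfied in $L^2(\mathbb{R}_+,\rho)$, i.e.\ that $\rho$ is a sensible reference measure (it is finite on bounded sets and assigns no mass to $\{0\}$, by \refD{def_auxiliary_rho} and the properties of $f,\lambda,E$), and that $\mathcal{K}_{\hat{t}}$ is indeed strictly positive $\rho\otimes\rho$-a.e.\ on $\mathbb{R}_+^2$ — which it is, since each factor is strictly positive away from the coordinate axes and $\rho(\{0\})=0$. One should also note that when $\hat{t}<\infty$ the factor $\hat{t}\wedge u\wedge s$ is still strictly positive for $u,s>0$, so irreducibility is not lost. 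Everything else is a direct citation of classical spectral theory, so I would keep the write-up short, stating the positivity-improving property as a one-line observation and then quoting the relevant theorem.
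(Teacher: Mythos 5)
Your proposal is correct and follows essentially the same route as the paper: Hilbert--Schmidt plus self-adjointness gives compactness and an orthonormal eigenbasis (Reed--Simon), and the strict positivity of the kernel $\mathcal{K}_{\hat{t}}(u,s)$ for $u,s,\hat{t}>0$ yields the Perron--Frobenius/Jentzsch conclusions for items (2)--(4). The only difference is cosmetic: you spell out item~(4) via the self-adjoint pairing $\langle B_{\hat{t}}g,w\rangle_\rho$ with the strictly positive principal eigenfunction, whereas the paper simply cites a Perron--Frobenius theorem for positive integral kernels (Mode, Section~6.5, Theorem~5.2) to cover (2)--(4) at once.
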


\begin{proof}
We have seen in \refL{lem:properties-of-branching-operator} that $B_{\hat{t}}$ is Hilbert--Schmidt and self-adjoint. Therefore, it is compact (see \cite[Theorem VI.22]{RS}) and has an orthonormal eigenbasis (see \cite[Theorem VI.16]{RS}).

The fact that the principal eigenvalue is $\norm{B_{\hat{t}}}$ follows from the self-adjointness (see \cite[Theorem VI.6]{RS}).

Observe that $\mathcal{K}_{\hat{t}}(u,s)>0$ holds if $u,s,\hat{t} \in (0,+\infty)$ (cf.\ \eqref{eq:kernel}). This implies the rest of the statement (cf.\ \cite[Section 6.5 Theorem 5.2]{M}).
\end{proof}

\subsection{Characterization of principal eigenvalue using second order ODE}\label{subsec:sturm_liouville}

The goal of Section~\ref{subsec:sturm_liouville} is to give an alternative characterization of the principal eigenvalue and the corresponding eigenfunction of the branching operator in terms of a second order linear differential equation with boundary conditions, thus turning it into a problem of Sturm--Liouville type.

By \refL{lem:eigen-properties}, we can introduce the following notation.

\begin{definition}[Principal eigenfunction]\label{def:w-def}
Let us fix $\hat{t} \in \mathbb{R}_+ \cup \{+\infty\}$ and consider the branching operator $B_{\hat{t}}$. Let $\mu_{\hat{t}}$ denote its principal eigenvalue (i.e., $\mu_{\hat{t}}=\norm{B_{\hat{t}}}$, cf.\ Lemma~\ref{lem:eigen-properties}) and let $v_{\hat{t}}(t)$ denote the corresponding eigenfunction, normalized in a way that $w_{\hat{t}}'(0) = 1$, where
\begin{equation}\label{eq:w-def}
w_{\hat{t}}(t) := \frac{\lambda(t)v_{\hat{t}}(t)}{E(t)}.
\end{equation}
\end{definition}

In order for the above definition to make mathematical sense, we will prove in the next lemma that the limit that defines $w_{\hat{t}}'(0)$ exists and $w_{\hat{t}}'(0) \in (0,\infty) $ holds. 

\begin{lemma}[Properties of function $w_{\hat{t}}(t)$]\label{lem:w-properties}
Let $\underline{p}$ satisfy the strict Assumption~\ref{assump:degree_constraint_strict}. Let us fix $\hat{t} \in \mathbb{R}_+ \cup \{+\infty\}$ and let us consider the function $w_{\hat{t}}(t)$ introduced in \refD{def:w-def}.%
{\vspace{-0.25em}\begin{enumerate}[label=(\alph*)]
\itemsep 0.125em \partopsep=0pt \parsep 0em 

\item The function $w_{\hat{t}}(t)$ is positive, monotone increasing and concave on $\mathbb{R}_+$.

\item We have
\begin{equation}\label{eq:Hw-integrable}
\int \limits_0^\infty H(s)w_{\hat{t}}(s) \, \mathrm{d}s < \infty.
\end{equation}

\item On the interval $(0,\hat{t})$ the function $w_{\hat{t}}(t)$ solves the following \textbf{second order linear} initial value problem
\begin{align}
\mu_{\hat{t}} w_{\hat{t}}''(t) &= -H(t)w_{\hat{t}}(t) \quad \text{for } 0 < t < \hat{t} \label{eq:w_that-2nd-order-diffeq}\\
w_{\hat{t}}(0) &= 0 \label{eq:w_that-iv-null1}\\
w_{\hat{t}}'(0) &= 1. \label{eq:w_that-iv-null2}
\end{align}

Moreover, if $\hat{t}< \infty$ then $w_{\hat{t}}(t)$ also satisfies the boundary condition
\begin{equation}
\mu_{\hat{t}} \cdot \left(\lim \limits_{t \nearrow \hat{t}}w_{\underline{\hat{t}}}'(t)\right) - w_{\hat{t}}(\hat{t}) \cdot \left( 1- \int \limits_0^{\hat{t}} H(s) \, \mathrm{d}s \right)=0, \label{eq:2nd-order-extra-mu}
\end{equation}
moreover, $w_{\hat{t}}(t)$ is constant for $t \ge \hat{t}$ (i.e., $w_{\hat{t}}(t)=w_{\hat{t}}(\hat{t})$ for all $t \ge \hat{t}$).

On the other hand, if $\hat{t}=\infty$ then for $w_\infty(t)$ we have
\begin{equation}\label{eq:2nd-order-extra-infty}
\lim \limits_{t \to \infty} w_{\infty}'(t) = 0.
\end{equation}%
\vspace{-0.25em}\end{enumerate}}%
\end{lemma}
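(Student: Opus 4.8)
The plan is to derive everything from the eigenvalue equation $B_{\hat t} v_{\hat t} = \mu_{\hat t} v_{\hat t}$ by unwinding the integral formula \eqref{eq:branch-op-rdcp}. Writing out $\mu_{\hat t} v_{\hat t}(t_0) = \tfrac{E(t_0)}{\lambda(t_0)}\int_0^\infty f(s)v_{\hat t}(s)(\hat t\wedge t_0\wedge s)\,\mathrm ds$ and substituting the definition \eqref{eq:w-def} of $w_{\hat t}$, i.e.\ $v_{\hat t}(s) = \tfrac{E(s)}{\lambda(s)}w_{\hat t}(s)$, gives
\begin{equation*}
\mu_{\hat t}\, w_{\hat t}(t_0) = \int_0^\infty \frac{E(t_0)E(s)}{\lambda(t_0)\lambda(s)}\, f(s)\, w_{\hat t}(s)\,(\hat t\wedge t_0\wedge s)\,\mathrm ds
 \;\stackrel{\eqref{eq:H-rho-def}}{=}\; \int_0^\infty H(s)\, w_{\hat t}(s)\,(\hat t\wedge t_0\wedge s)\,\mathrm ds,
\end{equation*}
using $H(s) = E(s)f(s)/\lambda(s)$ and noting that one factor $E(t_0)/\lambda(t_0)$ cancels against the $1/v_{\hat t}\leftrightarrow w_{\hat t}$ conversion on the left. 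So the whole lemma reduces to analysing the function defined by $\widetilde w(t_0) := \mu_{\hat t}^{-1}\int_0^\infty H(s)w_{\hat t}(s)(\hat t\wedge t_0\wedge s)\,\mathrm ds$, which must coincide with $w_{\hat t}$. First I would record that the integral is finite: $H$ is bounded by \eqref{eq:H-bound}, $w_{\hat t}$ is an eigenfunction hence (by Lemma~\ref{lem:eigen-properties}) a positive $L^2(\mathbb R_+,\rho)$ function, and $\hat t\wedge t_0\wedge s\le \hat t\wedge t_0$ is bounded on the relevant range; a Cauchy--Schwarz estimate against $\rho$ plus the Hilbert--Schmidt computation from the proof of Lemma~\ref{lem:properties-of-branching-operator} closes finiteness, which already gives part (b), \eqref{eq:Hw-integrable}, in the $\hat t<\infty$ case, and with a little more care (using $\hat t\wedge s\le s$ and the tail bound $\p(\tau>s)$ decaying fast) also for $\hat t=\infty$.

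For part (a) and the ODE in part (c), the key is to differentiate the kernel $k(t_0):=\hat t\wedge t_0\wedge s$ in $t_0$. On $(0,\hat t)$ we have $\partial_{t_0}(\hat t\wedge t_0\wedge s) = \indic{s>t_0}$ and $\partial_{t_0}^2(\hat t\wedge t_0\wedge s) = -\delta_{t_0}(s)$ in the distributional sense. Differentiating under the integral sign once gives $\mu_{\hat t} w_{\hat t}'(t_0) = \int_{t_0}^\infty H(s)w_{\hat t}(s)\,\mathrm ds$ for $t_0<\hat t$ (and $=0$ for $t_0\ge\hat t$, which is exactly why $w_{\hat t}$ is constant on $[\hat t,\infty)$), and differentiating again yields $\mu_{\hat t} w_{\hat t}''(t_0) = -H(t_0)w_{\hat t}(t_0)$, which is \eqref{eq:w_that-2nd-order-diffeq}. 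Positivity of $w_{\hat t}$ follows since $v_{\hat t}>0$, $\lambda>0$, $E>0$; monotonicity follows because $w_{\hat t}'(t_0) = \mu_{\hat t}^{-1}\int_{t_0}^\infty H(s)w_{\hat t}(s)\,\mathrm ds \ge 0$; concavity follows from $w_{\hat t}'' = -\mu_{\hat t}^{-1}Hw_{\hat t}\le 0$ since $H\ge0$ and $w_{\hat t}\ge0$. The initial conditions: \eqref{eq:w_that-iv-null1} is $w_{\hat t}(0)=0$, which comes from $k(0)=0$ in the integral representation of $w_{\hat t}$; and \eqref{eq:w_that-iv-null2} holds by the normalization in Definition~\ref{def:w-def}, but I must first check that $\lim_{t\to0^+}w_{\hat t}'(t)$ exists and is finite and positive — from $\mu_{\hat t}w_{\hat t}'(t_0)=\int_{t_0}^\infty H(s)w_{\hat t}(s)\,\mathrm ds$ this limit equals $\mu_{\hat t}^{-1}\int_0^\infty H(s)w_{\hat t}(s)\,\mathrm ds$, which is a positive finite number by part (b) and because $H,w_{\hat t}$ are not a.e.\ zero; so after rescaling $v_{\hat t}$ we may assume it equals $1$.

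For the boundary condition \eqref{eq:2nd-order-extra-mu} when $\hat t<\infty$: evaluate the two first-derivative identities at $t_0=\hat t$. We have $\mu_{\hat t}w_{\hat t}'(\hat t)=\int_{\hat t}^\infty H(s)w_{\hat t}(s)\,\mathrm ds$; on the other hand $w_{\hat t}$ is constant on $[\hat t,\infty)$ so $w_{\hat t}(s)=w_{\hat t}(\hat t)$ there, giving $\int_{\hat t}^\infty H(s)w_{\hat t}(s)\,\mathrm ds = w_{\hat t}(\hat t)\int_{\hat t}^\infty H(s)\,\mathrm ds = w_{\hat t}(\hat t)\bigl(1-\int_0^{\hat t}H(s)\,\mathrm ds\bigr)$ using \eqref{eq:H-int}; combining gives \eqref{eq:2nd-order-extra-mu}. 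For $\hat t=\infty$, the identity $\mu_\infty w_\infty'(t_0)=\int_{t_0}^\infty H(s)w_\infty(s)\,\mathrm ds$ together with the integrability \eqref{eq:Hw-integrable} forces the tail integral to $0$ as $t_0\to\infty$, which is \eqref{eq:2nd-order-extra-infty}.

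The main obstacle I anticipate is the rigorous justification of differentiating under the integral sign and, relatedly, the integrability statement \eqref{eq:Hw-integrable} in the $\hat t=\infty$ regime: there the kernel $\hat t\wedge t_0\wedge s$ does not truncate, so I genuinely need the decay of $H(s)w_\infty(s)$, which I would get by bounding $w_\infty$ by a linear function (from concavity and $w_\infty'(0)=1$, $w_\infty\le t$ is false in general but $w_\infty$ is sublinear — more precisely $w_\infty'$ decreases to a limit which must be $0$, so $w_\infty(t)=o(t)$) against the rapidly decaying tail $H(s)\le \lambda'(s)\mathrm e^{-\lambda(s)}\sum_k \lambda(s)^k/k!\,p_{k+2}$, which from \eqref{eq:Psi-bounds}-type estimates decays like $\lambda'(s)/\lambda(s)^{?}$; the cleanest route is probably to run the argument self-referentially — first prove the ODE and monotonicity/concavity on every finite interval using only local integrability (which is automatic), deduce $w_\infty$ is concave increasing hence $w_\infty(t)\le w_\infty'(0)\,t = t$ is too weak, so instead use that $w_\infty'$ is nonincreasing and nonnegative hence converges to some $\ell\ge0$, and if $\ell>0$ then $\mu_\infty\ell = \int_0^\infty H(s)w_\infty(s)\,\mathrm ds=\infty$ since $w_\infty(s)\ge \ell s$ eventually while $\int_0^\infty H(s)s\,\mathrm ds<\infty$ is needed — this last fact is exactly the Hilbert--Schmidt finiteness computation $\e((\tau_1\wedge\tau_2)^2)<\infty$ from Lemma~\ref{lem:properties-of-branching-operator}, which gives $\e(\tau^2)<\infty$, hence $\int_0^\infty H(s)s\,\mathrm ds=\e(\tau)<\infty$ — so $\ell=0$, giving both \eqref{eq:2nd-order-extra-infty} and, a posteriori, \eqref{eq:Hw-integrable}. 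I would present the $\hat t<\infty$ case first (where everything is clean and bounded) and then handle $\hat t=\infty$ by this limiting/self-referential argument.
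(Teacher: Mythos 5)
Your overall structure — deriving the integral identity $\mu_{\hat t}\,w_{\hat t}(t_0)=\int_0^\infty H(s)w_{\hat t}(s)(\hat t\wedge t_0\wedge s)\,\mathrm ds$ from the eigenvalue equation, differentiating once to get $\mu_{\hat t} w_{\hat t}'(t_0)=\int_{t_0}^\infty H(s)w_{\hat t}(s)\,\mathrm ds$ and twice to get \eqref{eq:w_that-2nd-order-diffeq}, reading off sign/monotonicity/concavity, and extracting \eqref{eq:2nd-order-extra-mu} and \eqref{eq:2nd-order-extra-infty} as boundary evaluations — is exactly the paper's approach (the paper splits the kernel into the $t_0\le\hat t$ and $t_0\ge\hat t$ branches explicitly, in \eqref{eq:t0-less-rdcp} and \eqref{eq:large-t0-rdcp}, rather than writing the unified $\hat t\wedge t_0\wedge s$; this is cosmetic). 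Parts (a), (c) and the $\hat t<\infty$ boundary condition are fine.

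The problem is your treatment of (b) in the $\hat t=\infty$ regime, which contains a concrete error. You claim that $\e(\tau)=\int_0^\infty H(s)\,s\,\mathrm ds<\infty$ and that this is ``exactly the Hilbert--Schmidt finiteness computation,'' citing $\e\bigl((\tau_1\wedge\tau_2)^2\bigr)<\infty$. Neither half is right. The implication $\e\bigl((\tau_1\wedge\tau_2)^2\bigr)<\infty\Rightarrow\e(\tau^2)<\infty$ is false in general (a minimum of two i.i.d.\ copies always has a lighter tail), and more to the point, for this model $\e(\tau)$ is genuinely \emph{infinite}. Taking $p_d=1$: from \eqref{eq:H-primitive} one has $\p(\tau>t)\sim \mathrm e^{-\lambda(t)}\lambda(t)^{d-2}/(d-2)!$ and from \eqref{eq:lambda-ivp}, $\lambda'(t)\sim \mathrm e^{-\lambda(t)}\lambda(t)^{d-1}/(d-1)!$, so substituting $y=\lambda(t)$ gives $\e(\tau)\approx(d-1)\int^\infty y^{-1}\,\mathrm dy=\infty$. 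Your self-referential argument for $\ell:=\lim w_\infty'=0$ therefore cannot be closed the way you sketch it, and the phrase ``$\mu_\infty\ell=\int_0^\infty H w_\infty=\infty$'' is also a non-sequitur: the limit of $w_\infty'(t_0)$ equals $\mu_\infty^{-1}\lim_{t_0\to\infty}\int_{t_0}^\infty Hw_\infty$, not $\mu_\infty^{-1}\int_0^\infty Hw_\infty$.

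The fix is to establish (b) uniformly in $\hat t$ before touching the boundary conditions, which is what the paper does and what your own Cauchy--Schwarz observation already delivers: from $v_{\hat t}\in L^2(\mathbb R_+,\rho)$ one has $\int_0^\infty H(s)w_{\hat t}(s)^2\,\mathrm ds=\int v_{\hat t}^2\rho=\norm{v_{\hat t}}_2^2<\infty$ (this is \eqref{eq:Hw^2-integrable}), and then either $\int Hw_{\hat t}\le\bigl(\int H\bigr)^{1/2}\bigl(\int Hw_{\hat t}^2\bigr)^{1/2}<\infty$ by Cauchy--Schwarz and \eqref{eq:H-int}, or — as in the paper — split $\int_0^1 Hw_{\hat t}\le w_{\hat t}(1)$ (using $H\le 1$, $w_{\hat t}$ increasing) and $\int_1^\infty Hw_{\hat t}\le w_{\hat t}(1)^{-1}\int_1^\infty Hw_{\hat t}^2$ (using $w_{\hat t}(s)\ge w_{\hat t}(1)>0$ there). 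Either way \eqref{eq:Hw-integrable} holds with no case split on $\hat t$, and then \eqref{eq:2nd-order-extra-infty} is immediate because $\int_{t_0}^\infty Hw_\infty\to 0$ as the tail of a convergent integral. You should also be a bit more careful than ``Cauchy--Schwarz against $\rho$ plus the Hilbert--Schmidt computation'': the HS computation $\e((\tau_1\wedge\tau_2)^2)<\infty$ is not what makes $\int Hw_{\hat t}$ finite; it is the membership $v_{\hat t}\in L^2(\rho)$, which is a separate (and more elementary) fact.
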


\begin{proof}
First, note that the non-negativity of $w_{\hat{t}}$ easily follows from its definition \eqref{eq:w-def}, since $\lambda(t)$, $E(t)$ and $v_{\hat{t}}(t)$ are all non-negative on $\mathbb{R}_+$ (note that non-negativity of $v_{\hat{t}}$ follows from \refL{lem:eigen-properties}).

The function $v_{\hat{t}}$ is an eigenfunction of $B_{\hat{t}}$ corresponding to the eigenvalue $\mu_{\hat{t}}$, thus we have $\mu_{\hat{t}} v_{\hat{t}} = B_{\hat{t}}v_{\hat{t}}$. Therefore, by \eqref{eq:branch-op-rdcp} if $t_0 \le \hat{t}$ then
\begin{equation}\label{eq:t0-less-rdcp}
\mu_{\hat{t}} \lambda(t_0) v_{\hat{t}}(t_0) = E(t_0)\left[ \int \limits_0^{t_0} f(s)v_{\hat{t}}(s)s \, \mathrm{d}s + t_0 \cdot \int \limits_{t_0}^\infty f(s) v_{\hat{t}}(s) \, \mathrm{d}s \right],
\end{equation}

and if $\hat{t}< \infty$ and $t_0 \ge \hat{t}$, then we have
\begin{equation}\label{eq:large-t0-rdcp}
\mu_{\hat{t}} \frac{\lambda(t_0) v_{\hat{t}}(t_0)}{E(t_0)} = \left[ \int \limits_0^{\hat{t}} f(s)v_{\hat{t}}(s)s \, \mathrm{d}s + \hat{t} \cdot \int \limits_{\hat{t}}^\infty f(s) v_{\hat{t}}(s) \, \mathrm{d}s \right].
\end{equation}
Observe that the right hand side of equation \eqref{eq:large-t0-rdcp} does not depend on $t_0$, thus the function ${w_{\hat{t}}(t_0) = \lambda(t_0) v_{\hat{t}}(t_0)/E(t_0)}$ is constant for each $t_0 \ge \hat{t}$, i.e., $w_{\hat{t}}(t) \equiv w_{\hat{t}}\left( \hat{t} \right)$.

If $t_0 \le \hat{t}$ then by \eqref{eq:t0-less-rdcp} we have
\begin{equation}\label{eq:w-eq-rdcp}
\mu_{\hat{t}} w_{\hat{t}}(t_0) = \left( \int \limits_0^{t_0} H(s)w_{\hat{t}}(s)s \, \mathrm{d}s + t_0 \cdot \int \limits_{t_0}^\infty H(s) w_{\hat{t}}(s) \, \mathrm{d}s \right).
\end{equation}

By differentiating \eqref{eq:w-eq-rdcp} with respect to $t_0$, we have the following equation
\begin{equation}\label{eq:w-diff-rdcp}
\mu_{\hat{t}} w_{\hat{t}}'(t_0) = \int \limits_{t_0}^\infty H(s)w_{\hat{t}}(s) \, \mathrm{d}s
\quad \text{for } 0 < t_0 < \hat{t}.
\end{equation}

In case $t_0 \nearrow \hat{t} < \infty$, using that $w_{\hat{t}}(t)$ is constant for $t \ge \hat{t}$ if $\hat{t}<\infty$ and $p_1 = 0$ by the strict Assumption~\ref{assump:degree_constraint_strict}, equation~\eqref{eq:w-diff-rdcp} gives~\eqref{eq:2nd-order-extra-mu}:
\begin{equation*}
\mu_{\hat{t}} \cdot \left(\lim \limits_{t_0 \nearrow \hat{t}}w_{\underline{\hat{t}}}'(t_0)\right) = w_{\hat{t}}\left( \hat{t}\right) \int \limits_{\hat{t}}^\infty H(s) \, \mathrm{d}s \stackrel{\eqref{eq:H-int}}{=} w_{\hat{t}}(\hat{t}) \cdot \left( 1- \int \limits_0^{\hat{t}} H(s) \, \mathrm{d}s \right).
\end{equation*}

If we differentiate \eqref{eq:w-diff-rdcp} with respect to $t_0$ we obtain \eqref{eq:w_that-2nd-order-diffeq}.

Now we can prove the monotonicity and the concavity of $w_{\hat{t}}(t)$. Since $w_{\hat{t}}(t)$ is positive, by \eqref{eq:w-diff-rdcp} and \eqref{eq:w_that-2nd-order-diffeq} we know that $w_{\hat{t}}(t)$ is monotone increasing and concave on the interval $[0, \hat{t}]$. In particular, if $\hat{t}=\infty$ then this statement holds on $\mathbb{R}_+$. If $\hat{t}<\infty$ then by \eqref{eq:2nd-order-extra-mu} we also know that $w'_{\hat{t}}(\hat{t})>0$ and $w_{\hat{t}}(t)$ is constant for $t \ge \hat{t}$, therefore, the statement holds also on $\mathbb{R}_+$.

By the boundedness of $B_{\hat{t}}$ (see \refL{lem:properties-of-branching-operator}), we know that $\norm{v_{\hat{t}}}_2 < \infty$, i.e.,
\begin{equation}\label{eq:Hw^2-integrable}
\int \limits_0^\infty H(t) \cdot w_{\hat{t}}(t)^2\, \mathrm{d}t \stackrel{\eqref{eq:H-rho-def},\, \eqref{eq:w-def}}{=}\int \limits_0^\infty v_{\hat{t}}(t)^2 \cdot \rho(t) \, \mathrm{d}t < \infty.
\end{equation}

Note that by the monotonicity and concavity of function $w_{\hat{t}}(t)$, \eqref{eq:Hw^2-integrable} implies also \eqref{eq:Hw-integrable}. Therefore, equation \eqref{eq:w-diff-rdcp} shows that $w_{\hat{t}}'(0) \in (0, \infty)$. Since any constant multiple of an eigenfunction is also an eigenfunction, we can choose $w_{\hat{t}}'(0)=1$, i.e., we obtain~\eqref{eq:w_that-iv-null2}.

It remains to show \eqref{eq:w_that-iv-null1} and \eqref{eq:2nd-order-extra-infty}.

For $t_0=0$ equation~\eqref{eq:w-eq-rdcp} using \eqref{eq:Hw-integrable} gives \eqref{eq:w_that-iv-null1}.

On the other hand, if $\hat{t}=\infty$ and we take $t_0 \to \infty$ in \eqref{eq:w-diff-rdcp}, then using \eqref{eq:Hw-integrable} we obtain~\eqref{eq:2nd-order-extra-infty}.
\end{proof}

We already have all of the ingredients required for the proof of \refT{thm:PF-eigenvalue}.
Let us note that the characterization of sub/supercriticality of a multi-type branching process via the principal eigenvalue of its branching operator (cf.\ statements~\ref{item:subcrit_eigen_statement}.\ and~\ref{item:supcrit_eigen_statement}.\ of \refT{thm:PF-eigenvalue}) is a classical result (see e.g.\ \cite[Chapter~V.3 Theorem~2]{AN} or \cite[Theorem~6.1]{BJR}). However, we have not found this result in the literature in the generality we need it. Therefore, for the sake of completeness, we include the proof that suits our setting, but we defer it to Appendix~\ref{appendix_main} (noting that this is one of our proofs, where we use that $\underline{p}$ satisfies $p_k=0$ for all $k > \Delta$, as stated in the strict Assumption~\ref{assump:degree_constraint_strict}).

\subsection{Equivalence of critical time definitions}
\label{subsec:tc_defs_coincide}

The goal of Section~\ref{subsec:tc_defs_coincide} is to prove \refP{prop:tc-equivalence}, i.e., that $t_c(\underline{p}) = F_{\underline{p}}\bigpar{\hat{t}_c(\underline{p})}/2$. Let us stress that this is the only proof of our paper, where we rely on the results of \cite{WW}.

\begin{proposition}[Sharpness of phase transition of limit object]\label{prop:emergence-of-giant}
Let $\underline{p}$ satisfy the strict Assumption~\ref{assump:degree_constraint_strict}. The critical time $\hat{t}_c(\underline{p})$ defined in \eqref{eq:t_crit-def} (i.e., the time when the susceptibility of the tree $G^\infty_{\underline{p}}(\hat{t})$ explodes) is equal to the infimum of times when $G^\infty_{\underline{p}}(\hat{t})$ has infinitely many vertices with positive probability:
\begin{equation}\label{eq:giant-emerge}
\hat{t}_c(\underline{p}) = \inf \left\{ \, \hat{t} \ge 0 \, \Big| \, \prob \left( |V(G^\infty_{\underline{p}}(\hat{t}))| = \infty \right)>0 \, \right\}.
\end{equation}
\end{proposition}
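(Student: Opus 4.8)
The plan is to obtain \refP{prop:emergence-of-giant} as an immediate corollary of the spectral dichotomy in \refT{thm:PF-eigenvalue}, combined with the distributional identification of \refT{thm:mtbp-reproduces}. First I would observe that, by \refT{thm:mtbp-reproduces}, the random rooted tree $G^\infty_{\underline{p}}(\hat{t})$ has the same law as the family tree of the multi-type branching process $\mathcal{M}_{\underline{p}}(\hat{t})$; in particular $|V(G^\infty_{\underline{p}}(\hat{t}))|$ and $|\mathcal{M}_{\underline{p}}(\hat{t})|$ have the same distribution, so $\e(|V(G^\infty_{\underline{p}}(\hat{t}))|)<\infty$ is equivalent to $\mathcal{M}_{\underline{p}}(\hat{t})$ being subcritical, and $\p(|V(G^\infty_{\underline{p}}(\hat{t}))| = \infty)>0$ is equivalent to $\mathcal{M}_{\underline{p}}(\hat{t})$ being supercritical, in the sense of \refD{def:critical}.

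Next I would split according to whether $\hat{t}$ is below or above the critical time, writing $\mu_{\hat{t}} := \norm{B_{\hat{t}}}$. If $\hat{t} < \hat{t}_c(\underline{p})$, then items~(4) and~(5) of \refT{thm:PF-eigenvalue} give $\mu_{\hat{t}} < \mu_{\hat{t}_c(\underline{p})} = 1$, so item~(2) yields that $\mathcal{M}_{\underline{p}}(\hat{t})$ is subcritical, hence $\e(|V(G^\infty_{\underline{p}}(\hat{t}))|)<\infty$ and in particular $\p(|V(G^\infty_{\underline{p}}(\hat{t}))| = \infty)=0$; thus no $\hat{t}<\hat{t}_c(\underline{p})$ belongs to the set on the right-hand side of \eqref{eq:giant-emerge}. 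If $\hat{t} > \hat{t}_c(\underline{p})$, the same two items give $\mu_{\hat{t}} > 1$, so item~(3) yields that $\mathcal{M}_{\underline{p}}(\hat{t})$ is supercritical, i.e.\ $\p(|V(G^\infty_{\underline{p}}(\hat{t}))| = \infty)>0$; thus every $\hat{t} > \hat{t}_c(\underline{p})$ belongs to that set. Consequently the set $\{\, \hat{t}\ge 0 \,:\, \p(|V(G^\infty_{\underline{p}}(\hat{t}))| = \infty)>0\,\}$ is disjoint from $[0,\hat{t}_c(\underline{p}))$ and contains $(\hat{t}_c(\underline{p}),\infty)$, so its infimum equals $\hat{t}_c(\underline{p})$, which is exactly \eqref{eq:giant-emerge}. (The degenerate case $\hat{t}_c(\underline{p})=\infty$ is consistent as well: then the set is empty and both sides are $+\infty$.)

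I do not expect any genuine obstacle here, since the statement is a bookkeeping consequence of \refT{thm:mtbp-reproduces} and \refT{thm:PF-eigenvalue}. The only point meriting a line of care is the behaviour exactly at $\hat{t}=\hat{t}_c(\underline{p})$, where \refT{thm:PF-eigenvalue} only tells us $\mu_{\hat{t}_c(\underline{p})}=1$ and hence does not by itself decide sub- or supercriticality; but since \eqref{eq:giant-emerge} involves an infimum, the value of the survival probability at the single point $\hat{t}=\hat{t}_c(\underline{p})$ is immaterial, so this does not affect the argument.
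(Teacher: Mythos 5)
Your proposal is correct and follows essentially the same route as the paper, which proves \refP{prop:emergence-of-giant} in one line by appealing to $\bignorm{B_{\hat{t}_c(\underline{p})}}=1$ and the remaining items of \refT{thm:PF-eigenvalue} (together with the identification of \refT{thm:mtbp-reproduces}). Your write-up just spells out the monotonicity-and-dichotomy bookkeeping that the paper leaves implicit.
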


\begin{proof}
This follows from $\norm{B_{\hat{t}_c(\underline{p})}} = \mu_{\hat{t}_c(\underline{p})} = 1$ and the other statements of \refT{thm:PF-eigenvalue}.
\end{proof}

\begin{proof}[Proof of \refP{prop:tc-equivalence}]
In this proof we denote by $t_c^W(\underline{p})$ the critical time determined in \cite[Theorem 1.6]{WW} and by $t_c(\underline{p})$ the critical time defined in \eqref{eq:discrete-tc-def}. Our goal is to prove that $t_c^W(\underline{p}) = t_c(\underline{p})$.

Let us denote by $Q^{n,k}(t)$ the ratio of the number of vertices whose component has size at most $k$ in~$G^n_{\underline{p}}(t)$. By \refC{cor:local-limit-discrete}, it follows that for any $t \in \mathbb{R}_+$ and $k \in \mathbb{N}$, the random variable $Q^{n,k}(t)$ converges in probability to $Q^{\infty,k}(t)$ as $n \to \infty$, where $Q^{\infty,k}(t) := \prob \left( \, | V(G^\infty_{\underline{p}}(t)) | \le k \right)$. 
Observe that if $t < t_c(\underline{p})$ then
\begin{equation}\label{eq:subcrit-prop}
\forall \, \varepsilon> 0, \, \exists \, k \in \mathbb{N}: \quad Q^{\infty,k}(t) \ge 1- \varepsilon,
\end{equation}
and using \eqref{eq:giant-emerge}, if $t > t_c(\underline{p})$ then
\begin{equation}\label{eq:supercrit-prop}
\exists \, \varepsilon> 0, \, \forall \, k \in \mathbb{N}: \quad Q^{\infty,k}(t) \le 1- \varepsilon.
\end{equation}

On the other hand, the third point of \cite[Theorem 1.6]{WW} implies that if $t<t_c^W(\underline{p})$, the susceptibility of the RDCP on $K_n$ at time $t$ converges to a finite number as $n \to \infty$ and it implies \eqref{eq:subcrit-prop}. Moreover, the second point of \cite[Theorem 1.6]{WW} shows that if $t > t_c^W(\underline{p})$, there exists a giant component in the RDCP on $K_n$ at time $t$ and it implies \eqref{eq:supercrit-prop}.

This argument shows that $t_c^W(\underline{p}) = t_c(\underline{p})$ holds.
\end{proof}

\section{Asymptotic expansion of the critical time}\label{sec:t_crit_asymp}

In this section our main goal is to prove \refT{thm:tc-asymptotics}, i.e., to give an asymptotic expansion of the critical time. 
Recalling from Theorem~\ref{thm:PF-eigenvalue} that $\hat{t}=\hat{t}_c$ is the unique value of $\hat{t}$ for which the principal eigenvalue of $B_{\hat{t}}$ is equal to $1$, we will use the ODE characterization of the corresponding eigenfunction (derived in Lemma~\ref{lem:w-properties}) to give asymptotically matching upper and lower bounds on the value of $\hat{t}_c$. The key idea is simple: we know that if $\hat{t}=\hat{t}_c$, then the value of $\mu_{\hat{t}}$ from \eqref{eq:w_that-2nd-order-diffeq} must be equal to $1$, thus we obtain the eigenfunction $w_{\hat{t}}(\cdot)$ by solving the initial value problem \eqref{eq:w_that-2nd-order-diffeq}--\eqref{eq:w_that-iv-null2}, therefore, we can identify $\hat{t}_c$ as the (leftmost) value of $\hat{t}$ for which the boundary condition \eqref{eq:2nd-order-extra-mu} is also satisfied.

In order to understand the ODE \eqref{eq:w_that-2nd-order-diffeq} from Lemma~\ref{lem:w-properties}, we first need to understand the function $H$ that appears in the r.h.s.\ of the ODE: this is what we will do in Lemma~\ref{lem:error-asymptotics}. Then we can estimate the solution of the ODE and the value of $\hat{t}_c$ in Lemma~\ref{lem:t-crit-asymp}. Finally, in Lemma~\ref{lem:F(tc)-asymp} we prove some bounds that allow us to convert the result \eqref{eq:tc} about $\hat{t}_c$ into the result \eqref{eq:discrete-tc} about $t_c$.

\medskip

 Let $\underline{p}^m$ and $D^m$ be defined as in \refT{thm:tc-asymptotics}, i.e., $\underline{p}^m, \, m=1,2,\dots$ is a sequence of distributions on $\{\,2, 3, \dots\,\}$, $D^m$ is a random variable with distribution~$\underline{p}^m$ and $D^m \Rightarrow \infty$ as $m \to \infty$. Note that $\Delta_m$ (see \eqref{eq:upper-bound-assumption}) can depend on~$m$, and that $\Delta_m \to \infty$ as~$m \to \infty$.

\begin{remark}
$D^m \Rightarrow \infty$ is equivalent to $p^m_k \to 0$ as $m \to \infty$ for any $k \in \mathbb{N}$, or, alternatively, $q^m_k \to 1$ as $m \to \infty$ for any $k$, where $p^m_k = \prob(D^m=k)$ and $q^m_k = \prob(D^m \ge k)$.
\end{remark}

First, we introduce some notation and state Lemmas~\ref{lem:error-asymptotics}, \ref{lem:t-crit-asymp} and~\ref{lem:F(tc)-asymp}. These lemmas will immediately imply \refT{thm:tc-asymptotics}.

\begin{notation}\label{not:crit-asymptotic-notations}
If the degree constraint distribution is $\underline{p}^m$, we use the notation $\lambda_m(t)$, $X^m_t$ and $H_m(t)$ instead of $\lambda(t)$ (see~\eqref{eq:lambda-ivp} and \eqref{eq:lambda-ivp-null}), $X_t$ (see \eqref{eq:Xt-def}) and $H(t)$ (see \eqref{eq:H-rho-def}), respectively. Let us denote by $\mathcal{X}^m$ the PPP corresponding to the counting process $X^m_t$.

Let us introduce the sequences
\begin{equation}\label{eq:delta-I-J-def}
\delta_m:= \int \limits_0^1 H_m(s) \cdot (1-s^2) \, \mathrm{d}s, \quad I_m := \int \limits_0^{1+2\delta_m} H_m(s) \, \mathrm{d}s, \quad J_m := \int \limits_0^1 \prob(D^m \le X^m_s) \, \mathrm{d}s, \quad m \in \mathbb{N}.
\end{equation}

Note that
\begin{equation}\label{eq:delta-smaller-than-I}
\delta_m \le I_m.
\end{equation}

We also use the notation $\mathcal{O}(h_m)$ and $h_m \sim g_m$ for sequences $h_m\colon \mathbb{N} \to \mathbb{R}$, $g_m\colon \mathbb{N} \to \mathbb{R}$:
\begin{align*}
&\mathcal{O}(h_m) := \left\{\, f_m\colon \mathbb{N}\to \mathbb{R} \, | \, \exists \, c \in \mathbb{R}_+ \, : \, \forall \, m \in \mathbb{N} \, : \, |f_m| \le c \cdot h_m \, \right\},\\
&h_m \sim g_m \quad \text{if and only if} \quad \lim \limits_{m \to \infty} \frac{h_m}{g_m} = 1.
\end{align*}
\end{notation}

\begin{lemma}[Order of error terms]\label{lem:error-asymptotics}
As $m \to \infty$ we have
\begin{equation}\label{eq:error-asymptotics}
\delta_m \sim \frac{2}{\mathrm{e}} \ev \left(\frac{1}{D^m!}\right), \qquad \frac{I_m^2}{\delta_m} \longrightarrow 0 \quad \text{ and } \quad \frac{J_m}{\delta_m} \longrightarrow 0.
\end{equation}
\end{lemma}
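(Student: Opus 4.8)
The plan is to first reduce all three statements to the asymptotics, as $j\to\infty$, of the elementary integrals $\gamma_j:=\int_0^1 s^j\mathrm{e}^{-s}\,\mathrm{d}s$ and $\int_0^1 s^j\mathrm{e}^{-s}(1-s^2)\,\mathrm{d}s$, and then to sum these against the weights $p^m_{j+2}$ and pass to the limit using $D^m\Rightarrow\infty$. The first step is to control $\lambda_m$ near the origin. Since $0\le\lambda_m'\le 1$ (by \eqref{eq:lambda-ivp}, as $\sum_{k<\Delta^m}\tfrac{\lambda^k}{k!}q^m_{k+1}\le\mathrm{e}^{\lambda}$), we have $\lambda_m(t)\le t$; and for $\lambda\le 2$ the truncation estimate $\mathrm{e}^{-\lambda}\sum_{k\ge N}\tfrac{\lambda^k}{k!}\le\tfrac{2^N}{N!}$ together with $q^m_{k+1}\to 1$ (which is equivalent to $D^m\Rightarrow\infty$) shows $\sup_{t\le 2}|\lambda_m'(t)-1|\to 0$, hence $\lambda_m(t)\to t$ uniformly on $[0,2]$. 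In particular $\lambda_m(1)\to 1$, and on $[0,1]$ the inverse $\lambda_m^{-1}$ exists, satisfies $u\le\lambda_m^{-1}(u)$, and $\lambda_m^{-1}(u)\to u$ uniformly.

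Next I would rewrite the three quantities as sums over degree values. Using $H_m(s)=\lambda_m'(s)\mathrm{e}^{-\lambda_m(s)}\sum_{j\ge 0}\tfrac{\lambda_m(s)^j}{j!}p^m_{j+2}$ from \eqref{eq:H-formula} and the substitution $u=\lambda_m(s)$,
\[
\delta_m=\sum_{j\ge 0}p^m_{j+2}\,a^{(m)}_j,\qquad a^{(m)}_j:=\int_0^{\lambda_m(1)}\mathrm{e}^{-u}\tfrac{u^j}{j!}\bigl(1-\lambda_m^{-1}(u)^2\bigr)\,\mathrm{d}u,
\]
while $I_m=\p(\tau_m\le 1+2\delta_m)$ with $\tau_m$ the hitting time from \eqref{eq:tau-def}, and (since $X^m_s\sim\mathrm{Poi}(\lambda_m(s))$ by \refCl{cl:X-is-counting-process} and $D^m$ is independent of it) $J_m=\sum_{d\ge 2}p^m_d\int_0^1\mathrm{e}^{-\lambda_m(s)}\sum_{j\ge d}\tfrac{\lambda_m(s)^j}{j!}\,\mathrm{d}s$. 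Because $0\le 1-\lambda_m^{-1}(u)^2\le 1-u^2$ while $\lambda_m^{-1}(u)\to u$ and $\lambda_m(1)\to 1$ uniformly, one gets $a^{(m)}_j=c_j+o_m(1)$ uniformly in $j$, where $c_j:=\tfrac1{j!}\int_0^1\mathrm{e}^{-s}s^j(1-s^2)\,\mathrm{d}s$; then I would compute $c_j$ and $\gamma_j/j!$ via the identities $\gamma_n=n\gamma_{n-1}-\mathrm{e}^{-1}$ and $\gamma_n=n!\,\mathrm{e}^{-1}\sum_{k>n}\tfrac1{k!}$, which give
\[
\frac{\gamma_j}{j!}=\frac{\mathrm{e}^{-1}}{(j+1)!}\bigl(1+O(1/j)\bigr),\qquad c_j=\frac{\gamma_j-\gamma_{j+2}}{j!}=\frac{2\,\mathrm{e}^{-1}}{(j+2)!}\bigl(1+O(1/j)\bigr)\qquad(j\to\infty),
\]
and in particular $c_j\asymp(j+2)!^{-1}$, $\gamma_j/j!\asymp(j+1)!^{-1}$ for every $j$; the companion bound $\mathrm{e}^{-\lambda}\sum_{j\ge d}\tfrac{\lambda^j}{j!}\le\tfrac{\lambda^d}{d!}$ (for $\lambda\le d$) handles the $J_m$-integrand.

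Finally I would sum and let $m\to\infty$. For $\delta_m$: writing $c_j=\tfrac{2}{\mathrm{e}(j+2)!}(1+\varepsilon_j)$ with $\varepsilon_j\to 0$ and $(\varepsilon_j)$ bounded, and using that $p^m_k\to 0$ for each fixed $k$ while $\sum_k\tfrac1{k!}<\infty$, the contribution of bounded degrees to $\sum_j p^m_{j+2}/(j+2)!=\e(1/D^m!)$ becomes negligible relative to the whole, and the uniform $o_m(1)$ from $a^{(m)}_j$ is controlled likewise, yielding $\delta_m=\tfrac2{\mathrm{e}}\,\e(1/D^m!)\,(1+o(1))$; in particular $\delta_m\asymp\e(1/D^m!)\to 0$. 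For $I_m$: by \refCl{cl:X-is-counting-process}, Step 1 and concavity of $\lambda_m$, $I_m=\p(X^m_{1+2\delta_m}\ge D^m-1)\le(1+o(1))\e(1/(D^m-1)!)=(1+o(1))\e(D^m/D^m!)$, so by Cauchy--Schwarz $I_m^2\le(1+o(1))\,\e((D^m)^2/D^m!)\,\e(1/D^m!)$, whence $I_m^2/\delta_m\lesssim\e((D^m)^2/D^m!)\to 0$ by dominated convergence (since $\sum_k k^2/k!<\infty$). For $J_m$: the integrand bound gives $J_m\le\e(1/(D^m+1)!)$; refining it to $J_m=(1+o(1))\,\mathrm{e}^{-1}\e(1/(D^m+1)!)$ via the Step~3 asymptotics, and combining $\delta_m\asymp\e(1/D^m!)$ with $1/(D^m+1)!=\tfrac1{D^m+1}\cdot\tfrac1{D^m!}$ and the escape of mass to infinity, gives $J_m/\delta_m\to 0$.

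The main obstacle is this last step: one must show that the parts of $\delta_m$, $I_m$, $J_m$ coming from atypically small values of $D^m$ are negligible not merely in absolute terms but relative to $\delta_m\sim\tfrac2{\mathrm{e}}\e(1/D^m!)$, and simultaneously that the degree-dependent error terms $O(1/j)$ in the Step~3 asymptotics do not accumulate when summed against $p^m_{j+2}$. This is precisely where $D^m\Rightarrow\infty$ has to be used quantitatively — via dominated convergence against the summable weights $k^{\alpha}/k!$ — and organizing Step~4 around such moment bounds, rather than around $\delta_m$ directly and only dividing by $\delta_m$ at the very end, is the key technical device.
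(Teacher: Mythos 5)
Your proposal takes a genuinely different route from the paper. The paper's proof is probabilistic: it introduces two homogeneous PPPs $\overline{\mathcal{X}}$ (intensity $1$) and $\underline{\mathcal{X}}^m$ (intensity $\lambda_m'(1)$) sandwiching $\mathcal{X}^m$, uses the identity $1-s^2=2(1-s)-(1-s)^2$ to write $\delta_m = 2\e(\varepsilon_m^+) - \e\bigl(\varepsilon_m^2 \indic{\varepsilon_m\ge 0}\bigr)$ with $\varepsilon_m=1-\tau_m$, and evaluates the sandwiching moments exactly by conditioning on $\overline{X}_1$ (resp.\ $\underline{X}^m_1$) and applying Campbell's theorem (order statistics of uniform points on $[0,1]$), together with a separate hitting-time argument for $J_m$. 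Your route is analytic: you substitute $u=\lambda_m(s)$ to get $\delta_m=\sum_j p^m_{j+2}\,a^{(m)}_j$, identify the limiting integrals $c_j$, and compute $c_j\sim\tfrac{2}{\mathrm{e}(j+2)!}$ via the recursion $\gamma_n=n\gamma_{n-1}-\mathrm{e}^{-1}$. The sandwich removes all dependence on $\lambda_m$, while your change of variables gives more explicit constants and the sharper uniform bound $|\lambda_m^{-1}(u)-u|\le J_m$ (which you should use to upgrade your ``$a^{(m)}_j=c_j+o_m(1)$'' to $a^{(m)}_j=c_j+O(J_m)$ uniformly in $j$: a raw $o_m(1)$ is too weak to be compared against $\e(1/D^m!)\to 0$).

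The genuine gap is in Step~4, which you correctly flag as ``the main obstacle'' but do not close; the proposed fix --- ``dominated convergence against the summable weights $k^\alpha/k!$'' --- does not deliver what is needed. Dominated convergence yields absolute smallness, $\e(g(D^m))\to 0$ for bounded $g$ vanishing at infinity. What Step~4 needs is relative smallness: writing $c_j=\tfrac{2}{\mathrm{e}(j+2)!}(1+\varepsilon_j)$ with $\varepsilon_j\to 0$, you must show $\sum_j p^m_{j+2}\tfrac{\varepsilon_j}{(j+2)!}$ is $o\bigl(\e(1/D^m!)\bigr)$, i.e.\ that the small-degree part of $\e(1/D^m!)$ carries negligible mass. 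This is a statement about the size-biased distribution $\tilde{p}^m_k\propto p^m_k/k!$ escaping to infinity, which is \emph{not} implied by $D^m\Rightarrow\infty$: take $p^m_2=1/m$, $p^m_m=1-1/m$; then $D^m\Rightarrow\infty$, but the tilted mass concentrates at $k=2$ (since $1/m!\ll 1/m$), and for instance $\e(1/(D^m+1)!)/\e(1/D^m!)\to 1/3\ne 0$. Note that the paper's proof of \eqref{eq:eps-upper} rests on exactly the same assertion (``Observing that $\e(1/(D^m+1)!)/\e(1/D^m!)\to 0$''), also without justification. So your proposal reaches the same crux as the paper, but neither the dominated-convergence appeal nor the paper's bare observation actually closes it; some additional hypothesis (e.g.\ deterministic $D^m=d_m\to\infty$, as in the $d$-process, or a tilted-escape assumption) or a substantially different argument is required at this point.
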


We prove \refL{lem:error-asymptotics} in \refS{subsec:error_terms}.

\begin{lemma}[Asymptotic formula for $\hat{t}_c$]\label{lem:t-crit-asymp}
Recall \refD{def:t-crit} and the notation of $\delta_m$ and $I_m$ (see~\eqref{eq:delta-I-J-def}). We have
\begin{equation}\label{eq:tc-formula}
\hat{t}_c(\underline{p}^m) = 1+ \delta_m + \mathcal{O}(I_m^2).
\end{equation}
\end{lemma}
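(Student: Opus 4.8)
The plan is to exploit the second-order linear characterization of the principal eigenvalue $\mu_{\hat t}$ from Lemma~\ref{lem:w-properties}, combined with the fact (Theorem~\ref{thm:PF-eigenvalue}) that $\hat t_c(\underline p)$ is precisely the value of $\hat t$ at which $\mu_{\hat t}=1$. Concretely, for a candidate time $\hat t$ write $w=w_{\hat t}$ for the solution of the initial value problem
\begin{align*}
\mu_{\hat t}\, w''(t) &= -H(t)\,w(t), \qquad 0<t<\hat t,\\
w(0)&=0, \quad w'(0)=1,
\end{align*}
subject to the extra boundary condition \eqref{eq:2nd-order-extra-mu}. Setting $\mu_{\hat t}=1$ in this system turns the problem of locating $\hat t_c(\underline p^m)$ into the problem: find $\hat t=\hat t_c(\underline p^m)$ and a function $w$ with $w''=-H_m w$, $w(0)=0$, $w'(0)=1$, and $w'(\hat t)=w(\hat t)\bigl(1-\int_0^{\hat t}H_m(s)\,\mathrm ds\bigr)$. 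I would then integrate the ODE twice: $w'(t)=1-\int_0^t H_m(s)w(s)\,\mathrm ds$ and $w(t)=t-\int_0^t(t-s)H_m(s)w(s)\,\mathrm ds$. Since $D^m\Rightarrow\infty$, the mass of $H_m$ near $t\le 1$ is vanishingly small (this is the content of Lemma~\ref{lem:error-asymptotics}: $\int_0^{1+2\delta_m}H_m\to 0$), so on the relevant time window $w(t)=t(1+o(1))$, i.e. $w$ is a small perturbation of the linear function $t\mapsto t$.

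The key step is to plug the approximation $w(s)\approx s$ back into the boundary condition and solve for $\hat t$. Using $w(t)=t-\int_0^t(t-s)H_m(s)w(s)\,\mathrm ds$ and $w'(t)=1-\int_0^t H_m(s)w(s)\,\mathrm ds$, the boundary condition $w'(\hat t)-w(\hat t)\bigl(1-\int_0^{\hat t}H_m\bigr)=0$ becomes, after substituting and collecting terms,
\[
1-\int_0^{\hat t}H_m(s)w(s)\,\mathrm ds - \Bigl(\hat t-\int_0^{\hat t}(\hat t-s)H_m(s)w(s)\,\mathrm ds\Bigr)\Bigl(1-\int_0^{\hat t}H_m(s)\,\mathrm ds\Bigr)=0.
\]
Replacing $w(s)$ by $s$ (and controlling the error $w(s)-s=\mathcal O(I_m s)$ using $\int_0^{\hat t}H_m\,\mathrm ds=I_m+\mathcal O(\delta_m)$, $\hat t=1+\mathcal O(\delta_m)$), the leading balance reads
\[
1-\hat t+\int_0^{\hat t}H_m(s)\bigl(\hat t\cdot s-s\bigr)\,\mathrm ds+\hat t\int_0^{\hat t}H_m(s)\,\mathrm ds-\int_0^{\hat t}H_m(s)\,s\,\mathrm ds+\cdots=0,
\]
which after simplification isolates $\hat t-1=\int_0^{\hat t}H_m(s)(1-s^2)\,\mathrm ds+(\text{quadratic in the small quantities})$. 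Recognizing $\delta_m=\int_0^1 H_m(s)(1-s^2)\,\mathrm ds$ and noting that extending the integral from $1$ to $\hat t=1+\mathcal O(\delta_m)$ contributes only $\mathcal O(I_m\delta_m)=\mathcal O(I_m^2)$ (by \eqref{eq:delta-smaller-than-I}), and that the neglected quadratic terms are $\mathcal O(I_m^2)$, yields $\hat t_c(\underline p^m)=1+\delta_m+\mathcal O(I_m^2)$, which is \eqref{eq:tc-formula}.

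To make this rigorous rather than heuristic I would (i) first establish a priori bounds: since $\mu_{\hat t}$ is continuous and strictly increasing and $\mu_{\hat t}\to 0$ as $\hat t\to 0$ while $\mu_\infty\ge 1$ (indeed $\hat t_c<\infty$ by Claim~\ref{cl:phantom-sat-finite} and Proposition~\ref{prop:tc-equivalence}), a solution of $\mu_{\hat t}=1$ exists and is unique; a crude estimate using $0\le H_m\le 1$ and $\int_0^\infty H_m=1$ shows $\hat t_c(\underline p^m)=1+\mathcal O(I_m)$ to start, which bootstraps to the finer expansion; (ii) set up the fixed-point / Gronwall argument on $[0,\hat t_c(\underline p^m)]$ to show $|w(t)-t|\le C\,I_m\,t$ uniformly, using that $\int_0^{\hat t_c}H_m\le I_m+\mathcal O(\delta_m)$ and $\hat t_c$ is close to $1$; (iii) carefully track every error term in the boundary-condition computation above, checking each is $\mathcal O(I_m^2)$ via $\delta_m\le I_m$ and the elementary estimate $|1-s^2|\le 2|1-s|$ together with $\int_0^{\hat t_c}H_m(s)|1-s|\,\mathrm ds=\mathcal O(I_m)$. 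The main obstacle I anticipate is step (iii): the boundary condition \eqref{eq:2nd-order-extra-mu} mixes $w(\hat t)$, $w'(\hat t)$, and $\int_0^{\hat t}H_m$, all of which are $1+(\text{small})$ or (small), so the cancellations that produce the $\delta_m$ main term are delicate, and one must be scrupulous that no $\mathcal O(I_m)$ (as opposed to $\mathcal O(I_m^2)$) term survives — in particular verifying that $I_m^2/\delta_m\to 0$ (Lemma~\ref{lem:error-asymptotics}) is exactly what guarantees the claimed expansion is genuinely sharp to leading order, and this interplay is where the bookkeeping must be done with care.
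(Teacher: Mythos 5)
Your proposal is correct and takes essentially the same route as the paper: set $\mu_{\hat{t}}=1$ in the Sturm--Liouville characterization from Lemma~\ref{lem:w-properties}, integrate the ODE twice to obtain $w'(t)=1-\int_0^t H_m w$ and $w(t)=t-\int_0^t(t-s)H_m w$, approximate $w(s)\approx s$ up to $\mathcal{O}(I_m)$, and expand the boundary condition~\eqref{eq:2nd-order-extra-mu} to isolate $\delta_m=\int_0^1 H_m(s)(1-s^2)\,\mathrm{d}s$ with a quadratic error; the paper packages exactly this as locating the root of the explicit monotone function $\gamma_m$ (cf.\ Definition~\ref{def:Wm-gamma-theta-def} and Lemmas~\ref{lem:Wm-gamma-prop}--\ref{lem:tc-approx}) via a second-order Taylor estimate and a sign change around $1+\delta_m$. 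One small typo: in your displayed ``leading balance'' the term $\hat{t}\cdot s - s$ should read $\hat{t}\cdot s - s^2$, though your final conclusion correctly recovers the $1-s^2$ weight.
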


We prove \refL{lem:t-crit-asymp} in \refS{subsec:t_crit_asymp_formula}.

Recall that $F_{\underline{p}}(t)$ denotes the expected number of neighbors of the root in $G^\infty_{\underline{p}}(t)$ (see \refD{def:F(t)}). We show that the difference between $\hat{t}_c(\underline{p}^m)$ and $F_{\underline{p}^m}(\hat{t}_c(\underline{p}^m))$ is small.

\begin{lemma}[Difference of $\hat{t}_c$ and $F_{\underline{p}}(\hat{t}_c)$]\label{lem:F(tc)-asymp}
\begin{equation}\label{eq:F(tc)-formula}
F_{\underline{p}^m}(\hat{t}_c(\underline{p}^m)) = \hat{t}_c(\underline{p}^m) - 2 J_m + \mathcal{O}(I_m^2),
\end{equation}
where $J_m$ and $I_m$ are defined in \eqref{eq:delta-I-J-def}.
\end{lemma}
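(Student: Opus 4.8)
The plan is to use the alternative formula $F_{\underline{p}}(t) = \int_0^t (\lambda'(s))^2\,\mathrm{d}s$ from Lemma~\ref{lem:F(t)-alternative} together with the asymptotic expansion $\hat{t}_c(\underline{p}^m) = 1 + \delta_m + \mathcal{O}(I_m^2)$ from Lemma~\ref{lem:t-crit-asymp}. Writing $\hat{t}_m := \hat{t}_c(\underline{p}^m)$, I would split
\[
F_{\underline{p}^m}(\hat{t}_m) = \hat{t}_m - \int_0^{\hat{t}_m}\bigl(1 - (\lambda_m'(s))^2\bigr)\,\mathrm{d}s,
\]
so the task reduces to showing $\int_0^{\hat{t}_m}\bigl(1-(\lambda_m'(s))^2\bigr)\,\mathrm{d}s = 2J_m + \mathcal{O}(I_m^2)$. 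Since $\lambda_m'(s) = \p(T_\rt > s) = \p(D^m > X^m_s)$ (cf.~\eqref{eq:lambda-diff-prob}), we have $1 - \lambda_m'(s) = \p(D^m \le X^m_s)$, and $1 - (\lambda_m'(s))^2 = (1-\lambda_m'(s))(1+\lambda_m'(s)) = \p(D^m \le X^m_s)\cdot(1 + \lambda_m'(s))$.

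Next I would break the integral at $s=1$. On $[0,1]$, I would write $1+\lambda_m'(s) = 2 - \p(D^m \le X^m_s)$, so that
\[
\int_0^1 \bigl(1-(\lambda_m'(s))^2\bigr)\,\mathrm{d}s = 2\int_0^1 \p(D^m \le X^m_s)\,\mathrm{d}s - \int_0^1 \p(D^m \le X^m_s)^2\,\mathrm{d}s = 2J_m - \int_0^1 \p(D^m \le X^m_s)^2\,\mathrm{d}s,
\]
using the definition of $J_m$ in \eqref{eq:delta-I-J-def}. For the error term here, I would bound $\p(D^m\le X^m_s)^2 \le \p(D^m\le X^m_s)$ (for $s\le 1$), and then relate $\int_0^1 \p(D^m\le X^m_s)\,\mathrm{d}s$ to the quantities controlling $\delta_m$ and $I_m$: since $\lambda_m(s)\le s\le 1$ on this range, $\p(D^m\le X^m_s) = \p(X^m_s \ge D^m)$ is comparable (up to constants depending only on $\Delta$-free tail bounds) to $H_m$-type integrals, giving $\int_0^1\p(D^m\le X^m_s)\,\mathrm{d}s = \mathcal{O}(I_m)$, hence $\int_0^1 \p(D^m\le X^m_s)^2\,\mathrm{d}s = \mathcal{O}(I_m^2)$ — here I would use $\p(D^m \le X^m_s)^2 \le \p(D^m\le X^m_s)\cdot \sup_{s\le 1}\p(D^m\le X^m_s)$ and note $\sup_{s\le 1}\p(D^m\le X^m_s) = \mathcal{O}(I_m)$ as well since the probability is increasing in $s$ and $I_m \gtrsim H_m$-mass which in turn dominates $\p(D^m \le X^m_1)$; alternatively bound directly $\int_0^1\p(D^m\le X^m_s)^2\,\mathrm{d}s \le \bigl(\int_0^1\p(D^m\le X^m_s)\,\mathrm{d}s\bigr)^2 / \text{(something)}$, but the cleanest is the product bound. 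On $[1,\hat{t}_m]$ the integrand is $\mathcal{O}(1)$ and the interval has length $\hat{t}_m - 1 = \delta_m + \mathcal{O}(I_m^2) = \mathcal{O}(I_m)$ by \eqref{eq:delta-smaller-than-I}; since $1-(\lambda_m'(s))^2 \le 2(1-\lambda_m'(s)) = 2\p(D^m\le X^m_s)$ and on $[1, \hat{t}_m]$ we have $\p(D^m\le X^m_s) \le \p(D^m \le X^m_{\hat{t}_m}) = \mathcal{O}(I_m)$ (monotonicity plus $\hat t_m \le 1 + 2\delta_m \le 1 + 2 I_m$ for large $m$, matching the upper limit in the definition of $I_m$), this contributes $\mathcal{O}(I_m^2)$.

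Combining the two pieces gives $\int_0^{\hat{t}_m}\bigl(1-(\lambda_m'(s))^2\bigr)\,\mathrm{d}s = 2J_m + \mathcal{O}(I_m^2)$, which is exactly \eqref{eq:F(tc)-formula}. I expect the main obstacle to be the bookkeeping on the interval $[1,\hat{t}_m]$ and controlling $\sup_{s\le 1}\p(D^m \le X^m_s)$: one must be careful that $\hat t_m$ lies below $1 + 2\delta_m$ (so that all the $H_m$ and $\p(D^m \le X^m_\cdot)$ evaluations stay within the range over which $I_m$ is defined), and that the crude bound $\p(D^m\le X^m_s)^2 \le \p(D^m\le X^m_s)$ really does yield $\mathcal{O}(I_m^2)$ rather than merely $\mathcal{O}(I_m)$ — this requires observing that $\p(D^m \le X^m_s)$ itself is $\mathcal{O}(I_m)$ uniformly on the relevant range, which follows from monotonicity in $s$ together with a comparison to the $H_m$-integral defining $I_m$. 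All of these are elementary once the probabilistic interpretations $\lambda_m'(s) = \p(D^m > X^m_s)$ and the formula $F_{\underline{p}}' = (\lambda')^2$ are in hand.
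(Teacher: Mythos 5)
Your proposal is correct and follows essentially the same route as the paper: both use the identity $F_{\underline{p}^m}(\hat{t}_c) = \int_0^{\hat{t}_c}(\lambda_m'(s))^2\,\mathrm{d}s$ from Lemma~\ref{lem:F(t)-alternative}, the probabilistic interpretation $\lambda_m'(s) = 1 - \p(D^m \le X^m_s)$ from \eqref{eq:lambda-diff-prob}, the pointwise bound $\p(D^m\le X^m_s)\le I_m$ on $[0,1+2\delta_m]$ (which in the paper comes cleanly from $\p(D^m\le X^m_s)\le\p(D^m-1\le X^m_s)=\int_0^s H_m$), and the estimate $\hat{t}_c-1=\delta_m+\mathcal{O}(I_m^2)=\mathcal{O}(I_m)$ from Lemma~\ref{lem:t-crit-asymp} to control the tail past $s=1$. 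The only cosmetic difference is bookkeeping order: you split the integral at $s=1$ before expanding, whereas the paper expands $(1-\p)^2$ globally, absorbs $\int_0^{\hat{t}_c}\p^2$ into $\mathcal{O}(I_m^2)$ first, and then truncates $\int_0^{\hat{t}_c}\p$ to $\int_0^1\p=J_m$.
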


We prove \refL{lem:F(tc)-asymp} in \refS{subsec:F_tc_asymp}.

\begin{proof}[Proof of \refT{thm:tc-asymptotics}]
Lemmas~\ref{lem:t-crit-asymp} and~\ref{lem:error-asymptotics} imply \eqref{eq:tc}. Together with \refL{lem:F(tc)-asymp}, they also imply that
\begin{equation}\label{eq:F(tc)}
\frac{F_{\underline{p}^m}(\hat{t}_c(\underline{p}^m))-1}{\frac{2}{\mathrm{e}} \cdot \ev \left( \frac{1}{D^m!} \right)} \stackrel{m \to \infty}{\longrightarrow} 1.
\end{equation}
By \eqref{eq:discrete-tc-def} equation \eqref{eq:F(tc)} is equivalent to \eqref{eq:discrete-tc}.
\end{proof}

\subsection{Order of error terms}\label{subsec:error_terms}

In this section our goal is to prove \refL{lem:error-asymptotics}. First, we introduce two homogeneous Poisson point processes that approximate $\mathcal{X}^m$ (defined in \refN{not:crit-asymptotic-notations}). After calculating some facts about the asymptotic behavior of these homogeneous PPPs (see \refL{lem:eps-moments}) and proving the bound on $J_m$ stated in \refL{lem:Jm-bound}, we can easily deduce \refL{lem:error-asymptotics}.

\begin{notation}
Recall from \refN{not:crit-asymptotic-notations} that $X^m_t$ is the counting process of the PPP $\mathcal{X}^m$, which has intensity function $\lambda_m'(t)$. Now we define $\tau^m$ similarly to $\tau$ (see \eqref{eq:tau-def}), but for degree distribution $\underline{p}^m$:
\begin{equation}\label{eq:tau_m-def}
\tau_m := \min \{\, t \, : \, X^m_t =D^m-1 \,\}.
\end{equation}
\end{notation}

\begin{definition}[Auxiliary homogeneous PPPs]
Let $\overline{\mathcal{X}}$ be a homogeneous Poisson point process with intensity $1$, independent of $D^m$. Let $\overline{X}_t$ be the counting process of $\overline{\mathcal{X}}$. Similarly, we introduce a homogeneous Poisson point process
$\underline{\mathcal{X}}^m$ with intensity function~$\lambda'_m(1)$, also independent of $D^m$. Let $\underline{X}^m_t$ denote the counting process of $\underline{\mathcal{X}}^m$. Let us introduce the notation $\overline{\tau}_m$ and $\underline{\tau}_m$:
\begin{equation}\label{eq:tau_const-def}
\overline{\tau}_m:= \min \{\, t \, : \, \overline{X}_t = D^m-1 \,\}, \qquad \underline{\tau}_m:= \min \{\, t \, : \, \underline{X}^m_t = D^m-1 \,\}.
\end{equation}

Let
\begin{equation}\label{eq:eps-definitions}
\varepsilon_m := 1- \tau_m, \quad \overline{\varepsilon}_m := 1-\overline{\tau}_m, \quad \underline{\varepsilon}_m := 1-\underline{\tau}_m.
\end{equation}
\end{definition}

\begin{notation}
Let $\mathcal{X}$ and $\mathcal{Y}$ be point processes on $\mathbb{R}_+$. We denote by $\mathcal{X} \le \mathcal{Y}$ if $\mathcal{Y}$ can be obtained from $\mathcal{X}$ by inserting points.

We also introduce the notation $\xi^+$ for the positive part of a random variable $\xi$, i.e., we define $\xi^+ := \xi \cdot \indic{\xi \ge 0}$.
\end{notation}

\begin{claim}[Coupling between Poisson point processes]
For any $m \in \mathbb{N}$, the Poisson point processes $\mathcal{X}^m$, $\overline{\mathcal{X}}$ and $\underline{\mathcal{X}}^m$ can be defined on the same probability space in a way that
\begin{equation}\label{eq:PPPs-connection}
\underline{\mathcal{X}}^m \le \mathcal{X}^m \quad \text{holds on the interval } [0,1], \quad \text{and} \quad \mathcal{X}^m \le \overline{\mathcal{X}} \quad \text{on } \mathbb{R}_+.
\end{equation}

Moreover, we have
\begin{equation}\label{eq:eps-connection}
\underline{\varepsilon}_m^+ \le \varepsilon_m^+ \le \overline{\varepsilon}_m^+.
\end{equation}
\end{claim}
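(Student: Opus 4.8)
The plan is to construct all three Poisson point processes on a single probability space by a thinning/superposition coupling, tailored to the pointwise inequality between the intensity functions. First I would verify the intensity comparison that makes this possible: by Claim~\ref{cl:lambda-prop}(a) we have $\lambda_m'(t) = \mathrm{e}^{-\lambda_m(t)}\sum_{k=0}^{\Delta_m-1}\frac{\lambda_m(t)^k}{k!}q^m_{k+1}$, and since $\lambda_m$ is increasing (it is the primitive of a survival function, see \eqref{eq:def-lambda}) with $\lambda_m(0)=0$, the map $t\mapsto \lambda_m'(t)$ is nonincreasing (this is exactly $\lambda_m''\le 0$, which also follows from $f=-\lambda_m''\ge 0$ in \eqref{eq:f-with-lambda-rdcp}). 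Hence on $[0,1]$ we have $\lambda_m'(t)\ge \lambda_m'(1)$, which gives the intensity domination needed for $\underline{\mathcal{X}}^m\le \mathcal{X}^m$ on $[0,1]$; and $\lambda_m'(t)\le \lambda_m'(0)=1$ for all $t\ge 0$ (using $\lambda_m'(0)=1$ from \eqref{eq:lambda-ivp}–\eqref{eq:lambda-ivp-null}), which gives $\mathcal{X}^m\le\overline{\mathcal{X}}$ on $\mathbb{R}_+$.

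Next I would carry out the coupling itself. For $\mathcal{X}^m\le\overline{\mathcal{X}}$: take $\overline{\mathcal{X}}$ a rate-$1$ PPP, and obtain $\mathcal{X}^m$ by independently keeping each point $x$ of $\overline{\mathcal{X}}$ with probability $\lambda_m'(x)\in[0,1]$; by the thinning theorem $\mathcal{X}^m$ is an inhomogeneous PPP with intensity $\lambda_m'(\cdot)$, and by construction every point of $\mathcal{X}^m$ is a point of $\overline{\mathcal{X}}$, i.e.\ $\mathcal{X}^m\le\overline{\mathcal{X}}$ on $\mathbb{R}_+$. For $\underline{\mathcal{X}}^m\le\mathcal{X}^m$ on $[0,1]$: on $[0,1]$ obtain $\underline{\mathcal{X}}^m$ by thinning $\mathcal{X}^m$, keeping each point $x\in[0,1]$ with probability $\lambda_m'(1)/\lambda_m'(x)\in[0,1]$, then extend $\underline{\mathcal{X}}^m$ to $(1,\infty)$ by superimposing an independent rate-$\lambda_m'(1)$ PPP; the result is a homogeneous rate-$\lambda_m'(1)$ PPP whose restriction to $[0,1]$ is a subset of $\mathcal{X}^m\cap[0,1]$. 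Finally make $D^m$ independent of all the Poisson randomness used above. This realises $\underline{\mathcal{X}}^m\le\mathcal{X}^m$ on $[0,1]$ and $\mathcal{X}^m\le\overline{\mathcal{X}}$ on $\mathbb{R}_+$ simultaneously.

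For the second assertion \eqref{eq:eps-connection}, recall $\tau_m=\min\{t:X^m_t=D^m-1\}$ and likewise for $\overline\tau_m,\underline\tau_m$ (equations \eqref{eq:tau_m-def}, \eqref{eq:tau_const-def}), where $X^m_t,\overline X_t,\underline X^m_t$ are the counting functions. If $\underline{\mathcal{X}}^m\le\mathcal{X}^m\le\overline{\mathcal{X}}$ on an interval containing $[0,1]$, then $\underline X^m_t\le X^m_t\le\overline X_t$ for $t$ in that interval, so the hitting time of level $D^m-1$ is reversed: $\overline\tau_m\le\tau_m\le\underline\tau_m$ \emph{provided the hitting occurs before time $1$}. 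Writing $\varepsilon_m=1-\tau_m$ etc., this says precisely $\underline\varepsilon_m\le\varepsilon_m\le\overline\varepsilon_m$ on the event that all three hitting times are $\le 1$; and on the complementary events one of the relevant $\varepsilon$'s is negative, so passing to positive parts absorbs the boundary cases and yields $\underline\varepsilon_m^+\le\varepsilon_m^+\le\overline\varepsilon_m^+$ deterministically. The one point requiring a little care — and the main (minor) obstacle — is the bookkeeping at the boundary $t=1$: the domination $\underline{\mathcal{X}}^m\le\mathcal{X}^m$ only holds on $[0,1]$, so I must check that $\varepsilon_m^+$ and $\underline\varepsilon_m^+$ only depend on the processes through time $1$ (which they do, since $\xi^+=0$ once $\xi<0$, i.e.\ once the hitting time exceeds $1$), making the restriction harmless; the comparison with $\overline{\mathcal{X}}$ is on all of $\mathbb{R}_+$ and causes no such issue.
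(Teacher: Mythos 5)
Your proposal is correct and follows essentially the same route as the paper's proof: compare the intensity functions pointwise (using monotonicity of $\lambda_m'$), construct the nested processes by thinning, and then translate the domination of counting processes into the reversed ordering of the hitting times $\tau_m,\overline\tau_m,\underline\tau_m$ and hence of the positive parts $\varepsilon_m^+$. The extra details you supply (explicit thinning probabilities, extending $\underline{\mathcal{X}}^m$ past $t=1$ by superposition, and the careful check that the $[0,1]$-restriction is harmless because $\xi^+$ vanishes once the hitting time exceeds $1$) are all sound and simply make explicit what the paper treats more tersely.
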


\begin{proof}
The intensity function of the PPPs $\underline{\mathcal{X}}^m$, $\mathcal{X}^m$ and $\overline{\mathcal{X}}$ is $\lambda'_m(1)$, $\lambda_m'(t)$ and $1$, respectively. For any $m \in \mathbb{N}$, the function $\lambda_m'(t)$ is monotone decreasing (see \eqref{eq:lambda-diff-prob}). Therefore, for any $t \in [0,1]$, we have $\lambda'_m(1) \le \lambda_m'(t)$ and for any $t \in \mathbb{R}_+$, we have $\lambda_m'(t) \le 1$. Hence on the interval~$[0,1]$, the PPP $\underline{\mathcal{X}}^m$ can be defined as a thinning of the PPP $\mathcal{X}^m$ and on $\mathbb{R}_+$, the PPP $\mathcal{X}^m$ can be defined as a thinning of the PPP $\overline{\mathcal{X}}$. The proof of \eqref{eq:PPPs-connection} is complete.

By \eqref{eq:tau_m-def} and \eqref{eq:tau_const-def}, the inequalities of \eqref{eq:PPPs-connection} imply that
\begin{equation}\label{eq:tau-connection}
(1-\underline{\tau}_m) \cdot \indic{\underline{\tau}_m \le 1} \le (1-\tau_m) \cdot \indic{\tau_m \le 1} \le (1-\overline{\tau})_m \cdot \indic{\overline{\tau}_m \le 1}.
\end{equation}

By \eqref{eq:eps-definitions}, the inequalities of \eqref{eq:tau-connection} are equivalent to the inequalities of \eqref{eq:eps-connection}.
\end{proof}

\begin{lemma}[Asymptotic behavior in case of constant intensity]\label{lem:eps-moments}
Recall $\overline{\varepsilon}_m$ and $\underline{\varepsilon}_m$, defined in \eqref{eq:eps-definitions}. We have
\begin{align}
\ev \left(\overline{\varepsilon}_m^+ \right) &\sim \frac{1}{\mathrm{e}} \ev \left( \frac{1}{D^m!} \right), \label{eq:eps-upper}\\
\ev \left(\underline{\varepsilon}_m^+ \right) &\sim \frac{1}{\mathrm{e}} \ev \left( \frac{1}{D^m!} \right), \label{eq:eps-lower}\\
\ev \left(\overline{\varepsilon}_m^2 \cdot \indic{\overline{\varepsilon}_m \ge 0} \right) & = \mathcal{O} \left( \ev \left( \frac{1}{(D^m+1)!} \right) \right). \label{eq:eps-second-mom}
\end{align}
\end{lemma}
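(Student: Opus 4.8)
The statement concerns the positive part $\overline{\varepsilon}_m^+ = (1-\overline{\tau}_m)^+$ and $\underline{\varepsilon}_m^+ = (1-\underline{\tau}_m)^+$, where $\overline{\tau}_m$ is the first time a unit-rate Poisson counting process $\overline{X}_t$ reaches level $D^m-1$, and $\underline{\tau}_m$ is the analogue with rate $\lambda_m'(1)$. The key observation is that $\{\overline{\tau}_m \le 1\} = \{\overline{X}_1 \ge D^m-1\}$, so conditioning on $D^m$ and on the value of $\overline{X}_1$ turns everything into an explicit computation with a Poisson$(1)$ random variable (resp.\ Poisson$(\lambda_m'(1))$). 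The dominant contribution to $\e(\overline{\varepsilon}_m^+)$ comes from the event $\{\overline{X}_1 = D^m - 1\}$ exactly, i.e.\ from the last point of the PPP arriving just before time $1$: on this event $\overline{\varepsilon}_m = 1 - \overline{\tau}_m$ is the overshoot below $1$ of the $(D^m-1)$-th Poisson point, and contributions from $\{\overline{X}_1 \ge D^m\}$ are lower-order.

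\medskip

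First I would condition on $D^m = k$ and compute $\e\big((1-\overline{\tau}_m)^+ \mid D^m = k\big)$ for a fixed unit-rate Poisson process. Writing $\overline{\tau}_m = S_{k-1}$, the time of the $(k-1)$-th arrival, I would split according to $\overline{X}_1$:
\begin{itemize}
\item On $\{\overline{X}_1 = k-1\}$ (probability $\mathrm{e}^{-1}/(k-1)!$): given this, $S_{k-1}$ is distributed as the maximum of $k-1$ i.i.d.\ Uniform$[0,1]$ variables, so $\e(1 - S_{k-1} \mid \overline{X}_1 = k-1) = 1/k$. This contributes $\mathrm{e}^{-1}/k!$.
\item On $\{\overline{X}_1 \ge k\}$: here $S_{k-1} < 1$ still, but one checks (again conditioning on $\overline{X}_1 = j \ge k$ and using order statistics) that the total contribution is $o$ of the leading term as $k \to \infty$ — concretely it is $O(1/(k+1)!)$ or smaller, uniformly in $k$.
\end{itemize}
Summing over $k$ weighted by $p_k^m = \p(D^m = k)$ gives $\e(\overline{\varepsilon}_m^+) = \e(\mathrm{e}^{-1}/D^m!) + (\text{lower order})$, and since $D^m \Rightarrow \infty$ the leading term dominates, yielding \eqref{eq:eps-upper}. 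For \eqref{eq:eps-lower} I would repeat the computation with rate $\beta_m := \lambda_m'(1)$ in place of $1$: the event $\{\underline{X}_1 = k-1\}$ has probability $\mathrm{e}^{-\beta_m}\beta_m^{k-1}/(k-1)!$ and the conditional expected overshoot is $(1/k)\cdot(\text{something})$; the point is that $\beta_m = \lambda_m'(1) \to 1$ as $m \to \infty$ (this follows from $\lambda_m'(t) = \p(D^m > X_t^m)$ via \eqref{eq:lambda-diff-prob} together with $D^m \Rightarrow \infty$, so $\lambda_m'(1) \to 1$), hence $\mathrm{e}^{-\beta_m}\beta_m^{k-1} \to \mathrm{e}^{-1}$ and, after a uniform-integrability / dominated-convergence argument to handle the sum over $k$, one again gets $\e(\underline{\varepsilon}_m^+) \sim \e(\mathrm{e}^{-1}/D^m!)$. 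Alternatively, \eqref{eq:eps-connection} sandwiches $\e(\underline{\varepsilon}_m^+)$ between a quantity asymptotic to the target and $\e(\overline{\varepsilon}_m^+)$, so \eqref{eq:eps-lower} would follow from \eqref{eq:eps-upper} plus a matching lower bound obtained by keeping only the $\{\underline{X}_1 = D^m-1\}$ term.

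\medskip

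For the second-moment bound \eqref{eq:eps-second-mom}, I would again condition on $D^m = k$ and on $\overline{X}_1$. On $\{\overline{X}_1 = k-1\}$, $\e((1-S_{k-1})^2 \mid \overline{X}_1 = k-1)$ is the second moment of $1 - \max$ of $k-1$ uniforms, which equals $\frac{2}{k(k+1)}$; multiplied by $\mathrm{e}^{-1}/(k-1)!$ this is $O(1/(k+1)!)$. The higher events $\{\overline{X}_1 \ge k\}$ contribute even less. Summing over $k$ against $p_k^m$ gives $\e(\overline{\varepsilon}_m^2 \,\indic{\overline{\varepsilon}_m \ge 0}) = O\!\big(\e(1/(D^m+1)!)\big)$, which is \eqref{eq:eps-second-mom}.

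\medskip

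The main obstacle I anticipate is controlling the ``higher-order'' contributions from $\{\overline{X}_1 \ge D^m\}$ (and its rate-$\beta_m$ analogue) \emph{uniformly in $k$}, so that after summing against $p_k^m$ and dividing by $\e(1/D^m!)$ the ratio genuinely tends to $1$ rather than merely being bounded. Concretely, one needs that $\sum_k p_k^m \cdot (\text{error}_k) = o\big(\sum_k p_k^m/k!\big)$, and because $D^m \Rightarrow \infty$ only guarantees $p_k^m \to 0$ for each fixed $k$ without a rate, the cleanest route is to show $\text{error}_k = O(1/(k+1)!) = o(1/k!)$ \emph{as a sequence}, so that the claim reduces to $\e(1/(D^m+1)!) = o(\e(1/D^m!))$ — which is itself immediate from $D^m \Rightarrow \infty$ by dominated convergence applied to the ratio $\frac{1/(D^m+1)!}{\,\cdots}$, or more simply from $1/(k+1)! \le \frac{1}{k+1}\cdot 1/k!$ and $\e\big(\tfrac{1}{D^m+1}\cdot \tfrac{1}{D^m!}\big)/\e(1/D^m!) \to 0$. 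Getting these per-$k$ error estimates right (the order-statistics computations for $\{\overline{X}_1 \ge k\}$) is the bookkeeping-heavy part, but it is elementary once set up.
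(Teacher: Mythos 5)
Your approach is essentially the paper's: condition on $D^m$ and on the Poisson count at time $1$, use the order-statistics representation of the arrival times given the count (Campbell's theorem), read off the conditional moments of $\overline{\varepsilon}_m$ from the standard uniform order-statistic formulas $\e(Z_{k,l})=l/(k+1)$ and $\e(Z_{k,l}^2)=l(l+1)/((k+1)(k+2))$, and isolate the leading term coming from $\overline{X}_1 = D^m-1$. For \eqref{eq:eps-lower} the paper does exactly your second alternative: a one-term lower bound for $\e(\underline{\varepsilon}_m^+)$ (made explicit via Bernoulli's inequality on $(\lambda'_m(1))^{D^m}$) combined with the sandwich $\underline{\varepsilon}_m^+ \le \overline{\varepsilon}_m^+$. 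One small slip: you describe the conditional expected overshoot in the rate-$\lambda'_m(1)$ process as ``$(1/k)\cdot(\text{something})$,'' but it equals $1/k$ exactly, since the positions of a fixed number of PPP points in $[0,1]$ are i.i.d.\ Uniform$[0,1]$ irrespective of the rate; only the probability of $\{\underline{X}^m_1 = k-1\}$ changes.

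There is, however, a genuine gap at the final reduction, and it is shared with the paper. You reduce \eqref{eq:eps-upper} to the claim $\e\bigpar{1/(D^m+1)!}=o\bigpar{\e(1/D^m!)}$ and attribute it to ``dominated convergence'' or to the equivalent rewriting $\e\bigpar{\tfrac{1}{D^m+1}\cdot\tfrac{1}{D^m!}}/\e(1/D^m!)\to 0$. Neither is a valid derivation from $D^m\Rightarrow\infty$ alone: the ratio is the expectation of $1/(D^m+1)$ under the reweighted law with density proportional to $p_d^m/d!$, and nothing in $D^m\Rightarrow\infty$ forces that reweighted law to escape to infinity, because the weight $1/d!$ heavily amplifies small $d$. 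Concretely, take $p^m_2=1/m$, $p^m_m=1-1/m$, $\Delta^m=m$; then $D^m\Rightarrow\infty$, but $\e(1/D^m!)\sim 1/(2m)$, $\e(1/(D^m+1)!)\sim 1/(6m)$, so the ratio tends to $1/3$, and in fact a direct computation gives $\e(\overline{\varepsilon}_m^+)\sim \mathrm{e}^{-1}/m = 2\cdot\frac{1}{\mathrm{e}}\e(1/D^m!)$, so \eqref{eq:eps-upper} itself fails. The paper asserts the same ratio convergence immediately after \eqref{eq:constant-intensity-upper-1st-mom}, so this is not a defect you introduced; but your justification (dominated convergence) should be replaced by an honest additional hypothesis such as $p_k^m = o\bigpar{\e(1/D^m!)}$ for each fixed $k$ (which holds in the point-mass case $p^m_{d_m}=1$, $d_m\to\infty$, relevant to the random $d$-process).
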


\begin{proof}
We use Campbell's theorem (see for example \cite[Theorem 5.14]{K}), i.e., that given that a homogeneous PPP on $\mathbb{R}$ has exactly $k$ points in the interval $[0,1]$, then the joint distribution of these $k$ points is the same as the joint distribution of the order statistics of $k$ i.i.d.\ random variables with distribution UNI$[0,1]$.

It is also known (see \cite[Example 3.1.1 and equation (3.1.6)]{DN}) that if $Z_{k,l}$ denotes the~$l^{\text{th}}$ point of this order statistics, then
\begin{equation}\label{eq:lth-point-moments}
\ev(Z_{k,l}) = \frac{l}{k+1} \quad \text{and} \quad \ev \left(Z_{k,l}^2 \right) = \frac{l (l+1)}{(k+1) (k+2)}.
\end{equation}

Observe that given the conditions $D^m=d$ and $\overline{X}_1=k$, where $k\ge d-1$ (hence $\overline{\varepsilon}_m \ge 0$, see \eqref{eq:eps-definitions}), the conditional distribution of $\overline{\varepsilon}_m$ is the same as the distribution of $Z_{k,k-d+2}$. Similarly, given $D^m=d$ and $\underline{X}^m_1=k$, where $k\ge d-1$ (hence $\underline{\varepsilon}_m \ge 0$, see \eqref{eq:eps-definitions}), the conditional distribution of $\underline{\varepsilon}_m$ is the same as the distribution of $Z_{k,k-d+2}$. In particular, by \eqref{eq:lth-point-moments}, we have
\begin{align}
&\ev \left(\overline{\varepsilon}_m \, | \, D^m=d, \, \overline{X}_1=k \right) = \ev \left(\underline{\varepsilon}_m \, | \, D^m=d, \, \underline{X}^m_1=k \right) = \frac{k-d+2}{k+1}, \label{eq:eps-1st-moment} \\
&\ev \left(\overline{\varepsilon}_m^2 \, | \, D^m=d, \, \overline{X}_1=k \right) = \frac{(k-d+2)(k-d+3)}{(k+1)(k+2)}. \label{eq:eps-2nd-moment}
\end{align}

First, we prove \eqref{eq:eps-upper}:
\begin{align}
\begin{split}\label{eq:constant-intensity-asymptotic}
\ev \left( \overline{\varepsilon}_m^+ \right) &\stackrel{\eqref{eq:eps-1st-moment}}{=} \sum \limits_{d=2}^{\Delta_m} p_d^m \sum \limits_{k=d-1}^\infty \mathrm{e}^{-1} \cdot \frac{1}{k!} \cdot \frac{k-d+2}{k+1} =\\
&\sum \limits_{d=2}^{\Delta_m} \left[ p_d^m \mathrm{e}^{-1} \cdot \left( \frac{1}{d!} + \sum \limits_{k=d+1}^\infty \frac{1}{k!} \cdot (k-d+1) \right) \right] \stackrel{(*)}{=}\\
& \frac{1}{\mathrm{e}} \cdot \ev \left( \frac{1}{D^m !}\right) + \sum \limits_{d=2}^{\Delta_m} \left[ p_d^m \mathrm{e}^{-1} \cdot \left( \sum \limits_{k=d+1}^\infty \frac{1}{k!} \cdot (k-d+1) \right) \right].
\end{split}
\end{align}

Using the fact that $\sum \limits_{k=d+1}^\infty \frac{1}{k!} \cdot (k-d+1) \le \frac{2}{(d+1)!} + \sum \limits_{k=d+2}^\infty \frac{1}{(k-1)!} \le \frac{4}{(d+1)!}$, equation \eqref{eq:constant-intensity-asymptotic} implies that
\begin{equation}\label{eq:constant-intensity-upper-1st-mom}
\frac{1}{\mathrm{e}} \cdot \ev \left( \frac{1}{D^m !} \right) \le \ev \left( \overline{\varepsilon}_m^+ \right) \le \frac{1}{\mathrm{e}} \cdot \ev \left( \frac{1}{D^m !}\right) + \frac{4}{\mathrm{e}} \cdot \ev \left( \frac{1}{(D^m+1) !}\right).
\end{equation}

Observing that $\frac{\ev \left( \frac{1}{(D^m+1) !}\right)}{\ev \left( \frac{1}{D^m !}\right)} \longrightarrow 0$ as $m \to \infty,$ \eqref{eq:constant-intensity-upper-1st-mom} implies \eqref{eq:eps-upper}.

Now we prove \eqref{eq:eps-lower}. A calculation similar to \eqref{eq:constant-intensity-asymptotic} shows that
\begin{equation}\label{eq:eps-lower-expectation}
\ev \left(\underline{\varepsilon}_m^+ \right) \ge \mathrm{e}^{-\lambda'_m(1)} \cdot \ev \left( \frac{(\lambda'_m(1))^{D^m}}{D^m !} \right).
\end{equation}

Note that $\lambda'_m(1)$ converges to $1$ as $m\to \infty$. Moreover,
\begin{equation}\label{eq:lambda'-lower-bound}
1-\lambda'_m(1) \stackrel{\eqref{eq:lambda-diff-prob}}{=} \prob (D^m \le X^m_1)=\sum \limits_{d=2}^{\Delta_m} \left( p^m_d \cdot \sum \limits_{k=d}^\infty \mathrm{e}^{-1} \cdot \frac{1}{k!} \right) \le \sum \limits_{d=2}^{\Delta_m} p^m_d \cdot \mathrm{e}^{-1} \cdot \frac{2}{d!} = \frac{2}{\mathrm{e}} \cdot \ev \left( \frac{1}{D^m !} \right).
\end{equation}

Therefore,
\begin{align*}
\ev \left(\underline{\varepsilon}_m^+ \right) &\stackrel{\eqref{eq:eps-lower-expectation},\, \eqref{eq:lambda'-lower-bound}}{\ge} \mathrm{e}^{-1} \cdot \ev \left( \frac{ \left(1-\frac{2}{\mathrm{e}} \cdot \ev \left( \frac{1}{D^m !} \right) \right)^{D^m}}{D^m !} \right) \stackrel{(*)}{\ge} \mathrm{e}^{-1} \cdot \ev \left( \frac{ 1-\frac{2D^m}{\mathrm{e}} \cdot \ev \left( \frac{1}{D^m !} \right)}{D^m !} \right)=\\
& \mathrm{e}^{-1} \cdot \ev\left( \frac{1}{D^m!} \right) \cdot \left(1-\frac{2}{\mathrm{e}} \cdot \ev \left( \frac{1}{(D^m-1)!} \right) \right) \sim \mathrm{e}^{-1} \cdot \ev\left( \frac{1}{D^m!} \right), \quad \text{since } D^m \Rightarrow \infty,
\end{align*}
where at $(*)$ we used Bernoulli's inequality. Together with \eqref{eq:eps-connection}, \eqref{eq:eps-upper}, we can conclude~\eqref{eq:eps-lower}.

Finally, we prove \eqref{eq:eps-second-mom}:
\begin{align*}
\ev \left( \overline{\varepsilon}^2_m \cdot \indic{\overline{\varepsilon}_m \ge 0} \right) &\stackrel{\eqref{eq:eps-2nd-moment}}{=} \sum \limits_{d=2}^{\Delta_m} p_d^m \sum \limits_{k=d-1}^\infty \mathrm{e}^{-1} \cdot \frac{1}{k!} \cdot \frac{(k-d+2)(k-d+3)}{(k+1)(k+2)} \le\\
&\sum \limits_{d=2}^{\Delta_m} p_d^m \mathrm{e}^{-1} \left( \frac{2}{(d+1)!} + \frac{6}{(d+2)!} + \sum \limits_{k=d+1}^\infty \frac{1}{k!} \right) \le\\
&\sum \limits_{d=2}^{\Delta_m} p_d^m \mathrm{e}^{-1} \cdot \frac{10}{(d+1)!} = \mathcal{O} \left( \ev \left( \frac{1}{(D^m+1)!} \right) \right).
\end{align*}
\end{proof}

\begin{lemma}[Upper bound on $J_m$]\label{lem:Jm-bound}
Recall $J_m$ from \eqref{eq:delta-I-J-def}. We have
\begin{equation}\label{eq:Jm-bound}
J_m = \mathcal{O} \left( \ev \left( \frac{1}{(D^m+1)!} \right) \right).
\end{equation}
\end{lemma}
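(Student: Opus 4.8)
The plan is to bound the integrand $\p(D^m \le X^m_s)$ uniformly for $s \in [0,1]$ and then integrate. The key observation is that on $[0,1]$ the counting process $X^m_s$ is dominated by the counting process $\overline{X}_s$ of a rate-$1$ homogeneous PPP (this is exactly the domination $\mathcal{X}^m \le \overline{\mathcal{X}}$ from \eqref{eq:PPPs-connection}, which holds because $\lambda_m'(t) \le 1$ for all $t$ by \eqref{eq:lambda-diff-prob}). Hence, since $D^m$ is independent of both $\mathcal{X}^m$ and $\overline{\mathcal{X}}$, for every $s \in [0,1]$ we have
\begin{equation*}
\p(D^m \le X^m_s) \le \p(D^m \le \overline{X}_s) \le \p(D^m \le \overline{X}_1),
\end{equation*}
where in the last step we used that $s \mapsto \overline{X}_s$ is non-decreasing (so $\{D^m \le \overline{X}_s\} \subseteq \{D^m \le \overline{X}_1\}$). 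Integrating over $s \in [0,1]$ gives $J_m \le \p(D^m \le \overline{X}_1)$.

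First I would compute $\p(D^m \le \overline{X}_1)$ explicitly: conditioning on $D^m=d$ and using that $\overline{X}_1 \sim \text{POI}(1)$,
\begin{equation*}
\p(D^m \le \overline{X}_1) = \sum_{d=2}^{\Delta_m} p^m_d \sum_{k=d}^\infty \mathrm{e}^{-1}\frac{1}{k!}.
\end{equation*}
The inner tail sum satisfies $\sum_{k \ge d} 1/k! \le (1/d!)\sum_{j\ge 0} 1/d^j = \frac{1}{d!}\cdot\frac{d}{d-1} \le \frac{2}{d!}$ for $d \ge 2$; this is the same elementary estimate already used in \eqref{eq:lambda'-lower-bound}. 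A cleaner route is to write $\frac{1}{k!} = \frac{1}{(d+1)!}\cdot\frac{(d+1)!}{k!}$ and observe that for $k \ge d$ one has $\frac{(d+1)!}{k!} \le \text{(something summable in a geometric fashion)}$, yielding $\sum_{k \ge d} 1/k! \le C/(d+1)!$ for an absolute constant — whichever bound is more convenient; I would pick the one matching the target $\e(1/(D^m+1)!)$. In fact, noting $\sum_{k\ge d}\frac{1}{k!} = \frac{1}{d!} + \sum_{k\ge d+1}\frac1{k!} \le \frac1{d!} + \frac{2}{(d+1)!}$ is not quite enough since $1/d!$ dominates; instead I would use the sharper fact that actually $\p(D^m \le \overline{X}_1)$ need not be $\mathcal{O}(\e(1/(D^m+1)!))$ on its own — so the real point is that the extra integration over $s\in[0,1]$ buys a factor. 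Let me reconsider: bounding by the $s=1$ value loses too much.

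The correct approach keeps the $s$-integral: $J_m = \int_0^1 \p(D^m \le \overline{X}_s)\,\mathrm{d}s$ after the domination. Conditioning on $D^m = d$, we get $\int_0^1 \p(\overline{X}_s \ge d)\,\mathrm{d}s$, and since $\overline{X}_s \sim \text{POI}(s)$,
\begin{equation*}
\int_0^1 \p(\overline{X}_s \ge d)\,\mathrm{d}s = \int_0^1 \sum_{k \ge d}\mathrm{e}^{-s}\frac{s^k}{k!}\,\mathrm{d}s \le \sum_{k\ge d}\int_0^1 \frac{s^k}{k!}\,\mathrm{d}s = \sum_{k\ge d}\frac{1}{(k+1)!} = \sum_{j \ge d+1}\frac{1}{j!} \le \frac{2}{(d+1)!},
\end{equation*}
using the same tail estimate as above (now starting at $d+1$). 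Taking expectation over $D^m$ then gives $J_m \le 2\,\e\bigl(1/(D^m+1)!\bigr) = \mathcal{O}\bigl(\e(1/(D^m+1)!)\bigr)$, which is \eqref{eq:Jm-bound}.

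The main obstacle is simply to notice that one must retain the integration over $s$ (rather than crudely bounding the integrand by its value at $s=1$): the factor $\int_0^1 s^k\,\mathrm{d}s = 1/(k+1)$ is exactly what upgrades the $1/d!$-type bound to the $1/(d+1)!$-type bound demanded by the statement. Everything else — the stochastic domination, the Poisson tail estimate, interchanging sum and integral by monotone convergence — is routine. I would also remark that the independence of $D^m$ from the point processes, already recorded in the setup of Section \ref{subsec:error-terms}, is what licenses the conditioning.
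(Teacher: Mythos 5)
Your proof is correct, and it takes a genuinely different — and cleaner — route than the paper. The paper first rewrites $J_m$ via Fubini as $\e\bigl((1-\kappa_m)\indic{\kappa_m \le 1}\bigr)$, where $\kappa_m := \min\{t : X^m_t = D^m\}$ is the hitting time of level $D^m$; it then compares $\kappa_m$ with $\tau_m = \min\{t : X^m_t = D^m-1\}$ via a memoryless argument (the waiting time $\kappa_m - \tau_m$ stochastically dominates an $\text{EXP}(1)$, because $\lambda_m' \le 1$), deduces $\e\bigl((1-\kappa_m)^+\bigr) \le \e\bigl(\varepsilon_m^2\,\indic{\varepsilon_m \ge 0}\bigr)$, and finally invokes the previously established second-moment bound \eqref{eq:eps-second-mom} from Lemma~\ref{lem:eps-moments}. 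Your argument bypasses all of this: you dominate $X^m_s$ by a rate-one Poisson counting process $\overline{X}_s$, condition on $D^m=d$, and compute $\int_0^1 \p(\overline{X}_s \ge d)\,\mathrm{d}s$ directly by Tonelli, using $\int_0^1 \mathrm{e}^{-s}\frac{s^k}{k!}\,\mathrm{d}s \le \frac{1}{(k+1)!}$ to land squarely on $\sum_{j \ge d+1} 1/j! \le 2/(d+1)!$. Your route is shorter, self-contained, and does not depend on the $\overline{\varepsilon}_m$/$\underline{\varepsilon}_m$ machinery or on Lemma~\ref{lem:eps-moments}; the paper's route, while less direct here, has the advantage of reusing a computation that serves multiple lemmas in Section~\ref{subsec:error-terms}. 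One presentational remark: the mid-proof digression where you first bound the integrand by its value at $s=1$, realize this loses a factor, and then back out should be excised from a final write-up — the key insight you correctly land on is precisely that the $s$-integration contributes the extra $1/(k+1)$ that upgrades $1/d!$ to $1/(d+1)!$, and stating that directly is cleaner than narrating the false start.
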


\begin{proof}
Let us define $\kappa_m := \min \{\, t \,:\, X^m_t=D^m \,\}$, $\beta_m := 1-\kappa_m$. Observe that by the definition of $\kappa_m$ and \eqref{eq:tau_m-def}, we have
\begin{equation}\label{eq:kappa-larger}
\tau_m \le \kappa_m, \quad \text{hence} \quad \beta_m \le \varepsilon_m.
\end{equation}

First, note that $\beta_m \ge 0$ if and only if $\kappa_m - \tau_m \le \varepsilon_m$. On the other hand, by the definition of $\tau_m$ (see \eqref{eq:tau_m-def}) and $\kappa_m$, the difference $\kappa_m-\tau_m$ is the time that we need to wait from $\tau_m$ until the next point of $\mathcal{X}^m$. Hence, we can define a random variable $\phi \sim \text{EXP}(1)$ that is independent to $\tau_m$ and $\kappa_m-\tau_m > \phi$ (see \eqref{eq:PPPs-connection}). Now we can deduce~that
\begin{equation}\label{eq:beta-positive-upper}
\prob(\beta_m \ge 0 \, | \, \tau_m ) = \prob( \kappa_m -\tau_m \le \varepsilon_m \, | \, \tau_m ) \le \prob( \phi \le \varepsilon_m \, | \, \tau_m ) \le \varepsilon_m.
\end{equation}

Therefore,
\begin{align}
\begin{split}\label{eq:kappa-asymptotic}
\ev((1-\kappa_m)\cdot \indic{\kappa_m \le 1})
&= \ev(\beta_m \cdot \indic{\kappa_m \le 1})
= \ev(\beta_m^+ \cdot \indic{\kappa_m \le 1}) \stackrel{\eqref{eq:kappa-larger}}{\le}\\
& \ev(\beta_m^+ \cdot \indic{\tau_m \le 1})
\le \ev(\varepsilon_m \cdot \indic{\beta_m \ge 0} \cdot \indic{\tau_m \le 1}) \stackrel{\text{tower rule}}{\le}\\
& \ev \left(\varepsilon_m \cdot \indic{\tau_m \le 1} \cdot \prob(\beta_m \ge 0 \, | \, \tau_m ) \right) \stackrel{\eqref{eq:beta-positive-upper}}{\le} \ev(\varepsilon_m^2 \cdot \indic{\varepsilon_m \ge 0}).
\end{split}
\end{align}

Hence
\begin{align*}
J_m &\stackrel{\eqref{eq:delta-I-J-def}}{=}\int \limits_0^1 \prob(D^m \le X^m_s) \, \mathrm{d}s
 = \int \limits_0^1 \prob(\kappa_m \le s) \, \mathrm{d}s
 = \int \limits_0^1 \ev(\indic{\kappa_m \le s}) \, \mathrm{d}s = \\
& \ev \left( \, \int \limits_{\kappa_m}^1 \, \mathrm{d}s \right) = \ev \left((1-\kappa_m) \cdot \indic{\kappa_m \le 1} \right) \stackrel{\eqref{eq:kappa-asymptotic}}{\le} \ev \left(\varepsilon_m^2 \cdot \indic{\varepsilon_m \ge 0} \right) \stackrel{\eqref{eq:eps-connection}}{\le}\\
&\ev \left(\overline{\varepsilon}_m^2 \cdot \indic{\overline{\varepsilon}_m \ge 0} \right) \stackrel{\eqref{eq:eps-second-mom}}{=} \mathcal{O} \left( \ev \left( \frac{1}{(D^m+1)!} \right) \right).
\end{align*}
\end{proof}

Now we can conclude \refL{lem:error-asymptotics}.

\begin{proof}[Proof of \refL{lem:error-asymptotics}]
First we prove that $\delta_m \sim \frac{2}{\mathrm{e}} \ev \left(\frac{1}{D^m!}\right)$. Note that
\begin{equation}\label{eq:delta-with-tau}
\delta_m \stackrel{\eqref{eq:delta-I-J-def}}{=} \int \limits_0^1 H_m(s) \cdot (1-s^2) \, \mathrm{d}s \stackrel{\eqref{eq:H-interpret}}{=} \ev \left( (1-\tau_m^2) \cdot \indic{\tau_m \le 1} \right).
\end{equation}

Using that $1-s^2 = 2(1-s) - (1-s)^2$, we obtain
\begin{align}
\begin{split}\label{eq:delta-with-eps}
\delta_m &\stackrel{\eqref{eq:delta-with-tau}}{=} 2\ev \left( (1-\tau_m) \cdot \indic{\tau_m \le 1 }\right) - \ev \left( (1-\tau_m)^2 \cdot \indic{\tau_m \le 1} \right) \stackrel{\eqref{eq:eps-definitions}}{=}\\
& 2\ev \left( \varepsilon_m^+ \right) - \ev \left( \varepsilon_m^2 \cdot \indic{\varepsilon_m \ge 0} \right).
\end{split}
\end{align}

By \eqref{eq:eps-connection}, it follows from \eqref{eq:delta-with-eps} that
\begin{equation}\label{eq:delta-bounds}
 2\ev \left( \underline{\varepsilon}_m^+ \right) - \ev \left( \overline{\varepsilon}^2_m \cdot \indic{\overline{\varepsilon}_m \ge 0} \right) \le \delta_m \le 2\ev \left( \overline{\varepsilon}_m^+ \right).
\end{equation}

Equation \eqref{eq:delta-bounds} together with \refL{lem:eps-moments} proves the first identity of \eqref{eq:error-asymptotics}. Moreover, by \eqref{eq:Jm-bound}, it also implies the third limit stated in \eqref{eq:error-asymptotics}. 
Finally, we show the second statement of \eqref{eq:error-asymptotics}. Note that $p_1 = 0$ by the strict Assumption~\ref{assump:degree_constraint_strict}. Therefore,
\begin{align*}
I_m &\stackrel{\eqref{eq:delta-I-J-def}, \, \eqref{eq:H-primitive}}{=} \prob(X^m_{1+2\delta_m} \ge D^m-1) \stackrel{\eqref{eq:PPPs-connection}}{\le} \prob(\overline{X}_{1+2\delta_m} \ge D^m-1) \le\\
& \prob(\overline{X}_1 \ge D^m-1) + \prob( \overline{X}_1 \neq \overline{X}_{1+2\delta_m}) \le \mathrm{e}^{-1} \sum \limits_{d=2}^{\Delta_m} p_d^m \sum \limits_{k=d-1}^\infty \frac{1}{k!} + 2\delta_m \le\\
& 2\mathrm{e}^{-1} \ev \left( \frac{1}{(D^m-1)!} \right) + 2\delta_m \stackrel{(*)}{\sim} \mathcal{O} \left( \ev \left( \frac{1}{(D^m-1)!} \right) \right),
\end{align*}
where at $(*)$ we used that $\delta_m \sim \frac{2}{\mathrm{e}} \ev \left(1/{(D^m!)}\right)$. Therefore, $I_m^2 = \mathcal{O} \left( \ev \left[ {((D^m-1)!)^{-2}} \right] \right)$ and $I_m/\delta_m^2 \to 0$ as $m \to \infty$.
\end{proof}

\subsection{Asymptotic formula for the critical time}\label{subsec:t_crit_asymp_formula}

\paragraph{}In this section our goal is to prove \refL{lem:t-crit-asymp}, i.e., to prove the asymptotic formula~\eqref{eq:tc-formula} for $\hat{t}_c(\underline{p}^m)$. The key ingredient of the proof of \refL{lem:t-crit-asymp} is \refL{lem:w-properties} in which we derived a second order linear initial value problem for the principal eigenfunction of the branching operator. We approximate the solution of this initial value problem in \refL{lem:Wm-gamma-prop} and the critical time in \refL{lem:tc-approx}.

\begin{definition}[Extended second order linear differential equation]\label{def:Wm-gamma-theta-def}
Let us consider a sequence of distributions $\underline{p}^m$ as in \refT{thm:tc-asymptotics} and recall from \refN{not:crit-asymptotic-notations} the function $H_m(t)$. Let $W_m(t)$ be the solution of the initial value problem
\begin{align}
W_m''(t) &= -H_m(t)W_m(t) \quad \text{for } 0 < t < \infty \label{eq:Wm-diffeq}\\
W_m(0) &= 0 \label{eq:Wm-null}\\
W_m'(0) &= 1. \label{eq:Wm-diff-null}
\end{align}

We also introduce the notation (cf.\ \eqref{eq:2nd-order-extra-mu})
\begin{equation}\label{eq:gamma-def}
\gamma_m(t) := W_m'(t) -W_m(t) \cdot \left( 1-\int \limits_0^t H_m(s) \, \mathrm{d}s \right)
\end{equation}
and
\begin{equation}\label{eq:theta-def}
\theta_m := \inf \{ \, t > 0 \, | \, W'_m(t)=0 \, \}.
\end{equation}
\end{definition}

\begin{lemma}[Properties of $W_m$ and $\gamma_m$]\label{lem:Wm-gamma-prop}
Let us consider the functions $W_m(t)$ and $\gamma_m(t)$ introduced in \refD{def:Wm-gamma-theta-def}. For any $m \in \mathbb{N}$ the following holds.%
{\vspace{-0.25em}\begin{enumerate}[label=(\alph*)]
\itemsep 0.125em \partopsep=0pt \parsep 0em 
\item The function $W_m(t)$ is non-negative, strictly increasing and concave on the interval~$[0, \theta_m]$. Moreover, if we consider the function $w_{\hat{t}_c(\underline{p}^m)}(t)$ (see \refD{def:w-def}), then
\begin{align}
W_m(t) &= w_{\hat{t}_c(\underline{p}^m)}(t) \quad \forall \, t \in [0,\hat{t}_c(\underline{p}^m)], \label{eq:Wm-equals-wtc}\\
W_m(t) &> w_{\hat{t}_c(\underline{p}^m)}(t) \quad \forall \,t \in (\hat{t}_c(\underline{p}^m), \theta_m]. \label{eq:Wm-larger-wtc}
\end{align}

\item For any $t \in \mathbb{R}_+$ we have
\begin{align}
W_m'(t) &= 1-\int \limits_0^t H_m(s)W_m(s) \, \mathrm{d}s, \label{eq:Wm-diff-formula}\\
W_m(t) &= t-\int \limits_0^t (t-s) \cdot H_m(s)W_m(s) \, \mathrm{d}s. \label{eq:Wm-formula}
\end{align}
\item The function $\gamma_m(t)$ is strictly decreasing on the interval $[0, \theta_m]$.
\item\label{item:unique_root} We have $\hat{t}_c(\underline{p}^m) < \theta_m$ and $\hat{t}_c(\underline{p}^m)$ is the unique root of $\gamma_m(t)$ on $[0,\theta_m]$.
\item For any $t \in [0, \theta_m]$:
\begin{equation}\label{eq:gamma-diffdiff}
0\le \gamma''_m(t) \le t+1.
\end{equation}%
\vspace{-0.25em}\end{enumerate}}%
\end{lemma}

\begin{proof}$ $
\begin{enumerate}[label=(\alph*)]
\item Observe that $w_{\hat{t}_c(\underline{p}^m)}(t)$ solves the initial value problem \{\eqref{eq:Wm-diffeq}, \eqref{eq:Wm-null}, \eqref{eq:Wm-diff-null}\} on the interval $(0,\hat{t}_c(\underline{p}^m))$ (cf.\ the initial value problem \{\eqref{eq:w_that-2nd-order-diffeq}, \eqref{eq:w_that-iv-null1}, \eqref{eq:w_that-iv-null2}\} of \refL{lem:w-properties} for $\hat{t}=\hat{t}_c(\underline{p}^m)$, where $\mu_{\hat{t}_c(\underline{p}^m)}=1$), hence we obtain \eqref{eq:Wm-equals-wtc}. However, $w_{\hat{t}_c(\underline{p}^m)}(t)$ is constant on the interval $[\hat{t}_c(\underline{p}^m), \theta_m]$, while $W_m(t)$ is strictly increasing on~$[0, \theta_m]$ (see~\eqref{eq:theta-def}). It implies \eqref{eq:Wm-larger-wtc} and also the non-negativity of $W_m(t)$ on the interval $[0, \theta_m]$. Concavity of $W_m(t)$ follows from its non-negativity and \eqref{eq:Wm-diffeq}.

\item Follows from \eqref{eq:Wm-diffeq} and the initial values using the fundamental theorem of calculus.

\item It follows from the negativity of $\gamma_m'(t)$:
\begin{align}
\begin{split}\label{eq:gamma-diff}
\gamma_m'(t) &\stackrel{\eqref{eq:gamma-def}}{=} W_m''(t) -W_m'(t) +W_m'(t) \cdot \int \limits_0^t H_m(s) \, \mathrm{d}s +W_m(t)H_m(t) \stackrel{\eqref{eq:Wm-diffeq}}{=}\\
& -W_m'(t) \cdot \left(1- \int \limits_0^t H_m(s) \, \mathrm{d}s \right).
\end{split}
\end{align}
The right hand-side of \eqref{eq:gamma-diff} is negative by \eqref{eq:H-int} and the fact that $W_m'(t)>0$ on $[0,\theta_m)$ by the definition of $\theta_m$ \eqref{eq:theta-def}. Therefore, $\gamma_m'(t) < 0$ on $[0,\theta_m)$.

\item The inequality $\hat{t}_c(\underline{p}^m) < \theta_m$ follows from the fact that $W_m(t)$ is monotone increasing on $[0, \hat{t}_c(\underline{p}^m)]$ (see \eqref{eq:Wm-equals-wtc} and \refL{lem:w-properties}) and the definition of $\theta_m$ (see \eqref{eq:theta-def}).

By \eqref{eq:2nd-order-extra-mu} and the fact that $W_m(\hat{t}_c(\underline{p}^m)) = w_{\hat{t}_c(\underline{p}^m)}(\hat{t}_c(\underline{p}^m))$ (see \eqref{eq:Wm-equals-wtc}), we know that $\gamma_m(\hat{t}_c(\underline{p}^m)) =0$. Since $\gamma_m$ is monotone, it has no other root on the interval $[0,\theta_m]$.

\item Differentiating \eqref{eq:gamma-diff} and using \eqref{eq:Wm-diffeq} we obtain
\begin{align*}
\gamma_m''(t) &\stackrel{\eqref{eq:gamma-diff},\, \eqref{eq:Wm-diffeq}}{=} H(t) \cdot \left(W_m(t) \cdot \int \limits_t^\infty H(s) \, \mathrm{d}s + W_m'(t)\right) \stackrel{\eqref{eq:H-int},\, \eqref{eq:H-bound}}{\le}\\
& W_m(t)+W_m'(t) \stackrel{\eqref{eq:Wm-formula},\,\eqref{eq:Wm-diff-formula}}{\le}t+1
\end{align*}
and $\gamma_m''(t) \ge 0$ since all of the terms in the above formula are non-negative if~${t \in [0,\theta_m]}$.
\end{enumerate}
\end{proof}

Recall $\delta_m$ and $I_m$ from \eqref{eq:delta-I-J-def} and note that \refL{lem:error-asymptotics} implies that $\delta_m \to 0$ and $I_m \to 0$ as $m \to \infty$.

\begin{lemma}[Bounds on $\hat{t}_c$]\label{lem:tc-approx} We have $1+2\delta_m < \theta_m$.
Furthermore, let us consider the function $\gamma_m(t)$ defined in \eqref{eq:gamma-def} and let~${\tilde{t}_m:= 1+\delta_m}$.
There exists $c>0$ such that if $m$ is large enough, then we have
\begin{equation}\label{eq:gamma-bounds}
\gamma_m(\tilde{t}_m-c \cdot I_m^2) > 0, \quad \gamma_m(\tilde{t}_m+c \cdot I_m^2) < 0.
\end{equation}
\end{lemma}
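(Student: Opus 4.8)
\textbf{Proof proposal for Lemma~\ref{lem:tc-approx}.}
The plan is to use the characterization of $\hat{t}_c(\underline{p}^m)$ as the unique root of $\gamma_m$ on $[0,\theta_m]$ from Lemma~\ref{lem:Wm-gamma-prop}(d), together with the Taylor-expansion control of $\gamma_m$ that the other parts of Lemma~\ref{lem:Wm-gamma-prop} supply. Since $\gamma_m$ is strictly decreasing on $[0,\theta_m]$ (Lemma~\ref{lem:Wm-gamma-prop}(c)), it suffices to (i)~show $1+2\delta_m<\theta_m$ so that the whole window $[\tilde t_m-cI_m^2,\tilde t_m+cI_m^2]$ lies inside $[0,\theta_m]$ for large $m$, and (ii)~locate a sign change of $\gamma_m$ inside that window; strict monotonicity then pins down $\hat t_c(\underline p^m)$ to the window, which is exactly \eqref{eq:tc-formula} once we feed in $\delta_m\to 0$, $I_m\to 0$ and $I_m^2/\delta_m\to 0$ from Lemma~\ref{lem:error-asymptotics}.

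For step (i), I would estimate $W_m$ and $W_m'$ from the integral identities \eqref{eq:Wm-diff-formula}, \eqref{eq:Wm-formula}: on any bounded interval $[0,t]$ with $t=O(1)$ one has $0\le W_m(s)\le s$, hence $W_m'(t)=1-\int_0^t H_m(s)W_m(s)\,\mathrm ds\ge 1-\int_0^t sH_m(s)\,\mathrm ds\ge 1 - \int_0^1 H_m(s)\,\mathrm ds - \int_1^t H_m(s)\,\mathrm ds$; using $\int_0^1 H_m(s)(1-s^2)\,\mathrm ds=\delta_m$ and $\int_0^\infty H_m=1$, this is bounded below by something of the form $1-(1-\tfrac{\delta_m}{?}) - O(I_m)$ and in particular stays strictly positive throughout $[0,1+2\delta_m]$ once $m$ is large, because $\delta_m,I_m\to 0$. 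Thus $W_m'>0$ on $[0,1+2\delta_m]$, which by \eqref{eq:theta-def} gives $\theta_m>1+2\delta_m$. (The precise bookkeeping here is routine; the point is only that $H_m$ concentrates near $s=1$ and has total mass one, so the "mass used up by time $1+2\delta_m$" is $1-o(1)$ but the small slack $\delta_m$ of extra time past $1$ is not enough to drive $W_m'$ to zero.)

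For step (ii), I would Taylor-expand $\gamma_m$ around $\tilde t_m=1+\delta_m$. By \eqref{eq:gamma-def} and the definition of $\delta_m$ one computes $\gamma_m(\tilde t_m)$ to leading order: $\gamma_m(\tilde t_m)=W_m'(\tilde t_m)-W_m(\tilde t_m)\bigl(1-\int_0^{\tilde t_m}H_m\bigr)$, and using $W_m(\tilde t_m)=\tilde t_m+O(\cdot)$, $W_m'(\tilde t_m)=1-\int_0^{\tilde t_m}sH_m(s)\,\mathrm ds+O(\cdot)$ together with $\int_0^1 H_m(s)(1-s^2)\,\mathrm ds=\delta_m$, I expect $\gamma_m(\tilde t_m)=O(I_m^2)$ — i.e. the choice $\tilde t_m=1+\delta_m$ is exactly the one that kills the $O(\delta_m)$ term in $\gamma_m$. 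Then \eqref{eq:gamma-diff} gives $\gamma_m'(t)=-W_m'(t)\bigl(1-\int_0^t H_m\bigr)$, and on the window $W_m'(t)$ is bounded below by a positive constant (from step (i)) while $1-\int_0^t H_m(s)\,\mathrm ds$ is, for $t=1+\delta_m\pm O(I_m^2)$, of order $\int_t^\infty H_m\ge$ (a constant, since $H_m$ has mass near $s=1$ but also genuine mass past any fixed point when... )—actually here I must be careful: $1-\int_0^{\tilde t_m}H_m$ is small, comparable to $I_m$. So $|\gamma_m'|$ on the window is of order $I_m$, not order $1$. Combining $\gamma_m(\tilde t_m)=O(I_m^2)$, $|\gamma_m'|\asymp I_m$ on the window, and the bound $0\le\gamma_m''\le t+1=O(1)$ from \eqref{eq:gamma-diffdiff}, a second-order Taylor expansion $\gamma_m(\tilde t_m\pm cI_m^2)=\gamma_m(\tilde t_m)\pm cI_m^2\gamma_m'(\tilde t_m)+O(I_m^4)$ shows that, for $c$ a sufficiently large fixed constant, the $\pm cI_m^2\gamma_m'(\tilde t_m)$ term (of order $cI_m^3$) dominates... which it does not dominate $O(I_m^2)$. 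This forces me to be more precise: I should expand around a slightly better center, or note that in fact $\gamma_m'(\tilde t_m)$ has a definite sign and magnitude $\asymp I_m$ while $\gamma_m(\tilde t_m)$ is genuinely $O(I_m^2)=o(I_m\cdot I_m^2/\!\ ?)$—so shifting by $cI_m^2$ changes $\gamma_m$ by $\asymp cI_m^3$, which is $o(I_m^2)$, the wrong direction.

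\textbf{The main obstacle}, then, is getting the orders of $\gamma_m(\tilde t_m)$, $\gamma_m'$ near $\tilde t_m$, and the window width $I_m^2$ to line up so that a sign change is forced. The resolution I would pursue: show $\gamma_m(\tilde t_m)=O(I_m^2)$ \emph{and} $\gamma_m'(\tilde t_m)\le -c_0 I_m^2$-type bound is \emph{not} what is needed; rather, since $\gamma_m$ is strictly decreasing with $\gamma_m(\hat t_c)=0$, the bound $\gamma_m(\tilde t_m)=O(I_m^2)$ combined with a \emph{lower} bound $|\gamma_m'(t)|\ge c_1 I_m$ on a neighborhood of $\hat t_c$ immediately gives $|\hat t_c-\tilde t_m|=|\gamma_m(\tilde t_m)|/|\gamma_m'(\xi)|=O(I_m^2)/\Omega(I_m)=O(I_m)$, which is weaker than claimed. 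To reach $O(I_m^2)$ one actually needs $\gamma_m(\tilde t_m)=O(I_m^3)$, so I would instead recompute $\gamma_m(1+\delta_m)$ to one more order using \eqref{eq:Wm-formula}–\eqref{eq:Wm-diff-formula} and the exact identity $\delta_m=\int_0^1 H_m(s)(1-s^2)\,\mathrm ds$, splitting $\int_0^{\tilde t_m}=\int_0^1+\int_1^{\tilde t_m}$ and controlling the $\int_1^{\tilde t_m}$ pieces by $I_m\cdot\delta_m=O(I_m^2)$ and the remaining near-$s=1$ error by $O(I_m^2)$ as well; once $\gamma_m(\tilde t_m)=O(I_m^2)$ with a \emph{constant that can be beaten}, the monotonicity estimate $\gamma_m'(t)=-W_m'(t)(1-\int_0^t H_m(s)\,\mathrm ds)$, with $W_m'(t)\ge 1/2$ say and $1-\int_0^{\tilde t_m} H_m(s)\,\mathrm ds$ bounded below by a \emph{constant multiple of} $I_m$ (since $I_m=\int_0^{1+2\delta_m}H_m$ and total mass is $1$, so $\int_{\tilde t_m}^\infty H_m\asymp 1-I_m+(\text{small})\asymp$ constant—wait, $1-I_m$ is close to $1$!). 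So in fact $|\gamma_m'(\tilde t_m)|\asymp 1$, not $\asymp I_m$: the mass of $H_m$ beyond $1+2\delta_m$ is $1-I_m$, which is $\to 1$. With $|\gamma_m'|\asymp 1$ on the window and $\gamma_m(\tilde t_m)=O(I_m^2)$, choosing $c$ larger than the implied constant yields $\gamma_m(\tilde t_m - cI_m^2)\ge |\gamma_m'|\cdot cI_m^2 - \gamma_m(\tilde t_m)-O(I_m^4)>0$ and symmetrically $\gamma_m(\tilde t_m+cI_m^2)<0$, giving \eqref{eq:gamma-bounds}. So the real crux is the careful second-look computation that $\gamma_m(1+\delta_m)=O(I_m^2)$ (exploiting that $1+\delta_m$ is the ``right'' center), after which the large derivative $|\gamma_m'|\asymp 1$ makes the rest a routine mean-value-theorem argument; I would spend the bulk of the written proof on that computation and on the step-(i) estimate $\theta_m>1+2\delta_m$.
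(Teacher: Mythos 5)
After your mid-course self-correction (recognizing that $|\gamma_m'(\tilde t_m)|$ is of order $1$, not $I_m$, because the mass of $H_m$ past $\tilde t_m$ is $1-\mathcal{O}(I_m)$), your plan matches the paper's: prove $\theta_m>1+2\delta_m$ via $W_m'(t)=1+\mathcal{O}(I_m)$ on $[0,1+2\delta_m]$; show $\gamma_m(\tilde t_m)=\mathcal{O}(I_m^2)$ by exploiting that the choice $\tilde t_m=1+\delta_m$ cancels the $\mathcal{O}(\delta_m)$ term; and then use a second-order Taylor expansion with $\gamma_m'(\tilde t_m)=-1+\mathcal{O}(I_m)$ and $|\gamma_m''|=\mathcal{O}(1)$ to choose $c$ beating the implied constant. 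The one step you flag but leave incomplete is the verification $\gamma_m(\tilde t_m)=\mathcal{O}(I_m^2)$, which the paper carries out by expanding $W_m$ and $W_m'$ to within $\mathcal{O}(I_m^2)$, plugging into $\gamma_m$, and using $\delta_m\le I_m$ to absorb the $\delta_m(1+s)$ and $\int_1^{1+\delta_m}$ correction terms.
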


\begin{proof}
First note that if $t \le 1+2\delta_m$ then $W'_m(t) = 1 + \mathcal{O}(I_m)$ (by equation \eqref{eq:Wm-diff-formula}), thus $1+2\delta_m < \theta_m$ if $m$ is large enough (see the definition of $\theta_m$ \eqref{eq:theta-def}).

Now we prove \eqref{eq:gamma-bounds}. If $t \le 1+2\delta_m$ then using that $t < 2$ if $m$ is large enough, by~\eqref{eq:H-bound} and \eqref{eq:Wm-formula}, we have
\begin{equation}\label{eq:Wm-asymptotic1}
W_m(t) = t + \mathcal{O}(I_m),
\end{equation}
and hence from \eqref{eq:Wm-formula} and \eqref{eq:Wm-diff-formula} we obtain
\begin{align}
\begin{split}\label{eq:Wm-asymptotic2}
W_m(t) &\stackrel{\eqref{eq:Wm-formula},\, \eqref{eq:Wm-asymptotic1}}{=} t-\int \limits_0^t H_m(s)\cdot (t-s)\cdot s \, \mathrm{d}s + \mathcal{O}(I_m^2),\\ W'_m(t) &\stackrel{\eqref{eq:Wm-diff-formula},\, \eqref{eq:Wm-asymptotic1}}{=} 1-\int \limits_0^t H_m(s)\cdot s \, \mathrm{d}s + \mathcal{O}(I_m^2).
\end{split}
\end{align}

Therefore, for any $t \le 1+2\delta_m$
\begin{multline}
\label{eq:gamma-asymptotic}
\gamma_m(t) \stackrel{\eqref{eq:gamma-def}}{=} W_m'(t) +W_m(t) \cdot \left(-1+ \cdot \int \limits_0^t H_m(s) \, \mathrm{d}s \right) \stackrel{\eqref{eq:Wm-asymptotic2}}{=}\\
1-\int \limits_0^t H_m(s)\cdot s \, \mathrm{d}s + \left( t-\int \limits_0^t H_m(s)\cdot (t-s)\cdot s \, \mathrm{d}s \right) \cdot \left(-1+\int \limits_0^t H_m(s) \, \mathrm{d}s \right) + \mathcal{O}(I_m^2)\stackrel{(*)}{=}\\
1-t + \int \limits_0^t H_m(s) \cdot (t+ts-s-s^2) \, \mathrm{d}s+ \mathcal{O}(I_m^2),
\end{multline}
where at $(*)$ we used that for any $t \le 1+2\delta_m$, we have
\begin{equation*}
\left( \int \limits_0^t H_m(s) \cdot (t-s) \cdot s \, \mathrm{d}s \right) \cdot \left( \int \limits_0^t H_m(s) \, \mathrm{d}s\right) \le t^2 \cdot \left( \int \limits_0^t H_m(s) \, \mathrm{d}s \right)^2 = \mathcal{O}(I_m^2).
\end{equation*}

In particular, for $t=\tilde{t}_m=1+\delta_m$, we obtain that
\begin{align}
\begin{split}\label{eq:gamma(ttilde)}
\gamma_m(\tilde{t}_m) &\stackrel{\eqref{eq:gamma-asymptotic}}{=}-\delta_m + \int \limits_0^{1+\delta_m} H_m(s) \cdot (1-s^2+\delta_m\cdot(1+s)) \, \mathrm{d}s+ \mathcal{O}(I_m^2) \stackrel{\eqref{eq:delta-smaller-than-I}}{=}\\
&-\delta_m+\int \limits_0^1 H_m(s)\cdot (1-s^2) \, \mathrm{d}s + \mathcal{O}(I_m^2) \stackrel{\eqref{eq:delta-I-J-def}}{=} \mathcal{O}(I_m^2).
\end{split}
\end{align}

By \eqref{eq:gamma(ttilde)}, there exists a constant $C>0$ such that if $m$ is large enough, then
\begin{equation}\label{eq:gamma-t-tilde-upper}
|\gamma_m(\tilde{t}_m)| \le C \cdot I_m^2.
\end{equation}

Now we show that if $c = 2C$ then \eqref{eq:gamma-bounds} holds. We use first order Taylor approximation of $\gamma_m$ at $\tilde{t}_m$, noting that $|\gamma_m''(t)| \le 3$ on the interval $[0,1+2\delta_m]$ if $m$ is large enough by~\eqref{eq:gamma-diffdiff}. We begin by showing the first inequality of \eqref{eq:gamma-bounds}:
\begin{align*}
\gamma_m(\tilde{t}_m-2C \cdot I_m^2) &\ge \gamma_m(\tilde{t}_m) - 2C \cdot I_m^2 \cdot \gamma_m'(\tilde{t}_m) -\frac 32 \cdot \left(2C \cdot I_m^2 \right)^2 \stackrel{(*)}{\ge}\\
& -C \cdot I_m^2 + 2C \cdot I_m^2 + \mathcal{O}\left( I_m^3 \right) > 0 \quad \text{if } m \text{ is large enough,}
\end{align*}
where at $(*)$ we used \eqref{eq:gamma-t-tilde-upper} and that
\begin{equation}\label{eq:w-diff-tilde-t}
\gamma_m'(\tilde{t}_m)\stackrel{\eqref{eq:gamma-diff}}{=} -W_m'(\tilde{t}_m) \cdot \left( 1- \int \limits_0^{\tilde{t}_m} H_m(s) \, \mathrm{d}s \right) \stackrel{\eqref{eq:Wm-asymptotic2}}{=} -1+ \mathcal{O}\left( I_m \right) .
\end{equation}

We can prove the second inequality of \eqref{eq:gamma-bounds} similarly:
\begin{align*}
\gamma_m(\tilde{t}_m + 2C \cdot I_m^2) &\le \gamma(\tilde{t}_m) + 2C \cdot I_m^2 \cdot \gamma_m'(\tilde{t}_m) +\frac 32 \cdot \left( 2C \cdot I_m^2 \right)^2 \stackrel{\eqref{eq:gamma-t-tilde-upper},\, \eqref{eq:w-diff-tilde-t}}{\le}\\
& -C \cdot I_m^2 + \mathcal{O}\left( I_m^3 \right) < 0
\end{align*}
if $m$ is large enough.
\end{proof}

\begin{proof}[Proof of \refL{lem:t-crit-asymp}]
By \refL{lem:Wm-gamma-prop}~\ref{item:unique_root} we know that the unique root of $\gamma_m(t)$ on the interval $[0,\theta_m]$ is $\hat{t}_c(\underline{p}^m)$. On the other hand, by \refL{lem:tc-approx}, we also know that if $m$ is large enough, then there is a root of $\gamma_m(t)$ in the interval $(\tilde{t}_m -c\cdot I_m^2,\tilde{t}_m +c\cdot I_m^2)$, where $\tilde{t}_m := 1+\delta_m$.

By \refL{lem:tc-approx} we also know that $1+2\delta_m < \theta_m$ if $m$ is large enough. Therefore, the root of $\gamma_m(t)$ that is close to $\tilde{t}_m$ must be $\hat{t}_c(\underline{p}^m)$, i.e., $|\hat{t}_c(\underline{p}^m)-\tilde{t}_m| = \mathcal{O}(I_m^2)$.
\end{proof}

\subsection{Asymptotic formula for expected number of neighbors}\label{subsec:F_tc_asymp}
In this final section we prove \refL{lem:F(tc)-asymp}.

\begin{proof}[Proof of \refL{lem:F(tc)-asymp}]
First, note that if $s \le 1+2\delta_m$ then
\begin{equation}\label{eq:F(tc)-asymp-aux}
\prob(D^m \le X^m_s) \le \prob(D^m-1 \le X^m_s) \stackrel{\eqref{eq:H-interpret}}{=} \int \limits_0^s H_m(u) \, \mathrm{d}u \stackrel{\eqref{eq:delta-I-J-def}}{\le} I_m.
\end{equation}

Hence
\begin{equation}\label{eq:square-term-is-small}
\int \limits_0^{\hat{t}_c(\underline{p}^m)} \prob(D^m \le X^m_s)^2 \, \mathrm{d}s \stackrel{\eqref{eq:tc-formula},\, \eqref{eq:F(tc)-asymp-aux}}{=} \mathcal{O}(I_m^2).
\end{equation}

Therefore, we obtain \eqref{eq:F(tc)-formula} by noting~that
\begin{align*}
F_{\underline{p}^m}(\hat{t}_c(\underline{p}^m)) &\stackrel{\eqref{eq:F(t)-formula}}{=} \int \limits_0^{\hat{t}_c(\underline{p}^m)} (\lambda_m'(s))^2 \, \mathrm{d}s \stackrel{\eqref{eq:lambda-diff-prob}}{=} \int \limits_0^{\hat{t}_c(\underline{p}^m)} (1-\prob(D^m \le X^m_s))^2 \, \mathrm{d}s \stackrel{\eqref{eq:square-term-is-small}}{=}\\
& \hat{t}_c(\underline{p}^m) - 2 \int \limits_0^{\hat{t}_c(\underline{p}^m)} \prob(D^m \le X^m_s) \, \mathrm{d}s + \mathcal{O}(I_m^2) \stackrel{\eqref{eq:delta-I-J-def},\, \eqref{eq:tc-formula},\, \eqref{eq:F(tc)-asymp-aux}}{=}\\
& \hat{t}_c(\underline{p}^m) - 2 \int \limits_0^1 \prob(D^m \le X^m_s) \, \mathrm{d}s + \mathcal{O}(I_m^2) \stackrel{\eqref{eq:delta-I-J-def}}{=} \hat{t}_c(\underline{p}^m) - 2 J_m + \mathcal{O}(I_m^2),
\end{align*}
completing the proof of \refL{lem:F(tc)-asymp}.
\end{proof}

\appendix
\section{Appendix}\label{appendix_main}

In Section~\ref{subsec:app_HS} we prove Lemma~\ref{lem:properties-of-branching-operator}~\ref{item:bo_hs}. In Section~\ref{subsec:app_spectral_characterization} we prove \refT{thm:PF-eigenvalue}.

\subsection{Hilbert--Schmidt property of the branching operator}\label{subsec:app_HS}

\noindent After proving Lemma~\ref{lem:properties-of-branching-operator}~\ref{item:bo_hs} we will explain in Remark~\ref{rem:hs_barely} why the proof is a close call.

\begin{proof}[Proof of Lemma~\ref{lem:properties-of-branching-operator}~\ref{item:bo_hs}]
The goal is to prove that $B_{\hat{t}}$ is a Hilbert--Schmidt operator for any $\hat{t} \in \mathbb{R}_+ \cup \{+\infty\}$. In order to prove it, it is enough to see that the Hilbert--Schmidt norm of $B_{\infty}$ is finite. Let $\tau_1$ and $\tau_2$ be i.i.d.\ random variables with the same distribution as $\tau$ (defined in \eqref{eq:tau-def}). We have
\begin{multline}\label{eq:HS_bounds_ineqs}
\norm{B_{\infty}}_{\text{HS}}^2 \stackrel{\eqref{eq:kernel}}{=} \int \limits_0^\infty \int \limits_0^\infty \mathcal{K}_{\infty}(u,s)^2 \, \rho(u) \rho(s)\,\mathrm{d}u\, \mathrm{d}s \stackrel{\eqref{eq:H-rho-def},\, \eqref{eq:H-interpret}}{=} \ev \left( (\tau_1 \wedge \tau_2)^2 \right) =\\
\int \limits_0^\infty \prob \left(\tau > \sqrt{t} \right)^2 \, \mathrm{d}t \stackrel{\eqref{eq:tau-def}}{=} \int \limits_0^\infty \prob(X_{\sqrt{t}} \le D-2)^2 \, \mathrm{d}t = \int \limits_0^\infty \prob(X_s \le D-2)^2 \cdot 2s \, \mathrm{d}s.
\end{multline}

Observe that the above integral is finite on bounded intervals, therefore, it is enough to prove that
\begin{equation}\label{eq:HS-sufficient-goal}
\int \limits_{\lambda^{-1}(\Delta)}^\infty \prob(X_s \le D-2)^2 \cdot 2s \, \mathrm{d}s < \infty,
\end{equation}
where $\Delta$ is defined in the strict Assumption~\ref{assump:degree_constraint_strict}.
Let us define the functions $\Psi(y)$, $\psi(y)$ such that
\begin{equation}\label{eq:Psi-psi-def}
\Psi(\lambda) := \mathrm{e}^{-\lambda} \cdot \sum \limits_{k=0}^{\Delta-1} \frac{\lambda^k}{k!} \cdot q_{k+1}, \quad \text{and} \quad \psi(t) := \int \limits_{\Delta}^t \frac{1}{\Psi(u)} \,\mathrm{d}u,
\end{equation}
thus we have $\lambda'(t) = \Psi(\lambda(t))$ (cf.\ \eqref{eq:lambda-ivp}).
Without loss of generality, we can assume that $p_\Delta > 0$. Then, by the definition of $\Psi(\lambda)$, for any $\lambda > \Delta$, we have
\begin{equation}\label{eq:Psi-bounds}
\mathrm{e}^{-\lambda} \cdot \frac{\lambda^{\Delta-1}}{(\Delta-1)!} \cdot p_\Delta \le \Psi(\lambda) \le \mathrm{e}^{-\lambda} \cdot \frac{\lambda^{\Delta-1}}{(\Delta-1)!} \cdot \Delta .
\end{equation}

Observe that
\begin{equation}\label{eq:Psi-psi-formula}
\psi(\lambda(t)) = t-\lambda^{-1}(\Delta), \quad \text{i.e.,} \quad \lambda^{-1}(t) = \lambda^{-1}(\Delta)+\psi(t) \quad \text{and} \quad (\lambda^{-1})'(t) = \frac{1}{\Psi(t)}.
\end{equation}

Now we obtain
\begin{multline}
\label{eq:HS-integral-finite}
\int \limits_{\lambda^{-1}(\Delta)}^\infty \prob(X_s \le D-2)^2 \cdot 2s \, \mathrm{d}s \stackrel{\eqref{eq:Xt-def},\, \eqref{eq:def-pk-qk}}{=} \int \limits_{\lambda^{-1}(\Delta)}^\infty \left( \mathrm{e}^{-\lambda(s)} \cdot \sum \limits_{k=0}^{\Delta-2} \frac{\lambda(s)^k}{k!} \cdot q_{k+2} \right)^2 \cdot 2s \, \mathrm{d}s \stackrel{\eqref{eq:lambda-ivp}}{\le}\\
\int \limits_{\lambda^{-1}(\Delta)}^\infty \left( \Delta \cdot \frac{\lambda'(s)}{\lambda(s)} \right)^2 \cdot 2s \, \mathrm{d}s \stackrel{y=\lambda(s), \, \eqref{eq:lambda-lim}}{=} 2\Delta \cdot \int \limits_{\Delta}^\infty \frac{\lambda^{-1}(y)}{(\lambda^{-1})'(y)} \cdot \frac{1}{y^2} \, \mathrm{d}y \stackrel{\eqref{eq:Psi-psi-formula}}{=}\\
 2\Delta \int \limits_{\Delta}^\infty \Psi(y) \cdot \lambda^{-1}(\Delta) \cdot \frac{1}{y^2} \, \mathrm{d}y +
 2\Delta \int \limits_{\Delta}^\infty \Psi(y) \cdot \psi(y) \cdot \frac{1}{y^2} \, \mathrm{d}y.
 \end{multline}
 The first term on the r.h.s.\ of \eqref{eq:HS-integral-finite} is finite since $ \Psi(y) \leq 1$ (cf.\ \eqref{eq:Psi-psi-def}). Next we show that the second term on the r.h.s.\ of \eqref{eq:HS-integral-finite} is finite:
\begin{multline}\label{eq:hs_calc_2}
 \int \limits_{\Delta}^\infty \Psi(y) \cdot \psi(y) \cdot \frac{1}{y^2} \, \mathrm{d}y 
 \stackrel{\eqref{eq:Psi-psi-def}}{=} \int \limits_{\Delta}^\infty \left( \int \limits_{\Delta}^y \frac{\Psi(y)}{\Psi(t)} \, \mathrm{d}t \right) \cdot \frac{1}{y^2} \, \mathrm{d}y \stackrel{\eqref{eq:Psi-bounds}}{\le}\\
 \int \limits_{\Delta}^\infty \left( \int \limits_{\Delta}^y \frac{\mathrm{e}^{-y} \cdot y^{\Delta-1} \cdot \Delta}{\mathrm{e}^{-t} \cdot t^{\Delta-1} \cdot p_\Delta} \, \mathrm{d}t \right) \cdot \frac{1}{y^2} \, \mathrm{d}y \stackrel{(*)}{<} \infty.
\end{multline}

For the last step, marked with $(*)$, observe that the inner integral can be upper bounded as follows:
\begin{align*}
\int \limits_{\Delta}^y \mathrm{e}^{-(y-t)} \cdot \left( \frac{y}{t} \right)^{\Delta-1} \, \mathrm{d}t & \le \int \limits_{\Delta}^{y/2} \mathrm{e}^{-y/2} \cdot y^{\Delta-1} \, \mathrm{d}t + \int \limits_{y/2}^y \mathrm{e}^{-(y-t)} \cdot 2^{\Delta-1} \, \mathrm{d}t \le \mathrm{e}^{-y/2} \cdot y^\Delta + 2^\Delta.
\end{align*}

Inequality \eqref{eq:HS-integral-finite} implies \eqref{eq:HS-sufficient-goal}, which implies that $B_{\hat{t}}$ is Hilbert--Schmidt for any $\hat{t} \in \mathbb{R}_+ \cup \{+\infty\}$.

Finally, $B_{\hat{t}}$ is bounded, since the operator norm of a Hilbert--Schmidt operator is bounded by its Hilbert--Schmidt norm (see \cite[Theorem 6.22]{RS}): $\norm{B_{\hat{t}}} \le \norm{B_{\hat{t}}}_{\text{HS}}$.
\end{proof}

\begin{remark}[The proof of \eqref{eq:HS-sufficient-goal} was a close call]\label{rem:hs_barely}
It is easy to check using \eqref{eq:Psi-psi-def} that~$\Psi(\lambda)$ decays exponentially as $\lambda \to \infty$ and thus $\psi(t)$ grows exponentially as $t \to \infty$. 
Putting this together with \eqref{eq:Psi-psi-formula},
we see that $\lambda(t)$ grows logarithmically as $t \to \infty$. This implies that $\prob(X_s \le D-2)$ is `roughly' $\mathrm{e}^{-\lambda(s)}$, i.e., it is `roughly' $1/s$ when $s \gg 1$, and this (admittedly naive) calculation gives that the integral in \eqref{eq:HS-sufficient-goal} is `roughly' equal to~${\int_{\lambda^{-1}(\Delta)}^\infty \frac{1}{s^2} \cdot 2s \, \mathrm{d}s=+\infty}$.
\end{remark}

\begin{remark}[The principal eigenvalue of $B_\infty$]\label{rem:pf_infty}
We proved Lemma~\ref{lem:properties-of-branching-operator}~\ref{item:bo_hs} in the ${\hat{t}=\infty}$ case as well, because we wanted Perron--Frobenius theory (i.e., the results of Lemma~\ref{lem:eigen-properties}) to be applicable in the $\hat{t}=\infty$ case, too. Although in the current paper we only use the~${\hat{t}<\infty}$ case, let us note that the $\hat{t}=\infty$ case might find some future applications: for example, the principal eigenvalue of $B_\infty$ captures the Malthusian growth rate of the multi-type branching process that arises as the local limit of the final graphs~$G^n_{\underline{p}}(\infty)$.
\end{remark}

\subsection{Spectral characterization of sub/supercriticality of \texorpdfstring{$\mathcal{M}_{\underline{p}}(\hat{t})$}{M}}\label{subsec:app_spectral_characterization}

The goal of Section~\ref{subsec:app_spectral_characterization} is to prove \refT{thm:PF-eigenvalue}.

Let us recall the definition of the MTBP from Definition~\ref{def:mtbp} as well as the definitions of~$\lambda(\cdot)$, $E(\cdot)$, $\rho(\cdot)$ and the normalized principal eigenfunction $v_{\hat{t}}$ of $B_{\hat{t}}$ from Notation~\ref{not:lambda_q_z}, Notation~\ref{not:E}, Definition~\ref{def:auxiliary_rho} and Definition~\ref{def:w-def}, respectively.
We also recall the definitions of $\langle \cdot, \cdot \rangle_\rho$ and $\norm{\cdot}_2$ from Notation~\ref{not:eltwo}.

\begin{lemma}[Some elements of $L^2(\mathbb{R}_+, \rho)$]\label{lem:L2-elements} Let $\underline{p}$ satisfy the strict Assumption~\ref{assump:degree_constraint_strict}.
The functions $\bbone$, $\frac{E}{\lambda}$, $\frac{E \cdot \tilde{f}}{\lambda \cdot f}$, $v_{\hat{t}}$, $v_{\hat{t}}^2$ for any $\hat{t} \in \mathbb{R}_+$ are in $L^2(\mathbb{R}_+, \rho)$, where $\tilde{f}$ denotes the probability density function of the type of the $d(\rt)^{\text{th}}$ child of the root if the type of the root is chosen with density function $f(t)$.
\end{lemma}

\begin{proof}
Note that $d(\ibf) \ge 2$ for any vertex $\ibf$, therefore, by \eqref{eq:E-def}, $E(t) \ge 1$. Hence
\begin{equation}\label{eq:1-norm}
\left( \norm{\bbone}_2 \right)^2 \stackrel{\eqref{eq:H-rho-def}}{=} \int \limits_0^\infty \frac{\lambda(t)f(t)}{E(t)} \, \mathrm{d}t \stackrel{\eqref{eq:f-with-lambda-rdcp}, \, \eqref{eq:E-def}}{\le} -\int \limits_0^\infty \lambda(t) \cdot \lambda''(t) \, \mathrm{d}t \leq \int \limits_0^\infty \lambda'(t)^2 \, \mathrm{d}t \stackrel{\eqref{eq:F(t)-formula}}{=} F_{\underline{p}}(\infty) \le \Delta.
\end{equation}

By \eqref{eq:H-rho-def} and \eqref{eq:H-int} (noting that $p_1=0$ by the strict Assumption~\ref{assump:degree_constraint_strict}), we obtain that $\left(\norm{\frac{E}{\lambda}}_2\right)^2 = \int_0^\infty H(s) \, \mathrm{d}s =1$.

To prove that $\frac{E \cdot \tilde{f}}{\lambda \cdot f} \in L^2(\mathbb{R}_+, \rho)$, we estimate $\tilde{f}$:
\begin{equation}\label{eq:ftilde-estimation}
\tilde{f}(t)\stackrel{\eqref{eq:dth-dens}}{=} \int \limits_0^t f(s) \cdot \frac{f(t)}{\int \limits_s^\infty f(u) \, \mathrm{d}u} \, \mathrm{d}s \stackrel{\eqref{eq:lambda-diff-prob}, \, \eqref{eq:f-z-probabilistic-meaning}}{=} f(t) \cdot \int \limits_0^t \prob(D=X_s + 1) \, \mathrm{d}s \stackrel{\eqref{eq:lambda-diff-prob}}{\le} f(t) \cdot \lambda(t).
\end{equation}

Therefore,
\begin{align*}
\left( \norm{\frac{E \cdot \tilde{f}}{\lambda \cdot f}}_2 \right)^2 \stackrel{\eqref{eq:ftilde-estimation}}{\le} \int \limits_0^\infty E(t)^2 \rho(t) \, \mathrm{d}t \stackrel{\eqref{eq:E-def}}{\le} \Delta^2 \cdot \norm{\bbone}_2 \stackrel{\eqref{eq:1-norm}}{<} \infty.
\end{align*}

The fact that $\norm{v_{\hat{t}}}_2 < \infty$ follows from the boundedness of $B_{\hat{t}}$ (see \refL{lem:properties-of-branching-operator}).

Finally, we prove that $v_{\hat{t}}^2 \in L^2(\mathbb{R}_+, \rho)$ for any $\hat{t} \in \mathbb{R}_+$. Observe that $\lambda(t)$ and $w_{\hat{t}}(t)$ are monotone increasing, concave functions with $\lambda(0) = w_{\hat{t}}(0) = 0$, ${\lambda'(0) = w_{\hat{t}}'(0) = 1}$ and~$w_{\hat{t}}(t)$ is constant for $t \ge \hat{t}$ (see \eqref{eq_lambda_f_lambda_diff} and \refL{lem:w-properties}). Therefore, if we define ${t^* := (\lambda')^{-1}(1/2) \in \mathbb{R}_+}$, we have
\begin{equation}
\begin{alignedat}{4}
\label{eq:lambda-w-bounds}
&\lambda(t) \ge \frac{t}{2},\quad && w_{\hat{t}}(t) \le t \quad && \forall \, t \le t^*,&&\\
&\lambda(t) \ge \lambda(t^*), \quad && w_{\hat{t}}(t) \le w_{\hat{t}}(\hat{t}) \quad && \forall \, t \ge t^*.&&
\end{alignedat}
\end{equation}

Therefore,
\begin{align*}
\left( \norm{v_{\hat{t}}^2}_2 \right)^2 &\stackrel{\eqref{eq:H-rho-def}, \, \eqref{eq:w-def}}{=} \int \limits_0^\infty \frac{E(t)^2}{\lambda(t)^2} \cdot w_{\hat{t}}(t)^4 \cdot H(t) \, \mathrm{d}t \stackrel{\eqref{eq:E-def},\, \eqref{eq:H-bound},\, \eqref{eq:lambda-w-bounds}}{\le}\\
&\int \limits_0^{t^*} \frac{4\Delta^2}{t^2} \cdot t^4 \cdot 1 \, \mathrm{d}t + \int \limits_{t^*}^\infty \frac{\Delta^2}{(\lambda(t^*))^2} \cdot w_{\hat{t}}(\hat{t})^4 \cdot H(t) \, \mathrm{d}t \stackrel{\eqref{eq:H-int}}{<} \infty.
\end{align*}
\end{proof}

In the next claim we assume that the offspring distribution of the root of the MTBP is the same as that of other vertices.

\begin{claim}[Powers of the branching operator]\label{claim_power_of_br}
It is known that if $B$ denotes the branching operator of a MTBP and $B^r$ is the $r^{\text{th}}$ power of $B$, then we have
\begin{equation}\label{eq:power-of-branching-op}
\left(B^r\varphi \right)(t_0) = \ev \left( \sum \limits_{i=1}^{M^r} \varphi(t_i) \right),
\end{equation}
where $M^r$ is the (random) number of descendants in the $r^{\text{th}}$ generation of a vertex with type $t_0$ in the MTBP and $t_1, \, t_2, \, \dots, \, t_{M^r}$ are the (random) types of these descendants in the $r^{\text{th}}$ generation.
\end{claim}

\begin{proof}[Proof of \refT{thm:PF-eigenvalue}]
We have seen the first point of the theorem in \refL{lem:properties-of-branching-operator}.

Recall that $\mu_{\hat{t}} = \norm{B_{\hat{t}}}$ is the principal eigenvalue of the branching operator $B_{\hat{t}}$ and $v_{\hat{t}}$ denotes a corresponding eigenfunction (cf.\ \refL{lem:eigen-properties} and \refD{def:w-def}).

In this proof, we use the notation $g_r \asymp h_r$ for sequences $g_r$, $h_r\colon \mathbb{N} \to \mathbb{R}$ if for any~$r \in \mathbb{N}$, we have $c g_r \le h_r \le C g_r$ for some positive constants $c$, $C$ (which can depend on $\hat{t}$).

First we prove that if $\norm{B_{\hat{t}}} < 1$ then $\mathcal{M}_{\underline{p}}(\hat{t})$ is subcritical, i.e., $\ev( | \mathcal{M}_{\underline{p}}(\hat{t}) | ) < \infty$. Let us denote by $\mathcal{M}_{\underline{p}}^r(\hat{t})$ the set of vertices of the $r^\text{th}$ generation of $\mathcal{M}_{\underline{p}}(\hat{t})$. We prove that $\ev( |\mathcal{M}_{\underline{p}}^r(\hat{t})|) \asymp \mu_{\hat{t}}^r$. Note that Claim~\ref{claim_power_of_br} cannot be directly applied to our setting because of the $d(\rt)^{\text{th}}$ child of $\rt$
(cf.\ Definition~\ref{def:mtbp}), therefore, we will deal with the contribution of the descendants of the $d(\rt)^{\text{th}}$ child of $\rt$ to $\ev( | \mathcal{M}_{\underline{p}}(\hat{t}) | )$ later.

Equation \eqref{eq:power-of-branching-op} implies that the expected number of descendants of the first $d(\varnothing)-1$ children of the root in the $r^\text{th}$ generation of $\mathcal{M}_{\underline{p}}(\hat{t})$ is equal to $\int_0^\infty (B_{\hat{t}}^r \bbone)(t) f(t) \mathrm{d}t= \left\langle \frac{E}{\lambda}, \, B_{\hat{t}}^r \bbone \right\rangle_{\rho}$, which can be bounded by $ \norm{\frac{E}{\lambda}}_2 \norm{B_{\hat{t}}^r} \norm{\bbone}_2 \leq \norm{\frac{E}{\lambda}}_2 \norm{B_{\hat{t}}}^r \norm{\bbone}_2 \asymp \mu_{\hat{t}}^r $ with the help of Lemma~\ref{lem:L2-elements}.

Similarly, if $\tilde{f}(t)$ denotes the probability density function of the type of the $d(\rt)^{\text{th}}$ child of $\rt$, then the expected number of descendants of this vertex in the $r^\text{th}$ generation is $\left\langle \frac{E \cdot \tilde{f}}{\lambda \cdot f}, \, B_{\hat{t}}^{r-1} \bbone \right\rangle_{\rho}$. Similarly to the above calculation (and using Lemma~\ref{lem:L2-elements} again) we obtain $\left\langle \frac{E \cdot \tilde{f}}{\lambda \cdot f}, \, B_{\hat{t}}^{r-1} \bbone \right\rangle_{\rho} \asymp \mu_{\hat{t}}^{r-1}$.
Therefore,
\begin{equation}\label{eq:Mr-approximation}
\ev( |\mathcal{M}_{\underline{p}}^r(\hat{t})|) = \left\langle \frac{E}{\lambda}, \, B_{\hat{t}}^r \bbone \right\rangle_{\rho} + \left\langle \frac{E \cdot \tilde{f}}{\lambda \cdot f}, \, B_{\hat{t}}^{r-1} \bbone \right\rangle_{\rho} \asymp \mu_{\hat{t}}^r.
\end{equation}

This implies the subcriticality of $\mathcal{M}_{\underline{p}}(\hat{t})$, since
\begin{equation}
 \ev( | \mathcal{M}_{\underline{p}}(\hat{t}) | ) = \sum \limits_{r=0}^\infty \ev( |\mathcal{M}_{\underline{p}}^r(\hat{t})|) \stackrel{\eqref{eq:Mr-approximation}}{\asymp} \frac{1}{1-\mu_{\hat{t}}^r} < \infty.
\end{equation}

Now we prove that if $\norm{B_{\hat{t}}} > 1$ then $\mathcal{M}_{\underline{p}}(\hat{t})$ is supercritical, i.e., $\prob( | \mathcal{M}_{\underline{p}}(\hat{t}) | = \infty ) > 0$. We show that the process is supercritical even if we ignore the $d(\rt)^\text{th}$ child of the root (and its descendants). Note that this way the offspring distribution is the same for each vertex. Let us denote this smaller graph and its $r^\text{th}$ generation by $\widetilde{\mathcal{M}}_{\underline{p}}(\hat{t})$ and $\widetilde{\mathcal{M}}^r_{\underline{p}}(\hat{t})$, respectively. We introduce the notation
\begin{equation}\label{eq:Yr-def}
Y^r(\hat{t}) := \sum \limits_{\ibf \in \widetilde{\mathcal{M}}_{\underline{p}}^r(\hat{t})} v_{\hat{t}}(T_{\ibf}),
\end{equation}
where $T_{\ibf}$ is the type of vertex $\ibf$. We will show that there exists a constant $\hat{c}>0$ such that
\begin{equation}\label{eq:Yr-moments}
\ev(Y^r(\hat{t})) \asymp \mu_{\hat{t}}^r, \quad \text{Var} \left(Y^r(\hat{t}) \right) \le \hat{c} \mu_{\hat{t}}^{2r}.
\end{equation}

As soon as we prove \eqref{eq:Yr-moments}, supercriticality follows by the Paley--Zygmund inequality:
\begin{equation}\label{eq:Paley-Zygmund}
\prob \left(Y^r(\hat{t}) \ge \frac 12 \ev(Y^r(\hat{t})) \right) \ge \frac 14 \frac{ \left( \ev(Y^r(\hat{t})) \right)^2}{\ev \left((Y^r(\hat{t}))^2 \right)} \stackrel{\eqref{eq:Yr-moments}}{\ge} C, \quad r=1, 2, 3, \dots
\end{equation}
for some positive constant $C$, and therefore,
\begin{equation*}
\prob \left( \, |\widetilde{\mathcal{M}}_{\underline{p}}(\hat{t})| = \infty \right) = \lim \limits_{r \to \infty}\prob \left( \widetilde{\mathcal{M}}_{\underline{p}}^r(\hat{t}) \neq \emptyset \right) \stackrel{\eqref{eq:Paley-Zygmund}}{\ge} C> 0,
\end{equation*}
i.e., $\widetilde{\mathcal{M}}_{\underline{p}}(\hat{t})$ is supercritical.

It remains to show \eqref{eq:Yr-moments}. Let $\mathcal{F}_r$ be the $\sigma$-algebra generated by the first $r$ generations of $\widetilde{\mathcal{M}}_{\underline{p}}(\hat{t})$, i.e., by the variables $\big\{ \, T_\ibf \, | \, \ibf \in \bigcup \limits_{k=0}^r \widetilde{\mathcal{M}}^k_{\underline{p}}(\hat{t}) \, \big\}$. Let us first observe that $Y^r(\hat{t})/\mu_{\hat{t}}^r$ is a martingale:
\begin{align}
\begin{split}\label{eq:Yr-expectation}
\ev(Y^{r+1}(\hat{t}) \, | \, \mathcal{F}_r ) &\stackrel{\eqref{eq:Yr-def}}{=} \sum \limits_{\ibf \in \widetilde{\mathcal{M}}^r_{\underline{p}}(\hat{t})} \ev \left( \sum \limits_j v_{\hat{t}}(T_{\ibf j}) \, \Big| \, \mathcal{F}_r \right) \stackrel{\eqref{eq:branch-op-def}}{=} \sum \limits_{\ibf \in \widetilde{\mathcal{M}}^r_{\underline{p}}(\hat{t})} B_{\hat{t}}v_{\hat{t}} (T_\ibf) =\\
& \sum \limits_{\ibf \in \widetilde{\mathcal{M}}^r_{\underline{p}}(\hat{t})} \mu_{\hat{t}}v_{\hat{t}} (T_\ibf) \stackrel{\eqref{eq:Yr-def}}{=} \mu_{\hat{t}} \cdot Y^r(\hat{t}).
\end{split}
\end{align}
It implies the first identity of \eqref{eq:Yr-moments}.

Now we estimate the variance of $Y^r(\hat{t})$. First, observe that for any $\ibf \in \widetilde{\mathcal{M}}^r_{\underline{p}}(\hat{t})$, given~$T_\ibf$ and $d(\ibf)$, the phantom saturation of times of its children are conditionally independent (see \refD{def:mtbp}). Therefore,
\begin{align}
\begin{split}\label{eq:Yr+1-condiitonal-expectation}
\ev \left( \sum \limits_j v_{\hat{t}}(T_{\ibf j}) \, \Big| \, T_\ibf, \, d(\ibf) \right) &\stackrel{\eqref{eq:mtbp-joint-dens}}{=} (d(\ibf)-1) \cdot \int \limits_0^\infty (T_\ibf \wedge s \wedge \hat{t}) \cdot \frac{f(s)}{\lambda(T_\ibf)} \cdot v_{\hat{t}}(s) \, \mathrm{d}s \stackrel{\eqref{eq:branch-op-rdcp}}{=}\\
& \frac{d(\ibf)-1}{E(T_\ibf)} \cdot (B_{\hat{t}}v_{\hat{t}})(T_\ibf) \le \Delta \cdot \mu_{\hat{t}} v_{\hat{t}}(T_\ibf) .
\end{split}\\
\begin{split} \label{eq:Yr+1-conditional-variance}
\text{Var} \left( \sum \limits_j v_{\hat{t}}(T_{\ibf j}) \, \Big| \, T_\ibf, \, d(\ibf) \right) &\le (d(\ibf) - 1) \cdot \ev \left( v_{\hat{t}}(T_{\ibf j})^2 \, \Big| \, T_\ibf, \, d(\ibf) \right) \stackrel{\eqref{eq:mtbp-joint-dens}}{=}\\
& (d(\ibf)-1) \cdot \int \limits_0^\infty (T_\ibf \wedge s \wedge \hat{t}) \cdot \frac{f(s)}{\lambda(T_\ibf)} \cdot v_{\hat{t}}(s)^2 \, \mathrm{d}s .
\end{split}
\end{align}

Therefore, by the law of total variance
\begin{align}
\begin{split}\label{eq:Ti-children-variance}
\text{Var} \left( \sum \limits_j v_{\hat{t}}(T_{\ibf j}) \, \Big| \, T_\ibf \right) &\le \ev \left(\text{Var} \left( \sum \limits_j v_{\hat{t}}(T_{\ibf j}) \, \Big| \, T_\ibf, \, d(\ibf) \right) \, \Bigg| \, T_\ibf \right) +\\
& \ev \left[ \left( \ev \left( \sum \limits_j v_{\hat{t}}(T_{\ibf j}) \, \Big| \, T_\ibf, \, d(\ibf) \right) \right)^2 \, \Bigg| \, T_\ibf \right] \stackrel{\eqref{eq:branch-op-rdcp},\, \eqref{eq:Yr+1-condiitonal-expectation},\, \eqref{eq:Yr+1-conditional-variance}}{\le}\\
& \left(B_{\hat{t}}(v_{\hat{t}}^2) \right)(T_\ibf) + \Delta^2 \cdot \mu_{\hat{t}}^2 \cdot v_{\hat{t}}(T_\ibf)^2.
\end{split}
\end{align}

By the law of total variance, we can estimate $\text{Var}(Y^{r+1}(\hat{t}))$:
\begin{align}
\begin{split}\label{eq:law-of-total-variance}
\text{Var}(Y^{r+1}(\hat{t})) &= \ev \left(\text{Var}(Y^{r+1}(\hat{t}) \, | \, \mathcal{F}_r) \right) +\text{Var} \left( \ev( Y^{r+1}(\hat{t}) \, | \, \mathcal{F}_r) \right) \stackrel{\eqref{eq:Yr-expectation}, \, \eqref{eq:Ti-children-variance}}{\le}\\
& \ev \left( \sum \limits_{\ibf \in \widetilde{\mathcal{M}}_{\underline{p}}^r(\hat{t})} \left[ \left(B_{\hat{t}}(v_{\hat{t}}^2) \right)(T_\ibf) + \Delta^2 \cdot \mu_{\hat{t}}^2 \cdot v_{\hat{t}}(T_\ibf)^2 \right] \right) + \mu_{\hat{t}}^2 \cdot \text{Var} \left( Y^r(\hat{t}) \right) \stackrel{\eqref{eq:power-of-branching-op}}{=}\\
& \left\langle \frac{E}{\lambda}, B_{\hat{t}}^r \left(B_{\hat{t}}(v_{\hat{t}}^2) \right) \right\rangle_\rho + \Delta^2 \cdot \mu_{\hat{t}}^2 \cdot \left\langle \frac{E}{\lambda}, B_{\hat{t}}^r \left(v_{\hat{t}}^2 \right) \right\rangle_\rho + \mu_{\hat{t}}^2 \cdot \text{Var} \left( Y^r(\hat{t}) \right) \stackrel{(*)}{\le}\\
& \tilde{c} \mu_{\hat{t}}^r + \mu_{\hat{t}}^2 \cdot \text{Var} \left( Y^r(\hat{t}) \right)
\end{split}
\end{align}
for some positive constant $\tilde{c}$, where at $(*)$ we used that
\begin{equation*}
\left\langle \frac{E}{\lambda}, B_{\hat{t}}^r \left(B_{\hat{t}}(v_{\hat{t}}^2) \right) \right\rangle_\rho \asymp \left\langle \frac{E}{\lambda}, B_{\hat{t}}^r \left(v_{\hat{t}}^2 \right) \right\rangle_\rho \le c \cdot \mu_{\hat{t}}^r.
\end{equation*}
Indeed: note that $\frac{E}{\lambda}$, $v_{\hat{t}}^2 \in L^2(\mathbb{R}_+, \rho)$ (see \refL{lem:L2-elements}), thus the statement follows from $\mu_{\hat{t}}^r = \norm{B_{\hat{t}}^r}$. Using \eqref{eq:law-of-total-variance}, it can be shown that $\text{Var}(Y^r(\hat{t})) \le \text{Var}(Y^1(\hat{t})) \cdot \mu_{\hat{t}}^{2r} + \tilde{c} \cdot \sum_{k=r-1}^{2r-1} \mu_{\hat{t}}^k$. It implies the second inequality of \eqref{eq:Yr-moments}.

Finally, note that the function $\hat{t} \mapsto \mu_{\hat{t}}$ is strictly increasing and continuous (these facts easily follow e.g.\ from the characterization of $\mu_{\hat{t}}$ given in \refL{lem:w-properties}). Thus the second and the third points of \refT{thm:PF-eigenvalue} together with \eqref{eq:t_crit-def} imply that $\norm{B_{\hat{t}_c(\underline{p})}} = \mu_{\hat{t}_c(\underline{p})} = 1$, 
which concludes the proof of \refT{thm:PF-eigenvalue}.
\end{proof}

\textbf{\large Acknowledgements.}
We would like to thank Christian Borgs and Amin Coja-Oghlan for questions that encouraged us to study the existence of local limits in the random \mbox{$d$-process}. 
We would also like to thank the organizers of the R{\'e}nyi 100~conference (Budapest), where this project was initiated in~2022. 
Bal\'azs R\'ath and M\'arton Sz\H{o}ke were partially supported by the grant NKFI-FK-142124 of NKFI (National Research, Development and Innovation Office), and the ERC Synergy Grant No.\ 810115 - DYNASNET.
Lutz Warnke was supported by NSF~CAREER grant~DMS-2225631, a Sloan Research Fellowship, and the RandNET project (Rise project H2020-EU.1.3.3).


\begin{thebibliography}{99}
 
\bibitem{A13} Addario-Berry, L.: The local weak limit of the minimum spanning tree of the complete graph. \emph{Preprint arXiv:1301.1667}, (2013).

\bibitem{AF} Addario-Berry, L.\ and Ford, K.: Poisson-Dirichlet branching random walks. \emph{The Annals of Applied Probability} \textbf{23}(1), (2013), 283--307.

\bibitem{AGK} Addario-Berry, L., Griffiths, S.\ and Kang, R.\ J.: Invasion percolation on the Poisson-weighted infinite tree. \emph{The Annals of Applied Probability} \textbf{22}(3), (2012), 931--970.

\bibitem{A90} Aldous, D.\ J.: A random tree model associated with random graphs. \emph{Random Structures \& Algorithms} \textbf{1}(4), (1990), 383--402.

\bibitem{A91} Aldous, D.\ J.: Asymptotic fringe distributions for general families of random trees. \emph{The Annals of Applied Probability}, (1991), 228--266.

\bibitem{A92} Aldous, D.\ J.: Asymptotics in the random assignment problem. \emph{Probability Theory and Related Fields} \textbf{93}(4), (1992), 507--534

\bibitem{A01} Aldous, D.\ J.: The $\zeta(2)$ limit in the random assignment problem. \emph{Random Structures \& Algorithms} \textbf{18}(4), (2001), 381--418.

\bibitem{AB} Aldous, D.\ J.\ and Bandyopadhyay, A.: A survey of max-type recursive distributional equations. \emph{The Annals of Applied Probability}, \textbf{15}(2), (2005), 1047--1110.

\bibitem{AS04} Aldous, D.\ J.\ and Steele, J.\ M.: The objective method: probabilistic combinatorial optimization and local weak convergence. \emph{Probability on discrete structures: Encyclopaedia of Mathematical Sciences} \textbf{110}, (2004), 1--72, Springer, Berlin.

\bibitem{APS24} Arcanjo, W.\ S., Pereira, A.\ S.\ and dos Santos, D.\ C.: On the number of infinite clusters in the constrained-degree percolation model. \emph{Preprint arXiv:2405.09343}, (2024).

\bibitem{AS16} Anantharam, V.\ and Salez, J.: The densest subgraph problem in sparse random graphs. \emph{The Annals of Applied Probability} \textbf{26}(1), (2016), 305--327.

\bibitem{AS23} Angel, O.\ and S\'enizergues, D.: The scaling limit of the root component in the Wired Minimal Spanning Forest of the Poisson Weighted Infinite Tree. \emph{Preprint arXiv:2312.14640}, (2023).

\bibitem{AN} Athreya, K.\ B.\ and Ney, P.\ E.: Branching processes. Die Grundlehren der mathematischen Wissenschaften, Band 196. \emph{Springer-Verlag}, New York-Heidelberg, (1972), xi+287 pp.

\bibitem{BQ87} Bali\'nska, K.\ T.\ and Quintas, L.\ V.: Random graph models for physical systems. \emph{Graph Theory and Topology in Chemistry} \textbf{51}, (1987), 349--361.

\bibitem{BBSY} Banerjee, S., Bhamidi, S., Shen, J.\ and Young, S.\ P.: Local weak convergence and its applications. \emph{Preprint arXiv:2403.01544}, (2024).

\bibitem{BDO} Banerjee, S., Deka, P.\ and Olvera-Cravioto, M.: Local weak limits for collapsed branching processes with random out-degrees. \emph{Stochastic Processes and their Applications} \textbf{182}(C), (2025), 104566.

\bibitem{BA} Barab\'asi, A.\ L.\ and Albert, R.: Emergence of scaling in random networks. \emph{Science} \textbf{286}(5439), (1999), 509--512.

\bibitem{BS} Benjamini, I.\ and Schramm, O.: Recurrence of distributional limits of finite planar graphs. \emph{Electronic Journal of Probability} \textbf{6}(23), (2001), 1--13.

\bibitem{BK} Ben-Naim, E.\ and Krapivsky, P.\ L.: Dynamics of random graphs with bounded degrees, \emph{Journal of Statistical Mechanics: Theory and Experiment} \textbf{2011}(11), (2011), P11008.

\bibitem{BBCS} Berger, N., Borgs, C., Chayes, J.\ T.\ and Saberi, A.: Asymptotic behavior and distributional limits of preferential attachment graphs. \emph{The Annals of Probability} \textbf{42}(1), (2014), 1--40.

\bibitem{BES} Bhamidi, S., Evans, S.\ N.\ and Sen, A.: Spectra of large random trees. \emph{Journal of Theoretical Probability} \textbf{25}(3), (2012), 613--654.

\bibitem{Bol} Bollob\'as, B.: A probabilistic proof of an asymptotic formula for the number of labelled regular graphs. \emph{European Journal of Combinatorics} \textbf{1}(4), (1980), 311--316.

\bibitem{BJR} Bollob\'as, B., Janson, S.\ and Riordan, O.: The phase transition in inhomogeneous random graphs. \emph{Random Structures \& Algorithms} \textbf{31}(1), (2007), 3--122.

\bibitem{BR15} Bollob\'as, B.\ and Riordan, O.: An old approach to the giant component problem. \emph{Journal of Combinatorial Theory, Series B}, \textbf{113}, (2015), 236--260.

\bibitem{Bor} Bordenave, C.: Lecture notes on random graphs and probabilistic combinatorial optimization, (2016), \url{https://www.math.univ-toulouse.fr/~bordenave/coursRG.pdf}.

\bibitem{BCC11a} Bordenave, C., Caputo, P.\ and Chafa\"{\i}, D.: Spectrum of large random reversible Markov chains: heavy tailed weights on the complete graph. \emph{The Annals of Probability} \textbf{39}(4), (2011), 1544--1590.

\bibitem{BCC11b} Bordenave, C., Caputo, P.\ and Chafa\"{\i}, D.: Spectrum of non-Hermitian heavy tailed random matrices. \emph{Communications in mathematical physics} \textbf{307}(2), (2011), 513--560.

\bibitem{BC} Bordenave, C.\ and Chafa\"{\i}, D.: Around the circular law. \emph{Probability Surveys} \textbf{9}, (2012), 1--89.

\bibitem{BL} Bordenave, C.\ and Lelarge, M.: Resolvent of large random graphs. \emph{Random Structures \& Algorithms} \textbf{37}(3), (2010), 332--352.

\bibitem{BLS11} Bordenave, C., Lelarge, M.\ and Salez, J.: The rank of diluted random graphs. \emph{The Annals of Probability} \textbf{39}(3), (2011), 1097--1121.

\bibitem{BLS13} Bordenave, C., Lelarge, M.\ and Salez, J.: Matchings on infinite graphs. \emph{Probability Theory and Related Fields} \textbf{157}(1-2), (2013), 183--208.

\bibitem{CRY} Crane, E., R\'ath, B.\ and Yeo, D.: Age evolution in the mean field forest fire model via multitype branching processes. \emph{The Annals of Probability} \textbf{49}(4), (2021), 2031--2075.

\bibitem{DEM} D'Achille, M., Enriquez, N.\ and Melotti, P.: Local limit of massive spanning forests on the complete graph. \emph{Preprint arXiv:2403.11740}, (2024).

\bibitem{DN} David, H.\ A.\ and Nagaraja, H.\ N.: Order statistics. Third edition. Wiley Series in Probability and Statistics. \emph{Wiley-Interscience [John Wiley \& Sons]}, Hoboken, NJ, (2003), xvi+458 pp.

\bibitem{DKS} Dondajewski, M., Kirschenhofer, P.\ and Szyma\'nski, J.: Vertex-degrees in strata of a random recursive tree. \emph{North-Holland Mathematics Studies} \textbf{118}, (1985), 99--105.

\bibitem{EW} Eslava, L.\ and Warnke, L.: The size of the giant component in the random $d$-process and generalizations, Manuscript. 

\bibitem{G16} Gabrysch, K.: Convergence of directed random graphs to the Poisson-weighted infinite tree. \emph{Journal of Applied Probability} \textbf{53}(2), (2016), 463--474.

\bibitem{GNS} Gamarnik, D., Nowicki, T.\ and Swirszcz, G.: Maximum weight independent sets and matchings in sparse random graphs. Exact results using the local weak convergence method. \emph{Random Structures \& Algorithms} \textbf{28}(1), (2006), 76--106.

\bibitem{GHHR} Garavaglia, A., Hazra, R.\ S., van der Hofstad, R.\ and Ray, R.: Universality of the local limit of preferential attachment models. \emph{Preprint arXiv:2212.05551}, (2022).

\bibitem{G80} Grimmett, G.\ R.: Random labelled trees and their branching networks. \emph{Journal of the Australian Mathematical Society} \textbf{30}(2), (1980), 229--237.

\bibitem{HL} Hartarsky, I.\ and de Lima, B.\ N.: Weakly constrained-degree percolation on the hypercubic lattice. \emph{Stochastic Processes and their Applications} \textbf{153}, (2022), 128--144.

\bibitem{HS} Hartarsky, I.\ and Silva, R.\ W.\ C.: Sharpness of the phase transition for constrained-degree percolation. \emph{Preprint arXiv:2509.16162}, (2025).

\bibitem{HNT} Hladk\'y, J., Nachmias, A.\ and Tran, T.: The local limit of the uniform spanning tree on dense graphs. \emph{Journal of Statistical Physics} \textbf{173}(3-4), (2018), 502--545.

\bibitem{H24a} Hofstad, J.: Behavior of the minimum degree throughout the $d$-process. \emph{Combinatorics, Probability and Computing} \textbf{33}(5), (2024), 564--582.

\bibitem{H24b} van der Hofstad, R.: Random graphs and complex networks: Volume 2. \emph{Cambridge university press}, (2024).

\bibitem{HHM} van der Hofstad, R., van der Hoorn, P.\ and Maitra, N.: Local limits of spatial inhomogeneous random graphs. \emph{Advances in Applied Probability} \textbf{55}(3), (2023), 793--840.

\bibitem{HMP} Holroyd, A.\ E., Martin, J.\ B.\ and Peres, Y.: Stable matchings in high dimensions via the Poisson-weighted infinite tree. \emph{Annales de L'Institut Henri Poincare Section (B) Probability and Statistics} \textbf{56}(2), (2020), 826--846.

\bibitem{JW} Janson, S.\ and Warnke, L.: Preferential attachment without vertex growth: emergence of the giant component. \emph{The Annals of Applied Probability} \textbf{31}(4), (2021), 1523--1547.

\bibitem{J18} Jung, P.: L\'evy-Khintchine random matrices and the Poisson weighted infinite skeleton tree. \emph{Transactions of the American Mathematical Society} \textbf{370}(1), (2018), 641--668.

\bibitem{KQ} Kennedy, J.\ W.\ and Quintas, L.\ V.: Probability Models for Random f-Graphs. \emph{Annals of the New York Academy of Sciences} \textbf{555}(1), (1989), 248--261.

\bibitem{K} Kulkarni, V.\ G.: Modeling and analysis of stochastic systems. Third edition. Chapman \& Hall/CRC Texts in Statistical Science Series. \emph{CRC Press}, Boca Raton, FL, (2017), xxi+583 pp.

\bibitem{LPW} Levin, D.\ A., Peres, Y.\ and Wilmer, E.\ L.: Markov chains and mixing times. With a chapter by James G. Propp and David B. Wilson.\emph{American Mathematical Society}, Providence, RI, (2009), xviii+371 pp.

\bibitem{LSSST} de Lima, B.\ N., Sanchis, R., dos Santos, D.\ C., Sidoravicius, V.\ and Teodoro, R.: The Constrained-degree percolation model. \emph{Stochastic Processes and their Applications} \textbf{130}(9), (2020), 5492--5509.

\bibitem{L05} Lyons, R.: Asymptotic enumeration of spanning trees. \emph{Combinatorics, Probability and Computing} \textbf{14}(4), (2005), 491--522.

\bibitem{MHZ} Milewska, M., van der Hofstad, R.\ and Zwart, B.: Dynamic random intersection graph: Dynamic local convergence and giant structure. \emph{Random Structures \& Algorithms} \textbf{66}(1), (2025), e21264.

\bibitem{M} Mode, C.\ J.: Multitype Branching Processes. Theory and Applications. Modern Analytic and Computational Methods in Science and Mathematics, No.\ 34. \emph{American Elsevier Publishing Co., Inc.}, New York, (1971), xx+330 pp.

\bibitem{MSW} Molloy, M., Surya, E.\ and Warnke, L.: The degree-restricted random process is far from uniform. \emph{Journal of Combinatorial Theory, Series B} \textbf{176}, (2026), 111--162.

\bibitem{MU} Mitzenmacher, M.\ and Upfal, E.: Probability and Computing. Randomized Algorithms and Probabilistic Analysis. \emph{Cambridge University Press}, Cambridge, (2005), xvi+352 pp. 

\bibitem{NP} Nachmias, A.\ and Peres, Y.: The local limit of uniform spanning trees. \emph{Probability Theory and Related Fields} \textbf{182}(3), (2022), 1133-–1161.

\bibitem{NT} Nachmias, A.\ and Tang, P.: The wired minimal spanning forest on the Poisson-weighted infinite tree. \emph{The Annals of Applied Probability} \textbf{34}(2), (2024), 2415--2446.

\bibitem{P} Price, D.\ D.\ S.: A general theory of bibliometric and other cumulative advantage processes. \emph{Journal of the American society for Information science} \textbf{27}(5), (1976), 292--306.

\bibitem{RT} R\'ath, B.\ and T\'oth, B.: Erd\H{o}s--R\'enyi Random graphs $+$ forest fires $=$ Self-Organized Criticality. \emph{Electronic Journal of Probability} \textbf{14}(45), (2009), 1290--1327.

\bibitem{RSzWArXivV1} R\'ath, B., Sz\H{o}ke, M.\ and Warnke, L.: Local limit of the random degree constrained process. \emph{Preprint arXiv:2409.11747v1}, (2024).

\bibitem{RW2015} Riordan, O.\ and Warnke, L: The evolution of subcritical Achlioptas processes. \emph{Random Structures and Algorithm} \textbf{47}(1), (2015), 174--203.

\bibitem{RW2017} Riordan, O.\ and Warnke, L.: The phase transition in bounded-size Achlioptas processes. \emph{Preprint arXiv:1704.08714}, (2017).

\bibitem{RS} Reed, M.\ and Simon, B.: Methods of Modern Mathematical Physics I: Functional Analysis. Second edition. \emph{Academic Press, Inc. [Harcourt Brace Jovanovich, Publishers]}, New York, (1980), xv+400 pp.

\bibitem{RW92} Ruci\'nski, A.\ and Wormald, N.: Random graph processes with degree restrictions. \emph{Combinatorics, Probability and Computing} \textbf{1}(2), (1992), 169--180.

\bibitem{RW97} Ruci\'nski, A.\ and Wormald, N.: Random graph processes with maximum degree $2$. \emph{The Annals of Applied Probability} \textbf{7}(1), (1997), 183--199.

\bibitem{RW02} Ruci\'nski, A.\ and Wormald, N.: Connectedness of graphs generated by a random $d$-process. \emph{Journal of the Australian Mathematical Society} \emph{72}(1), (2002), 67--86.

\bibitem{RW23} Ruci\'nski, A.\ and Wormald, N.: Sharper analysis of the random graph $d$-process via a balls-in-bins model. \emph{Preprint arXiv:2311.04743}, (2023).

\bibitem{RTV} Rudas, A., T\'oth, B.\ and Valk\'o, B.: Random trees and general branching processes. \emph{Random Structures \& Algorithms} \textbf{31}(2), (2007), 186--202.

\bibitem{Sa13} Salez, J.: Weighted enumeration of spanning subgraphs in locally tree‐like graphs. \emph{Random Structures \& Algorithms} \textbf{43}(3), (2013), 377--397.

\bibitem{SS} Salez, J.\ and Shah, D.: Belief propagation: an asymptotically optimal algorithm for the random assignment problem. \emph{Mathematics of operations research} \textbf{34}(2), (2009), 468--480.

\bibitem{SSS} Sanchis, R., dos Santos, D.\ C.\ and Silva, R.\ W.\ C.: Constrained-degree percolation in random environment. \emph{Annales de l'Institut Henri Poincare (B) Probabilites et statistiques} \textbf{58}(4), (2022), 1887--1899.

\bibitem{SS21} dos Santos, D.\ C.\ and Silva, R.\ W.\ C.: Exponential decay for constrained-degree percolation. \emph{Journal of Applied Probability} \textbf{62}(2), (2025), 795--807.

\bibitem{Se13} Seierstad, T.\ G.: On the normality of giant components. \emph{Random Structures \& Algorithms} \textbf{43}(4), (2013), 452--485.

\bibitem{S55} Simon, H.\ A.: On a class of skew distribution functions. \emph{Biometrika} \textbf{42}(3-4), (1955), 425--440.

\bibitem{S02} Steele, J.\ M.: Minimal spanning trees for graphs with random edge lengths. \emph{Mathematics and Computer Science II: Algorithms, Trees, Combinatorics and Probabilities}, (2002), 223--245.

\bibitem{S21} Stufler, B.: Local convergence of random planar graphs. \emph{Journal of the European Mathematical Society} \textbf{25}(1), (2021), 1--73.

\bibitem{TWZ} Telcs, A., Wormald, N.\ and Zhou, S.: Hamiltonicity of random graphs produced by 2-processes. \emph{Random Structures \& Algorithms} \textbf{31}(4), (2007), 450--481.

\bibitem{WW} Warnke, L.\ and Wormald, N.: Phase transition in evolving random graphs using bounded degrees, (2022), \url{https://mathweb.ucsd.edu/~lwarnke/dprocess_PT.pdf}.

\bibitem{WW2} Warnke, L.\ and Wormald, N.: Second largest component in the random $d$-process, in preparation.

\bibitem{W95} Wormald, N.: Differential equations for random processes and random graphs. \emph{The Annals of Applied Probability} \textbf{5}(4), (1995), 1217--1235.

\bibitem{W99a} Wormald, N.: The differential equation method for random graph processes and greedy algorithms. \emph{Lectures on approximation and randomized algorithms} \textbf{73}(155), (1999), 0943--05073.

\bibitem{Y} Yule, G.\ U.: II.-A mathematical theory of evolution, based on the conclusions of Dr. J. C. Willis, F. R. S. \emph{Philosophical transactions of the Royal Society of London, Series B, containing papers of a biological character} \textbf{213}(402-410), (1925), 21--87.

\end{thebibliography}
\end{document}